\newtheorem{theorem}{Theorem} 	      	      	                              % Theorem environment
\newtheorem{corollary}[theorem]{Corollary}     	      	      	      	      % Corollary environment
\newtheorem{lemma}[theorem]{Lemma}     	       	      	      	      	      % Lemma environment
\newtheorem{proposition}[theorem]{Proposition} 	      	      	      	      % Proposition environment
\newtheorem*{remark}{Remark}                                                  % Remark environment
\numberwithin{equation}{section}                                              % Equation numbering
\numberwithin{theorem}{section}                                               % Theorem numbering
\newcommand{\ul}[1]{\underline{#1}}                                           % Underline
\newcommand{\mf}[1]{\mathfrak{#1}}                                            % Mathfrak
\newcommand{\mc}[1]{\mathcal{#1}}                                             % Mathcal
\newcommand{\Z}{\mathbb{Z}}                                                   % Integers
\newcommand{\R}{\mathbb{R}}                                                   % Real numbers
\newcommand{\C}{\mathbb{C}}                                                   % Complex numbers
\newcommand{\Sph}{\mathbb{S}}                                                 % Unit sphere
\newcommand{\paren}[1]{\left(#1\right)}                                       % Resized parentheses
\newcommand{\brak}[1]{\left[#1\right]}                                        % Resized brackets
\newcommand{\abs}[1]{\left|#1\right|}                                         % Resized absolute value
\newcommand{\nabs}[1]{\left\|#1\right\|}                                      % Resized norm
\DeclareMathOperator{\real}{Re}                                               % Real component
\DeclareMathOperator{\imag}{Im}                                               % Imaginary component
\DeclareMathOperator{\trace}{tr}                                              % Trace
\newcommand{\lapl}{\Delta}                                                    % Laplacian
\newcommand{\cint}{{\begingroup\textstyle\smallint\endgroup}}                 % Covariant integral
\newcommand{\ass}[2]{{\bf (#1)}${}_{#2}$}                                     % Assumption condition
\begin{document}

\title[New Tensorial Estimates]{New tensorial estimates in Besov spaces for time-dependent $(2 + 1)$-dimensional problems}
\author{Arick Shao}

\address{Department of Mathematics, University of Toronto, Toronto, ON, Canada M5S 2E4}
\email{ashao@math.utoronto.ca}

\subjclass[2010]{35R01 (Primary) 58J99, 53C21, 35Q75 (Secondary)}

\begin{abstract}
In this paper, we consider various tensorial estimates in geometric Besov-type norms on a one-parameter foliation of surfaces with evolving geometries.
Moreover, we wish to accomplish this with only very weak control on these geometries.
Several of these estimates were proved in \cite{kl_rod:cg, kl_rod:stt}, but in very specific settings.
A primary objective of this paper is to significantly simplify and make more robust the proofs of the estimates.
Another goal is to generalize these estimates to more abstract settings.
In \cite{alex_shao:nc_inf}, we will apply these estimates in order to consider a variant of the problem in \cite{kl_rod:cg}, that of a truncated null cone in an Einstein-vacuum spacetime extending to infinity.
This analysis will then be used in \cite{alex_shao:bondi} to study and to control the Bondi mass and the angular momentum under minimal conditions.
\end{abstract}

\maketitle

\section{Introduction} \label{sec.intro}

In this paper, we investigate a number of tensorial estimates in general $(1 + 2)$-dimensional geometric settings.
For example, in the Euclidean setting $I \times \R^2$, where $I = [0, 1]$, a known integrated product estimate is
\begin{align}
\label{eqi.est_trace} \nabs{ \int_0^1 \partial_t \phi (s, \cdot) \psi (s, \cdot) ds }_{ B^0_{2, 1} ( \R^2 ) } \lesssim \| \phi \|_{ H^1 ( I \times \R^2 ) } \| \psi \|_{ H^1 ( I \times \R^2 ) } \text{,}
\end{align}
where $\phi, \psi$ are sufficiently nice functions on $I \times \R^2$.
This estimate was proved in \cite[Sect. 3]{kl_rod:stt} using classical Littlewood-Paley decompositions.
A corresponding non-integrated product estimate, again on $I \times \R^2$, is
\begin{align}
\label{eqi.est_prod} \| \phi (t, \cdot) \psi (t, \cdot) \|_{ B^0_{2, 1} ( \R^2 ) } \lesssim \| \phi \|_{ H^1 ( I \times \R^2 ) } \| \psi \|_{ H^1 ( I \times \R^2 ) } \text{,} \qquad t \in I \text{.}
\end{align}
In addition, on the Euclidean cylinder $I \times \Sph^2$, one has the elliptic estimate
\begin{equation} \label{eqi.est_ell} \| \nabla^2 \phi (t, \cdot) \|_{ B^0_{2, 1} (\Sph^2) } \lesssim \| \lapl \phi (t, \cdot) \|_{ B^0_{2, 1} (\Sph^2) } \text{,} \qquad t \in I \text{,} \end{equation}
for appropriately defined Besov spaces on $\Sph^2$.

The general questions we wish to pose are the following:
\begin{enumerate}
\item Do analogues of \eqref{eqi.est_trace}-\eqref{eqi.est_ell}, as well as other related estimates, hold when $\R^2$ and $\Sph^2$ are replaced by another surface $\mc{S}$, and when the geometry of $\mc{S}$ is allowed to change with the time variable $t$?

\item Can we establish these estimates with only very weak assumptions on how the geometries of $\mc{S}$ evolve with time, and, for the elliptic estimates, with only similarly weak assumptions on the curvatures of these $\mc{S}$?

\item Can we retain these estimates even when the fields under consideration are no longer scalar but are tensorial in nature?
\end{enumerate}
In this paper, we will examine these questions for rather general $\mc{S}$.
The main results are stated in Theorems \ref{thm.est_prod_elem}, \ref{thm.est_prod}, \ref{thm.est_trace_sh}, \ref{thm.sharp_trace}, \ref{thm.besov_weak}, and \ref{thm.besov_impr}.

\subsection{Motivations} \label{sec.intro_motive}

The main motivation for this problem comes from mathematical general relativity, in the analysis of regular null cones in spacetimes.
More specifically, one wishes to control the geometry of the null cone given the assumption of bounded or small curvature flux.
Here, ``controlling the geometry" refers to quantitative control for various connection coefficients of the cone, as well as a lower bound on the null conjugacy radius of the cone.

The \emph{curvature flux} refers to an $L^2$-norm along a null hypersurface of certain components of the spacetime curvature tensor.
It is a fundamental quantity for dealing with local energy estimates involving the curvature.
In particular, the curvature flux arises naturally from the Bel-Robinson tensor, which one can think of as an analogue of the stress-energy tensor for the spacetime Weyl curvature.

A number of results involving mathematical relativity and the Einstein equations have relied heavily on variations of the curvature flux.
Examples include the stability of Minkowski spacetimes (e.g., \cite{bie_zip:stb_mink, chr_kl:stb_mink, kl_nic:stb_mink}), improved breakdown and continuation criteria for the Einstein equations (e.g., \cite{kl_rod:bdc, parl:bdc, shao:bdc_nv, shao:bdc_nvp, wang:ibdc}), and the formation of black holes and trapped surfaces (e.g., \cite{chr:gr_bh, kl_rod:fbh}), among several others.
In particular, regarding the breakdown criteria for the Einstein equations, a major component of the proofs of this family of results was precisely that of controlling the geometry of null cones by the curvature flux.
\footnote{More accurately, the geometry of the null cone is controlled by the curvature flux, properties of the time foliation, and properties of the matter field (in non-vacuum situations).}
This was by far the most technically demanding portion of the argument, involving an elaborate bootstrap procedure and the construction of a geometric tensorial Littlewood-Paley theory, cf. \cite{kl_rod:glp}.
Furthermore, variants of these same ideas have been applied to recent work on the bounded $L^2$-curvature conjecture (\cite{kl_rod_szf:blc, szf:par1, szf:par2, szf:par3, szf:par4}).

Control of the geometry of null cones by its curvature flux was first proved in \cite{kl_rod:cg}, for geodesically foliated truncated null cones beginning from a $2$-sphere in an Einstein-vacuum spacetime; technical aspects of the argument were also included in \cite{kl_rod:glp, kl_rod:stt, kl_rod:rin}.
The result was extended to null cones beginning from a point in \cite{wang:cg, wang:cgp}, which was needed in the breakdown criteria problem.
Other versions of this result include \cite{parl:bdc, shao:bdc_nv, wang:ibdc}, which dealt with time foliated null cones.
In particular, \cite{shao:bdc_nv} extended the result to Einstein-scalar and Einstein-Maxwell spacetimes.

There are many reasons why this problem in general is technically demanding.
The first is that the assumptions regarding the curvature flux, an $L^2$-norm of the curvature \emph{along the entire null cone}, grants only very weak regularity for the geometries of the spheres foliating the cone.
In particular, standard coordinate-based methods fail to work due to the inherent lack of regularity.
Since the differential equations under consideration were tensorial in nature, one often had to devise methods to deal with these quantities in an invariant manner.

Another difficulty is that one required several bilinear product estimates in order to prove the required bounds.
In the Euclidean setting, many of these estimates, in particular \eqref{eqi.est_trace}, were proved in \cite[Sect. 3]{kl_rod:stt}.
Next, the main results of \cite{kl_rod:stt} extended these bounds to the specific setting of geodesically foliated truncated null cones beginning from a $2$-sphere in an Einstein-vacuum spacetime.
In \cite{wang:cg}, similar estimates were proved for corresponding null cones beginning from a point.

Furthermore, regarding the spheres foliating the null cone, one does not have even $L^2$-control on their curvatures.
This made proving even the simplest of elliptic estimates rather laborious.
Even worse, one also required more elaborate tensorial elliptic estimates in Besov spaces.
In fact, a significant part of \cite{wang:cg} was dedicated to the derivation of these Besov-elliptic bounds.

These estimates in \cite{kl_rod:stt, wang:cg} form the starting point for the discussions of this paper.
In relation to \cite{kl_rod:stt, wang:cg}, our paper will accomplish two primary goals:
\begin{itemize}
\item We extend several estimates in \cite{kl_rod:stt, wang:cg} to more abstract settings.

\item We give proofs for our main estimates that are significantly shorter and simpler than the corresponding proofs in \cite{kl_rod:stt, wang:cg}.
\end{itemize}
Most of the main estimates we establish here are analogues or generalizations of product estimates in \cite{kl_rod:stt, wang:cg}, though in a more abstract setting.
In simplifying the proof, we adopt different methods for establishing the desired bounds.

We note that the estimates proved in this paper are retroactively applicable to previous results pertaining to controlling the geometry of null cones by the curvature flux, e.g., \cite{kl_rod:cg, parl:bdc, shao:bdc_nv, wang:cg, wang:cgp, wang:ibdc}.
In all the above works, their settings are special cases of the abstract conditions considered here.

Regarding current applications, the immediate goal is to apply the estimates of this paper to another variant of the ``null cone with finite curvature flux" problem: that of an outgoing truncated null cone extending indefinitely toward infinity.
Using the same ideas as in \cite{kl_rod:cg}, one hopes to obtain corresponding control for the null geometry.
This is accomplished in \cite{alex_shao:nc_inf}, under the assumption that the null cone is sufficiently close to a Schwarzschild null cone, in the sense of a weighted curvature flux.
An additional goal is to control the associated Bondi mass of this null cone by this weighted flux; this is discussed in \cite{alex_shao:bondi}.

All the problems mentioned above will require corresponding bilinear product and elliptic estimates, for precisely the same reasons as in \cite{kl_rod:cg, parl:bdc, shao:bdc_nv, wang:cg}.
Thus, an important motivation behind the work in this paper is to establish these estimates in a sufficiently generalized setting so that they can be applied with relatively little effort to all potential variant problems.
In this paper, we establish these general estimates, while \cite{alex_shao:bondi, alex_shao:nc_inf} demonstrate in detail how the framework developed here is applied to the ``null cone with finite curvature flux" class of problems.

\subsection{The Abstract Formalism} \label{sec.intro_abs}

As mentioned before, the bilinear product estimates in \cite{kl_rod:stt} and the Besov-elliptic estimates found in \cite{kl_rod:cg} applied to an extremely specific setting: geodesically foliated truncated null cones in vacuum spacetimes beginning from a sphere.
The corresponding estimates in \cite{wang:cg} are equally specific: same null cones, but beginning from a point.
Suppose one wishes to work instead with double-null-foliated null cones or in non-vacuum backgrounds, for example.
One certainly expects that the general proof templates from \cite{kl_rod:stt, wang:cg} can be adapted to these cases.
However, since the arguments presented in \cite{kl_rod:stt, wang:cg} were highly dependent on the specific setting, all the bilinear product estimates and the elliptic estimates would in principle have to be redone.
This becomes rather unsavory, since the arguments were quite lengthy and technically elaborate.

As a result, we wish to frame the hypotheses and conclusions of these estimates in a more abstract framework.
In particular, we wish to express our results in a sufficiently general manner so that the null cone problems in \cite{kl_rod:cg, kl_rod:stt, wang:cg}, as well as variations of these problems, can be expressed in terms of this abstract formalism.
Indeed, the settings of \cite{kl_rod:cg, kl_rod:stt, parl:bdc, shao:bdc_nv, wang:cg} can be shown to satisfy the abstract assumptions we impose for our main estimates in this paper.

Throughout the paper, we will work with a one-parameter foliation
\[ \mc{N} = [0, \delta] \times \mc{S} \text{,} \]
where $\delta > 0$ is fixed, and where $\mc{S}$ is a two-dimensional manifold.
To describe the geometric information of this system, we impose a family of evolving Riemannian metrics on $\mc{S}$, parametrized by the interval $[0, \delta]$.
In other words, for each $0 \leq \tau \leq \delta$, we let $\gamma [\tau]$ denote a Riemannian metric on the cross-section
\[ \mc{S}_\tau = \{ \tau \} \times \mc{S} \simeq \mc{S} \text{,} \]
such that these $\gamma [\tau]$'s vary smoothly with respect to $\tau$.
From this, we can construct other geometric objects on $\mc{N}$ (e.g., covariant derivatives with respect to the $\gamma [\tau]$'s), which amount to smoothly varying aggregations of objects on the $\mc{S}_\tau$'s.

The quantities we will analyze throughout the paper are represented as \emph{horizontal tensor fields} on $\mc{N}$.
These can be interpreted as smooth tensor fields on $\mc{N}$ which are everywhere tangent to the $\mc{S}_\tau$'s.
For instance, by aggregating the metrics $\gamma [\tau]$ mentioned above into a single object $\gamma$ on $\mc{N}$, we obtain such a horizontal tensor field.
By measuring how these $\gamma [\tau]$'s evolve with respect to $\tau$, we obtain another horizontal field, which we refer to as the ``second fundamental form".

This notion of horizontal tensor fields has been used implicitly in various works in mathematical general relativity.
For example, in \cite{chr:gr_bh, chr_kl:stb_mink, kl_nic:stb_mink, kl_rod:cg}, among several others, the authors deal with various tensorial quantities on null cones that are horizontal, i.e., tangent to the spheres which foliate the cone.
Furthermore, in \cite{chr_kl:stb_mink, kl_rod:bdc}, for instance, one foliates a spacetime into Riemannian timeslices and analyzes tensorial objects on this spacetime which are tangent to these timeslices.
The formalisms we adopt for notating, describing, and analyzing these horizontal fields derive from \cite{shao:bdc_nv, shao:ksp, shao:bdc_nvp}, which applied these notions for similar purposes.

One major component of the formalism is our definition of a \emph{covariant evolutionary derivative} (i.e., with respect to parameter $\tau \in [0, \delta]$) of horizontal tensor fields.
In comparison to previous results on null cones, e.g., \cite{chr:gr_bh, chr_kl:stb_mink, kl_nic:stb_mink, kl_rod:cg}, one can show this coincides with the derivative operators $\nabla_L$ and $\nabla_{\ul{L}}$, i.e., the appropriate horizontal projections of the corresponding \emph{spacetime} covariant derivatives.
In this sense, our construction generalizes those used in previous works on null cones.

\begin{remark}
We also note that some other unrelated topics can be connected to the abstract concepts used here.
One well-known example involves the Ricci flow, in particular with a process known as ``Uhlenbeck's trick", used to derive a covariant evolutionary equation for the Riemann curvature.
In \cite[Sect. 6.3]{andr_hop:ric_sph}, it was shown that this trick could be naturally expressed in terms of an abstract formulation equivalent to the one discussed here.
For further details, see also \cite{brend:ric_sph, ham:ric_flow_four}.
\end{remark}

Another component is the ``inverse" to the above operation: a \emph{covariant integral} along the vertical direction.
Such operations were used implicitly in \cite{kl_rod:fbh} in order to estimate trace norms of various quantities.
\footnote{This is in fact quite similar to the intended applications of this paper.
The main difference with \cite{kl_rod:fbh} is that here, we postulate far less regularity, making the analysis much more difficult.}
Here, we shall make much more explicit mention and use of these integral operators.
These will play a major role in demonstrating the improved regularity one obtains from covariant derivatives and parallel frames, both in the abstract setting discussed in this paper and in our specific motivating problem of null cones with bounded curvature flux.

The main observation behind our generalization of the bilinear product estimates of \cite{kl_rod:stt, wang:cg} is that the required assumptions can be stated in terms of objects defined from our abstract formalism.
In particular, much of the assumptions are given by weak integral bounds on the second fundamental form mentioned above.
The remaining assumptions involve relatively trivial quantitative control on the geometry of a single initial leaf of the foliation $\mc{N}$.
The exact conditions required for the main estimates of this paper will be described in detail in later sections.

We also mention that the process we use to prove our main estimates can be almost directly generalized to some abstract vector bundles.
This is due to the observation that our covariant methods in this paper depend only on the presence of a bundle metric and a compatible bundle connection.
One potential application of this would be for working with corresponding null cones in Einstein-Yang-Mills spacetimes.
In this setting, one has precisely this bundle-metric-connection system when describing the Lie algebra-valued Yang-Mills curvature.

Finally, for the Besov-elliptic estimates, one also requires that $\mc{S}$ is sufficiently close to $\Sph^2$ in some weak sense, as positive curvature plays a fundamental role.
In this case, we must impose additional ``weakly spherical" assumptions: that $\mc{S}$ is diffeomorphic to $\Sph^2$, and that the curvature of $\mc{K}$ is sufficiently close to $1$.
However, we require that this closeness is in an exceedingly weak sense; we postulate $\mc{K} - 1$ to be small in a manner that is even weaker than $L^2$.
This lack of regularity is responsible for much of the difficulty behind these elliptic estimates.

\subsection{Simplification of Proofs I} \label{sec.intro_sprod}

In \cite[Sect. 3]{kl_rod:stt}, it was shown that the Euclidean analogues of these bilinear product estimates could be established with relative ease via classical Littlewood-Paley decomposition methods.
Consequently, in the much less trivial setting of null cones with bounded curvature flux, i.e., of \cite{kl_rod:cg, kl_rod:stt}, a reasonable first approach to similar estimates could be the following:
\begin{itemize}
\item Choose appropriate local coordinate systems on the cone.

\item Apply the Euclidean estimates to these coordinate systems.

\item Extract a global estimate on the null cone from these coordinate estimates.
\end{itemize}
However, such a method fails for this setting.

The fundamental reason for this failure is the extremely weak assumption of bounded curvature flux.
In particular, one has only $L^2$-integrability of the spacetime curvature in the spherical component.
A cursory examination of the geometric equations involving the null cones shows that the spatial gradient of the second fundamental form, as defined in the preceding discussion, is of the same level as the spacetime curvature.
Therefore, one expects the same spherical $L^2$-integrability for this gradient as for the spacetime curvature.

Now, if we choose natural coordinate systems as dictated by the above outline, then one can relate the Christoffel symbols associated with these coordinates to the gradient of the second fundamental form.
After some analysis, one sees that these Christoffel symbols can have at best $L^2$-integrability in the spherical component.
It turns out that this $L^2$-integrability for these connection quantities makes it such that the above reduction to the Euclidean estimates just barely fails.
In fact, if one has $L^q$-integrability for some $q > 2$, then this reduction can be recovered.
However, this regularity is unattainable, due to the stringent curvature assumptions.

As a result, \cite{kl_rod:cg, kl_rod:stt, wang:cg} resorted largely to geometric and intrinsically tensorial methods.
In \cite{kl_rod:glp}, the authors constructed a fully geometric and tensorial Littlewood-Paley theory via the heat flow.
In particular, all the required Besov-type norms and estimates were defined in terms of this theory.
The geometric nature of these constructions ultimately circumvented the need for local coordinate systems and their associated Christoffel symbols.
The price to be paid, though, was a significant amount of added technical baggage to the process, in the forms of numerous elaborate heat flow, Besov, and commutator estimates.

As a result, one is interested in simplifying, both lengthwise and technically, the proofs in \cite{kl_rod:stt} of the bilinear product estimates.
In \cite{wang:cg}, it was shown that a significant portion of the argument could in fact be made using local coordinate methods.
Although this somewhat shortened the proof, due to the difficulties mentioned above, not all of the proofs could be reduced to coordinate analyses.
Unfortunately, the difficult cases included the null cone analogues of \eqref{eqi.est_trace} and \eqref{eqi.est_prod}, which were the toughest estimates to establish in the first place.

In this paper, we claim that \emph{all} of these bilinear product estimates in \cite{kl_rod:stt, wang:cg}, including the analogues of \eqref{eqi.est_trace} and \eqref{eqi.est_prod}, do in fact reduce to the corresponding Euclidean versions.
Although this could not be done using coordinate considerations, as previously mentioned, we demonstrate here that this can be accomplished, on the other hand, using specially chosen frames.
More specifically, we wish to systematically decompose horizontal tensor fields into local scalar quantities via a collection of parallel transported horizontal frames.

In the motivating null cone settings (e.g., \cite{kl_rod:stt, wang:cg}), these parallel frames have additional regularities that were not previously exploited.
In fact, one can derive $L^4$-integrability in the spherical component for the connection coefficients associated with these frames, which is a strict improvement over frames constructed from transported coordinate systems.
This is now sufficient for reducing the geometric bilinear product estimates to their Euclidean analogues.

This additional regularity for parallel frames is closely related to the evolutionary covariant derivative.
The commutation formula between this derivative and other covariant derivatives has nicer algebraic properties than the corresponding commutator using a non-covariant evolutionary derivative.
Indeed, in the former commutator, one sees only the curl of the second fundamental form, while in the latter, one sees instead the full gradient.
While this gain seems negligible at first, it is vastly important in the case of null cones with bounded curvature flux.

In this specific setting, the curl of the second fundamental form is further related to the geometry of the ambient spacetime via the well-known Codazzi equations.
By taking this into account, one can establish slightly better estimates for the curl of the second fundamental form than for the full gradient.
This leads to the improved $L^4$-estimates for the parallel frame connection coefficients.

\begin{remark}
In the abstract formulation that we utilize in the paper, we will achieve this improved regularity for parallel frames by postulating corresponding regularity assumptions for this curl of the second fundamental form.
In particular, settings involving null cones with curvature flux will satisfy these regularity conditions.
\end{remark}

With the above process, one transforms tensorial quantities to a family of scalar quantities localized to coordinate systems.
One can then define Besov-type spaces and norms in the standard fashion with respect to these coordinate systems and an associated partition of unity.
From here, one can now apply the analogous estimates in Euclidean spaces and piece together the various scalar components into the desired tensorial estimate in our nontrivial geometric setting.

The final piece of the puzzle is to relate the coordinate-based Besov-type norms mentioned above to the corresponding geometric, invariant norms.
For this task, we provide estimates showing that, under the same assumptions as needed before, these coordinate-based Besov norms are in fact equivalent to the analogous geometric norms.
These are inspired largely by a similar argument found in \cite{wang:cg}.

Finally, we mention that a major source of difficulty in \cite{kl_rod:stt, wang:cg} is the lack of regularity for the Gauss curvatures of the level spheres of null cones.
Much of the machinery in \cite{kl_rod:stt} is related to bypassing this obstacle.
Our analysis here, on the other hand, does not involve these Gauss curvatures at all.
Moreover, our methods also avoid many of the heat flow and commutator estimates needed in \cite{kl_rod:stt}.

\subsection{Simplification of Proofs II} \label{sec.intro_sell}

While this scalar decomposition using parallel frames suffices to reduce the bilinear product estimates to their Euclidean counterparts, the elliptic estimates will need to be treated with a completely different strategy.
For our motivating setting (e.g., \cite{kl_rod:cg, wang:cg}), while the curvature cannot be controlled in $L^2$, the part that is not $L^2$-controlled has special structure.
In fact, this term can be expressed as a (horizontal) divergence of a term that has $H^{1/2}$-control.
We shall show in this paper how this structure can be exploited in order to both simplify proofs and derive improved estimates.

\begin{remark}
Again, in the abstract setup of this paper, we will assume that the curvatures have the structure that is satisfied in \cite{kl_rod:cg, wang:cg} and their variants.
\end{remark}

In \cite{kl_rod:cg, parl:bdc, shao:bdc_nv, wang:cg}, the authors deal with the curvature issue by showing that it has $H^{-1/2}$-control.
This suffices to prove the basic $L^2$-elliptic estimates that are required.
However, due to the presence of this $H^{-1/2}$-space, the process is quite complex, as it already involves both product estimates and commutator estimates involving heat flows.
On the other hand, here we can give a short and elementary proof of these same estimates by leveraging the observation that the part of the curvature that is not $L^2$-controlled has this divergence form.

In \cite{kl_rod:cg, wang:cg}, as well as any potential variant of this setting, one requires not only the aforementioned $L^2$-elliptic estimates, but also some analogues of these bounds in geometric Besov-type norms.
More specifically, one requires the property that the operators $\nabla \mc{D}^{-1}$ are bounded with respect to a geometric $B^0_{2, 1}$-type norm.
Here, $\nabla$ denotes the covariant derivative with respect to the spheres, while $\mc{D}$ denotes one of the symmetric Hodge operators used, e.g., in \cite{chr_kl:stb_mink, kl_rod:cg, wang:cg}.

This boundedness property for the operators $\nabla \mc{D}^{-1}$ was stated in \cite{kl_rod:cg} without proof.
In \cite{wang:cg}, where these estimates were first treated in detail, it was shown that one needed additional lower-order error terms on the right-hand sides of these estimates.
In fact, a significant part of \cite{wang:cg} was dedicated to addressing many of the subtleties behind proving these bounds.

In this paper, we claim, once again, that these error terms are in fact not necessary.
Furthermore, we give a far shorter proof of these Besov-elliptic estimates.
The key behind our improvements is again the divergence form of the worst terms comprising the curvatures of the spheres.
More specifically, because of this structure, we can solve for a conformal renormalization that regularizes these curvatures.

This conformal transformation can be constructed precisely to absorb this worst divergence term that is not $L^2$-bounded.
In other words, the conformally transformed curvature will be $L^2$-controlled.
Having this curvature bounded in $L^2$ serves to greatly simplify all the proofs of elliptic estimates.
Furthermore, one can show, with relatively little effort, that our desired Besov-elliptic estimates in fact hold with respect to this conformally regularized setting.

What we ultimately require, however, is that these estimates hold with respect to the \emph{original} metric.
To this end, we show that the corresponding estimates with respect to the conformally regularized metric can be pulled back to the original metric.
This is dependent on the following observations:
\begin{itemize}
\item The conformal factor can be shown to have sufficient control.

\item Most of the Hodge operators are conformally invariant.
\end{itemize}
This completes the process that proves the desired Besov-elliptic estimates.
Although the conformal transformation adds some new technology, the resulting proof is much shorter and simpler.
This also avoids the lower-order error terms found in \cite{wang:cg}, which greatly complicates much of the remaining analysis.

Finally, by using both the abstracted bilinear product and elliptic estimates established in this paper, the proofs of the main results in \cite{kl_rod:cg, parl:bdc, shao:bdc_nv, wang:cg, wang:ibdc} (i.e., control of the null geometry by the curvature flux) become far simpler.
We will demonstrate this simplification explicitly in \cite{alex_shao:nc_inf}, where we consider truncating null cones extending to infinity, under similar assumptions on curvature flux.

\subsection{Outline of Paper} \label{sec.intro_outline}

We now briefly outline the remainder of this paper.

\begin{itemize}
\item In Section \ref{sec.geom}, we discuss the analysis of tensor fields on a single Riemannian surface.
We construct in this setting many of the tools we will use throughout the paper.
We also define various regularity assumptions on the surface, and we discuss the impact these have on estimates.

\item In Section \ref{sec.fol}, we move to our main setting: a one-parameter foliation of surfaces discussed in Section \ref{sec.geom}.
We consider general situations in which the regularity conditions defined in Section \ref{sec.geom} hold uniformly on each level surface.
From these assumptions, we devise a scalar decomposition scheme that can be used to reduce geometric tensorial estimates to ones in Euclidean space.
Finally, we apply these tools to prove some basic estimates.

\item In Section \ref{sec.cfol}, we build further upon the setting of Section \ref{sec.fol} by defining abstract notions of covariant evolution.
We define covariant evolutionary derivative and integral operators, and we discuss their basic properties.
Next, we consider a class of assumptions that control how the geometries of the level surfaces can evolve.
We show that these conditions suffice to propagate any regularity on the initial surface (in the sense of Section \ref{sec.geom}) uniformly to all the remaining surfaces (in the sense of Section \ref{sec.fol}).

\item In Section \ref{sec.thm}, we revisit the scalar decomposition scheme devised in Section \ref{sec.fol}, but in terms of the propagated regularity discussed in Section \ref{sec.cfol}.
We use this construction to obtain simple proofs of the main bilinear product estimates of this paper, in particular the analogues of \eqref{eqi.est_trace} and \eqref{eqi.est_prod}.

\item In Section \ref{sec.curv}, we consider the case in which the level surfaces are ``weakly spherical", and we obtain elliptic estimates in this setting.
We define precisely the meaning of weakly spherical, and we proceed to use the structure present in this definition in order to prove various $L^2$-elliptic estimates in a completely straightfoward and elementary manner.
In the remainder of this section, we use a conformal renormalization argument in order to prove Besov-type refinements of these elliptic estimates.

\item In Section \ref{sec.nc}, we give a brief and informal discussion of how the abstract formalisms and the estimates developed in this paper relate to geodesically foliated null cones in Einstein-vacuum spacetimes, i.e., the setting of \cite{kl_rod:cg}.

\item In Appendix \ref{sec.eucl}, we very briefly review some aspects of (dyadic) Littlewood-Paley theory in Euclidean spaces.
Afterwards, we proceed to prove the Euclidean analogues of the main bilinear product estimates of this paper.

\item In Appendix \ref{sec.bcomp}, we provide the proof of a norm comparison estimate found in Section \ref{sec.fol}.
This comparison shows that, under the appropriate regularity assumptions, the geometric tensorial Sobolev and Besov norms are equivalent to their (specially chosen) coordinate-based counterparts.

\item In Appendix \ref{sec.cell}, we show that the conformally renormalized metric constructed in Section \ref{sec.curv} satisfies better elliptic estimates than the original metric.
In particular, we show that the desired Besov-elliptic estimates hold with respect to this conformally renormalized metric.
\end{itemize}

\subsection{Notations} \label{sec.intro_not}

Consider a general manifold $M$ of arbitrary dimension.
We let $\mc{C}^\infty M$ denote the space of all smooth real-valued functions on $M$.
Next, let $\mc{V}$ be a vector bundle over $M$.
Given $z \in M$, we let $\mc{V}_z$ denote the fiber of $\mc{V}$ at $z$.
Moreover, we let $\mc{C}^\infty \mc{V}$ denote the space of all smooth sections of $\mc{V}$.

Throughout the paper, we will always let $r, r_1, r_2$ and $l, l_1, l_2$ denote non-negative integers.
The prototypical examples of vector bundles over $M$ are the tensor bundles $T^r_l M$ on $M$ of rank $(r, l)$, for which the associated fiber $( T^r_l M )_z$ of $T^r_l M$ at any $z \in M$ is the tensor space on $M$ of rank $(r, l)$ at $z$.
\footnote{Here, $r$ is the contravariant rank, and $l$ is the covariant rank.}
Therefore, the elements of $\mc{C}^\infty T^r_l M$ are the smooth tensor fields of rank $(r, l)$ on $M$.

We will often use standard index notation to describe tensor and tensor fields.
Indices, given by lowercase Latin letters, will be with respect to fixed frames and coframes.
In accordance with Einstein summation notation, repeated indices indicate summations over all allowable index values.

Given nonnegative real numbers $c_1, \dots, c_m$, we write
\[ X \lesssim_{c_1, \dots, c_m} Y \]
to mean that $X \leq C Y$ for some constant $C > 0$ depending on $c_1, \dots, c_m$.
If no $c_i$'s are given, then the constant $C$ is universal.
Similarly, we write
\[ X \simeq_{c_1, \dots, c_m} Y \]
to mean that both of the following statements hold:
\[ X \lesssim_{c_1, \dots, c_m} Y \text{,} \qquad Y \lesssim_{c_1, \dots, c_m} X \text{.} \]
To shorten notations, \emph{we will generally omit the dependence of constants (i.e., the $c_i$'s in the above) in inequalities within proofs of statements.}
The precise dependence of inequalities within proofs will be implicit from references to previous propositions.

Finally, from now on, we fix the following values:
\begin{itemize}
\item Let $C > 1$ and $B > 0$ denote real constants.

\item Let $N > 0$ denote an integer constant.
\end{itemize}
These symbols will denote constants that control the regularity of our settings.

\subsection{Acknowledgements} \label{sec.intro_ack}

The author would like to thank both Sergiu Klainerman and Spyros Alexakis for their support and their mathematical insights.

\section{Geometric Analysis} \label{sec.geom}

In our analysis, we will deal with two separate settings.
\begin{enumerate}
\item A fixed $2$-dimensional Riemannian manifold.

\item A one-parameter family of copies of this $2$-dimensional manifold, equipped with a family of evolving Riemannian metrics.
\end{enumerate}
In this section, we discuss aspects of the first setting.
Throughout, we let $\mc{S}$ denote an arbitrary oriented $2$-dimensional manifold.

\subsection{Riemannian Structures} \label{sec.geom_riem}

Let $h \in \mc{C}^\infty T^0_2 \mc{S}$ be a Riemannian metric on $\mc{S}$, and let $h^{-1} \in \mc{C}^\infty T^2_0 \mc{S}$ denote the metric dual of $h$.
As usual, within index notation, $h^{-1}$ is written as simply $h$, but with superscript indices.
\footnote{In other words, the components $h^{ab}$ comprise the inverse matrix of the $h_{ab}$'s.}
Moreover, $h$ and the chosen orientation of $\mc{S}$ induce a volume form $\omega \in \mc{C}^\infty T^0_2 \mc{S}$ on $\mc{S}$.

Recall $h$ and $h^{-1}$ define pointwise tensorial inner products and norms on $\mc{S}$.
More specifically, for any $F, G \in \mc{C}^\infty T^r_l \mc{S}$, we define
\[ \langle F, G \rangle = h_{a_1 b_1} \dots h_{a_r b_r} h^{c_1 d_1} \dots h^{c_l d_l} F^{a_1 \dots a_r}{}_{c_1 \dots c_l} G^{b_1 \dots b_r}{}_{d_1 \dots d_l} \in C^\infty \mc{S} \text{,} \]
i.e., the bundle metric on $T^r_l \mc{S}$ induced by $h$.
We also define the pointwise norm:
\[ | F | = \langle F, F \rangle^\frac{1}{2} \text{.} \]
We can now use $h$ and $\omega$ to define standard integral norms:
\[ \| F \|_{ L^q_x }^q = \int_\mc{S} | F |^q d \omega \text{,} \qquad \| F \|_{ L^\infty_x } = \sup_{ x \in \mc{S} } | F | |_x \text{,} \qquad q \in [1, \infty) \text{.} \]

\begin{remark}
In the scalar case $r = l = 0$, the inner product $\langle \cdot, \cdot \rangle$ is simply multiplication of two functions, and the norm $| \cdot |$ is the absolute value.
In particular, both the inner product and the norm are independent of $h$.
\end{remark}

Furthermore, following standard conventions, we let
\[ \nabla: \mc{C}^\infty T^r_l \mc{S} \rightarrow \mc{C}^\infty T^r_{l+1} \mc{S} \text{,} \qquad \lapl: \mc{C}^\infty T^r_l \mc{S} \rightarrow \mc{C}^\infty T^r_l \mc{S} \]
denote the Levi-Civita connection and the Bochner Laplacian for $(\mc{S}, h)$, respectively.
Higher-order covariant differentials are defined iteratively:
\[ \nabla^k: \mc{C}^\infty T^r_l \mc{S} \rightarrow \mc{C}^\infty T^r_{l+k} \mc{S} \text{,} \qquad \nabla^k = \nabla \nabla^{k-1} \text{,} \qquad k > 1 \text{.} \]
In addition, we let $\mc{K} \in \mc{C}^\infty \mc{S}$ denote the Gauss curvature of $(\mc{S}, h)$.

Next, we recall the symmetric Hodge operators, as defined in \cite{chr_kl:stb_mink, kl_rod:cg}.
For completeness, we begin by defining the vector bundles on which these operators act.
The rank-$0$ and rank-$1$ bundles are defined as
\[ H_0 \mc{S} = \mc{C}^\infty \mc{S} \otimes \C \text{,} \qquad H_1 \mc{S} = \mc{C}^\infty T^0_1 \mc{S} \text{.} \]
The sections of $H_0 \mc{S}$ and $H_1 \mc{S}$ are the complex-valued smooth scalar functions and the $1$-forms on $\mc{S}$, respectively.
In addition, we define $H_2 \mc{S}$ over $\mc{S}$ to be vector bundle over $\mc{S}$ all covariant symmetric $h$-traceless horizontal $2$-tensors on $\mc{S}$.
\footnote{In other words, $A \in \mc{C}^\infty T^0_2 \mc{S}$ is in $\mc{C}^\infty H_2 \mc{S}$ iff $A_{ba} = A_{ab}$ and $h^{ab} A_{ab} \equiv 0$.}

\begin{remark}
Note that $H_0 \mc{S}$ and $H_1 \mc{S}$ are independent of $h$, while $H_2 \mc{S}$ is not.
\end{remark}

We can now define the symmetric Hodge operators as follows:
\begin{alignat*}{3}
\mc{D}_1 &: \mc{C}^\infty H_1 \mc{S} \rightarrow \mc{C}^\infty H_0 \mc{S} \text{,} &\qquad \mc{D}_1 X &= h^{ab} \nabla_a X_b - i \cdot \omega^{ab} \nabla_a X_b \text{,} \\
\mc{D}_2 &: \mc{C}^\infty H_2 \mc{S} \rightarrow \mc{C}^\infty H_1 \mc{S} \text{,} &\qquad \mc{D}_2 X_a &= h^{bc} \nabla_b X_{ac} \text{,} \\
\mc{D}_1^\ast &: \mc{C}^\infty H_0 \mc{S} \rightarrow \mc{C}^\infty H_1 \mc{S} \text{,} &\qquad \mc{D}_1^\ast X_a &= - \nabla_a ( \real X ) - \omega_a{}^c \nabla_c ( \imag X ) \text{,} \\
\mc{D}_2^\ast &: \mc{C}^\infty H_1 \mc{S} \rightarrow \mc{C}^\infty H_2 \mc{S} \text{,} &\qquad -2 \mc{D}_2^\ast X_{ab} &= \nabla_a X_b + \nabla_b X_a - h_{ab} h^{cd} \nabla_c X_d \text{.}
\end{alignat*}
Direct computations show that the $\mc{D}_i^\ast$'s are the $L^2$-adjoints of the $\mc{D}_i$'s (with respect to $h$).
Additional calculations yield the following identities:
\begin{alignat}{3}
\label{eq.hodge_sq} \mc{D}_1 \mc{D}_1^\ast &= - \lapl \text{,} &\qquad \mc{D}_1^\ast \mc{D}_1 &= - \lapl + \mc{K} \text{,} \\
\notag \mc{D}_2 \mc{D}_2^\ast &= - \frac{1}{2} \lapl - \frac{1}{2} \mc{K} \text{,} &\qquad \mc{D}_2^\ast \mc{D}_2 &= - \frac{1}{2} \lapl + \mc{K} \text{.}
\end{alignat}
Integrating by parts and applying \eqref{eq.hodge_sq} results in the following integral identities:
\begin{alignat}{3}
\label{eq.hodge_id} \int_{ \mc{S} } ( | \nabla X |^2 + \mc{K} | X |^2 ) d \omega &= \int_{ \mc{S} } | \mc{D}_1 X |^2 d \omega \text{,} &\qquad X &\in \mc{C}^\infty H_1 \mc{S} \text{,} \\
\notag \int_{ \mc{S} } ( | \nabla X |^2 + 2 \mc{K} | X |^2 ) d \omega &= 2 \int_{ \mc{S} } | \mc{D}_2 X |^2 d \omega \text{,} &\qquad X &\in \mc{C}^\infty H_2 \mc{S} \text{,} \\
\notag \int_{ \mc{S} } | \nabla X |^2 d \omega &= \int_{ \mc{S} } | \mc{D}_1^\ast X |^2 d \omega \text{,} &\qquad X &\in \mc{C}^\infty H_0 \mc{S} \text{,} \\
\notag \int_{ \mc{S} } ( | \nabla X |^2 - \mc{K} | X |^2 ) d \omega &= 2 \int_{ \mc{S} } | \mc{D}_2^\ast X |^2 d \omega \text{,} &\qquad X &\in \mc{C}^\infty H_1 \mc{S} \text{.}
\end{alignat}

\subsection{Geometric Littlewood-Paley Theory} \label{sec.geom_glp}

Next, we construct a fully geometric and tensorial Littewood-Paley (abbreviated L-P) theory, using smoothed spectral decompositions of the (Bochner) Laplacian.
We adopt the same ideas as in \cite{bq_ger_tzv:strichartz}, for example, but we also consider tensorial quantities.
\footnote{An alternative approach is to use the geometric L-P theory of \cite{kl_rod:glp}, based on the heat flow.
This was done in previous works in this direction, e.g., \cite{kl_rod:stt, wang:cg}.
In fact, for our purposes, we can attain our desired estimates using either theory, with mostly the same proofs.
Since the spectral version is much easier to rigorously construct and utilize, we opt for this route here.}

For technical purposes, we consider the Hilbert space $L^2 T^r_l \mc{S}$, defined as the completion of $\mc{C}^\infty T^r_l \mc{S}$ with respect to the above $L^2_x$-norm on $\mc{S}$.
Consider the negative Laplacian $-\lapl$, interpreted as a positive self-adjoint unbounded operator on $L^2 T^r_l \mc{S}$, which then has a spectral decomposition
\footnote{This spectral decomposition is in fact discrete; see \cite{gil:index}.}
\[ -\lapl = \int_0^\infty \lambda \cdot d E_\lambda \text{.} \]

The spectral L-P operators can now be constructed in the following fashion:
\begin{itemize}
\item Fix a function $\varsigma \in \mc{C}^\infty \R$, supported in the region $1/2 \leq | \xi | \leq 2$, satisfying
\[ \sum_{k \in \Z} \varsigma ( 2^{-2k} \xi ) = 1 \text{,} \qquad \xi \in \R \setminus \{ 0 \} \text{.} \]

\item For each $k \in \Z$, we define the L-P operators on $L^2 T^r_l \mc{S}$ by
\[ P_k = \varsigma ( - 2^{-2k} \lapl ) \text{,} \qquad P_- = \chi_{ \{ 0 \} } ( -\lapl ) \text{.} \]
In particular, $P_-$ is precisely the $L^2$-projection onto the kernel of $\lapl$.

\item Given any $k \in \Z$, we can define the aggregated operators
\[ P_{< k} = P_- + \sum_{l < k} P_l \text{,} \]
where the summation is in the strong operator topology.
In particular,
\[ P_{< 0} + \sum_{k \geq 0} P_k \text{,} \]
is the identity operator on $L^2 T^r_l \mc{S}$.
\end{itemize}

Note that the $P_k$'s are almost pairwise orthogonal, that is,
\begin{equation} \label{eq.glp_almost_ortho} P_k P_l \equiv 0 \text{,} \qquad k, l \in \Z \text{,} \quad | k - l | > 1 \text{.} \end{equation}
As a result, given $k \in \Z$, we will often use the notations
\[ P_{\sim k} = P_{k - 1} + P_k + P_{k + 1} \text{,} \qquad P_{\lesssim k} = P_{< k + 1} \text{.} \]
The property \eqref{eq.glp_almost_ortho} implies the handy self-replication identities
\[ P_k = P_k P_{\sim k} \text{,} \qquad P_{< k} = P_{< k} P_{\lesssim k} \text{,} \qquad P_{\geq k} = P_{\geq k} P_{\gtrsim k} \text{.} \]

Another important trick is the identity
\[ 2^{2k} P_k = - \lapl P^\prime_k \text{,} \qquad k \in \Z \text{,} \]
where $P^\prime_k = \tilde{\varsigma} ( - 2^{-2k} \lapl )$ is another smoothed spectral projection.
$P^\prime_k$ has the same support as $P_k$, and hence has mostly the same estimates as $P_k$.
Note that a similar trick applies to other related operators, such as powers of $-\lapl$ or $I - \lapl$.

We also mention that since $\lapl$ commutes with contractions, metric contractions, and volume form contractions, then all the geometric L-P operators (e.g., the $P_k$'s and $P_{< 0}$) must also commute with these contractions.

\subsection{Sobolev and Besov Norms} \label{sec.geom_norms}

We now list some basic $L^2_x$-estimates satisfied by the geometric L-P operators $P_k$ and $P_{< 0}$.

\begin{proposition} \label{thm.glp}
Let $k \in \Z$, and let $F \in \mc{C}^\infty T^r_l \mc{S}$.
\begin{itemize}
\item $P_k$ and $P_{< k}$ are bounded operators, i.e.,
\begin{equation} \label{eq.glp_bdd} \| P_k F \|_{ L^2_x } + \| P_{< k} F \|_{ L^2_x } \lesssim \| F \|_{ L^2_x } \text{.} \end{equation}

\item The following ``finite band" estimates hold:
\begin{align}
\label{eq.glp_fbl} \| \lapl P_k F \|_{ L^2_x } \lesssim 2^{2k} \| F \|_{ L^2_x } \text{,} \qquad \| \lapl P_{< k} F \|_{ L^2_x } \lesssim 2^{2k} \| F \|_{ L^2_x } \text{.}
\end{align}

\item The following ``finite band" estimates hold:
\footnote{In the expressions $P_k \nabla F$ and $P_{< k} \nabla F$, the L-P operators are of course those on $L^2 T^r_{l+1} \mc{S}$.}
\begin{alignat}{3}
\label{eq.glp_fb} \| \nabla P_k F \|_{ L^2_x } &\lesssim 2^k \| F \|_{ L^2_x } \text{,} &\qquad \| P_k \nabla F \|_{ L^2_x } &\lesssim 2^k \| F \|_{ L^2_x } \text{,} \\
\notag \| \nabla P_{< k} F \|_{ L^2_x } &\lesssim 2^k \| F \|_{ L^2_x } \text{,} &\qquad \| P_{< k} \nabla F \|_{ L^2_x } &\lesssim 2^k \| F \|_{ L^2_x } \text{.}
\end{alignat}

\item The following ``reverse finite band" estimates hold:
\begin{equation} \label{eq.glp_fbr} \| P_k F \|_{ L^2_x } \lesssim 2^{-2k} \| \lapl F \|_{ L^2_x } \text{,} \qquad \| P_k F \|_{ L^2_x } \lesssim 2^{-k} \| \nabla F \|_{ L^2_x } \text{.} \end{equation}
\end{itemize}
\end{proposition}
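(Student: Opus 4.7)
The plan is to rely entirely on the spectral functional calculus for the self-adjoint operator $-\lapl$ on each $L^2 T^r_l \mc{S}$. The key identity I would use repeatedly is
\[ \|\nabla G\|_{L^2_x}^2 = \langle -\lapl G, G\rangle_{L^2_x} = \|(-\lapl)^{1/2} G\|_{L^2_x}^2 \text{,} \]
which follows from the fact that $-\lapl = \nabla^*\nabla$ is the Bochner Laplacian and holds on every tensor bundle $T^r_l \mc{S}$. This identity converts every $\nabla$ appearing in the statement into a factor of $(-\lapl)^{1/2}$, after which each estimate reduces to bounding the $L^\infty$-norm of an explicit function on the spectrum of $-\lapl$ and invoking $\|\varphi(-\lapl)\|_{L^2_x\to L^2_x}\leq\|\varphi\|_\infty$.

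For \eqref{eq.glp_bdd}, I would note directly that $\|P_k\|_{L^2_x\to L^2_x}\leq \|\varsigma\|_\infty\lesssim 1$, and for $P_{<k}$ I would write it via the single spectral symbol $\sigma_{<k}(\lambda) = \chi_{\{0\}}(\lambda) + \sum_{l<k}\varsigma(2^{-2l}\lambda)$, which is uniformly bounded because at most two of the dyadic bumps $\varsigma(2^{-2l}\cdot)$ are simultaneously nonzero at a fixed $\lambda$. Estimate \eqref{eq.glp_fbl} I would read off from the symbols $-\lambda\varsigma(2^{-2k}\lambda)$ and $-\lambda\sigma_{<k}(\lambda)$, each supported in $\lambda\lesssim 2^{2k}$ and hence of sup norm $\lesssim 2^{2k}$. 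For the reverse finite band \eqref{eq.glp_fbr}, I would factor $P_k = r_k(-\lapl)\cdot(-\lapl) = s_k(-\lapl)\cdot(-\lapl)^{1/2}$ with $r_k(\lambda)=\varsigma(2^{-2k}\lambda)/\lambda$ and $s_k(\lambda)=\varsigma(2^{-2k}\lambda)/\lambda^{1/2}$ (both well-defined since $\varsigma$ vanishes near the origin), of sup norms $\lesssim 2^{-2k}$ and $\lesssim 2^{-k}$ respectively; the second bound then matches the desired $\nabla F$ form via the key identity applied to $F$.

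The first half of \eqref{eq.glp_fb} I would handle by applying the key identity to $G = P_k F$: $\|\nabla P_k F\|_{L^2_x} = \|(-\lapl)^{1/2} P_k F\|_{L^2_x} = \|\varphi_k(-\lapl)F\|_{L^2_x}$ with $\varphi_k(\lambda) = \lambda^{1/2}\varsigma(2^{-2k}\lambda)$ of sup norm $\lesssim 2^k$, and the $P_{<k}$ version is entirely analogous. The second half, bounding $\|P_k\nabla F\|_{L^2_x}$ where $P_k$ now lives on the bundle $T^r_{l+1}\mc{S}$, is the one step that does not come directly from spectral calculus on a single bundle, since the operators $P_k$ on $T^r_l\mc{S}$ and on $T^r_{l+1}\mc{S}$ are a priori distinct. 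My plan here is to pass to the adjoint: since $P_k$ is self-adjoint, $(P_k\nabla)^* = \nabla^* P_k$, and using the pointwise inequality $|\nabla^* H|\lesssim |\nabla H|$ (valid because $\nabla^*$ is a metric trace of $\nabla$ in one index) together with the bound $\|\nabla P_k H\|_{L^2_x}\lesssim 2^k\|H\|_{L^2_x}$ just proved on $T^r_{l+1}\mc{S}$, I obtain $\|\nabla^* P_k\|_{L^2_x\to L^2_x}\lesssim 2^k$, and hence the claim by duality.

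The main obstacle, and indeed the only step requiring any argument beyond one-line spectral calculus, is precisely this duality step for $\|P_k\nabla F\|_{L^2_x}$, where one must carefully distinguish between the L-P operators on the two bundles $T^r_l\mc{S}$ and $T^r_{l+1}\mc{S}$. Everything else is a routine sup-norm estimate on an explicit multiplier; once all three bands in \eqref{eq.glp_fb} are treated this way, the proposition is complete.
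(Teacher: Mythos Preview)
Your proposal is correct and follows essentially the same approach as the paper: spectral functional calculus for the boundedness and finite-band bounds, integration by parts (your identity $\|\nabla G\|_{L^2_x}^2 = \langle -\lapl G, G\rangle$ is exactly what the paper uses, combined with Cauchy--Schwarz rather than passing through $(-\lapl)^{1/2}$), and the same duality argument via $(P_k\nabla)^\ast = \nabla^\ast P_k$ for the $P_k\nabla$ estimate. The only cosmetic differences are that the paper obtains the $P_{<k}$ bounds in \eqref{eq.glp_fb} by summing the $P_l$ estimates rather than directly bounding the aggregated symbol, and derives the second part of \eqref{eq.glp_fbr} from the trick $P_k = 2^{-2k} P_k'(-\lapl)$ together with the already-proved $P_k\nabla$ bound instead of your direct $s_k(-\lapl)(-\lapl)^{1/2}$ factoring.
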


\begin{proof}
First, \eqref{eq.glp_bdd} and \eqref{eq.glp_fbl} are direct consequences of the definitions of the $P_k$'s and $P_-$.
The first estimate in \eqref{eq.glp_fb} follows from \eqref{eq.glp_fbl} via an integration by parts:
\[ \| \nabla P_k F \|_{ L^2_x }^2 \leq \| ( -\lapl ) P_k F \|_{ L^2_x } \| P_k F \|_{ L^2_x } \lesssim 2^{2 k} \| F \|_{ L^2_x }^2 \text{.} \]
That $P_k \nabla$ is similarly bounded follows from the previous estimate by a standard duality argument.
The remaining estimates in \eqref{eq.glp_fb} can be obtained by summing the already completed estimates.
The first part of \eqref{eq.glp_fbr} follows immediately from the definition of $P_k$.
For the second part of \eqref{eq.glp_fbr}, we can apply \eqref{eq.glp_fb}:
\[ \| P_k F \|_{ L^2_x } = 2^{-2 k} \| P^\prime_k ( -\lapl ) F \|_{ L^2_x } \lesssim 2^k \| \nabla F \|_{ L^2_x } \text{.} \qedhere \]
\end{proof}

In general, one can define for any $F \in \mc{C}^\infty T^r_l \mc{S}$ the $L^2$-based Sobolev norms
\[ \| F \|_{ H^s_x } = \| ( I - \lapl )^\frac{s}{2} F \|_{ L^2_x } \text{,} \qquad s \in \R \text{.} \]
Fractional powers of $I - \lapl$ can be defined in the usual manner using functional analytic means.
In particular, in the case $s = 1$, we have
\[ \| F \|_{ H^1_x } \simeq \| \nabla F \|_{ L^2_x } + \| F \|_{ L^2_x } \text{.} \]

Using our geometric L-P theory, we can define a more general class of \emph{geometric tensorial} Besov-type norms.
Indeed, for any $a \in [1, \infty)$ and $s \in \R$, we can define
\begin{align*}
\| F \|_{ B^{a, s}_{\ell, x} }^a &= \sum_{k \geq 0} 2^{ask} \| P_k F \|_{ L^2_x }^a + \| P_{< 0} F \|_{ L^2_x }^a \text{,} \\
\| F \|_{ B^{\infty, s}_{\ell, x} } &= \max \left( \sup_{k \geq 0} 2^{s k} \| P_k F \|_{ L^2_x }, \| P_{< 0} F \|_{ L^2_x } \right) \text{.}
\end{align*}
These are the direct analogues of the standard $B^s_{2, a}$-norms in Euclidean space.

\begin{remark}
We adopt the somewhat peculiar notational conventions above in order to maintain consistency with conventions developed in later sections.
\end{remark}

The following properties are immediate from the definitions of the $B^{a, s}_{\ell, x}$-norms.

\begin{proposition} \label{thm.besov}
Let $a, a^\prime \in [1, \infty]$, let $s, s^\prime \in \R$, and let $F \in \mc{C}^\infty T^r_l \mc{N}$.
\begin{itemize}
\item If $a^\prime \leq a$ and $s \leq s^\prime$, then
\begin{equation} \label{eq.besov_triv_1} \| F \|_{ B^{a, s}_{\ell, x} } \leq \| F \|_{ B^{a^\prime, s^\prime}_{\ell, x} } \text{.} \end{equation}

\item If $a \leq a^\prime$ and $s < s^\prime$, then
\begin{equation} \label{eq.besov_triv_2} \| F \|_{ B^{a, s}_{\ell, x} } \lesssim_{ a^\prime - a, s^\prime - s } \| F \|_{ B^{a^\prime, s^\prime}_{\ell, x} } \text{.} \end{equation}
\end{itemize}
\end{proposition}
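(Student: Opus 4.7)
The plan is to reduce both claims to standard statements about weighted $\ell^{a}$ sequence spaces applied to the dyadic decomposition $c_k = \|P_k F\|_{L^2_x}$ for $k \geq 0$ and $c_{-1} = \|P_{< 0} F\|_{L^2_x}$. In this notation, the two norms in question are the weighted $\ell^a$ and $\ell^{a'}$ norms of the sequence $(c_k)_{k \geq -1}$ with weight $w_k = 2^{sk}$ (respectively $w'_k = 2^{s'k}$) for $k \geq 0$ and $w_{-1} = w'_{-1} = 1$.

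For the first inequality \eqref{eq.besov_triv_1}, I would proceed in two elementary steps. First, the hypothesis $s \leq s'$ gives $w_k \leq w'_k$ for every $k \geq -1$, so the weighted sequence can only increase when we pass from weight $w$ to $w'$. Second, the inequality $a' \leq a$ allows me to invoke the classical sequence-space embedding $\ell^{a'} \hookrightarrow \ell^{a}$ (with constant $1$), applied to the nonnegative sequence $(w'_k c_k)$. Chaining the two observations yields \eqref{eq.besov_triv_1}. The endpoint cases $a = \infty$ or $a' = \infty$ present no difficulty, since the $\ell^{a'} \hookrightarrow \ell^{\infty}$ inclusion is immediate.

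For the second inequality \eqref{eq.besov_triv_2}, I would use a single application of Hölder's inequality. For $k \geq 0$, write
\begin{equation*}
2^{a s k} c_k^a = 2^{- a (s' - s) k} \cdot \bigl( 2^{s' k} c_k \bigr)^a \text{,}
\end{equation*}
and apply Hölder with conjugate exponents $p = a'/a \geq 1$ and $q = a'/(a' - a)$ (the case $a = a'$ reduces to the first claim, so assume $a < a'$). The factor involving $(2^{s' k} c_k)^a$ raised to $p$ gives precisely $2^{a' s' k} c_k^{a'}$, whose sum is bounded by $\|F\|_{B^{a', s'}_{\ell, x}}^{a'}$; the other factor is a geometric series $\sum_{k \geq 0} 2^{- a q (s' - s) k}$ that converges, since $s' > s$, to a constant depending only on $s' - s$ and $a' - a$. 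The $k = -1$ term is handled separately using only that $c_{-1} \leq \|F\|_{B^{a', s'}_{\ell, x}}$ (by definition of the target norm), which gives $c_{-1}^a$ bounded as required. The endpoints $a = \infty$ or $a' = \infty$ are handled by the analogous pointwise-in-$k$ inequality followed by either the geometric series (when $a' = \infty$) or the $\ell^\infty$ bound (when $a = \infty$).

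The only ``obstacle'' is purely bookkeeping: ensuring that the constant arising from the geometric series is controlled by $s' - s$ and $a' - a$ as claimed, and correctly handling the zero-frequency term $P_{< 0} F$ and the sup-norm endpoints. No new analytic input beyond Hölder's inequality and the $\ell^{a'} \hookrightarrow \ell^{a}$ sequence embedding is needed.
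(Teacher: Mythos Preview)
Your argument is correct and is precisely the standard sequence-space reasoning the paper has in mind; the paper itself gives no proof, stating only that both inequalities are ``immediate from the definitions of the $B^{a,s}_{\ell,x}$-norms.'' One minor remark: your H\"older constant $\bigl(\sum_{k\geq 0} 2^{-aq(s'-s)k}\bigr)^{1/q}$ a priori depends on $a,a'$ and not merely on $a'-a$, but since $aq = aa'/(a'-a) \geq 1/(a'-a)$ and $1/q \leq 1$, it is indeed bounded by $(1-2^{-(s'-s)/(a'-a)})^{-1}$, matching the stated dependence.
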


The cases of interest in this paper and in the ensuing applications are $a = 2$ and $a = 1$.
As expected, the former case coincides with the above $H^s_x$-norms.

\begin{proposition} \label{thm.sobolev}
If $s \in \R$ and $F \in \mc{C}^\infty T^r_l \mc{N}$, then
\begin{equation} \label{eq.sobolev} \| F \|_{ B^{2, s}_{\ell, x} } \simeq_s \| F \|_{ H^s_x } \text{.} \end{equation}
\end{proposition}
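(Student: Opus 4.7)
The plan is to express both norms in \eqref{eq.sobolev} via the spectral calculus of $-\lapl$ on $L^2 T^r_l \mc{S}$ and then verify that the resulting spectral multipliers agree pointwise up to $s$-dependent constants. By the functional calculus associated with the spectral resolution $-\lapl = \int_0^\infty \lambda \, dE_\lambda$, for any bounded Borel function $\phi : [0,\infty) \to \C$ one has
\[ \| \phi( -\lapl ) F \|_{ L^2_x }^2 = \int_0^\infty | \phi (\lambda) |^2 \, d \mu_F ( \lambda ) \text{,} \]
where $\mu_F$ denotes the scalar spectral measure of $F$. Applying this to the multipliers $(1+\cdot)^{s/2}$, $\varsigma(2^{-2k}\cdot)$, and $\Phi := \chi_{\{0\}} + \sum_{l<0} \varsigma (2^{-2l}\cdot)$ (the latter being the multiplier associated with $P_{<0}$) yields integral representations of $\| F \|_{H^s_x}^2$, $\| P_k F \|_{L^2_x}^2$, and $\| P_{<0} F \|_{L^2_x}^2$, respectively.

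Next, I would exploit the support properties of $\varsigma$ to compare the Besov weights with the Sobolev weight $(1+\lambda)^s$. Since $\varsigma(2^{-2k}\cdot)$ is supported in $[2^{2k-1}, 2^{2k+1}]$, on that interval one has $(1+\lambda)^s \simeq_s 2^{2sk}$ whenever $k \geq 0$, so
\[ 2^{2sk} \| P_k F \|_{L^2_x}^2 \simeq_s \int_0^\infty (1+\lambda)^s \varsigma(2^{-2k}\lambda)^2 \, d\mu_F(\lambda) \text{.} \]
Because $\Phi$ is supported in $[0, 1]$, on which $(1+\lambda)^s \simeq_s 1$, the analogous comparison gives $\| P_{<0} F \|_{L^2_x}^2 \simeq_s \int_0^\infty (1+\lambda)^s \Phi(\lambda)^2 \, d\mu_F(\lambda)$. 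Summing over $k \geq 0$ and adding the $P_{<0}$ contribution,
\[ \| F \|_{ B^{2,s}_{\ell,x} }^2 \simeq_s \int_0^\infty (1+\lambda)^s \Psi(\lambda) \, d\mu_F(\lambda) \text{,} \qquad \Psi := \Phi^2 + \sum_{k \geq 0} \varsigma(2^{-2k} \cdot)^2 \text{.} \]

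It then suffices to verify $\Psi \simeq 1$ on $[0, \infty)$, which reduces \eqref{eq.sobolev} to the already established spectral formula for $\| F \|_{H^s_x}^2$. The upper bound on $\Psi$ is immediate, since the support condition on $\varsigma$ ensures that at any $\lambda$ at most three terms in the sum defining $\Psi$ are nonzero. For the lower bound, the partition of unity $\Phi(\lambda) + \sum_{k \geq 0} \varsigma(2^{-2k}\lambda) = 1$ (with the atom at $\lambda = 0$ handled directly by $\chi_{\{0\}}$) combined with Cauchy--Schwarz gives $\Psi(\lambda) \geq 1/3$. I do not anticipate any serious obstacle; the only mild subtlety is the bookkeeping at $\lambda = 0$ and the correct identification of the multiplier associated with the kernel projector $P_-$, but both are immediate from the definitions.
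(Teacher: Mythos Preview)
Your proposal is correct and follows essentially the same approach as the paper. The paper's proof is a single sentence invoking ``the spectral properties of the operators $P_k$, $P_{<0}$, and $(I-\lapl)^{s/2}$, along with the almost orthogonality property \eqref{eq.glp_almost_ortho}''; your argument is precisely a detailed execution of that sentence, with the almost orthogonality entering through your observation that at most three terms of $\Psi$ are nonzero at any point.
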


\begin{proof}
This follows from the spectral properties of the operators $P_k$, $P_{< 0}$, and $( I - \lapl )^{s/2}$, along with the almost orthogonality property \eqref{eq.glp_almost_ortho}.
\end{proof}

\subsection{Regularity Conditions} \label{sec.geom_reg}

On a fixed Riemannian surface $(\mc{S}, h)$, one has many standard estimates, e.g., Sobolev inequalities.
In this section, we will review many of these results.
Our main focus here, however, is not the estimates themselves, but rather the control of the \emph{constants} of these estimates.
In later sections, we will apply such estimates uniformly to a family of manifolds.

In order to control these constants, we devise appropriate \emph{regularity assumptions}.
Here, we state the assumptions for $(\mc{S}, h)$ that we will encounter.
The eventual objective is to show that the constants of various estimates on $(\mc{S}, h)$ can indeed be controlled by the parameters of these regularity assumptions.

First, we say that $(\mc{S}, h)$ satisfies \ass{r0}{C, N}, with data $\{ U_i, \varphi_i, \eta_i \}_{i = 1}^N$, iff:
\begin{itemize}
\item The area $| \mc{S} |$ of $(\mc{S}, h)$ satisfies
\[ C^{-1} \leq | \mc{S} | \leq C \text{.} \]

\item The $(U_i, \varphi_i)$'s, where $1 \leq i \leq N$, are local coordinate systems on $\mc{S}$ that cover $\mc{S}$.
Moreover, each $\varphi_i (U_i)$ is a bounded neighborhood in $\R^2$.

\item The $\eta_i$'s form a partition of unity of $\mc{S}$, subordinate to the $U_i$'s, such that
\[ 0 \leq \eta_i \leq 1 \text{,} \qquad | \partial^i_a \eta_i | \leq C \text{,} \qquad a, b \in \{ 1, 2 \} \text{,} \]
for each $1 \leq i \leq N$, where $\partial^i_1, \partial^i_2$ denote the $\varphi_i$-coordinate vector fields.

\item For each $1 \leq i \leq N$, we have on $U_i$ the uniform positivity property
\[ C^{-1} | \xi |^2 \leq \sum_{a, b = 1}^2 h_{ab} \xi^a \xi^b \leq C | \xi |^2 \text{,} \qquad \xi \in \R^2 \text{,} \]
where we have indexed with respect to the $\varphi_i$-coordinate system on $U_i$.
\end{itemize}

On such a coordinate system $(U_i, \varphi_i)$, as expressed in the data for the \ass{r0}{} condition, we can define the associated \emph{area density} $\vartheta_i \in \mc{C}^\infty U_i$ by
\[ \vartheta_i = \sqrt{ h_{11} h_{22} - h_{12}^2 } \text{,} \]
where we have indexed with respect to the $\varphi_i$-coordinates.
The \ass{r0}{} condition, in particular the uniform positivity property, implies uniform bounds for the $\vartheta_i$'s.

\begin{proposition} \label{thmr.R_extra}
Assume $(\mc{S}, h)$ satisfies \ass{r0}{C, N}, with data $\{ U_i, \varphi_i, \eta_i \}_{i = 1}^N$.
\begin{itemize}
\item For any $1 \leq i \leq N$, the area density $\vartheta_i$ satisfies
\begin{equation} \label{eqr.vd_unif} \vartheta_i \simeq_C 1 \text{.} \end{equation}

\item If $q \in [1, \infty]$, $1 \leq i \leq N$, and $f \in \mc{C}^\infty U_i$, then
\footnote{Here, the right-hand side refers to the $L^q$-norm for functions on $\R^2$.}
\begin{equation} \label{eqr.change_of_coord} \| f \|_{ L^q_x } \simeq_{C, q} \| f \circ \varphi_i^{-1} \|_{ L^q_x } \text{.} \end{equation}
\end{itemize}
\end{proposition}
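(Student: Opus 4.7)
The plan is to handle the two claims in sequence, since the second estimate \eqref{eqr.change_of_coord} follows almost directly from \eqref{eqr.vd_unif} via a change of variables.

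For the first bound, I interpret the uniform positivity condition in the \textbf{(r0)} hypothesis as saying that the $\varphi_i$-coordinate matrix $(h_{ab})_{a,b=1,2}$, viewed as a symmetric positive-definite $2 \times 2$ matrix on $\R^2$, has all eigenvalues contained in the interval $[C^{-1}, C]$. Since $\vartheta_i^2 = h_{11} h_{22} - h_{12}^2$ is precisely the determinant of this matrix, which equals the product of its two eigenvalues, I immediately get
\[ C^{-2} \leq \vartheta_i^2 \leq C^2, \]
i.e., $\vartheta_i \simeq_C 1$ pointwise on $U_i$. This is the content of \eqref{eqr.vd_unif}, and the argument is nothing beyond elementary linear algebra applied to the eigenvalue characterization of the positive-definiteness bounds.

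For \eqref{eqr.change_of_coord}, I would split into the cases $q < \infty$ and $q = \infty$. In the former, since $f$ is supported in $U_i$ (or we are simply restricting), the Riemannian volume form pulls back via $\varphi_i$ to $\vartheta_i \circ \varphi_i^{-1} \, dx$ on $\varphi_i(U_i) \subset \R^2$, so the change-of-variables formula yields
\[ \| f \|_{L^q_x}^q = \int_{U_i} |f|^q \, d\omega = \int_{\varphi_i(U_i)} |f \circ \varphi_i^{-1}|^q \, (\vartheta_i \circ \varphi_i^{-1}) \, dx. \]
Applying \eqref{eqr.vd_unif} to replace the density $\vartheta_i \circ \varphi_i^{-1}$ by $1$ up to constants depending only on $C$ then gives $\| f \|_{L^q_x}^q \simeq_C \| f \circ \varphi_i^{-1} \|_{L^q}^q$, from which \eqref{eqr.change_of_coord} follows after taking $q$-th roots (introducing the additional $q$-dependence in the final constant). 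The case $q = \infty$ is immediate from the fact that $\varphi_i$ is a bijection between $U_i$ and $\varphi_i(U_i)$, so supremum norms agree without any dependence on $C$ at all.

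There is no serious obstacle here; the only conceptual point is recognizing that the uniform positivity assumption in \textbf{(r0)} translates into two-sided eigenvalue bounds, which in turn immediately control the determinant. Everything else is the standard change-of-variables identity for integration on a Riemannian manifold paired with this density bound.
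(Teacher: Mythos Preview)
Your proof is correct and follows the same approach as the paper, which simply states that \eqref{eqr.vd_unif} is immediate from the uniform positivity property and that \eqref{eqr.change_of_coord} follows immediately from \eqref{eqr.vd_unif}. You have supplied exactly the details behind these ``immediately'' claims: the eigenvalue interpretation of uniform positivity controlling the determinant, and the change-of-variables formula for the integral comparison.
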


\begin{proof}
The uniform positivity property in the \ass{r0}{C, N} condition immediately implies \eqref{eqr.vd_unif}.
The integral comparison \eqref{eqr.change_of_coord} follows immediately from \eqref{eqr.vd_unif}.
\end{proof}

Next, we say $(\mc{S}, h)$ satisfies \ass{r1}{C, N}, with data $\{ U_i, \varphi_i, \eta_i, \tilde{\eta}_i, e^i \}_{i = 1}^N$, iff:
\begin{itemize}
\item $(\mc{S}, h)$ satisfies \ass{r0}{C, N}, with data $\{ U_i, \varphi_i, \eta_i \}_{i = 1}^N$.

\item For any $1 \leq i \leq N$, we have that $e^i = ( e^i_1, e^i_2 ) \in \mc{C}^\infty T^1_0 U_i \times \mc{C}^\infty T^1_0 U_i$ forms an orthonormal frame on $U_i$ and satisfies the estimates
\[ \| \nabla e^i_a \|_{ L^4_x } \leq C \text{,} \qquad a \in \{ 1, 2 \} \text{.} \]

\item For any $1 \leq i \leq N$, we have that $\tilde{\eta}_i \in \mc{C}^\infty \mc{S}$ is supported within $U_i$, is identically $1$ on the support of $\eta_i$, and satisfies the estimates
\[ 0 \leq \tilde{\eta}_i \leq 1 \text{,} \qquad | \partial^i_a \tilde{\eta}_i | \leq C \text{,} \qquad a \in \{ 1, 2 \} \text{.} \]

\item For each $1 \leq i \leq N$, the $\varphi_i$-area density $\vartheta_i \in \mc{C}^\infty U_i$ satisfies
\[ \| \nabla \vartheta_i \|_{ L^2_x } \leq C \text{.} \]
\end{itemize}
The \ass{r1}{} condition will be essential for our upcoming analysis, in particular with regards to all the geometric Besov-type estimates.

\begin{remark}
In fact, all the analysis in the paper will still remain valid if the $L^4_x$-norm for the orthonormal frame elements are replaced by an $L^q_x$-norm, for any $q \in (2, \infty]$.
However, we will not need this flexibility in any of our intended applications, hence we consider only the case $q = 4$ here for simplicity.
\end{remark}

For future reference, we note the following:

\begin{proposition} \label{thmr.vdd_inv}
Assume $(\mc{S}, h)$ satisfies \ass{r1}{C, N}, with data $\{ U_i, \varphi_i, \eta_i, \tilde{\eta}_i, e^i \}_{i = 1}^N$.
Then, for any $1 \leq i \leq N$, the following estimate holds:
\begin{align}
\label{eqr.vdd_inv} \| \nabla \vartheta_i^{-1} \|_{ L^2_x } \lesssim_C 1 \text{.}
\end{align}
\end{proposition}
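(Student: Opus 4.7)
The plan is to use the elementary calculus identity $\nabla \vartheta_i^{-1} = - \vartheta_i^{-2} \nabla \vartheta_i$, which holds because $\vartheta_i$ is a positive scalar function on $U_i$ (recall the uniform positivity from $\vartheta_i \simeq_C 1$, ensuring $\vartheta_i^{-1}$ is smooth on $U_i$). This reduces the problem to bounding the pointwise factor $\vartheta_i^{-2}$ uniformly and controlling $\nabla \vartheta_i$ in $L^2_x$.

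For the pointwise factor, Proposition \ref{thmr.R_extra}, specifically \eqref{eqr.vd_unif}, already gives $\vartheta_i \simeq_C 1$, so $\vartheta_i^{-2} \lesssim_C 1$ everywhere on $U_i$. For the $L^2_x$-control of $\nabla \vartheta_i$, the last bullet in the definition of \ass{r1}{C, N} directly postulates $\| \nabla \vartheta_i \|_{L^2_x} \leq C$. Chaining these two bounds yields
\[
\| \nabla \vartheta_i^{-1} \|_{L^2_x} = \| \vartheta_i^{-2} \nabla \vartheta_i \|_{L^2_x} \lesssim_C \| \nabla \vartheta_i \|_{L^2_x} \lesssim_C 1 \text{.}
\]

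There is no real obstacle here; the statement is essentially a bookkeeping corollary recording, for later convenience, that the bound on $\nabla \vartheta_i$ built into \ass{r1}{} transfers to its reciprocal. The only mild subtlety to check is that $\nabla$ acting on the scalar $\vartheta_i^{-1}$ is genuinely just the exterior derivative (independent of $h$ for scalar functions, as noted in the remark following the definition of $|\cdot|$), so one is free to apply the ordinary chain rule componentwise in the $\varphi_i$-coordinates, or equivalently in the orthonormal frame $e^i$; either computation is justified by the smoothness of $\vartheta_i$ on $U_i$ afforded by \ass{r0}{}.
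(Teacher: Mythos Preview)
Your proof is correct and follows essentially the same approach as the paper's own proof, which is a one-line remark that the estimate ``follows immediately from \ass{r1}{C, N} and \eqref{eqr.vd_unif}.'' You have simply spelled out the chain rule $\nabla \vartheta_i^{-1} = -\vartheta_i^{-2}\nabla\vartheta_i$ and the two bounds that the paper leaves implicit.
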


\begin{proof}
This follows immediately from \ass{r1}{C, N} and \eqref{eqr.vd_unif}.
\end{proof}

Finally, we say $(\mc{S}, h)$ satisfies \ass{r2}{C, N}, with data $\{ U_i, \varphi_i, \eta_i, \tilde{\eta}_i, e^i \}_{i = 1}^N$, iff:
\begin{itemize}
\item $(\mc{S}, h)$ satisfies \ass{r1}{C, N}, with data $\{ U_i, \varphi_i, \eta_i, \tilde{\eta}_i, e^i \}_{i = 1}^N$.

\item For each $1 \leq i \leq N$, the $\varphi_i$-coordinate vector fields $\partial^i_1, \partial^i_2$ satisfy
\[ \| \nabla \partial^i_a \|_{ L^2_x } \leq C \text{,} \qquad a \in \{ 1, 2 \} \text{.} \]

\item For each $1 \leq i \leq N$, the second \emph{coordinate} derivatives of $\eta_i$ satisfy
\[ | \partial^i_a \partial^i_b \eta_i | \leq C \text{,} \qquad a, b \in \{ 1, 2 \} \text{.} \]
\end{itemize}
The \ass{r2}{} condition will only be used in the proof of Theorem \ref{thm.sharp_trace}.

\subsection{Sobolev Inequalities} \label{sec.geom_sob}

One application of the \ass{r0}{} condition is deriving first-order scalar Sobolev estimates on $(\mc{S}, h)$.
This can be done by applying the corresponding Euclidean estimates on each coordinate system $(U_i, \varphi_i)$ in the data.
For example, using this process, we can derive the following standard estimates:

\begin{proposition} \label{thm.gns_pre}
Suppose $(\mc{S}, h)$ satisfies \ass{r0}{C, N} holds, and let $f \in \mc{C}^\infty \mc{S}$.
\begin{itemize}
\item The following Gagliardo-Nirenberg-Sobolev inequality holds:
\begin{equation} \label{eq.gns_1_pre} \| f \|_{ L^2_x } \lesssim_{C, N} \| \nabla f \|_{ L^1_x } + \| f \|_{ L^1_x } \text{.} \end{equation}

\item The following Gagliardo-Nirenberg inequality holds for any $q \in (2, \infty)$:
\begin{equation} \label{eq.gns_1s_pre} \| f \|_{ L^\infty_x } \lesssim_{C, N, q} \| \nabla f \|_{ L^q_x }^\frac{2}{q} \| f \|_{ L^q_x }^{ 1 - \frac{2}{q} } + \| f \|_{ L^q_x } \text{.} \end{equation}
\end{itemize}
\end{proposition}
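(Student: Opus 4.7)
The plan is to transfer the corresponding Euclidean Gagliardo--Nirenberg inequalities to $(\mc{S}, h)$ by localizing via the partition of unity $\{\eta_i\}_{i=1}^N$ supplied by the \ass{r0}{C, N} data. More precisely, I write $f = \sum_{i=1}^N \eta_i f$, and for each $i$ the function $\eta_i f$ is compactly supported in $U_i$, so its pullback $(\eta_i f) \circ \varphi_i^{-1}$ is a compactly supported smooth function on a bounded neighborhood of $\R^2$. The standard Euclidean Gagliardo--Nirenberg--Sobolev inequality in $\R^2$ then gives, for \eqref{eq.gns_1_pre},
\[
\| (\eta_i f) \circ \varphi_i^{-1} \|_{L^2(\R^2)} \lesssim \| \partial [(\eta_i f) \circ \varphi_i^{-1}] \|_{L^1(\R^2)},
\]
and the corresponding Euclidean Gagliardo--Nirenberg inequality in the exponent $q > 2$ yields the analogue needed for \eqref{eq.gns_1s_pre}.

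Next I would convert these Euclidean statements into geometric ones. The norm comparison \eqref{eqr.change_of_coord} from Proposition \ref{thmr.R_extra} shows that $L^q$-norms of scalars transfer (up to a $C,q$-constant) between $U_i$ and its Euclidean image $\varphi_i(U_i)$. To handle the derivative terms, I note that in $\varphi_i$-coordinates the uniform positivity property in \ass{r0}{C, N} gives
\[
\sum_{a=1}^2 |\partial^i_a g|^2 \simeq_C h^{ab} \partial^i_a g \, \partial^i_b g = |\nabla g|^2
\]
for any scalar $g$, so the Euclidean gradient of $(\eta_i f) \circ \varphi_i^{-1}$ is controlled by $|\nabla(\eta_i f)|$ in $L^q_x$ up to a constant depending only on $C$. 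Leibniz then yields
\[
|\nabla(\eta_i f)| \leq \eta_i |\nabla f| + |\nabla \eta_i| |f| \lesssim_C |\nabla f| + |f|,
\]
using the uniform bound $|\partial^i_a \eta_i| \leq C$ together with the uniform positivity to pass from coordinate partials of $\eta_i$ to $|\nabla \eta_i|$.

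Combining these steps gives, for each $i$, a pointwise-in-$i$ version of \eqref{eq.gns_1_pre} and \eqref{eq.gns_1s_pre} with constants depending only on $C$ (and on $q$ in the second case); for the $L^\infty$ estimate I would apply the inequality $a^\theta b^{1-\theta} \lesssim a + b$ only where convenient, keeping the sharper product form as the target. The final step is to sum over $1 \leq i \leq N$: for \eqref{eq.gns_1_pre} one uses $\|f\|_{L^2_x} \leq \sum_i \|\eta_i f\|_{L^2_x}$ and bounds each summand; for \eqref{eq.gns_1s_pre} one uses $\|f\|_{L^\infty_x} \leq \sum_i \|\eta_i f\|_{L^\infty_x}$ and absorbs the $\eta_i$-factors inside the $L^q$ norms. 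Summation over $N$ patches produces the $N$-dependence of the final constant.

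The main obstacle is not conceptual but bookkeeping: one must be careful that \emph{both} the $L^q_x$ norm of $f$ and the $L^q_x$ norm of $\nabla f$ re-emerge on the right-hand side after the Leibniz step, and that the term $\|f\|_{L^q_x}$ arising from $|\nabla \eta_i||f|$ is absorbed into the $\|f\|_{L^q_x}$ term already appearing in the inequalities (this is precisely why the zeroth-order $\|f\|_{L^1_x}$ and $\|f\|_{L^q_x}$ summands are present on the right-hand sides). The rest is a routine matter of applying the Euclidean statements on each $\varphi_i(U_i)$ and invoking Proposition \ref{thmr.R_extra}.
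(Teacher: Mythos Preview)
Your proposal is correct and matches the approach the paper explicitly outlines in the paragraph preceding the proposition: ``This can be done by applying the corresponding Euclidean estimates on each coordinate system $(U_i, \varphi_i)$ in the data.'' The paper itself does not give a detailed proof, only citing references, so your write-up is in fact more detailed than what appears there; the localization via $\{\eta_i\}$, the transfer of norms via Proposition~\ref{thmr.R_extra} and the uniform positivity property, the Leibniz step producing the lower-order $\|f\|$ terms, and the final summation over $N$ patches are exactly the standard ingredients.
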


\begin{proof}
See, e.g., \cite{kl_rod:glp, shao:bdc_nv}.
\end{proof}

Next, we apply Proposition \ref{thm.gns_pre} in order to prove similar Sobolev estimates for \emph{tensorial} quantities.
This is trickier, since the connection $\nabla$ now depends on $h$.
However, we can sidestep this issue by dealing with the square norm of the tensor field, which is scalar, and recalling that $h$ and $\nabla$ are compatible.

\begin{proposition} \label{thm.gns_ineq}
Suppose $(\mc{S}, h)$ satisfies \ass{r0}{C, N}, and let $F \in \mc{C}^\infty T^r_l \mc{S}$.
\begin{itemize}
\item If $q \in (2, \infty)$, then
\begin{align}
\label{eq.gns_1} \| F \|_{ L^q_x } &\lesssim_{C, N, q} \| \nabla F \|_{ L^2_x }^{ 1 - \frac{2}{q} } \| F \|_{ L^2_x }^\frac{2}{q} + \| F \|_{ L^2_x } \text{,} \\
\label{eq.gns_1s} \| F \|_{ L^\infty_x } &\lesssim_{C, N, q} \| \nabla F \|_{ L^q_x }^\frac{2}{q} \| F \|_{ L^q_x }^{ 1 - \frac{2}{q} } + \| F \|_{ L^q_x } \text{.}
\end{align}

\item Moreover, the following estimate holds:
\begin{equation} \label{eq.gns_2} \| F \|_{ L^\infty_x } \lesssim_{C, N} \| \nabla^2 F \|_{ L^2_x }^\frac{1}{2} \| F \|_{ L^2_x }^\frac{1}{2} + \| F \|_{ L^2_x } \text{.} \end{equation}
\end{itemize}
\end{proposition}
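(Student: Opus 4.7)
The strategy is to reduce each of the three tensorial estimates to the scalar inequalities already supplied by Proposition~\ref{thm.gns_pre}. The bridge is Kato's inequality: since $\nabla$ is $h$-compatible, $\nabla |F|^2 = 2\langle F, \nabla F\rangle$, and the pointwise Cauchy--Schwarz bound $|\langle F, \nabla F\rangle| \leq |F||\nabla F|$ yields
\[ \bigl|\nabla |F|\bigr| \leq |\nabla F| \]
off the zero set of $F$. The zero set is handled, as usual, by applying the argument to the smooth approximants $(|F|^2 + \varepsilon^2)^{1/2}$ and letting $\varepsilon \to 0^+$. This lets one convert tensorial derivative bounds for $F$ into scalar derivative bounds for $|F|$ at no loss.

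For \eqref{eq.gns_1s}, I would simply apply the scalar estimate \eqref{eq.gns_1s_pre} to $f = |F|$: since $\||F|\|_{L^p_x} = \|F\|_{L^p_x}$ for every $p \in [1, \infty]$ and $\|\nabla |F|\|_{L^q_x} \leq \|\nabla F\|_{L^q_x}$ by Kato, the bound is immediate with the same constants.

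For \eqref{eq.gns_1}, the plan is to apply \eqref{eq.gns_1_pre} to $g = |F|^{q/2}$, which is legitimate (modulo the zero-set regularization above). The left-hand side becomes $\|g\|_{L^2_x} = \|F\|_{L^q_x}^{q/2}$, and the chain rule together with Kato gives $|\nabla g| \leq (q/2) |F|^{q/2 - 1} |\nabla F|$, whence Cauchy--Schwarz yields
\[ \|\nabla g\|_{L^1_x} \leq \tfrac{q}{2} \|F\|_{L^{q-2}_x}^{q/2 - 1} \|\nabla F\|_{L^2_x} \text{,} \qquad \|g\|_{L^1_x} = \|F\|_{L^{q/2}_x}^{q/2} \text{.} \]
Since $|\mc{S}|$ is bounded under \ass{r0}{C, N}, the intermediate $L^{q-2}_x$ and $L^{q/2}_x$ norms interpolate between $L^2_x$ and $L^q_x$ by H\"older's inequality; substituting these bounds and using Young's inequality to absorb the resulting fractional powers of $\|F\|_{L^q_x}$ into the left-hand side produces the desired estimate. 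This interpolation/absorption bookkeeping is the only place where care is needed, and it is the step I expect to be the main (though still routine) obstacle.

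For \eqref{eq.gns_2}, I would combine the previous two estimates. Applying \eqref{eq.gns_1s} with $q = 4$ gives
\[ \|F\|_{L^\infty_x} \lesssim \|\nabla F\|_{L^4_x}^{1/2} \|F\|_{L^4_x}^{1/2} + \|F\|_{L^4_x} \text{,} \]
and \eqref{eq.gns_1} with $q = 4$, applied to both $F$ and $\nabla F$, replaces the $L^4_x$ norms by mixed $L^2_x$ norms of $F$, $\nabla F$, and $\nabla^2 F$. The residual $\|\nabla F\|_{L^2_x}$ is then eliminated by the standard integration by parts
\[ \|\nabla F\|_{L^2_x}^2 = - \int_{\mc{S}} \langle F, \lapl F \rangle d \omega \leq \|F\|_{L^2_x} \|\lapl F\|_{L^2_x} \leq \|F\|_{L^2_x} \|\nabla^2 F\|_{L^2_x} \text{,} \]
using $|\lapl F| \leq |\nabla^2 F|$ pointwise. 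Expanding the resulting products and absorbing lower-order powers by the arithmetic--geometric mean inequality collapses everything to the stated form.
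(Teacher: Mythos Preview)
Your approach is correct in spirit and close to the paper's, with two differences worth noting.

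For \eqref{eq.gns_1s}, the paper avoids Kato's inequality and the zero-set regularization entirely by applying the scalar estimate \eqref{eq.gns_1s_pre} to $|F|^2$ (which is smooth) instead of $|F|$: one gets $\|F\|_{L^\infty_x}^2 \lesssim \|F\|_{L^\infty_x}\bigl(\|\nabla F\|_{L^q_x}^{2/q}\|F\|_{L^q_x}^{1-2/q} + \|F\|_{L^q_x}\bigr)$ and divides through. Your route via Kato is also fine, but the $|F|^2$ trick is a bit cleaner.

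For \eqref{eq.gns_1}, both you and the paper start from \eqref{eq.gns_1_pre} applied to $|F|^{q/2}$, arriving at $\|F\|_{L^q_x}^{q/2} \lesssim (\|\nabla F\|_{L^2_x} + \|F\|_{L^2_x})\|F\|_{L^{q-2}_x}^{(q-2)/2}$. The paper then specializes to $q = 2k$ and closes by induction on $k$, interpolating at the end for non-even $q$. Your plan to interpolate $\|F\|_{L^{q-2}_x}$ between $L^2_x$ and $L^q_x$ and absorb via Young does work and gives exactly the right exponents, but only for $q \geq 4$: when $q \in (2,4)$ one has $q-2 < 2$, so there is no interpolation to perform, and bounding $\|F\|_{L^{q-2}_x} \lesssim \|F\|_{L^2_x}$ directly yields $\|F\|_{L^q_x} \lesssim \|\nabla F\|_{L^2_x}^{2/q}\|F\|_{L^2_x}^{1-2/q} + \|F\|_{L^2_x}$, which has the exponents reversed from the claim and does not imply it. The fix is routine---prove $q=4$ first (where your argument works verbatim) and then interpolate $L^q_x$ between $L^2_x$ and $L^4_x$ for $q \in (2,4)$---but as written your bookkeeping step has this small gap. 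For \eqref{eq.gns_2} your approach matches the paper's.
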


\begin{proof}
\footnote{The proof of \eqref{eq.gns_1} is identical to that of \cite[Cor. 2.4]{kl_rod:glp}; we reproduce it here for convenience.}
First, we make use of \eqref{eq.gns_1_pre} as follows:
\begin{align*}
\| F \|_{ L^q_x }^\frac{q}{2} &\lesssim \| \nabla | F |^\frac{q}{2} \|_{ L^1_x } + \| | F |^\frac{q}{2} \|_{ L^1_x } \lesssim ( \| \nabla F \|_{ L^2_x } + \| F \|_{ L^2_x } ) \| F \|_{ L^{q - 2}_x }^\frac{q - 2}{2} \text{.}
\end{align*}
Applying the above with $q = 2k$, with $k > 1$ an integer, yields
\[ \| F \|_{ L^{2k}_x }^k \lesssim ( \| \nabla F \|_{ L^2_x } + \| F \|_{ L^2_x } ) \| F \|_{ L^{2k - 2}_x }^{k - 1} \text{.} \]
Taking $k = 2, 3, 4, \dots$ and applying an induction argument, we obtain
\[ \| F \|_{ L^{2k}_x } \lesssim ( \| \nabla F \|_{ L^2_x } + \| F \|_{ L^2_x } )^\frac{k - 1}{k} \| F \|_{ L^2_x }^\frac{1}{k} \text{.} \]
This is \eqref{eq.gns_1} when $q$ is an even integer; the general case follows via interpolation.

The idea for proving \eqref{eq.gns_1s} is similar.
We apply \eqref{eq.gns_1s_pre} as follows:
\begin{align*}
\| F \|_{ L^\infty_x }^2 &\lesssim \| \nabla | F |^2 \|_{ L^q_x }^\frac{2}{q} \| | F |^2 \|_{ L^q_x }^{ 1 - \frac{2}{q} } + \| | F |^2 \|_{ L^q_x } \\
&\lesssim \| F \|_{ L^\infty_x } ( \| \nabla F \|_{ L^q_x }^\frac{2}{q} \| F \|_{ L^q_x }^{ 1 - \frac{2}{q} } + \| F \|_{ L^q_x } ) \text{.}
\end{align*}
The inequality \eqref{eq.gns_1s} follows immediately.
Finally, for \eqref{eq.gns_2}, we apply \eqref{eq.gns_1s} and \eqref{eq.gns_1} in succession and integrate by parts to eliminate the $L^2$-norms of $\nabla F$.
\end{proof}

We can combine Proposition \ref{thm.gns_ineq} with our geometric L-P theory in order to derive weak Bernstein estimates for our L-P operators.

\begin{proposition} \label{thm.glp_bernstein}
Suppose $(\mc{S}, h)$ satisfies \ass{r0}{C, N}.
If $F \in \mc{C}^\infty T^r_l \mc{S}$, and if $k \geq 0$, $q \in (2, \infty)$, and $q^\prime \in (1, 2)$, then the following estimates hold:
\begin{alignat}{3}
\label{eq.glp_bernstein} \| P_k F \|_{ L^q_x } &\lesssim_{C, N, q} 2^{ (1 - \frac{2}{q}) k } \| F \|_{ L^2_x } \text{,} &\qquad \| P_{< 0} F \|_{ L^q_x } &\lesssim_{C, N, q} \| F \|_{ L^2_x } \text{,} \\
\notag \| P_k F \|_{ L^2_x } &\lesssim_{C, N, q^\prime} 2^{ (\frac{2}{q^\prime} - 1) k } \| F \|_{ L^{q^\prime}_x } \text{,} &\qquad \| P_{< 0} F \|_{ L^2_x } &\lesssim_{C, N, q^\prime} \| F \|_{ L^{q^\prime}_x } \text{.}
\end{alignat}
\end{proposition}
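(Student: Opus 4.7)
The plan is to obtain the two $L^2 \to L^q$ estimates first by applying the Gagliardo--Nirenberg bound \eqref{eq.gns_1} from Proposition \ref{thm.gns_ineq} directly to the smoothed L-P outputs $P_k F$ and $P_{<0} F$, and then to recover the two $L^{q'} \to L^2$ estimates by a duality argument based on the self-adjointness of the spectral L-P operators. No further assumption beyond \ass{r0}{C, N} is needed, since the Besov and spectral machinery was already built without assumptions on $h$ beyond this level.

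First, I would apply \eqref{eq.gns_1} to the tensor field $P_k F \in \mc{C}^\infty T^r_l \mc{S}$, giving
\[ \| P_k F \|_{L^q_x} \lesssim_{C, N, q} \| \nabla P_k F \|_{L^2_x}^{1 - 2/q} \| P_k F \|_{L^2_x}^{2/q} + \| P_k F \|_{L^2_x} \text{.} \]
The finite-band estimate \eqref{eq.glp_fb} from Proposition \ref{thm.glp} bounds $\| \nabla P_k F \|_{L^2_x}$ by $2^k \| F \|_{L^2_x}$, while \eqref{eq.glp_bdd} bounds $\| P_k F \|_{L^2_x}$ by $\| F \|_{L^2_x}$. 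Substituting, the dominant term is $2^{(1 - 2/q) k} \| F \|_{L^2_x}$, and since $k \geq 0$ and $1 - 2/q > 0$, the lower-order term $\| P_k F \|_{L^2_x}$ is absorbed. The identical argument applied to $P_{<0} F$, this time using the $P_{<k}$ version of \eqref{eq.glp_fb} at level $k = 0$, produces $\| P_{<0} F \|_{L^q_x} \lesssim \| F \|_{L^2_x}$.

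For the second pair of estimates, I would use duality. The operator $P_k$ is a bounded self-adjoint operator on $L^2 T^r_l \mc{S}$, since it is defined as a bounded real-valued Borel function applied to the self-adjoint $-\lapl$; likewise for $P_{<0}$. Picking $q$ to be the Hölder conjugate of $q' \in (1, 2)$, so that $q \in (2, \infty)$ and $1 - 2/q = 2/q' - 1$, I can write
\[ \| P_k F \|_{L^2_x} = \sup_{ \| G \|_{L^2_x} \leq 1 } \langle P_k F, G \rangle = \sup_{ \| G \|_{L^2_x} \leq 1 } \langle F, P_k G \rangle \text{,} \]
where the pairing is the standard $L^2$ inner product for tensor fields (using $h$ in the natural way). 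Applying Hölder in the $L^{q'}$--$L^q$ pair and then the first part of the proposition to $P_k G$ gives
\[ \langle F, P_k G \rangle \leq \| F \|_{L^{q'}_x} \| P_k G \|_{L^q_x} \lesssim_{C, N, q'} 2^{(2/q' - 1) k} \| F \|_{L^{q'}_x} \| G \|_{L^2_x} \text{,} \]
which yields the claimed bound; the $P_{<0}$ case is identical.

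The only subtle step is the duality argument, and specifically the justification that the tensorial $L^2$-inner product pairing really does identify $P_k$ with its adjoint, so that one can swap $P_k$ across the pairing. This is a direct consequence of the functional-calculus construction of Section \ref{sec.geom_glp}, but it is the only step that is not a bare-hands computation; everything else is a mechanical combination of the already-established ingredients from Propositions \ref{thm.glp} and \ref{thm.gns_ineq}.
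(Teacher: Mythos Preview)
Your proposal is correct and matches the paper's approach exactly: the paper's proof reads ``The first two inequalities are proved using \eqref{eq.gns_1} in conjunction with \eqref{eq.glp_fb}. The remaining two inequalities follow by duality.'' You have simply written out the details of both steps.
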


\begin{proof}
The first two inequalities are proved using \eqref{eq.gns_1} in conjunction with \eqref{eq.glp_fb}.
The remaining two inequalities follow by duality.
\end{proof}

\subsection{Conformal Transformations} \label{sec.geom_conf}

We conclude this section by reviewing some basic properties of conformal transformations of $h$.
Let $\bar{h} \in \mc{C}^\infty T^0_2 \mc{S}$ be another Riemannian metric that is a conformal transform of $h$,
\[ \bar{h} = e^{2 u} h \text{,} \qquad u \in \mc{C}^\infty \mc{S} \text{.} \]

We will use the following notational conventions: geometric objects and norms defined with respect to $\bar{h}$ will be denoted with a ``bar" over the symbol.
For example, $\bar{\omega} = e^{2 u} \omega$ denotes the volume form associated with $\bar{h}$ (with respect to the same orientation).
In index notation, we have the identities
\[ \bar{h}_{ab} = e^{2 u} h_{ab} \text{,} \qquad \bar{\omega}_{ab} = e^{2 u} \omega_{ab} \text{,} \qquad \bar{h}^{ab} = e^{-2 u} h^{ab} \text{,} \qquad \bar{\omega}^{ab} = e^{-2 u} \omega^{ab} \text{.} \]

The relations between $\nabla$ and $\bar{\nabla}$ and between $\mc{K}$ and $\bar{\mc{K}}$ are standard formulas in differential geometry.
First, for any $F \in \mc{C}^\infty T^r_l \mc{S}$,
\begin{align}
\label{eq.conf_cd} \bar{\nabla}_a F_{c_1 \dots c_l}^{d_1 \dots d_r} &= \nabla_a F_{c_1 \dots c_l}^{d_1 \dots d_r} + (r - l) \cdot \nabla_a u \cdot F_{c_1 \dots c_l}^{d_1 \dots d_r} \\
\notag &\qquad - \sum_{i = 1}^l ( \nabla_{c_i} u \cdot F_{c_1 \hat{a}_i c_l}^{d_1 \dots d_r} - h_{a c_i} h^{b c} \nabla_c u \cdot F_{c_1 \hat{b}_i c_l}^{d_1 \dots d_r} ) \\
\notag &\qquad + \sum_{j = 1}^l ( h_a{}^{d_j} \nabla_b u \cdot F_{c_1 \dots c_l}^{d_1 \hat{b}_j d_r} - h^{d_j b} \gamma_{a e} \nabla_b u \cdot F_{c_1 \dots c_l}^{d_1 \hat{e}_j d_r} ) \text{.}
\end{align}
In the above, the notation $c_1 \hat{a}_i c_l$ means $c_1 \dots c_l$, but with $c_i$ replaced by $a$.
By a similar brute force computation, one can also derive
\begin{equation} \label{eq.conf_gc} \bar{\mc{K}} = e^{-2u} ( \mc{K} - \lapl u ) \text{.} \end{equation}

\begin{remark}
Note that $\nabla$ and $\bar{\nabla}$ coincide for scalar fields, such as $u$.
\end{remark}

Next, we recall that the symmetric Hodge operators $\mc{D}_1$, $\mc{D}_2$, and $\mc{D}_1^\ast$ are conformally invariant.
Indeed, using \eqref{eq.conf_cd}, we obtain the relations
\footnote{Since the bundles $H_2 \mc{S}$ and $\bar{H}_2 \mc{S}$ coincide, the above relation for $\mc{D}_2$ and $\bar{\mc{D}}_2$ makes sense.}
\begin{equation} \label{eq.conf_hodge} \bar{\mc{D}}_1 = e^{-2u} \mc{D}_1 \text{,} \qquad \bar{\mc{D}}_2 = e^{-2u} \mc{D}_2 \text{,} \qquad \bar{\mc{D}}_1^\ast = \mc{D}_1^\ast \text{.} \end{equation}
On the other hand, $\mc{D}_2^\ast$ fails to be conformally invariant.

Finally, we show that a sufficiently nice conformal factor will preserve the regularity conditions that were defined in Section \ref{sec.geom_reg}.

\begin{proposition} \label{thm.conf_regf}
Let $u \in \mc{C}^\infty \mc{S}$, and let $\bar{h} = e^{2 u} h$ be a conformal transform of $h$.
Assume $(\mc{S}, h)$ satisfies \ass{r0}{C, N}, with data $\{ U_i, \varphi_i, \eta_i \}_{i = 1}^N$, and assume
\[ D = \| u \|_{ L^\infty_x } + \| \nabla u \|_{ L^4_x } \ll 1 \]
is sufficiently small.
Then, the following properties hold:
\begin{itemize}
\item For any $m \in \Z$,
\begin{equation} \label{eq.conf_unif} \| e^{m u} \|_{ L^\infty_x } \lesssim_m 1 \text{.} \end{equation}

\item For any $q \in [1, \infty]$ and $F \in \mc{C}^\infty T^r_l \mc{S}$,
\begin{equation} \label{eq.conf_comp} \| F \|_{ \bar{L}^q_x } \simeq_{q, r, l} \| F \|_{ L^q_x } \text{,} \qquad \| F \|_{ \bar{H}^1_x } \simeq_{r, l} \| F \|_{ H^1_x } \text{.} \end{equation}

\item There exists a constant $C^\prime$, depending on $C$ and $N$, such that $(\mc{S}, \bar{h})$ satisfies \ass{r0}{C^\prime, N}, with the same data $\{ U_i, \varphi_i, \eta_i \}_{i = 1}^N$.

\item Suppose $(\mc{S}, h)$ also satisfies \ass{r1}{C, N}, with data $\{ U_i, \varphi_i, \eta_i, \tilde{\eta}_i, e^i \}_{i = 1}^N$, with $e^i = (e^i_1, e^i_2)$ for each $1 \leq i \leq N$.
If we define $\bar{e}^i = ( e^{-u} e^i_1, e^{-u} e^i_2 )$, then there is some constant $C^\prime$, depending on $C$ and $N$, such that $(\mc{S}, \bar{h})$ satisfies \ass{r1}{C^\prime, N}, with data $\{ U_i, \varphi_i, \eta_i, \tilde{\eta}_i, \bar{e}^i \}_{i = 1}^N$.
\end{itemize}
\end{proposition}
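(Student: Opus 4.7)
The plan is to verify each of the four claims in turn, using the smallness of $D$ at every step to absorb conformal correction factors. Since $\|u\|_{L^\infty_x} \leq D \ll 1$, the bound $\|e^{mu}\|_{L^\infty_x} \leq e^{|m|D} \lesssim_m 1$ is immediate, giving \eqref{eq.conf_unif}. For the $L^q$-part of \eqref{eq.conf_comp}, the relations $\bar{h}_{ab} = e^{2u} h_{ab}$ and $\bar{h}^{ab} = e^{-2u}h^{ab}$ combined with the inner-product definition in Section \ref{sec.geom_riem} force the pointwise identity $|F|_{\bar h} = e^{(r-l)u}|F|_h$ for $F \in \mc{C}^\infty T^r_l \mc{S}$; coupled with $\bar\omega = e^{2u}\omega$ and \eqref{eq.conf_unif}, this yields the $L^q$-equivalence.

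For the $H^1$-equivalence, I apply \eqref{eq.conf_cd} to write $\bar\nabla F = \nabla F + \Gamma(u,F)$, where $\Gamma(u,F)$ is a contraction of $\nabla u$ with $F$ satisfying $|\Gamma(u,F)|_h \lesssim_{r,l} |\nabla u||F|$ pointwise. H\"older together with the Gagliardo--Nirenberg estimate \eqref{eq.gns_1} then gives
\[ \|\Gamma(u,F)\|_{L^2_x} \lesssim_{r,l} \|\nabla u\|_{L^4_x}\|F\|_{L^4_x} \lesssim D \paren{\|\nabla F\|_{L^2_x}^\frac{1}{2}\|F\|_{L^2_x}^\frac{1}{2} + \|F\|_{L^2_x}} \text{,} \]
and, combined with the already-established $L^2$-equivalence and the arithmetic-geometric inequality, the smallness of $D$ absorbs the correction into $\|F\|_{H^1_x}$. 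The reverse inequality follows symmetrically, since $h = e^{-2u}\bar h$ places $\nabla$ and $\bar\nabla$ in the same formal position with $u$ replaced by $-u$.

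The remaining two items are verified by direct inspection. For \ass{r0}{C',N}, the coordinates $(U_i,\varphi_i)$, the partition $\{\eta_i\}$, and their coordinate derivative bounds are metric-independent; the area satisfies $|\mc{S}|_{\bar h} = \int e^{2u}\,d\omega \simeq |\mc{S}|_h$; and the pointwise positivity transfers from $h_{ab}$ to $\bar h_{ab} = e^{2u}h_{ab}$ via \eqref{eq.conf_unif}. For \ass{r1}{C',N}, the $\tilde\eta_i$ condition is again coordinate-based, $\bar h$-orthonormality of $\bar e^i_a = e^{-u}e^i_a$ is immediate from $\bar h = e^{2u}h$, and expanding $\bar\nabla\bar e^i_a$ via the scalar-vector Leibniz rule and \eqref{eq.conf_cd} produces terms schematically of the form $e^{-u}\nabla e^i_a$ and $e^{-u}\nabla u \cdot e^i_a$; measured in $\bar L^4_x \simeq L^4_x$ with $|e^i_a|_h = 1$ pointwise, these are bounded by $\|\nabla e^i_a\|_{L^4_x} + \|\nabla u\|_{L^4_x} \leq C + D \lesssim C$. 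Finally, since $\bar\vartheta_i = e^{2u}\vartheta_i$ in $\varphi_i$-coordinates,
\[ \|\nabla\bar\vartheta_i\|_{L^2_x} \lesssim \|\vartheta_i\|_{L^\infty_x}\|\nabla u\|_{L^2_x} + \|\nabla\vartheta_i\|_{L^2_x} \lesssim D + C \text{,} \]
using \eqref{eqr.vd_unif}, the area bound $\|\nabla u\|_{L^2_x} \lesssim \|\nabla u\|_{L^4_x}|\mc{S}|^{1/4} \lesssim D$, and \ass{r1}{C,N}. The only genuinely nontrivial step is the $H^1$-equivalence, where the GNS interpolation \eqref{eq.gns_1} is essential for trading $\|\nabla u\|_{L^4_x}$ against a manageable $L^4$-norm of $F$; every other step is tensor algebra and volume comparison combined with the pointwise bound \eqref{eq.conf_unif}.
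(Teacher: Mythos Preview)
Your argument is correct and follows essentially the same route as the paper's own proof: pointwise norm identities plus \eqref{eq.conf_unif} for the $L^q$-comparison, the conformal-change formula \eqref{eq.conf_cd} together with \eqref{eq.gns_1} for the $H^1$-comparison, and direct verification of the \ass{r0}{} and \ass{r1}{} data. The only minor difference is ordering: the paper establishes \ass{r0}{C',N} for $(\mc{S},\bar h)$ \emph{before} the reverse $H^1$-inequality, so that \eqref{eq.gns_1} is available with respect to $\bar h$; your ``symmetry'' argument for the reverse inequality implicitly uses that same fact, so you should either reorder or observe that the correction term $\Gamma(u,F)$ can equally well be bounded in $L^2_x$ using \eqref{eq.gns_1} for $h$ and then absorbed.
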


\begin{proof}
First of all, since $D$ is small, \eqref{eq.conf_unif} follows trivially from the assumptions, and the first estimate in \eqref{eq.conf_comp} is an immediate consequence of \eqref{eq.conf_unif}.

Since $\bar{h} = e^{2 u} h$, then \eqref{eq.conf_unif} implies the area $| \bar{\mc{S}} |$ of $( \mc{S}, \bar{h} )$ satisfies $| \bar{\mc{S}} | \simeq | \mc{S} | \simeq 1$.
Similarly, on each $U_i$, indexing with respect to the $\varphi_i$-coordinates, we have
\[ \sum_{a, b = 1}^2 \bar{h}_{ab} \xi^a \xi^b \simeq \sum_{a, b = 1}^2 h_{ab} \xi^a \xi^b \simeq | \xi |^2 \text{,} \qquad \xi \in \R^2 \text{.} \]
It follows that $(\mc{S}, \bar{h})$ satisfies \ass{r0}{C^\prime, N} for some appropriately chosen $C^\prime$.

Recalling \eqref{eq.conf_cd} and applying \eqref{eq.gns_1} and the first part of \eqref{eq.conf_comp}, we can estimate
\[ \| \bar{\nabla} F \|_{ \bar{L}^2_x } \lesssim \| \nabla F \|_{ L^2_x } + \| \nabla u \|_{ L^4_x } \| F \|_{ L^4_x } \lesssim \| \nabla F \|_{ L^2_x } + \| F \|_{ L^2_x } \text{.} \]
Since $(\mc{S}, \bar{h})$ satisfies \ass{r0}{C^\prime, N}, as shown above, we also have
\[ \| \nabla F \|_{ L^2_x } \lesssim \| \bar{\nabla} F \|_{ \bar{L}^2_x } + \| F \|_{ \bar{L}^2_x } \text{.} \]
This completes the proof of \eqref{eq.conf_comp}.

For the last property, assume $(\mc{S}, h)$ satisfies \ass{r1}{C, N}, with the aforementioned data.
By definition, $\bar{e}^i_1 = e^{-u} e^i_1$ and $\bar{e}^i_2 = e^{-u} e^i_2$ form an $\bar{h}$-orthonormal frame on $U_i$, for each $1 \leq i \leq N$.
Moreover, using \eqref{eq.conf_cd} and \eqref{eq.conf_comp}, we can estimate
\begin{align*}
\| \bar{\nabla} \bar{e}_a \|_{ \bar{L}^4_x } \lesssim \| \nabla u \|_{ L^4_x } + \| \nabla e_a \|_{ L^4_x } \lesssim 1 \text{,} \qquad a \in \{ 1, 2 \} \text{.}
\end{align*}
Now, if $\vartheta_i \in \mc{C}^\infty U_i$ is the $h$-volume density on $U_i$ with respect to the $\varphi_i$-coordinates (see Section \ref{sec.geom_reg}), then the corresponding quantity for $\bar{h}$ is $\bar{\vartheta}_i = e^{2 u} \vartheta_i$, and
\[ \| \bar{\nabla} \bar{\vartheta}_i \|_{ \bar{L}^2_x } \lesssim \| \nabla u \|_{ L^2_x } + \| \nabla \vartheta_i \|_{ L^2_x } \lesssim 1 \text{.} \]
Thus, $(\mc{S}, \bar{h})$ satisfies \ass{r1}{C^\prime, N} for some $C^\prime$, with data $\{ U_i, \varphi_i, \eta_i, \tilde{\eta}_i, \bar{e}^i \}_{i = 1}^N$.
\end{proof}

\begin{remark}
In the setting of Proposition \ref{thm.conf_regf}, if $(\mc{S}, h)$ satisfies \ass{r2}{C, N}, then one can also show that $(\mc{S}, \bar{h})$ satisfies \ass{r2}{C^\prime, N}, with appropriately related data, for some $C^\prime$ depending on $C$ and $N$.
However, we will not need this result in this paper.
\end{remark}

\section{Foliations} \label{sec.fol}

In Section \ref{sec.geom}, we discussed the analysis of tensor fields on a two-dimensional Riemannian manifold.
In this section, we consider our second general setting: that of a one-parameter family of two-dimensional Riemannian manifolds.

Consider the $1$-parameter foliation
\[ \mc{N} = [0, \delta] \times \mc{S} \text{,} \qquad \delta > 0 \text{.} \]
We define $t$ to be the natural projection onto the first component:
\[ t: \mc{N} \rightarrow [0, \delta] \text{,} \qquad t(\tau, x) = \tau \text{.} \]
Throughout, we will let $\tau$ denote an arbitrary element on the interval $[0, \delta]$.
Given such a $\tau$, we let $\mc{S}_\tau$ denote the associated level set of $t$:
\[ \mc{S}_\tau = t^{-1} (\tau) = \{ \tau \} \times \mc{S} \text{.} \]

Although we will work only on the $3$-manifold with boundary $\mc{N}$, we will always implicitly assume that all our objects can be smoothly extended beyond the boundaries $\mc{S}_0$ and $\mc{S}_\delta$.
In other words, our full setting, on which all our objects of analysis are defined, is the extended foliation $\mc{N}^\prime = (-\varepsilon, \delta + \varepsilon) \times \mc{S}$, for some $\varepsilon > 0$.

\subsection{Horizontal Fields} \label{sec.fol_hor}

Consider the trivial diffeomorphisms
\[ \Xi_\tau: \mc{S}_\tau \leftrightarrow \mc{S} \text{,} \qquad \Xi_\tau (\tau, x) = x \text{,} \]
identifying $\mc{S}_\tau$ with $\mc{S}$.
From $\Xi_\tau$, we can construct natural identifications
\[ \Xi_\tau^\ast: \mc{C}^\infty T^r_l \mc{S}_\tau \leftrightarrow \mc{C}^\infty T^r_l \mc{S} \text{.} \]
We will use these identifications repeatedly in our basic constructions.

The first task is to define objects that represent, roughly, a family of corresponding objects on the $\mc{S}_\tau$'s varying smoothly with respect to $\tau$.
For this purpose, we can naturally construct a vector bundle $\ul{T}^r_l \mc{N}$ over $\mc{N}$, with fibers
\[ ( \ul{T}^r_l \mc{N} )_z = ( T^r_l \mc{S}_\tau )_z \text{,} \qquad z = (\tau, x) \in \mc{N} \text{.} \]
We call $\ul{T}^r_l \mc{N}$ the \emph{horizontal tensor bundle} of rank $(r, l)$.
In particular, $\ul{T}^0_0 \mc{N}$ can be naturally identified with the space $\mc{C}^\infty \mc{N}$ of smooth functions on $\mc{N}$.

An element $A \in \mc{C}^\infty \ul{T}^r_l \mc{N}$ is called a \emph{horizontal tensor field}.
Given $\tau$, we let
\[ A [\tau] = \Xi_\tau^\ast ( A | \mc{S}_\tau ) \in \mc{C}^\infty T^r_l \mc{S} \text{,} \]
i.e., the tensor field on $\mc{S}$ that is the restriction of $A$ to $\mc{S}_\tau$.
Alternately, we can think of $A$ as a family $A [\tau]$ of fields on $\mc{S}$ that varies smoothly with respect to $\tau$.

The next structure to impose on $\mc{N}$ is a \emph{horizontal metric} $\gamma \in \mc{C}^\infty \ul{T}^0_2 \mc{N}$.
More specifically, we stipulate that $\gamma [\tau]$ is a Riemannian metric on $\mc{S}$ for each $\tau$.
We also let $\gamma^{-1} \in \mc{C}^\infty \ul{T}^2_0 \mc{N}$ denote the dual to $\gamma$, that is, $\gamma^{-1} [\tau]$ is the dual $( \gamma [\tau] )^{-1}$ to $\gamma [\tau]$ for each $\tau$.
The above induces a \emph{horizontal volume form} $\epsilon \in \mc{C}^\infty \ul{T}^0_2 \mc{N}$, such that $\epsilon [\tau]$ represents the volume form of $\mc{S}$ associated with $\gamma [\tau]$ and the orientation of $\mc{S}$.

In general, families of objects defined on $\mc{S}$ that are parametrized by $\tau$ can be aggregated into corresponding ``horizontal" objects on $\mc{N}$.
We already saw three examples of this in the definitions of $\gamma$, $\gamma^{-1}$, and $\epsilon$.
We now list the remaining common examples we will reference throughout the paper.

\begin{itemize}
\item The pointwise inner products with respect to the $\gamma [\tau]$'s lift to a corresponding inner product on horizontal tensor fields, with respect to $\gamma$:
\[ \langle \cdot, \cdot \rangle : \mc{C}^\infty \ul{T}^r_l \mc{N} \times \mc{C}^\infty \ul{T}^r_l \mc{N} \rightarrow \mc{C}^\infty \mc{N} \text{,} \qquad \langle \Psi, \Phi \rangle [\tau] = \langle \Psi [\tau], \Phi [\tau] \rangle \text{.} \]
The horizontal tensor norm $| \cdot |$ is similarly defined from the $\gamma [\tau]$-norms.

\item Given $\Psi_i \in \mc{C}^\infty \ul{T}^{r_i}_{l_i} \mc{N}$, where $i \in \{ 1, 2 \}$, we define the tensor product
\[ \Psi_1 \otimes \Psi_2 \in \mc{C}^\infty \ul{T}^{r_1 + r_2}_{l_1 + l_2} \mc{N} \text{,} \qquad ( \Psi_1 \otimes \Psi_2 ) [\tau] = \Psi_1 [\tau] \otimes \Psi_2 [\tau] \text{.} \]

\item The Levi-Civita connections $\nabla$ on the $( \mc{S}, \gamma [\tau] )$'s can be aggregated into a single \emph{horizontal covariant differential} operator
\[ \nabla: \mc{C}^\infty \ul{T}^r_l \mc{N} \rightarrow \mc{C}^\infty \ul{T}^r_{l+1} \mc{N} \text{.} \]
More specifically, if $A \in \mc{C}^\infty \ul{T}^r_l \mc{N}$ and $X \in \mc{C}^\infty \ul{T}^1_0 \mc{N}$, then
\[ ( \nabla_X A ) [\tau] = \nabla_{ X [\tau] } ( A [\tau] ) \text{.} \]
Higher-order differentials $\nabla^k$, $k > 1$, are defined similarly.

\item We can also define the horizontal (Bochner) Laplacian with respect to $\gamma$:
\[ \lapl : \mc{C}^\infty \ul{T}^r_l \mc{N} \rightarrow \mc{C}^\infty \ul{T}^r_l \mc{N} \text{,} \qquad ( \lapl \Psi ) [\tau] = \lapl ( \Psi [\tau] ) \text{.} \]

\item The horizontal \emph{Gauss curvature} is the map $\mc{K} \in \mc{C}^\infty \mc{N}$ such that each $\mc{K} [\tau]$ is precisely the Gauss curvature associated with $\gamma [\tau]$.

\item We can aggregate the geometric L-P operators $P_k$, $P_{< k}$, $P_-$, so that they act on $\mc{C}^\infty \ul{T}^r_l \mc{N}$, with respect to $\gamma [\tau]$ on each $\mc{S}_\tau$.
\end{itemize}

Finally, the Hodge bundles defined in Section \ref{sec.geom_riem} can be lifted to horizontal objects on $\mc{N}$.
Let $\ul{H}_i \mc{N}$, where $i \in \{ 0, 1, 2 \}$, denote the natural vector bundles over $\mc{N}$, for which the fibers at each $(\tau, x) \in \mc{N}$ is $( \ul{H}_i \mc{N} )_{ (\tau, x) } = ( H_i \mc{S}_\tau )_{ (\tau, x) }$.
Furthermore, for $j \in \{ 1, 2 \}$, we define the aggregated Hodge operators
\[ \mc{D}_j : \mc{C}^\infty \ul{H}_j \mc{N} \rightarrow \mc{C}^\infty \ul{H}_{j-1} \mc{N} \text{,} \qquad \mc{D}_j^\ast : \mc{C}^\infty \ul{H}_{j-1} \mc{N} \rightarrow \mc{C}^\infty \ul{H}_j \mc{N} \text{,} \]
to behave like the corresponding Hodge operators on each $( \mc{S}, \gamma [\tau] )$.

Given $\Psi \in \mc{C}^\infty \ul{T}^r_l \mc{N}$, we will generally assume that any norm of $\Psi [\tau]$ is taken with respect to $\gamma [\tau]$.
In particular, noting this convention, if $p \in [1, \infty)$ and $q \in [1, \infty]$, then we can define the iterated integral norms
\begin{align*}
\| \Psi \|_{ L^{p, q}_{t, x} } = \paren{ \int_0^\delta \| \Psi [\tau] \|_{ L^q_x }^p d \tau }^\frac{1}{p} \text{,} \qquad \| \Psi \|_{ L^{\infty, q}_{t, x} } = \sup_{ 0 \leq \tau \leq \delta } \| \Psi [\tau] \|_{ L^q_x } \text{.}
\end{align*}
Moreover, given $a \in [1, \infty)$, $s \in \R$, and $p \in [1, \infty]$, we can define the following geometric, fully tensorial, and \emph{time-integrated} Besov-type norms:
\begin{align*}
\| \Psi \|_{ B^{a, p, s}_{\ell, t, x} }^a &= \sum_{k \geq 0} 2^{ask} \| P_k \Psi \|_{ L^{p, 2}_{t, x} }^a + \| P_{< 0} \Psi \|_{ L^{p, 2}_{t, x} }^a \text{,} \\
\| \Psi \|_{ B^{\infty, p, s}_{\ell, t, x} } &= \max \left( \sup_{k \geq 0} 2^{sk} \| P_k \Psi \|_{ L^{p, 2}_{t, x} }, \| P_{< 0} \Psi \|_{ L^{p, 2}_{t, x} } \right) \text{.}
\end{align*}

\begin{remark}
Heuristically speaking, for the $B^{a, p, s}_{\ell, t, x}$-norm, the parameters $a$, $p$, $s$ refer to the summability of the L-P components, the integrability of the $t$-component, and the differentiability of the spatial components, respectively.
The order ``$\ell, t, x$" refers to the relative order of integration and summation.
\end{remark}

\subsection{Localization} \label{sec.fol_loc}

If $U \subseteq \mc{S}$ is open, then we can consider the localized foliation
\[ \mc{N}_U = [0, \delta] \times U \text{.} \]
Since $U$ is a submanifold of $\mc{S}$ of the same dimension, we can treat $\mc{N}_U$ using the same formalisms as we did for $\mc{N}$.
All the geometric objects we have defined on $\mc{N}$ have direct analogues on $\mc{N}_U$, obtained via restriction.
Localized foliations will be generally useful for dealing with local coordinate systems and local frames.

Relations involving horizontal tensors are often more easily described using index notations.
We will use the same indexing conventions as one would use for tensors on $\mc{S}$ or the $\mc{S}_\tau$'s.
We will use lowercase Latin indices to denote components of a horizontal tensor field, with repeated indices indicating summations.

To be a bit more specific, suppose $U$ is as before, and let
\[ e_1, e_2 \in \mc{C}^\infty \ul{T}^1_0 \mc{N}_U \text{,} \qquad e^1_\ast, e^2_\ast \in \mc{C}^\infty \ul{T}^0_1 \mc{N}_U \]
denote a local horizontal frame and associated coframe.
We can then index horizontal fields with respect to these frames.
For example, if $\Psi \in \mc{C}^\infty \ul{T}^1_1$, then
\[ \Psi^a_b = \Psi ( e^a_\ast, e_b ) \in \mc{C}^\infty \mc{N}_U \text{.} \]
Note that the chosen frame and coframe is allowed to change as $\tau$ changes.

\begin{remark}
In particular, if we index on $\mc{N}$ as above, and if we index on each $\mc{S}_\tau \simeq \mc{S}$ with respect to the $e_a [\tau]$'s and $e^b_\ast [\tau]$'s, then for any $\Psi \in \mc{C}^\infty \ul{T}^1_1 \mc{N}$, we have
\[ \Psi^a_b [\tau] = ( \Psi [\tau] )^a_b \text{.} \]
This extends directly to horizontal tensor fields of any rank.
\end{remark}

\begin{remark}
To remain consistent with standard index notation conventions, the components of $\gamma^{-1}$ will be denoted $\gamma^{ab}$ rather than $( \gamma^{-1} )^{ab}$.
\end{remark}

$\Psi \in \mc{C}^\infty \ul{T}^r_l \mc{N}_U$ is called \emph{equivariant} iff the restrictions $\Psi [\tau] \in \mc{C}^\infty T^r_l U$ are constant over all $\tau$.
Note that given $F \in \mc{C}^\infty T^r_l U$, there is a unique equivariant field $\mf{e} F \in \mc{C}^\infty \ul{T}^r_l \mc{N}_U$, the \emph{equivariant transport} of $F$, satisfying $\mf{e} F [\tau] = F$ for every $\tau$.

For example, if $(U, \tilde{\varphi}) = (U; \tilde{x}^1, \tilde{x}^2)$ is a local coordinate system on $\mc{S}$, then we can transport each coordinate $\tilde{x}^a$ to $\mc{N}_U$ equivariantly:
\[ x^a = \mf{e} \tilde{x}^a \in \mc{C}^\infty \mc{N}_U \text{,} \qquad a \in \{ 1, 2 \} \text{.} \]
This defines a pair of equivariant functions on $\mc{N}_U$ which form coordinate systems on each timeslice $U_\tau = \{ \tau \} \times U$.
Furthermore, if $\tilde{\partial}_1$ and $\tilde{\partial}_2$ are the coordinate vector fields associated to $\tilde{x}^1$ and $\tilde{x}^2$, then their equivariant transports
\[ \partial_a = \mf{e} \tilde{\partial}_a \in \mc{C}^\infty \ul{T}^1_0 \mc{N}_U \text{,} \qquad a \in \{ 1, 2 \} \]
form the coordinate vector fields for the $x^a$'s on each $U_\tau$.

\subsection{Regularity Conditions} \label{sec.fol_reg}

The next task is to port the regularity conditions in Section \ref{sec.geom_reg} for a single surface to the current setting.
Essentially, we wish to assume these conditions hold \emph{uniformly} on every $(\mc{S}, \gamma [\tau])$.
However, for our analysis, we need a bit more---that the data associated with the conditions on the $\mc{S}_\tau$'s vary smoothly with $\tau$.
The precise conditions on $(\mc{N}, \gamma)$ that we will use are below.

We say that $(\mc{N}, \gamma)$ satisfies \ass{R0}{C, N}, with data $\{ U_i, \varphi_i, \eta_i \}_{i = 1}^N$, iff:
\begin{itemize}
\item For any $\tau \in [0, \delta]$, the area $| \mc{S}_\tau |$ of $(\mc{S}, \gamma [\tau])$ satisfies
\[ C^{-1} \leq | \mc{S}_\tau | \leq C \text{.} \]

\item The $(U_i, \varphi_i)$'s, where $1 \leq i \leq N$, are local coordinate systems on $\mc{S}$ that cover $\mc{S}$.
Moreover, each $\varphi_i (U_i)$ is a bounded neighborhood in $\R^2$.

\item The $\eta_i$'s form a partition of unity of $\mc{S}$, subordinate to the $U_i$'s, such that
\[ 0 \leq \eta_i \leq 1 \text{,} \qquad | \partial^i_a \eta_i | \leq C \text{,} \qquad a, b \in \{ 1, 2 \} \text{,} \]
for each $1 \leq i \leq N$, where $\partial^i_1, \partial^i_2$ denote the $\varphi_i$-coordinate vector fields.

\item For each $1 \leq i \leq N$ and $\tau \in [0, \delta]$, we have on $U_i$ the positivity property
\[ C^{-1} | \xi |^2 \leq \sum_{a, b = 1}^2 ( \gamma [\tau] )_{ab} \xi^a \xi^b \leq C | \xi |^2 \text{,} \qquad \xi \in \R^2 \text{,} \]
where we have indexed with respect to the $\varphi_i$-coordinate system on $U_i$.
\end{itemize}

Note that if $(\mc{N}, \gamma)$ satisfies \ass{R0}{C, N}, then every level surface $(\mc{S}, \gamma [\tau])$ trivially satisfies \ass{r0}{C, N}, with the same data.
Similar to Section \ref{sec.geom_reg}, given data for the \ass{R0}{} condition, we can define the associated \emph{area density} $\vartheta_i \in \mc{C}^\infty \mc{N}_{U_i}$ by
\[ \vartheta_i [\tau] = \sqrt{ ( \gamma [\tau] )_{11} ( \gamma [\tau] )_{22} - ( \gamma [\tau] )_{12}^2 } \text{,} \]
where we have again indexed with respect to the $\varphi_i$-coordinates.

Next, we say $(\mc{N}, \gamma)$ satisfies \ass{R1}{C, N}, with data $\{ U_i, \varphi_i, \eta_i, \tilde{\eta}_i, e^i \}_{i = 1}^N$, iff:
\begin{itemize}
\item $(\mc{N}, \gamma)$ satisfies \ass{R0}{C, N}, with data $\{ U_i, \varphi_i, \eta_i \}_{i = 1}^N$.

\item For any $1 \leq i \leq N$, we have that $e^i = ( e^i_1, e^i_2 ) \in \mc{C}^\infty \ul{T}^1_0 \mc{N}_{U_i} \times \mc{C}^\infty \ul{T}^1_0 \mc{N}_{U_i}$ forms a ($\gamma$-)orthonormal frame on each $( U_i )_\tau$ and satisfies
\[ \| \nabla e^i_a \|_{ L^{\infty, 4}_{t, x} } \leq C \text{,} \qquad a \in \{ 1, 2 \} \text{.} \]

\item For any $1 \leq i \leq N$, we have that $\tilde{\eta}_i \in \mc{C}^\infty \mc{S}$ is supported within $U_i$, is identically $1$ on the support of $\eta_i$, and satisfies the estimates
\[ 0 \leq \tilde{\eta}_i \leq 1 \text{,} \qquad | \partial^i_a \tilde{\eta}_i | \leq C \text{,} \qquad a \in \{ 1, 2 \} \text{.} \]

\item For each $1 \leq i \leq N$, the $\varphi_i$-area density $\vartheta_i \in \mc{C}^\infty \mc{N}_{U_i}$ satisfies
\[ \| \nabla \vartheta_i \|_{ L^{\infty, 2}_{t, x} } \leq C \text{.} \]
\end{itemize}
In particular, if $(\mc{N}, \gamma)$ satisfies \ass{R1}{C, N}, then every level surface $(\mc{S}, \gamma [\tau])$ trivially satisfies \ass{r1}{C, N}, with the corresponding data.
\footnote{In particular, one restricts the frame elements $e^i_a$ to each $\mc{S}_\tau$.}

\begin{remark}
Again, the $L^4_x$-norm in the \ass{R1}{} condition is only a matter of convenience, as this exponent ``$4$" throughout this paper can be replaced by any $q \in (2, \infty]$.
\end{remark}

Finally, we say $(\mc{N}, \gamma)$ satisfies \ass{R2}{C, N}, with data $\{ U_i, \varphi_i, \eta_i, \tilde{\eta}_i, e^i \}_{i = 1}^N$, iff:
\begin{itemize}
\item $(\mc{N}, \gamma)$ satisfies \ass{R1}{C, N}, with data $\{ U_i, \varphi_i, \eta_i, \tilde{\eta}_i, e^i \}_{i = 1}^N$.

\item For each $1 \leq i \leq N$, the $\varphi_i$-coordinate vector fields $\partial^i_1, \partial^i_2$ satisfy
\[ \| \nabla ( \mf{e} \partial^i_a ) \|_{ L^{\infty, 2}_{t, x} } \leq C \text{,} \qquad a \in \{ 1, 2 \} \text{.} \]

\item For each $1 \leq i \leq N$, the second \emph{coordinate} derivatives of $\eta_i$ satisfy
\[ | \partial^i_a \partial^i_b \eta_i | \leq C \text{,} \qquad a, b \in \{ 1, 2 \} \text{.} \]
\end{itemize}
Once again, if $(\mc{N}, \gamma)$ satisfies \ass{R2}{C, N}, then every level surface $(\mc{S}, \gamma [\tau])$ trivially satisfies \ass{r2}{C, N}, with the corresponding data.

\subsection{Scalar Reductions} \label{sec.fol_scal}

In the Sobolev-type inequalities of Proposition \ref{thm.gns_ineq}, we reduced estimates for a horizontal tensorial quantity $\Psi$ to one for a scalar quantity by dealing with $| \Psi |^2$ instead.
This allowed us to take advantage of the compatibility of $\nabla$ with $\gamma$ and avoid dealing with connection quantities resulting from a choice of frames.
However, this method will not suffice in some situations, including some comparisons of derivative norms and certain Besov-type estimates for tensor fields.

In this section, we discuss in general the systematic reduction of tensorial quantities to localized scalar analogues.
Using this process, we can define coordinate-based Sobolev- and Besov-type norms on $(\mc{N}, \gamma)$.
With respect to these coordinate norms, the main bilinear product estimates of this paper can be reduced to their corresponding (much easier to prove) Euclidean counterparts.
Moreover, using the regularity conditions defined in Section \ref{sec.fol_reg}, we show that the coordinate-based norms are in fact comparable to the geometric Besov norms.

Assume for now $(\mc{N}, \gamma)$ satisfies \ass{R1}{C, N}, with data $\{ U_i, \varphi_i, \eta_i, \tilde{\eta}_i, e_i \}_{i = 1}^N$.
For convenience, we will also use $\eta_i$ and $\tilde{\eta}_i$ to denote their equivariant transports, $\mf{e} \eta_i$ and $\mf{e} \tilde{\eta}_i$, respectively.
Similarly, we let $\varphi_i$ denote the function $\varphi_i: \mc{N}_{U_i} \rightarrow \R^2$, whose components are the equivariant transports of the components of $\varphi_i$.

Moreover, for $1 \leq i \leq N$, we let $e_{\ast i}^1, e_{\ast i}^2 \in \mc{C}^\infty T^0_1 \mc{N}_{U_i}$ denote the dual coframe to $e^i_1, e^i_2$.
Define $i \mc{X}^r_l$ to be the collection of all local horizontal fields of the form
\[ e^i_{a_1} \otimes \dots \otimes e^i_{a_l} \otimes e_{\ast i}^{b_1} \otimes \dots \otimes e_{\ast i}^{b_r} \in \mc{C}^\infty \ul{T}^l_r \mc{N}_{U_i} \text{,} \]
where $a_1, \dots, a_l, b_1, \dots, b_r \in \{ 1, 2 \}$.
This family $i \mc{X}^r_l$ consists of exactly $2^{r + l}$ elements and forms a local orthonormal frame on $\mc{N}_{U_i}$ for $\mc{C}^\infty \ul{T}^r_l \mc{N}$.
Moreover, by the \ass{R1}{C, N} assumption, each $X \in i \mc{X}^r_l$ satisfies
\begin{equation} \label{eq.frame_basis} \| X \|_{ L^{\infty, \infty}_{t, x} } \leq 1 \text{,} \qquad \| \nabla X \|_{ L^{\infty, 4}_{t, x} } \lesssim_C r + l \text{.} \end{equation}

Given any $\Psi \in \mc{C}^\infty \ul{T}^r_l \mc{N}$ and $X \in i \mc{X}^r_l$, its full contraction $\Psi ( X )$ defines an element of $\mc{C}^\infty \mc{N}_{U_i}$.
Moreover, the restriction $\Psi | \mc{N}_{U_i}$ can be entirely reconstructed from all of its contractions with elements of $i \mc{X}^r_l$.
Observe that if $\Phi \in \mc{C}^\infty \ul{T}^r_l \mc{N}$ as well, then the orthonormality of the $e^i_a$'s and $e_{\ast i}^b$'s implies that
\begin{equation} \label{eq.scalar_red_ip} \langle \Psi, \Phi \rangle | U_i = \sum_{ X \in i \mc{X}^r_l } [ \Psi (X) \cdot \Phi (X) ] \text{,} \qquad | \Psi |^2 | U_i = \sum_{ X \in i \mc{X}^r_l } [ \Psi (X) ]^2 \text{.} \end{equation}

\begin{remark}
As a special case, we can define $i \mc{X}^0_0$ to be the set containing only the constant function with value $1$ on $U_i$.
Then, the the scalar reduction theory described here also applies as written to the trivial case $r = l = 0$.
\end{remark} 

We now prove some basic properties for our scalar reduction scheme, based on the assumptions in Section \ref{sec.geom_reg}.
First is the following bound for first derivatives.

\begin{proposition} \label{thm.scalar_red_D}
Assume $(\mc{N}, \gamma)$ satisfies \ass{R1}{C, N}.
Then,
\begin{equation} \label{eq.scalar_red_D} \sum_{ X \in i \mc{X}^r_l } \| \nabla [ \Psi (X) ] \|_{ L^{p, 2}_{t, x} } \lesssim_{C, N, r, l} \| \nabla \Psi \|_{ L^{p, 2}_{t, x} } + \| \Psi \|_{ L^{p, 2}_{t, x} } \text{,} \end{equation}
for any $1 \leq i \leq N$, $p \in [1, \infty]$, and $\Psi \in \mc{C}^\infty \ul{T}^r_l \mc{N}$.
\end{proposition}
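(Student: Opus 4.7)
The plan is to apply the Leibniz rule to $\Psi(X)$ and reduce everything to the two natural pieces. Since $X \in i \mc{X}^r_l$ is an algebraic tensor product of frame/coframe elements and $\nabla$ is compatible with $\gamma$, one has the pointwise identity
\[ \nabla [ \Psi (X) ] = (\nabla \Psi) (X) + \Psi ( \nabla X ) \text{,} \]
where in the second term the contraction is taken over the tensor-valued slots of $\nabla X$. Accordingly,
\[ | \nabla [ \Psi (X) ] | \leq | \nabla \Psi | \cdot | X | + | \Psi | \cdot | \nabla X | \text{.} \]

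Next, I would bound each factor of $X$ and $\nabla X$ using the frame estimate \eqref{eq.frame_basis}: orthonormality gives $|X| \leq 1$ pointwise, and the \ass{R1}{C, N} assumption (via Leibniz applied to the $r + l$ factors comprising $X$) gives $\| \nabla X \|_{L^{\infty, 4}_{t,x}} \lesssim_{C, r, l} 1$. The first summand $(\nabla \Psi)(X)$ is therefore controlled pointwise by $|\nabla \Psi|$, while the second requires H\"older on each slice $\mc{S}_\tau$:
\[ \| \Psi [\tau] ( \nabla X [\tau] ) \|_{ L^2_x } \leq \| \Psi [\tau] \|_{ L^4_x } \| \nabla X [\tau] \|_{ L^4_x } \lesssim_{C, r, l} \| \Psi [\tau] \|_{ L^4_x } \text{.} \]

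For the $L^4_x$ norm of $\Psi [\tau]$, I would invoke the Gagliardo-Nirenberg inequality \eqref{eq.gns_1} (with $q = 4$), valid since \ass{R1}{C, N} implies the \ass{r0}{C, N} condition on each $(\mc{S}, \gamma [\tau])$:
\[ \| \Psi [\tau] \|_{ L^4_x } \lesssim_{C, N, r, l} \| \nabla \Psi [\tau] \|_{ L^2_x }^{1/2} \| \Psi [\tau] \|_{ L^2_x }^{1/2} + \| \Psi [\tau] \|_{ L^2_x } \text{,} \]
and then AM-GM to absorb the cross term into $\| \nabla \Psi [\tau] \|_{ L^2_x } + \| \Psi [\tau] \|_{ L^2_x }$. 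Combining yields the pointwise-in-$\tau$ estimate
\[ \| \nabla [ \Psi (X) ] [\tau] \|_{ L^2_x } \lesssim_{C, N, r, l} \| \nabla \Psi [\tau] \|_{ L^2_x } + \| \Psi [\tau] \|_{ L^2_x } \text{.} \]

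Finally, I would take the $L^p_t$-norm of both sides (using the triangle inequality, which handles both the finite $p$ and the $p = \infty$ cases uniformly) and sum over the $2^{r+l}$ frames $X \in i \mc{X}^r_l$, which absorbs into the constant. I do not expect a genuine obstacle here; the one place that requires care is the balance of exponents in step three, where the slightly subcritical $L^4_x$ regularity of $\nabla X$ must be compensated by the Sobolev embedding, and the fact that \eqref{eq.gns_1} produces only the $L^2_x$ norms of $\Psi$ and $\nabla \Psi$ (rather than genuine $H^1_x$ data) is exactly what the right-hand side of \eqref{eq.scalar_red_D} asks for.
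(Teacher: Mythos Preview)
Your proposal is correct and follows essentially the same approach as the paper: Leibniz rule to split $\nabla[\Psi(X)]$ into $(\nabla\Psi)(X) + \Psi(\nabla X)$, then H\"older to pair $\|\Psi\|_{L^4_x}$ with $\|\nabla X\|_{L^4_x}$, and finally the Gagliardo--Nirenberg inequality \eqref{eq.gns_1} to reduce $\|\Psi\|_{L^4_x}$ to the right-hand side. Your presentation is slightly more explicit in working slicewise before taking the $L^p_t$-norm, but the argument is the same.
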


\begin{proof}
Fix $X \in i \mc{X}^r_l$, and define $\nabla \Psi (X)$ and $\Psi ( \nabla X )$ to be the local horizontal $1$-forms mapping any $Y \in \mc{C}^\infty \ul{T}^1_0 \mc{N}_{U_i}$ to $( \nabla_Y \Psi ) (X)$ and $\Psi ( \nabla_Y X )$, respectively.
Applying the Leibniz rule along with H\"older's inequality yields
\begin{align*}
\| \nabla [ \Psi (X) ] \|_{ L^{p, 2}_{t, x} } &\leq \| \nabla \Psi (X) \|_{ L^{p, 2}_{t, x} } + \| \Psi ( \nabla X ) \|_{ L^{p, 2}_{t, x} } \\
&\leq \| \nabla \Psi (X) \|_{ L^{p, 2}_{t, x} } + \| \Psi \|_{ L^{2, 4}_{t, x} } \| \nabla X \|_{ L^{\infty, 4}_{t, x} } \text{.}
\end{align*}
By \eqref{eq.gns_1} and \eqref{eq.frame_basis},
\[ \| \Psi \|_{ L^{2, 4}_{t, x} } \| \nabla X \|_{ L^{\infty, 4}_{t, x} } \lesssim \| \nabla \Psi \|_{ L^{p, 2}_{t, x} } + \| \Psi \|_{ L^{p, 2}_{t, x} } \text{.} \]
Combining the above and summing over all $X \in \mc{X}^r_l (i)$ completes the proof.
\end{proof}

Assume again that $(\mc{N}, \gamma)$ satisfies \ass{R1}{C, N}, with data $\{ U_i, \varphi_i, \eta_i, \tilde{\eta}_i, e_i \}_{i = 1}^N$.
Given $\Psi \in \mc{C}^\infty \ul{T}^r_l \mc{N}$ and $X \in i \mc{X}^r_l$, since $\eta_i \Psi (X)$ is supported within $\mc{N}_{U_i}$, then
\[ [ \eta_i \cdot \Psi (X) ] \circ \varphi_i^{-1} \]
can be treated as a smooth function on $[0, \delta] \times \R^2$.
As a result, given $a \in [1, \infty]$, $p \in [1, \infty]$, and $s \in \R$, we can define coordinate-based Besov-type norms
\begin{align*}
\| \Psi \|_{ \mc{B}^{a, p, s}_{\ell, t, x} } &= \sum_{1 \leq i \leq N} \sum_{ X \in i \mc{X}^r_l } \| [ \eta_i \cdot \Psi ( X ) ] \circ \varphi_i^{-1} \|_{ B^{a, p, s}_{\ell, t, x} } \text{.}
\end{align*}
The norm on the right-hand side in the above refers to the corresponding standard Besov norm on $[0, \delta] \times \R^2$; see Appendix \ref{sec.eucl} for precise definitions.

\begin{remark}
We will use the same symbols to denote both the geometric Besov norms and the corresponding norms in Euclidean space.
The specific norm that is referenced in any particular instance will depend on context.
\end{remark}

Note that the above norms depend very heavily on the data associated with the \ass{R1}{} condition.
On the other hand, estimates involving these norms will depend only on the regularity constants $C$ and $N$.
Thus, we can make use of these noncanonical norms as intermediate quantities in order to ultimately reach the invariant geometric Besov norms defined in Section \ref{sec.fol_hor}.

\subsection{The Besov Comparison Property} \label{sec.fol_bcomp}

In Section \ref{sec.fol_hor}, we have defined geometric Besov norms on $(\mc{N}, \gamma)$.
Moreover, in Section \ref{sec.fol_scal}, we defined coordinate-based Besov norms, based on a localized scalar decomposition of horizontal tensor fields with respect to data associated with the \ass{R1}{} condition.
The question that remains is whether these two types of Besov norms are related to each other.

In the subsequent proposition, we show that under the \ass{R1}{} condition, corresponding geometric and coordinate-based Besov norms are in fact equivalent, as long as one considers only a low number of horizontal derivatives.
As a result, in order to obtain estimates involving such geometric Besov norms, we only need to establish estimates involving the corresponding coordinate-based norms.

\begin{proposition} \label{thm.comp_main}
Assume $(\mc{N}, \gamma)$ satisfies \ass{R1}{C, N}, and suppose that $a \in [1, \infty]$, $p \in [1, \infty]$, $s \in (-1, 1)$, and $\Psi \in \mc{C}^\infty \ul{T}^r_l \mc{N}$.
Then,
\begin{align}
\label{eq.comp_main} \| \Psi \|_{ B^{a, p, s}_{\ell, t, x} } \simeq_{C, N, s, r, l} \| \Psi \|_{ \mc{B}^{a, p, s}_{\ell, t, x} } \text{.}
\end{align}
Furthermore, if $\{ U_i, \varphi_i, \eta_i, \tilde{\eta}_i, e_i \}_{i = 1}^N$ denotes the associated data, then
\begin{equation} \label{eq.comp_main_tech} \sum_{i = 1}^N \sum_{ X \in i \mc{X}^r_l } \| [ \tilde{\eta}_i \cdot \Psi (X) ] \circ \varphi_i^{-1} \|_{ B^{a, p, s}_{\ell, t, x} } \lesssim_{C, N, s, r, l} \| \Psi \|_{ B^{a, p, s}_{\ell, t, x} } \text{.} \end{equation}
\end{proposition}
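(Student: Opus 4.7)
The plan is to reduce the equivalence to a scalar Euclidean problem through the parallel frames and partition of unity supplied by \ass{R1}{C,N}, and then to control the discrepancy between the geometric spectral L-P projectors $P_k$ and their Euclidean counterparts via a commutator-type estimate. I will prove \eqref{eq.comp_main} and the auxiliary bound \eqref{eq.comp_main_tech} in parallel, since $\tilde{\eta}_i$ obeys the same estimates as $\eta_i$ in \ass{R1}{C,N}; only the direction controlling the coordinate side by the geometric side matters for \eqref{eq.comp_main_tech}, so it is actually the easier half.

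First I would fix data $\{U_i, \varphi_i, \eta_i, \tilde{\eta}_i, e^i\}_{i=1}^N$ and write, using $\sum_i \eta_i \equiv 1$ and the scalar reduction of Section \ref{sec.fol_scal}, $\Psi = \sum_i \sum_{X \in i\mc{X}^r_l} [\eta_i \Psi(X)]\, X$, reducing the problem to comparing, for each $k \geq 0$ and each $X$, the geometric projection $P_k\Psi$ with the Euclidean projection $P_k^{\mathrm{eucl}}$ acting on $[\eta_i \Psi(X)] \circ \varphi_i^{-1}$. The core technical step will be a commutator estimate of the form
\[
\bigl\| P^{\mathrm{eucl}}_k\bigl([\eta_i \Psi(X)] \circ \varphi_i^{-1}\bigr) - [\eta_i (P_k \Psi)(X)] \circ \varphi_i^{-1} \bigr\|_{L^{p,2}_{t,x}} \lesssim_{C,N,r,l} 2^{-\sigma k} \|\Psi\|_{L^{p,2}_{t,x}}
\]
for some $\sigma \in (0,1)$, with an analogous bound for $P_{<0}$. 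Using $P_k = 2^{-2k} \tilde{\varsigma}(-2^{-2k}\lapl) (-\lapl)$, the error splits into three types of terms: (i) contributions from $\nabla \eta_i$, controlled in $L^\infty$ by \ass{R0}{C,N} via Proposition \ref{thm.glp} and H\"older; (ii) contributions from $\nabla X$, controlled in $L^{\infty,4}_{t,x}$ by \eqref{eq.frame_basis} and absorbed via the Gagliardo-Nirenberg estimate \eqref{eq.gns_1}; and (iii) contributions from the discrepancy $\lapl_\gamma - \lapl_{\mathrm{eucl}}$ expressed in $\varphi_i$-coordinates as coefficients involving $\nabla \vartheta_i$ and $\vartheta_i^{-1}$, which are controlled in $L^{\infty,2}_{t,x}$ by \ass{R1}{C,N} and Proposition \ref{thmr.vdd_inv}.

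Once this commutator estimate is established, both directions of \eqref{eq.comp_main} follow by summing in $k$; the restriction $s \in (-1,1)$ makes the geometric sum in $2^{(s\pm\sigma)k}$ absorbable, and the almost-orthogonality property \eqref{eq.glp_almost_ortho} (with its Euclidean analogue) handles the interaction terms. The main obstacle will be the piece (iii): the coefficients entering $\lapl_\gamma - \lapl_{\mathrm{eucl}}$ are only $L^2$-integrable in space, so a direct perturbative treatment at frequency $2^k$ fails. The trick will be to pair one factor of $\nabla\vartheta_i$ with a smoothing produced by the weak Bernstein estimate of Proposition \ref{thm.glp_bernstein}, converting one spatial derivative into an $L^{q}_x$-gain for some $q > 2$; this is precisely what the $L^4$-type regularity of the parallel frame in \ass{R1}{C,N} provides, and $s \in (-1,1)$ is exactly the threshold up to which this single gain closes the estimate.
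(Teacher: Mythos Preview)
Your overall architecture—scalar reduction via the frames $X\in i\mc{X}^r_l$, then comparison of geometric and Euclidean L--P projections—matches the paper's, but the core commutator estimate you propose is \emph{false} as stated, and the method you sketch cannot prove it. Take $\Psi$ scalar and supported at geometric frequency $\sim 2^{k+2}$, so that $P_k\Psi=0$; your bound would then force $\|E_k(\eta_i\Psi\circ\varphi_i^{-1})\|_{L^2}\lesssim 2^{-\sigma k}\|\Psi\|_{L^2}$ uniformly in $k$. But a function at geometric frequency $2^{k+2}$ generically carries Euclidean frequency-$2^k$ content of size comparable to a fixed constant times $\|\Psi\|$: the difference of two spectral projectors onto nearby (but not identical) annuli is an operator of order zero, not a smoothing operator. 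The correct decay is in the \emph{gap} $|k-m|$, where $m$ is the input frequency, not in $k$ alone; with only the metric regularity granted by \ass{R1}{}, the off-diagonal decay $2^{-|k-m|}$ is essentially sharp. Your peeling identity $P_k=2^{-2k}\tilde\varsigma(-2^{-2k}\lapl)(-\lapl)$ does not help: the residual factor $\tilde\varsigma(-2^{-2k}\lapl)$ is still a nonpolynomial function of the \emph{geometric} Laplacian, so comparing it to anything Euclidean is the very problem you are trying to solve—there is no finite expansion of $P_k^{\rm geom}-P_k^{\rm eucl}$ in powers of $\lapl_\gamma-\lapl_{\rm eucl}$.

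The paper (Appendix~\ref{sec.bcomp}) replaces the single-index commutator by a two-index \emph{intertwining} estimate: for scalar $f$ one proves
\[
\|E_k[(\tilde\eta_i\cdot P_l f)\circ\varphi_i^{-1}]\|_{L^2}\lesssim 2^{-|k-l|}\|P_{\sim l}f\|_{L^2},
\]
together with a reverse inequality (Lemma~\ref{thm.intertwining_mixed}) carrying a harmless $|k-l|$ loss. The case $l\le k$ is the easy finite-band direction; the case $l\ge k$ is obtained by \emph{duality}, which transfers the awkward spectral operator onto a test function where the easy direction applies. Summing the resulting double L--P decomposition with weight $2^{sk}$ then converges precisely for $|s|<1$ (Lemma~\ref{thm.besov_comp_sc}). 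The tensorial-to-scalar reduction is handled separately, entirely on the geometric side, by a product estimate (Lemmas~\ref{thm.intertwining_prodx_elem}--\ref{thm.besov_scalar_red}) that uses only $\|\nabla X\|_{L^{\infty,4}_{t,x}}$ and geometric L--P; this is where the frame regularity from \ass{R1}{} actually enters, and it never touches $\lapl_\gamma-\lapl_{\rm eucl}$.
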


\begin{proof}
See Appendix \ref{sec.bcomp}.
\end{proof}

The proof of Proposition \ref{thm.comp_main} is rather technical, involving decompositions using both geometric and Euclidean L-P operators in tandem.
As a result, we defer this proof until Appendix \ref{sec.bcomp}.
In the meantime, we assume the conclusions of Proposition \ref{thm.comp_main}, and we discuss its various consequences.

The first consequence of Proposition \ref{thm.comp_main} is a (geometric) Besov refinement of the standard $L^4$-$H^{1/2}$-Sobolev inequality in $2$-dimensions.

\begin{proposition} \label{thm.sob_frac_sh}
Assume that $(\mc{N}, \gamma)$ satisfies \ass{R1}{C, N}.
If $\Psi \in \mc{C}^\infty \ul{T}^r_l \mc{N}$, then
\begin{equation} \label{eq.sob_frac_sh} \| \Psi \|_{ L^{\infty, 4}_{t, x} } \lesssim_{ C, N, r, l } \| \Psi \|_{ B^{2, \infty, 1/2}_{\ell, t, x} } \text{.} \end{equation}
\end{proposition}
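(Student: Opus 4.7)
The plan is to reduce the geometric tensorial estimate to a standard Euclidean Sobolev embedding via the scalar decomposition scheme of Section \ref{sec.fol_scal}, then transfer back using the Besov comparison of Proposition \ref{thm.comp_main}. Let $\{ U_i, \varphi_i, \eta_i, \tilde{\eta}_i, e^i \}_{i=1}^N$ denote the data associated with \ass{R1}{C, N}.

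First, I would use the localization. Because $\sum_i \eta_i \equiv 1$, each $\eta_i \in [0,1]$, and $\tilde{\eta}_i \equiv 1$ on $\mathrm{supp}(\eta_i)$, one has the pointwise bound $\eta_i \leq \tilde{\eta}_i^4$, hence $|\Psi|^4 \leq \sum_i \tilde{\eta}_i^4 |\Psi|^4$. Applying the identity $|\Psi|^2|_{U_i} = \sum_{X \in i\mc{X}^r_l} [\Psi(X)]^2$ from \eqref{eq.scalar_red_ip}, together with the Cauchy--Schwarz inequality (which costs only a constant depending on $r,l$), gives the pointwise estimate $\tilde{\eta}_i^4 |\Psi|^4 \lesssim_{r,l} \sum_X |\tilde{\eta}_i \Psi(X)|^4$. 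Integrating, applying the elementary inequality $(\sum a_j^4)^{1/4} \leq \sum a_j$ for nonnegative terms with the finite index set of pairs $(i, X)$, and then invoking \eqref{eqr.change_of_coord} to change to the Euclidean variables, yields for each fixed $\tau$ the scalar reduction
\begin{equation*}
\| \Psi[\tau] \|_{L^4_x} \lesssim_{C, N, r, l} \sum_{i = 1}^N \sum_{X \in i\mc{X}^r_l} \| [\tilde{\eta}_i \Psi(X)][\tau] \circ \varphi_i^{-1} \|_{L^4(\R^2)} \text{.}
\end{equation*}

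Next, I would apply the standard Euclidean Sobolev embedding $L^4(\R^2) \hookleftarrow H^{1/2}(\R^2)$ in its Besov form: for any sufficiently nice scalar $f$ on $\R^2$,
\begin{equation*}
\| f \|_{L^4(\R^2)}^2 \lesssim \sum_{k \geq 0} 2^k \| P_k f \|_{L^2(\R^2)}^2 + \| P_{<0} f \|_{L^2(\R^2)}^2 \text{,}
\end{equation*}
which is a classical consequence of dyadic Littlewood-Paley theory (and will presumably be recorded in Appendix \ref{sec.eucl}). Applied to $f = [\tilde{\eta}_i \Psi(X)][\tau] \circ \varphi_i^{-1}$ and followed by taking the supremum in $\tau$ (which can be pushed inside the dyadic sum since the right-hand side is monotone in each $\| P_k f(\tau, \cdot) \|_{L^2}$), this furnishes the Euclidean bound
\begin{equation*}
\| [\tilde{\eta}_i \Psi(X)] \circ \varphi_i^{-1} \|_{L^{\infty, 4}_{t, x}} \lesssim \| [\tilde{\eta}_i \Psi(X)] \circ \varphi_i^{-1} \|_{B^{2, \infty, 1/2}_{\ell, t, x}} \text{.}
\end{equation*}

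Finally, I would take $\sup_\tau$ in the scalar reduction, insert the Euclidean embedding above, and close the estimate by invoking \eqref{eq.comp_main_tech} of Proposition \ref{thm.comp_main} to dominate $\sum_{i, X} \| [\tilde{\eta}_i \Psi(X)] \circ \varphi_i^{-1} \|_{B^{2, \infty, 1/2}_{\ell, t, x}}$ by $\| \Psi \|_{B^{2, \infty, 1/2}_{\ell, t, x}}$, where $s = 1/2 \in (-1, 1)$ as required. Since Proposition \ref{thm.comp_main} is the technical heart of the argument and is granted here, the remaining work is essentially a combination of partition-of-unity bookkeeping and a standard Sobolev embedding. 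The main obstacle, in principle, is the correct handling of the partition of unity with $\tilde{\eta}_i$ (rather than $\eta_i$) so that \eqref{eq.comp_main_tech} can be applied directly; replacing $\eta_i$ by $\tilde{\eta}_i^4$ in the pointwise estimate is the crucial trick.
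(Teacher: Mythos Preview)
Your proof is correct and follows essentially the same route as the paper's: scalar reduction via the \ass{R1}{} data, the Euclidean $H^{1/2}\hookrightarrow L^4$ embedding, and then the Besov comparison of Proposition~\ref{thm.comp_main}. The paper's version is marginally simpler in that it localizes with $\eta_i$ rather than $\tilde\eta_i$ and invokes \eqref{eq.comp_main} directly (the coordinate-based norm $\mc{B}^{2,\infty,1/2}_{\ell,t,x}$ is defined using $\eta_i$), so your $\eta_i \le \tilde\eta_i^4$ trick and the appeal to \eqref{eq.comp_main_tech} are not needed.
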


\begin{proof}
Let $\{ U_i, \varphi_i, \eta_i, \tilde{\eta}_i, e_i \}_{i = 1}^N$ be the data associated with the \ass{R1}{C, N} condition.
By Proposition \ref{thm.comp_main}, we need only prove the bound
\[ \| \Psi \|_{ L^{\infty, 4}_{t, x} } \lesssim \| \Psi \|_{ \mc{B}^{2, \infty, 1/2}_{\ell, t, x} } \text{.} \]

By our scalar reduction scheme, we can write
\[ | \Psi | \leq \sum_{i = 1}^N \sum_{ X \in i \mc{X}^r_l } | \eta_i \Psi (X) | \text{.} \]
Thus, by \eqref{eqr.change_of_coord} and the standard $H^{1/2}$-$L^4$-Sobolev inequality on $\R^2$, we have
\begin{align*}
\| \Psi \|_{ L^{\infty, 4}_{t, x} } &\lesssim \sum_{i = 1}^N \sum_{ X \in i \mc{X}^r_l } \| \eta_i \Psi (X) \circ \varphi_i^{-1} \|_{ L^{\infty, 4}_{t, x} } \\
&\lesssim \sup_{ \tau \in [0, \delta] } \sum_{i = 1}^N \sum_{ X \in i \mc{X}^r_l } \| \eta_i \Psi (X) \circ \varphi_i^{-1} [\tau] \|_{ H^{1/2}_x } \text{,}
\end{align*}
where the right-hand side is the standard Sobolev norm on $\R^2$.
By the L-P characterization of Sobolev norms (see Section \ref{sec.eucl_lp}), we obtain \eqref{eq.sob_frac_sh}:
\[ \| \Psi \|_{ L^{\infty, 4}_{t, x} } \lesssim \sup_{ \tau \in [0, \delta] } \sum_{i = 1}^N \sum_{ X \in i \mc{X}^r_l } \| \eta_i \Psi (X) \circ \varphi_i^{-1} [\tau] \|_{ B^{2, 1/2}_{\ell, x} } \lesssim \| \Psi \|_{ \mc{B}^{2, \infty, 1/2}_{\ell, t, x} } \text{.} \qedhere \]
\end{proof}

Although our setting here is one of a foliation $(\mc{N}, \gamma)$ of copies of $\mc{S}$, one can also obtain as a special case corresponding estimates on a single surface $(\mc{S}, h)$.
Indeed, this can be done by considering a horizontal metric $\gamma$ that is equivariant.
As this represents a system with static geometry, the estimates obtained on this foliation reduce to corresponding estimates on any single level set of this foliation.

For example, \eqref{eq.sob_frac_sh} reduces to a fractional Sobolev inequality.

\begin{corollary} \label{thm.sob_frac_shf}
Assume that $(\mc{S}, h)$ satisfies \ass{r1}{C, N}.
If $F \in \mc{C}^\infty T^r_l \mc{S}$, then
\begin{equation} \label{eq.sob_frac_shf} \| F \|_{ L^4_x } \lesssim_{ C, N, r, l } \| F \|_{ H^{1/2}_x } \text{.} \end{equation}
\end{corollary}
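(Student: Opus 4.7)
The plan is to deduce this single-surface Sobolev inequality from the foliation version in Proposition \ref{thm.sob_frac_sh} by considering the trivial static foliation built from $(\mc{S}, h)$, exactly as suggested in the paragraph preceding the corollary.

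First, I would set $\mc{N} = [0, 1] \times \mc{S}$ and define the horizontal metric $\gamma$ to be equivariant, namely $\gamma[\tau] = h$ for every $\tau \in [0, 1]$. Let $\{U_i, \varphi_i, \eta_i, \tilde{\eta}_i, e^i\}_{i=1}^N$ be the data realizing \ass{r1}{C, N} on $(\mc{S}, h)$. I would equivariantly transport each of these objects to $\mc{N}$: keep the same coordinate charts $(U_i, \varphi_i)$, replace the cutoffs $\eta_i, \tilde{\eta}_i$ by $\mf{e}\eta_i, \mf{e}\tilde{\eta}_i$, and replace each frame vector $e^i_a$ by $\mf{e} e^i_a$. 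Because $\gamma$ does not depend on $\tau$, all area, positivity, and derivative bounds inherent to \ass{r1}{C, N} hold for the transported data at every slice with the same constant $C$. In particular, $\|\nabla (\mf{e} e^i_a)\|_{L^{\infty,4}_{t,x}} = \|\nabla e^i_a\|_{L^4_x} \leq C$ and similarly for $\nabla \vartheta_i$. Hence $(\mc{N}, \gamma)$ satisfies \ass{R1}{C, N}.

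Next, given $F \in \mc{C}^\infty T^r_l \mc{S}$, I would set $\Psi = \mf{e} F \in \mc{C}^\infty \ul{T}^r_l \mc{N}$. Because $\gamma[\tau] = h$ for every $\tau$, the horizontal Laplacian satisfies $\lapl \Psi [\tau] = \lapl (\Psi[\tau]) = \lapl F$, so each spectral L-P operator $P_k$ (and $P_{<0}$) commutes with equivariant transport: $P_k \Psi = \mf{e}(P_k F)$. Therefore
\begin{align*}
\| \Psi \|_{ L^{\infty, 4}_{t, x} } &= \| F \|_{ L^4_x } \text{,} \\
\| \Psi \|_{ B^{2, \infty, 1/2}_{\ell, t, x} }^2 &= \sum_{k \geq 0} 2^k \| P_k F \|_{ L^2_x }^2 + \| P_{<0} F \|_{L^2_x}^2 = \| F \|_{ B^{2, 1/2}_{\ell, x} }^2 \text{.}
\end{align*}

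Finally I would invoke Proposition \ref{thm.sob_frac_sh} for $\Psi$ on $(\mc{N}, \gamma)$, and then Proposition \ref{thm.sobolev} with $s = 1/2$ to identify $\| F \|_{B^{2, 1/2}_{\ell, x}} \simeq_{1/2} \| F \|_{H^{1/2}_x}$, obtaining the desired inequality
\[
\| F \|_{ L^4_x } = \| \Psi \|_{ L^{\infty, 4}_{t, x} } \lesssim_{C, N, r, l} \| \Psi \|_{ B^{2, \infty, 1/2}_{\ell, t, x} } = \| F \|_{ B^{2, 1/2}_{\ell, x} } \lesssim \| F \|_{ H^{1/2}_x } \text{.}
\]
There is no genuine obstacle here; the only point requiring a brief check is the verification that the equivariantly transported data still satisfy \ass{R1}{C, N}, and this is immediate because $\tau$-derivatives play no role in the hypotheses and $\gamma$ is constant in $\tau$.
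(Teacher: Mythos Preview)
Your proposal is correct and follows exactly the approach the paper indicates: reduce to the foliation estimate \eqref{eq.sob_frac_sh} by taking the equivariant horizontal metric $\gamma = \mf{e} h$, so that the static foliation satisfies \ass{R1}{C,N} with the transported data, and then identify the resulting Besov norm with $H^{1/2}_x$ via Proposition \ref{thm.sobolev}. The paper states this reduction in one line, whereas you have spelled out the verification that the transported data satisfy \ass{R1}{C,N} and that the L-P operators commute with equivariant transport, but the content is the same.
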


We will also require the following single-surface variant of Proposition \ref{thm.comp_main}.

\begin{corollary} \label{thm.comp_mainf}
Assume $(\mc{N}, \gamma)$ satisfies \ass{R1}{C, N}, with data $\{ U_i, \varphi_i, \eta_i, \tilde{\eta}_i, e^i \}_{i = 1}^N$.
If $\tau \in [0, \delta]$, $a \in [1, \infty]$, $s \in (-1, 1)$, and $\Psi \in \mc{C}^\infty \ul{T}^r_l \mc{N}$, then
\begin{align}
\label{eq.comp_mainf} \sum_{i = 1}^N \sum_{ X \in i \mc{X}^r_l } \| \eta_i \Psi [\tau] (X [\tau]) \circ \varphi_i^{-1} \|_{ B^{a, s}_{\ell, x} } &\simeq_{C, N, s, r, l} \| \Psi [\tau] \|_{ B^{a, s}_{\ell, x} } \text{,} \\
\notag \sum_{i = 1}^N \sum_{ X \in i \mc{X}^r_l } \| \tilde{\eta}_i \Psi [\tau] (X [\tau]) \circ \varphi_i^{-1} \|_{ B^{a, s}_{\ell, x} } &\lesssim_{C, N, s, r, l} \| \Psi [\tau] \|_{ B^{a, s}_{\ell, x} } \text{.}
\end{align}
\end{corollary}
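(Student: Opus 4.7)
The plan is to reduce Corollary~\ref{thm.comp_mainf} to the already-established Proposition~\ref{thm.comp_main} by constructing an auxiliary equivariant foliation that encodes the single-slice geometry at time $\tau$. Fix $\tau \in [0, \delta]$, and set $\bar{h} = \gamma[\tau]$ and $\bar{e}^i_a = e^i_a[\tau]$ for each $i, a$. Consider the equivariant horizontal metric $\bar{\gamma} = \mf{e}\bar{h}$ and equivariant frames $\mf{e}\bar{e}^i_a$ on $\mc{N}$, constructed by equivariant transport of these single-slice objects. From the original \ass{R1}{C,N} data, one checks directly that $(\mc{N}, \bar{\gamma})$ satisfies \ass{R1}{C,N} with data $\{U_i, \varphi_i, \eta_i, \tilde{\eta}_i, \mf{e}\bar{e}^i\}_{i=1}^N$: area, coordinate positivity, and partition-of-unity properties pass through immediately from the single-slice assumptions at $\tau$, while the $L^{\infty,4}_{t,x}$ frame bound and $L^{\infty,2}_{t,x}$ area-density bound on the equivariant foliation reduce to the corresponding $L^4_x$ and $L^2_x$ bounds at time $\tau$, which in turn are controlled by the original $L^{\infty,\cdot}_{t,x}$-type assumptions on $(\mc{N}, \gamma)$.

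Now let $F = \Psi[\tau]$ and consider its equivariant transport $\mf{e}F \in \mc{C}^\infty \ul{T}^r_l \mc{N}$. Since $\bar{\gamma}$ is equivariant, the Levi-Civita connection, Laplacian, and hence the geometric L-P operators associated to $(\mc{N}, \bar{\gamma})$ all commute with equivariant transport, giving $P_k (\mf{e}F) = \mf{e}(P_k F)$ and $P_{<0}(\mf{e}F) = \mf{e}(P_{<0} F)$, where the operators on the right are with respect to $(\mc{S}, \bar{h})$. Since $\|\mf{e}G\|_{L^{\infty,2}_{t,x}(\bar{\gamma})} = \|G\|_{L^2_x(\bar{h})}$ for any $G \in \mc{C}^\infty T^r_l \mc{S}$, it follows that
\[ \|\mf{e}F\|_{B^{a, \infty, s}_{\ell, t, x}(\bar{\gamma})} = \|F\|_{B^{a, s}_{\ell, x}(\bar{h})} = \|\Psi[\tau]\|_{B^{a, s}_{\ell, x}}. \]

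Likewise, the scalar reduction family $i\bar{\mc{X}}^r_l$ built from the equivariant frames $\mf{e}\bar{e}^i$ consists precisely of the equivariant transports of the elements of $i\mc{X}^r_l$ evaluated at time $\tau$; thus $(\mf{e}F)(X) = \mf{e}(F(X[\tau]))$. Since any constant-in-$t$ function $f(t,x) = g(x)$ on $[0, \delta] \times \R^2$ satisfies $\|f\|_{B^{a, \infty, s}_{\ell, t, x}(\R^2)} = \|g\|_{B^{a, s}_{\ell, x}(\R^2)}$ directly from the definition of the time-integrated Besov norm, one obtains
\[ \|\mf{e}F\|_{\mc{B}^{a, \infty, s}_{\ell, t, x}(\bar{\gamma})} = \sum_{i=1}^N \sum_{X \in i\mc{X}^r_l} \|\eta_i \, \Psi[\tau](X[\tau]) \circ \varphi_i^{-1}\|_{B^{a, s}_{\ell, x}(\R^2)}, \]
and the analogous identity with $\tilde{\eta}_i$ replacing $\eta_i$. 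Applying \eqref{eq.comp_main} at $p = \infty$ to $\mf{e}F$ on $(\mc{N}, \bar{\gamma})$ yields the first part of \eqref{eq.comp_mainf}, and applying \eqref{eq.comp_main_tech} in the same setting yields the second part.

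The main obstacle is entirely organizational: one must carefully verify that the horizontal metric, frames, area densities, partition of unity, and the scalar reduction family all transfer coherently to the constructed equivariant foliation, and that the L-P operators commute with equivariant transport so that the $p = \infty$ time-integrated norms collapse onto the single-slice norms at $\tau$. No new analytic ingredients are required beyond Proposition~\ref{thm.comp_main}, since Corollary~\ref{thm.comp_mainf} is essentially the $p = \infty$ case of that proposition applied to a static background.
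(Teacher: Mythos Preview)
Your proposal is correct and takes essentially the same approach as the paper: the paper's proof consists of the single sentence ``apply Proposition~\ref{thm.comp_main} to the foliation $(\mc{N}, \gamma^\prime)$, where $\gamma^\prime = \mf{e}(\gamma[\tau])$,'' and you have carried out exactly this reduction, supplying the routine verifications (that the equivariant foliation inherits \ass{R1}{C,N}, that the L-P operators commute with equivariant transport, and that the $p=\infty$ norms collapse to single-slice norms) that the paper leaves implicit.
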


Corollary \ref{thm.comp_mainf} can be immediately proved by applying Proposition \ref{thm.comp_main} to the foliation $(\mc{N}, \gamma^\prime)$, where $\gamma^\prime = \mf{e} ( \gamma [\tau] )$ is the equivariant transport of $\gamma [\tau]$.

\subsection{Product Estimates} \label{sec.fol_est}

Next, we obtain some tensorial product estimates involving geometric Besov norms using our scalar reduction machinery.
The basic strategy, as always, is to use Proposition \ref{thm.comp_main} to convert geometric Besov norms to equivalent coordinate-based norms.
From this point, one can proceed by applying standard estimates in Euclidean space; these are given in Appendix \ref{sec.eucl}.

To avoid confusion, in future proofs, we will let $\partial$ denote the Euclidean gradient on $\R^2$, and we will let $\nabla$ denote the horizontal covariant differential for $(\mc{N}, \gamma)$.

\begin{theorem} \label{thm.est_prod_elem}
Assume that $(\mc{N}, \gamma)$ satisfies \ass{R1}{C, N}.
Furthermore, consider horizontal tensor fields $\Psi \in \mc{C}^\infty \ul{T}^{r_1}_{l_1} \mc{N}$ and $\Phi \in \mc{C}^\infty \ul{T}^{r_2}_{l_2} \mc{N}$.
\begin{itemize}
\item If $a \in [1, \infty]$, $p \in [1, \infty]$, and $s \in (-1, 1)$, then
\begin{align}
\label{eq.est_prod_elem} \| \Phi \otimes \Psi \|_{ B^{a, p, s}_{\ell, t, x} } &\lesssim_{ C, N, s, r_1, l_1, r_2, l_2 } ( \| \nabla \Phi \|_{ L^{\infty, 2}_{t, x} } + \| \Phi \|_{ L^{\infty, \infty}_{t, x} } ) \| \Psi \|_{ B^{a, p, s}_{\ell, t, x} } \text{.}
\end{align}

\item If $s \in [0, 1)$, and if $p, p_1, p_2 \in [1, \infty]$ satisfy $p^{-1} = p_1^{-1} + p_2^{-1}$, then
\begin{align}
\label{eq.est_prod_sob} \| \Phi \otimes \Psi \|_{ B^{1, p, s}_{\ell, t, x} } &\lesssim_{ C, N, s, r_1, l_1, r_2, l_2 } \| \Phi \|_{ B^{ 2, p_1, (1 + s) / 2 }_{\ell, t, x} } \| \Psi \|_{ B^{ 2, p_2, (1 + s) / 2 }_{\ell, t, x} } \text{.}
\end{align}
\end{itemize}
\end{theorem}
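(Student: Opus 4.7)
The plan is to reduce both inequalities to their Euclidean counterparts from Appendix \ref{sec.eucl} by combining the scalar decomposition of Section \ref{sec.fol_scal} with the Besov comparison of Proposition \ref{thm.comp_main}. By Proposition \ref{thm.comp_main}, it suffices to estimate the coordinate-based norm $\| \Phi \otimes \Psi \|_{ \mc{B}^{a, p, s}_{\ell, t, x} }$. Given the data $\{ U_i, \varphi_i, \eta_i, \tilde{\eta}_i, e^i \}_{i = 1}^N$, I note that any element of $i \mc{X}^{r_1 + r_2}_{l_1 + l_2}$ is of the form $Y = Y_1 \otimes Y_2$ for some $Y_1 \in i \mc{X}^{r_1}_{l_1}$ and $Y_2 \in i \mc{X}^{r_2}_{l_2}$, so the tensorial product factors as the scalar product
\[ [ \eta_i (\Phi \otimes \Psi) (Y) ] \circ \varphi_i^{-1} = \{ [ \eta_i \Phi (Y_1) ] \circ \varphi_i^{-1} \} \cdot \{ [ \tilde{\eta}_i \Psi (Y_2) ] \circ \varphi_i^{-1} \} \text{,} \]
using $\tilde{\eta}_i \equiv 1$ on the support of $\eta_i$. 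This factorization is the whole point of using parallel frames: it converts a geometric tensor product into an honest scalar product of compactly supported functions on $[0, \delta] \times \R^2$, at the cost of controlling a scalar reduction for each factor.

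For \eqref{eq.est_prod_elem}, I would appeal to the Euclidean multiplier estimate on $B^{a, p, s}_{\ell, t, x}$ for $s \in (-1, 1)$, namely
\[ \| f g \|_{ B^{a, p, s}_{\ell, t, x} } \lesssim_s ( \| \partial f \|_{ L^{\infty, 2}_{t, x} } + \| f \|_{ L^{\infty, \infty}_{t, x} } ) \| g \|_{ B^{a, p, s}_{\ell, t, x} } \text{,} \]
which I expect to be established in Appendix \ref{sec.eucl} via a paraproduct decomposition. Apply this with $f = [ \eta_i \Phi (Y_1) ] \circ \varphi_i^{-1}$ and $g = [ \tilde{\eta}_i \Psi (Y_2) ] \circ \varphi_i^{-1}$. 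The $L^{\infty, \infty}$-norm of $f$ is bounded by $\| \Phi \|_{ L^{\infty, \infty}_{t, x} }$ using $|\eta_i|, |Y_1| \leq 1$ and \eqref{eqr.change_of_coord}. The coordinate derivative $\partial f$ is controlled by Proposition \ref{thm.scalar_red_D} (which absorbs the connection coefficient $\nabla e^i_a$ using the \ass{R1}{} assumption), together with the product rule for the cutoff $\eta_i$; the resulting $L^{\infty, 2}_{t, x}$-bound is $\| \nabla \Phi \|_{L^{\infty, 2}_{t, x}} + \| \Phi \|_{L^{\infty, \infty}_{t, x}}$, since on a fixed surface of finite area we can dominate $\| \Phi \|_{L^2_x}$ by $\| \Phi \|_{L^\infty_x}$. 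The factor in $\Psi$ is then summed over $i$ using the second inequality in Proposition \ref{thm.comp_main}, yielding $\| \Psi \|_{B^{a,p,s}_{\ell,t,x}}$.

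For \eqref{eq.est_prod_sob}, the identical decomposition reduces matters to the Euclidean bilinear bound
\[ \| f g \|_{ B^{1, p, s}_{\ell, t, x} } \lesssim_s \| f \|_{ B^{ 2, p_1, (1+s)/2 }_{\ell, t, x} } \| g \|_{ B^{ 2, p_2, (1+s)/2 }_{\ell, t, x} } \text{,} \qquad s \in [0, 1) \text{,} \]
again to be found in Appendix \ref{sec.eucl}. Here I would take $f = [\eta_i \Phi (Y_1)] \circ \varphi_i^{-1}$ and $g = [\tilde{\eta}_i \Psi (Y_2)] \circ \varphi_i^{-1}$, then sum over $i$ and $Y_1, Y_2$ using both inequalities in Proposition \ref{thm.comp_main}: the $\eta_i$-side yields the norm of $\Phi$, while the $\tilde{\eta}_i$-side gives the norm of $\Psi$. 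Since the summation is over the finite index set $1 \leq i \leq N$ and over the finite frame families $i \mc{X}^{r_j}_{l_j}$, these summations cost only constants depending on $N$, $r_j$, $l_j$.

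The principal obstacle is really packaged into Proposition \ref{thm.comp_main}: the equivalence between geometric and coordinate Besov norms for $s \in (-1, 1)$ is precisely what makes the scalar reduction legitimate, and it is this step that forces the restriction $s \in (-1, 1)$. Given Proposition \ref{thm.comp_main} and the Euclidean product estimates, the remaining work is simply the routine bookkeeping sketched above, with the only nontrivial point being the use of Proposition \ref{thm.scalar_red_D} --- and through it the $L^{\infty, 4}_{t, x}$ control on $\nabla e^i_a$ from \ass{R1}{C, N} --- to keep the connection coefficients of the parallel frame at the correct level of regularity.
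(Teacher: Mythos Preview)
Your proposal is correct and follows essentially the same route as the paper: reduce to coordinate Besov norms via Proposition~\ref{thm.comp_main}, factor $\eta_i(\Phi\otimes\Psi)(Y_1\otimes Y_2)$ as a scalar product using $\tilde\eta_i\equiv 1$ on $\operatorname{supp}\eta_i$, apply the Euclidean estimates \eqref{eqc.est_prod_elem} and \eqref{eqc.est_prod_sob}, and reassemble using \eqref{eq.scalar_red_D} and \eqref{eq.comp_main_tech}. The only differences are cosmetic: the paper assigns $\tilde\eta_i$ to $\Phi$ and $\eta_i$ to $\Psi$ (so the $\Psi$-sum is the definition of $\mc{B}$ rather than an appeal to \eqref{eq.comp_main_tech}), and your aside about ``parallel frames'' is a slight misnomer here --- the \ass{R1}{} frames are merely orthonormal with $L^{\infty,4}_{t,x}$-controlled connection, and the $t$-parallel refinement enters only later in Section~\ref{sec.thm} under \ass{F1}{}.
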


\begin{proof}
By Proposition \ref{thm.comp_main}, we need only prove \eqref{eq.est_prod_elem} and \eqref{eq.est_prod_sob}, but with all geometric Besov norms replaced by their coordinate-based analogues.
As usual, we assume data $\{ U_i, \varphi_i, \eta_i, \tilde{\eta}_i, e_i \}_{i = 1}^N$ associated with the \ass{R1}{C, N} condition.

For \eqref{eq.est_prod_elem}, we begin with the scalar reduction process:
\begin{align*}
\| \Phi \otimes \Psi \|_{ \mc{B}^{a, p, s}_{\ell, t, x} } &= \sum_{i = 1}^N \sum_{ W \in i \mc{X}^{r_1 + r_2}_{l_1 + l_2} } \| [ \eta_i \cdot ( \Phi \otimes \Psi ) (W) ] \circ \varphi_i^{-1} \|_{ B^{a, p, s}_{\ell, t, x} } \\
&= \sum_{i = 1}^N \sum_{ X \in i \mc{X}^{r_1}_{l_1} } \sum_{ Y \in i \mc{X}^{r_2}_{l_2} } \| [ \tilde{\eta}_i \Phi (X) \cdot \eta_i \Psi (Y) ] \circ \varphi_i^{-1} \|_{ B^{a, p, s}_{\ell, t, x} } \text{.}
\end{align*}
Observe that the elements of $i \mc{X}^{r_1 + r_2}_{l_1 + l_2}$ are precisely the tensor products of elements of $i \mc{X}^{r_1}_{l_1}$ and $i \mc{X}^{r_2}_{l_2}$.
Applying the Euclidean analogue \eqref{eqc.est_prod_elem} of our desired estimate along with the \ass{R1}{C, N} condition on $(\mc{N}, \gamma)$, we obtain
\begin{align*}
\| \Phi \otimes \Psi \|_{ B^{a, p, s}_{\ell, t, x} } &\lesssim \sum_{i = 1}^N \sum_{ X \in i \mc{X}^{r_1}_{l_1} } \{ \| \partial [ \tilde{\eta}_i \Phi (X) \circ \varphi_i^{-1} ] \|_{ L^{\infty, 2}_{t, x} } + \| \tilde{\eta}_i \Phi (X) \circ \varphi_i^{-1} \|_{ L^{\infty, \infty}_{t, x} } \} \\
&\qquad \cdot \sum_{ Y \in i \mc{X}^{r_2}_{l_2} } \| \eta_i \Psi (Y) \circ \varphi_i^{-1} \|_{ B^{a, p, s}_{\ell, t, x} } \\
&\lesssim \sup_{ 1 \leq i \leq N } \sum_{ X \in i \mc{X}^{r_1}_{l_1} } \{ \| \nabla [ \Phi (X) ] \|_{ L^{\infty, 2}_{t, x} } + \| \Phi (X) \|_{ L^{\infty, \infty}_{t, x} } \} \cdot \| \Psi \|_{ \mc{B}^{a, p, s}_{\ell, t, x} } \text{.}
\end{align*}
Finally, applying \eqref{eq.scalar_red_D} yields \eqref{eq.est_prod_elem}.

For \eqref{eq.est_prod_sob}, we decompose as before and apply the Euclidean analogue \eqref{eqc.est_prod_sob}:
\begin{align*}
\| \Phi \otimes \Psi \|_{ \mc{B}^{1, p, s}_{\ell, t, x} } &= \sum_{i = 1}^N \sum_{ X \in i \mc{X}^{r_1}_{l_1} } \sum_{ Y \in i \mc{X}^{r_2}_{l_2} } \| [ \tilde{\eta}_i \Phi (X) \cdot \eta_i \Psi (Y) ] \circ \varphi_i^{-1} \|_{ B^{1, p, s}_{\ell, t, x} } \\
&\lesssim \sup_{1 \leq i \leq N} \sum_{ X \in i \mc{X}^{r_1}_{l_1} } \| \tilde{\eta}_i \Phi (X) \circ \varphi_i^{-1} \|_{ B^{2, p_1, (1 + s) / 2}_{\ell, t, x} } \\
&\qquad \cdot \sum_{i = 1}^N \sum_{ Y \in i \mc{X}^{r_2}_{l_2} } \| \eta_i \Psi (Y) \circ \varphi_i^{-1} \|_{ B^{2, p_2, (1 + s) / 2}_{\ell, t, x} } \text{.}
\end{align*}
By \eqref{eq.comp_main_tech} and the definition of the coordinate Besov norms, we obtain \eqref{eq.est_prod_sob}:
\begin{align*}
\| \Phi \otimes \Psi \|_{ \mc{B}^{1, p, s}_{\ell, t, x} } &\lesssim \| \Phi \|_{ B^{2, p_1, (1 + s) / 2}_{\ell, t, x} } \| \Psi \|_{ \mc{B}^{2, p_2, (1 + s) / 2}_{\ell, t, x} } \text{.} \qedhere
\end{align*}
\end{proof}

\begin{remark}
In fact, \eqref{eq.est_prod_elem} and \eqref{eq.est_prod_sob} still hold if the products $\Phi \otimes \Psi$ on the left-hand sides are replaced by zero or more contractions, metric ($\gamma$-)contractions, and volume form ($\epsilon$-)contractions of $\Phi \otimes \Psi$.
This is due to the observation that the above contraction operations commute with all the geometric L-P operators.
\end{remark}

Finally, by taking $p = \infty$ and $\gamma$ to be equivariant in Theorem \ref{thm.est_prod_elem}, and by recalling Proposition \ref{thm.sobolev}, we obtain analogous estimates for a fixed surface.

\begin{corollary} \label{thm.est_prod_elemf}
Assume that $(\mc{S}, h)$ satisfies \ass{r1}{C, N}.
\begin{itemize}
\item If $a \in [1, \infty]$, $s \in (-1, 1)$, $F \in \mc{C}^\infty T^{r_1}_{l_1} \mc{S}$, and $G \in \mc{C}^\infty T^{r_2}_{l_2} \mc{S}$, then
\begin{align}
\label{eq.est_prod_elemf} \| F \otimes G \|_{ B^{a, s}_{\ell, x} } &\lesssim_{ C, N, s, r_1, l_1, r_2, l_2 } ( \| \nabla F \|_{ L^2_x } + \| F \|_{ L^\infty_x } ) \| G \|_{ B^{a, s}_{\ell, x} } \text{.}
\end{align}

\item If $s \in [0, 1)$, $F \in \mc{C}^\infty T^{r_1}_{l_1} \mc{S}$, and $G \in \mc{C}^\infty T^{r_2}_{l_2} \mc{S}$, then
\begin{align}
\label{eq.est_prod_sobf} \| F \otimes G \|_{ B^{1, s}_{\ell, x} } &\lesssim_{ C, N, s, r_1, l_1, r_2, l_2 } \| F \|_{ H^{ (1 + s) / 2 }_x } \| G \|_{ H^{ (1 + s) / 2 }_x } \text{.}
\end{align}
\end{itemize}
\end{corollary}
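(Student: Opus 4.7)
The plan is to reduce the corollary to Theorem \ref{thm.est_prod_elem} by lifting the single-surface situation to an equivariant foliation. Given $(\mc{S}, h)$ satisfying \ass{r1}{C, N} with data $\{U_i, \varphi_i, \eta_i, \tilde{\eta}_i, e^i\}_{i=1}^N$, I would fix any $\delta > 0$ (say $\delta = 1$), form the trivial foliation $\mc{N} = [0, \delta] \times \mc{S}$, and equip it with the equivariant horizontal metric $\gamma = \mf{e} h$. Then I would check that the equivariant transports of the \ass{r1} data furnish data satisfying \ass{R1}{C, N} for $(\mc{N}, \gamma)$. This verification is routine: the area bound, the uniform positivity on each $(U_i)_\tau$, and the partition-of-unity bounds are pointwise in $\tau$, while each time-integrated norm of the form $\| \nabla (\mf{e} X) \|_{ L^{\infty, 4}_{t, x} }$ or $\| \nabla \vartheta_i \|_{ L^{\infty, 2}_{t, x} }$ is just the constant-in-$\tau$ value of the corresponding single-surface norm on $(\mc{S}, h)$, controlled by $C$.

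Next, given $F \in \mc{C}^\infty T^{r_1}_{l_1} \mc{S}$ and $G \in \mc{C}^\infty T^{r_2}_{l_2} \mc{S}$, I would form their equivariant transports $\Psi = \mf{e} F$ and $\Phi = \mf{e} G$ on $\mc{N}$. Because the geometric L-P operators on $(\mc{N}, \gamma)$ act slicewise by the L-P operators on $(\mc{S}, h)$, they commute with equivariant transport, so for any $a \in [1, \infty]$, $q \in [1, \infty]$, and $s \in \R$ one has the identifications
\[
\| \Psi \|_{ L^{\infty, q}_{t, x} } = \| F \|_{ L^q_x }, \qquad \| \Psi \|_{ B^{a, \infty, s}_{\ell, t, x} } = \| F \|_{ B^{a, s}_{\ell, x} },
\]
and likewise for $\Phi, G$ and for $\mf{e} ( F \otimes G ) = \Psi \otimes \Phi$. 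The same holds for the derivative norm $\| \nabla \Psi \|_{ L^{\infty, 2}_{t, x} } = \| \nabla F \|_{ L^2_x }$.

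Now I would apply Theorem \ref{thm.est_prod_elem} to $\Psi, \Phi$ with $p = \infty$. Estimate \eqref{eq.est_prod_elem} becomes \eqref{eq.est_prod_elemf} directly under the above identifications. Taking instead $p_1 = p_2 = \infty$ (so $p = \infty$) in \eqref{eq.est_prod_sob} yields
\[
\| F \otimes G \|_{ B^{1, s}_{\ell, x} } \lesssim \| F \|_{ B^{2, (1+s)/2}_{\ell, x} } \| G \|_{ B^{2, (1+s)/2}_{\ell, x} },
\]
and then Proposition \ref{thm.sobolev} identifies the $B^{2, (1+s)/2}_{\ell, x}$-norm with the $H^{(1+s)/2}_x$-norm (up to constants depending only on $s$), delivering \eqref{eq.est_prod_sobf}.

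There is no substantive obstacle: the only step requiring any care is the identification of the time-integrated Besov norm with $p = \infty$ on an equivariant field as the corresponding single-surface Besov norm, which follows from the fact that the L-P operators associated with $\gamma = \mf{e} h$ coincide on every timeslice with those of $h$. All the constants track through cleanly because the \ass{R1}{} constants of $(\mc{N}, \gamma)$ match the \ass{r1}{} constants of $(\mc{S}, h)$ by construction.
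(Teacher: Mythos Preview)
Your proposal is correct and follows exactly the approach indicated in the paper: reduce to Theorem \ref{thm.est_prod_elem} by taking $\gamma$ equivariant and $p = \infty$, then invoke Proposition \ref{thm.sobolev} for the second estimate. You have simply filled in the routine details that the paper leaves implicit.
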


\subsection{Conformal Transformations} \label{sec.fol_conf}

We return to the topic of conformal transformations, but now in the foliation setting.
Consider another horizontal metric $\bar{\gamma} \in \mc{C}^\infty \ul{T}^0_2 \mc{N}$, related to $\gamma$ via the \emph{conformal transform}
\[ \bar{\gamma} = e^{2 u} \gamma \text{,} \qquad u \in \mc{C}^\infty \mc{N} \text{.} \]
We can view this as a family of conformal transformations for the $\gamma [\tau]$'s, such that the conformal factors $e^{2 u} [\tau]$ also vary smoothly with respect to $\tau$.
If we let $\bar{\epsilon}$ denote the volume form associated with $\bar{\gamma}$, then we have
\[ \bar{\gamma}_{ab} = e^{2 u} \gamma_{ab} \text{,} \qquad \bar{\epsilon}_{ab} = e^{2 u} \epsilon_{ab} \text{,} \qquad \bar{\gamma}^{ab} = e^{-2 u} \gamma^{ab} \text{,} \qquad \bar{\epsilon}^{ab} = e^{-2 u} \epsilon^{ab} \text{.} \]
In general, we denote objects with respect to $\bar{\gamma}$ with a ``bar" over the symbol.

We can now port Proposition \ref{thm.conf_regf} directly to the foliation setting.

\begin{proposition} \label{thm.conf_reg}
Let $u \in \mc{C}^\infty \mc{N}$, and let $\bar{\gamma} = e^{2 u} \gamma$ be a conformal transform of $\gamma$.
Furthermore, assume that the following quantity is sufficiently small:
\[ D = \| u \|_{ L^{\infty, \infty}_{t, x} } + \| \nabla u \|_{ L^{\infty, 4}_{t, x} } \ll 1 \]
Then, the following properties hold:
\begin{itemize}
\item Suppose $(\mc{N}, \gamma)$ satisfies \ass{R0}{C, N}, with data $\{ U_i, \varphi_i, \eta_i \}_{i = 1}^N$.
Then, there exists a constant $C^\prime$, depending on $C$ and $N$, such that $(\mc{N}, \bar{\gamma})$ satisfies \ass{R0}{C^\prime, N}, with the same data $\{ U_i, \varphi_i, \eta_i \}_{i = 1}^N$.

\item Suppose $(\mc{N}, \gamma)$ also satisfies \ass{R1}{C, N}, with data $\{ U_i, \varphi_i, \eta_i, \tilde{\eta}_i, e^i \}_{i = 1}^N$, with $e^i = (e^i_1, e^i_2)$ for each $1 \leq i \leq N$.
If we define $\bar{e}^i = ( e^{-u} e^i_1, e^{-u} e^i_2 )$, then there is some constant $C^\prime$, depending on $C$ and $N$, such that $(\mc{N}, \bar{\gamma})$ satisfies \ass{R1}{C^\prime, N}, with data $\{ U_i, \varphi_i, \eta_i, \tilde{\eta}_i, \bar{e}^i \}_{i = 1}^N$.
Furthermore, for any $a \in [1, \infty]$, $p \in [1, \infty]$, $s \in (-1, 1)$, and $\Psi \in \mc{C}^\infty \ul{T}^r_l \mc{N}$, we have
\begin{equation} \label{eq.conf_besov} \| \Psi \|_{ \bar{B}^{a, p, s}_{\ell, t, x} } \simeq_{ C, N, s, r, l } \| \Psi \|_{ B^{a, p, s}_{\ell, t, x} } \text{.} \end{equation}
\end{itemize}
\end{proposition}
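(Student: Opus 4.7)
The first two bullet points are the foliation analogues of Proposition \ref{thm.conf_regf}, and the approach is to apply that proposition slicewise at each $\tau \in [0,\delta]$. Since $D$ is measured in $L^{\infty,\infty}_{t,x}$ and $L^{\infty,4}_{t,x}$ norms, the per-slice bounds $\|u[\tau]\|_{L^\infty_x} + \|\nabla u[\tau]\|_{L^4_x} \leq D$ are uniform in $\tau$, so the constants produced by Proposition \ref{thm.conf_regf} do not depend on $\tau$. This immediately supplies \ass{R0}{C^\prime,N} for $(\mc{N},\bar{\gamma})$. For \ass{R1}{C^\prime,N}, note that $\bar{e}^i_a = e^{-u}e^i_a$ is $\bar{\gamma}$-orthonormal by construction; the conformal formula \eqref{eq.conf_cd} together with \eqref{eq.gns_1} (applied slicewise) converts the given $L^{\infty,4}_{t,x}$-controls on $\nabla u$ and $\nabla e^i_a$ into the required $L^{\infty,4}_{t,x}$-bound on $\bar{\nabla}\bar{e}^i_a$; and the product rule gives the $L^{\infty,2}_{t,x}$-bound on $\nabla\bar{\vartheta}_i = \nabla(e^{2u}\vartheta_i)$ from those on $\nabla u$ and $\nabla\vartheta_i$. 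The cutoffs $\eta_i,\tilde{\eta}_i$ and charts $(U_i,\varphi_i)$ are unchanged.

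For \eqref{eq.conf_besov}, the plan is to use Proposition \ref{thm.comp_main} to pass from the geometric Besov norms to coordinate-based ones, where the comparison will reduce to a scalar multiplier estimate. Having verified \ass{R1}{C^\prime,N} for $(\mc{N},\bar{\gamma})$ with data $\{U_i,\varphi_i,\eta_i,\tilde{\eta}_i,\bar{e}^i\}_{i=1}^N$, Proposition \ref{thm.comp_main} applied to both metrics yields
\begin{equation*}
\|\Psi\|_{B^{a,p,s}_{\ell,t,x}} \simeq \|\Psi\|_{\mc{B}^{a,p,s}_{\ell,t,x}}, \qquad \|\Psi\|_{\bar{B}^{a,p,s}_{\ell,t,x}} \simeq \|\Psi\|_{\bar{\mc{B}}^{a,p,s}_{\ell,t,x}},
\end{equation*}
where $\mc{B}$ and $\bar{\mc{B}}$ are the coordinate Besov norms built from the frames $e^i$ and $\bar{e}^i$, respectively. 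Since $\bar{e}^i_a = e^{-u}e^i_a$ forces the dual coframe $\bar{e}_{*i}^a = e^u e_{*i}^a$, each frame element appearing in $\bar{\mc{B}}$ is $e^{(r-l)u}$ times the corresponding element appearing in $\mc{B}$, giving $\Psi(\bar{X}) = e^{(r-l)u}\Psi(X)$ for the pairwise full contractions. Thus the equivalence $\mc{B} \simeq \bar{\mc{B}}$ reduces, after composing with $\varphi_i^{-1}$ and localizing by $\eta_i$ or $\tilde{\eta}_i$, to a scalar Euclidean multiplier bound $\|e^{mu}f\|_{B^{a,p,s}_{\ell,t,x}} \simeq \|f\|_{B^{a,p,s}_{\ell,t,x}}$, where $m = r-l$ and $f$ is supported in $\varphi_i(U_i)$.

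This last multiplier bound is the main step. It follows from the Euclidean product estimate \eqref{eqc.est_prod_elem} in Appendix \ref{sec.eucl}, applied with multiplier $\tilde{\eta}_i \cdot e^{mu}\circ\varphi_i^{-1}$: the $L^{\infty,\infty}_{t,x}$-bound on the multiplier comes directly from $\|u\|_{L^{\infty,\infty}_{t,x}} \leq D$, and the $L^{\infty,2}_{t,x}$-bound on its Euclidean gradient follows from $\|\nabla u\|_{L^{\infty,4}_{t,x}} \leq D$, together with \eqref{eqr.change_of_coord} (to transfer norms between chart and Euclidean coordinates) and H\"older's inequality on the bounded set $\varphi_i(U_i)$ to pass from $L^{\infty,4}_{t,x}$ to $L^{\infty,2}_{t,x}$. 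Chaining this multiplier bound with the coordinate-geometric equivalences from Proposition \ref{thm.comp_main} then proves \eqref{eq.conf_besov}.
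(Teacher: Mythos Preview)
Your proposal is correct and follows essentially the same route as the paper: apply Proposition \ref{thm.conf_regf} slicewise for the regularity conditions, then use Proposition \ref{thm.comp_main} on both $(\mc{N},\gamma)$ and $(\mc{N},\bar{\gamma})$ together with the identity $\Psi(\bar{X}) = e^{(r-l)u}\Psi(X)$ to reduce \eqref{eq.conf_besov} to a multiplier bound. The only cosmetic difference is that the paper packages the last step as $\|\Psi\|_{\bar{B}^{a,p,s}_{\ell,t,x}} \simeq \|e^{(r-l)u}\Psi\|_{B^{a,p,s}_{\ell,t,x}}$ and then invokes the already-established geometric product estimate \eqref{eq.est_prod_elem}, whereas you stay at the coordinate level and apply the Euclidean estimate \eqref{eqc.est_prod_elem} directly; since \eqref{eq.est_prod_elem} is itself proved via \eqref{eqc.est_prod_elem}, the two are equivalent.
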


\begin{proof}
That the \ass{R0}{C, N} condition for $(\mc{N}, \gamma)$ implies the same for $(\mc{N}, \bar{\gamma})$ follows immediately by applying Proposition \ref{thm.conf_regf} to each $(\mc{S}, \gamma [\tau])$.
A similar argument using Proposition \ref{thm.conf_regf} also shows that if $(\mc{N}, \gamma)$ satisfies \ass{R1}{C, N}, then so does $(\mc{N}, \bar{\gamma})$, for the desired data given in the statement of the proposition.

Thus, it remains only to prove the Besov comparison \eqref{eq.conf_besov}.
By the definitions of the coordinate-based Besov norms with respect to both $\bar{\gamma}$ and $\gamma$, we have
\begin{align*}
\| \Psi \|_{ \bar{\mc{B}}^{a, p, s}_{\ell, t, x} } &= \sum_{i = 1}^N \sum_{\bar{X} \in i \bar{\mc{X}}^r_l } \| \eta_i \Psi (\bar{X}) \circ \varphi_i^{-1} \|_{ B^{a, p, s}_{\ell, t, x} } \\
&= \sum_{i = 1}^N \sum_{X \in i \mc{X}^r_l } \| e^{(r - l) u} \eta_i \Psi ( X ) \circ \varphi_i^{-1} \|_{ B^{a, p, s}_{\ell, t, x} } \\
&= \| e^{ (r - l) u } \Psi \|_{ \mc{B}^{a, p, s}_{\ell, t, x} } \text{.}
\end{align*}
The relationship between $i \mc{X}^r_l$ and $i \bar{\mc{X}}^r_l$ follows from the relationship between the corresponding data for the \ass{R1}{} conditions for $(\mc{N}, \gamma)$ and $(\mc{N}, \bar{\gamma})$.
Since both $(\mc{N}, \gamma)$ and $(\mc{N}, \bar{\gamma})$ satisfy the \ass{R1}{} condition, then the comparison \eqref{eq.comp_main} yields
\[ \| \Psi \|_{ \bar{B}^{a, p, s}_{\ell, t, x} } = \| e^{ (r - l) u } \Psi \|_{ B^{a, p, s}_{\ell, t, x} } \lesssim ( \| \nabla e^{ (r - l) u } \|_{ L^{\infty, 2}_{t, x} } + \| e^{ (r - l) u } \|_{ L^{\infty, \infty}_{t, x} } ) \| \Psi \|_{ B^{a, p, s}_{\ell, t, x} } \text{.} \]
By \eqref{eq.conf_unif} and our smallness assumption for $u$, one inequality in \eqref{eq.conf_besov} follows.
The reverse inequality can be proved using similar means.
\end{proof}

\section{Covariant Evolution} \label{sec.cfol}

We maintain the general setting of Section \ref{sec.fol}, that is, the foliation $\mc{N} = [0, \delta] \times \mc{S}$, with a horizontal metric $\gamma \in \mc{C}^\infty \ul{T}^0_2 \mc{N}$.
Moreover, we maintain all the definitions and notations established throughout Section \ref{sec.fol}.

Note the horizontal covariant differential $\nabla$ already provides a covariant notion of tensorial differentiation in the horizontal directions.
Next, we wish to formalize a covariant notion of differentiation in the $t$-direction.
Afterwards, we define the ``inverse" of this operation---a covariant notion of \emph{integration} in the $t$-direction.

\subsection{Evolution} \label{sec.cfol_ev}

The identifications $\Xi_\tau$ (see Section \ref{sec.fol_hor}) can be utilized to define a \emph{vertical Lie derivative} of horizontal tensor fields.
Given $A \in \mc{C}^\infty \ul{T}^r_l \mc{N}$, we define
\[ \mf{L}_t A [\tau] = \lim_{ \tau^\prime \rightarrow \tau } \frac{ A [\tau^\prime] - A [\tau] }{ \tau^\prime - \tau } \in \mc{C}^\infty T^r_l \mc{S} \text{,} \]
which together define the Lie derivative $\mf{L}_t A \in \mc{C}^\infty \ul{T}^r_l \mc{N}$.
Heuristically, $\mf{L}_t A$ measures how $A$ evolves as $t$ increases, \emph{with respect to the diffeomorphisms $\Xi_\tau$}.
Note in particular that $A$ is equivariant if and only if $\mf{L}_t A$ vanishes identically.

\begin{remark}
Note that $\mf{L}_t$ is independent of $\gamma$ and $\epsilon$.
\end{remark}

\begin{remark}
$\mf{L}_t$ can alternately be defined as the (standard) Lie derivative with respect to the lift of the coordinate vector field $d/dt$ on $[0, \delta]$ to $\mc{N}$.
\end{remark}

Next, define the \emph{second fundamental form} (with respect to $\gamma$) to be
\[ k = \frac{1}{2} \mf{L}_t \gamma \in \mc{C}^\infty \ul{T}^0_2 \mc{N} \text{.} \]
This describes the evolution of the metrics $\gamma [\tau]$ as $\tau$ increases.
Of particular importance will be the $\gamma$-trace of $k$, called the \emph{mean curvature}, or \emph{expansion}, of $k$:
\[ \trace k = \gamma^{ab} k_{ab} \in \mc{C}^\infty \mc{N} \text{.} \]
Elementary computations yield the following basic identities:
\begin{equation} \label{eq.str_ev_vol} \frac{1}{2} ( \mf{L}_t \gamma^{-1} )^{ab} = - \gamma^{ac} \gamma^{bd} k_{cd} \text{,} \qquad ( \mf{L}_t \epsilon )_{ab} = ( \trace k ) \epsilon_{ab} \text{.} \end{equation}

A direct calculation also yields the following commutator identity for $\mf{L}_t$ and $\nabla$.

\begin{proposition} \label{thm.comm_lie}
If $\Psi \in \mc{C}^\infty \ul{T}^r_l \mc{N}$, then
\begin{align}
\label{eq.comm_lie} [ \mf{L}_t, \nabla_a ] \Psi_{u_1 \dots u_l}^{v_1 \dots v_r} &= - \sum_{i = 1}^l \gamma^{cd} ( \nabla_a k_{u_i c} + \nabla_{u_i} k_{ac} - \nabla_c k_{a u_i} ) \Psi_{u_1 \hat{d}_i u_l}^{v_1 \dots v_r} \\
\notag &\qquad + \sum_{j = 1}^r \gamma^{c v_j} ( \nabla_a k_{d c} + \nabla_d k_{ac} - \nabla_c k_{ad} ) \Psi_{u_1 \dots u_l}^{v_1 \hat{d}_j v_r} \text{.}
\end{align}
Here, $u_1 \hat{d}_i u_l$ denotes the set of indices $u_1 \dots u_l$, but with $u_i$ replaced by $d$.
The upper index notation $v_1 \hat{d}_j v_r$ is defined analogously.
\end{proposition}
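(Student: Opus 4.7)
The plan is to work in an equivariant local coordinate system $(x^a)$ on $\mc{N}_U$, as introduced in Section \ref{sec.fol_loc}. In such coordinates the coordinate vector fields $\partial_a$ are equivariant, so $\mf{L}_t \partial_a = 0$, and $\mf{L}_t$ acts on the component functions of any horizontal tensor simply as $\partial_t$. In particular, $\mf{L}_t$ commutes with every coordinate partial derivative $\partial_a$. The strategy is to first compute $\mf{L}_t \Gamma^a_{bc}$, where $\Gamma^a_{bc}$ are the Christoffel symbols of $\gamma[\tau]$, and then read off $[\mf{L}_t, \nabla_a]$ by expanding $\nabla_a$ in coordinates.

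For the first step, I would differentiate the standard identity $2\Gamma^a_{bc} = \gamma^{ad}(\partial_b \gamma_{cd} + \partial_c \gamma_{bd} - \partial_d \gamma_{bc})$ in $t$, using $\mf{L}_t \gamma_{bc} = 2 k_{bc}$ and \eqref{eq.str_ev_vol} for $\mf{L}_t \gamma^{-1}$. A direct calculation yields
\begin{align*}
\mf{L}_t \Gamma^a_{bc} = -2 k^a{}_e \Gamma^e_{bc} + \gamma^{ad}(\partial_b k_{cd} + \partial_c k_{bd} - \partial_d k_{bc}).
\end{align*}
Rewriting each $\partial k$ as $\nabla k$ plus the corresponding Christoffel corrections, and using the symmetries $\Gamma^e_{bc} = \Gamma^e_{cb}$ and $k_{bc} = k_{cb}$, the cross-terms collapse to a single contribution $+2 \Gamma^e_{bc} k^a{}_e$ which exactly cancels the first term. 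This leaves the manifestly tensorial identity
\begin{align*}
\mf{L}_t \Gamma^a_{bc} = \gamma^{ad}(\nabla_b k_{cd} + \nabla_c k_{bd} - \nabla_d k_{bc}),
\end{align*}
as expected, since $\mf{L}_t \Gamma$ is a difference of two connections.

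For the second step, I would expand $\nabla_a \Psi$ in coordinates as
\begin{align*}
\nabla_a \Psi^{v_1 \dots v_r}_{u_1 \dots u_l} = \partial_a \Psi^{v_1 \dots v_r}_{u_1 \dots u_l} + \sum_{j=1}^r \Gamma^{v_j}_{ad}\, \Psi^{v_1 \hat{d}_j v_r}_{u_1 \dots u_l} - \sum_{i=1}^l \Gamma^d_{a u_i}\, \Psi^{v_1 \dots v_r}_{u_1 \hat{d}_i u_l},
\end{align*}
apply $\mf{L}_t$, and subtract $\nabla_a \mf{L}_t \Psi$. Because $\mf{L}_t$ commutes with $\partial_a$ in these coordinates, every term in which $\mf{L}_t$ hits a component of $\Psi$ cancels, leaving only the contributions from $\mf{L}_t \Gamma$. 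Substituting the formula for $\mf{L}_t \Gamma^a_{bc}$ at the positions $\Gamma^{v_j}_{ad}$ (with $b \mapsto a$, $c \mapsto d$) and $\Gamma^d_{a u_i}$ (with $b \mapsto a$, $c \mapsto u_i$) produces exactly \eqref{eq.comm_lie}.

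The hard part, such as it is, will be the index bookkeeping in the first step: tracking the various $\Gamma \cdot k$ terms that appear when converting $\partial k$ to $\nabla k$ and confirming that they sum to precisely $+2 \Gamma^e_{bc} k^a{}_e$ with the correct sign, so as to cancel the non-tensorial $-2 k^a{}_e \Gamma^e_{bc}$. After that, the remaining computation is purely mechanical, and the manifestly covariant form of the resulting identity vindicates the coordinate-based derivation.
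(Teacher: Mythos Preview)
Your approach is correct and is precisely the ``direct calculation'' the paper alludes to without writing out; the paper gives no proof beyond that phrase, and your two-step strategy (compute $\mf{L}_t\Gamma^a_{bc}$ in equivariant coordinates, then read off the commutator from the coordinate expansion of $\nabla_a\Psi$) is the standard and expected route. The index bookkeeping you flag as the only delicate point is handled correctly: the six $\Gamma\cdot k$ cross-terms from converting $\partial k$ to $\nabla k$ collapse, by the symmetries $\Gamma^e_{bc}=\Gamma^e_{cb}$ and $k_{bc}=k_{cb}$, to the single term $+2\Gamma^e_{bc}k^a{}_e$, which cancels the $-2k^a{}_e\Gamma^e_{bc}$ coming from $\mf{L}_t\gamma^{-1}$.
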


We can now use $\mf{L}_t$ and $k$ to define a corresponding ($\gamma$-)\emph{covariant} derivative along the $t$-direction.
Given $\Psi \in \mc{C}^\infty \ul{T}^r_l \mc{N}$, we define $\nabla_t \Psi \in \mc{C}^\infty \ul{T}^r_l \mc{N}$ by
\[ \nabla_t \Psi_{u_1 \dots u_l}^{v_1 \dots v_r} = \mf{L}_t \Psi_{u_1 \dots u_l}^{v_1 \dots v_r} - \sum_{i = 1}^l \gamma^{cd} k_{u_i c} \Psi_{u_1 \hat{d}_i u_l}^{v_1 \dots v_r} + \sum_{j = 1}^r \gamma^{c v_j} k_{cd} \Psi_{u_1 \dots u_l}^{v_1 \hat{d}_j v_r} \text{,} \]
where we use the same multi-index conventions as in Proposition \ref{thm.comm_lie}.
In particular, note that $\nabla_t$ and $\mf{L}_t$ coincide in the case of scalar fields.

Of particular importance is the curl of $k$:
\[ \mf{C} \in \mc{C}^\infty \ul{T}^0_3 \mc{N} \text{,} \qquad \mf{C}_{abc} = \nabla_b k_{ac} - \nabla_c k_{ab} \text{.} \]
With this, we can state the commutation formula for $\nabla_t$ and $\nabla$, which is a result of basic computations that we leave to the reader.

\begin{proposition} \label{thm.comm}
If $\Psi \in \mc{C}^\infty \ul{T}^r_l \mc{N}$, then
\begin{align}
\label{eq.comm_cov} [ \nabla_t, \nabla_a ] \Psi_{u_1 \dots u_l}^{v_1 \dots v_r} &= - \gamma^{cd} k_{ac} \nabla_d \Psi_{u_1 \dots u_l}^{v_1 \dots v_r} - \sum_{i = 1}^l \gamma^{cd} \mf{C}_{a u_i c} \Psi_{u_1 \hat{d}_i u_l}^{v_1 \dots v_r} \\
\notag &\qquad + \sum_{j = 1}^r \gamma^{c v_j} \mf{C}_{adc} \Psi_{u_1 \dots u_l}^{v_1 \hat{d}_j v_r} \text{.}
\end{align}
\end{proposition}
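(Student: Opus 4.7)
The plan is to derive the commutator formula directly from the definition of $\nabla_t$ and the Lie derivative commutator in Proposition \ref{thm.comm_lie}. Write
\[ \nabla_t \Psi = \mf{L}_t \Psi + \mc{K}_\Psi \text{,} \]
where $\mc{K}_\Psi$ denotes the pointwise $k$-correction
\[ (\mc{K}_\Psi)_{u_1 \dots u_l}^{v_1 \dots v_r} = - \sum_{i=1}^l \gamma^{cd} k_{u_i c} \Psi_{u_1 \hat{d}_i u_l}^{v_1 \dots v_r} + \sum_{j=1}^r \gamma^{c v_j} k_{cd} \Psi_{u_1 \dots u_l}^{v_1 \hat{d}_j v_r} \text{.} \]
Then $[\nabla_t, \nabla_a]\Psi = [\mf{L}_t, \nabla_a]\Psi + \mc{K}_{\nabla_a \Psi} - \nabla_a \mc{K}_\Psi$, so the proof reduces to computing this $k$-correction difference and combining it with \eqref{eq.comm_lie}.

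First I would expand $\mc{K}_{\nabla_a \Psi}$, treating $\nabla_a \Psi$ as a rank $(r, l+1)$ field whose extra covariant index is $a$. The sum over covariant slots then splits into the contribution from the indices $u_1, \dots, u_l$ (mirroring that of $\mc{K}_\Psi$ acting on $\nabla_a \Psi$) and a single extra contribution from the new slot $a$, which produces precisely the term $-\gamma^{cd} k_{ac} \nabla_d \Psi_{u_1 \dots u_l}^{v_1 \dots v_r}$. Next, I would compute $\nabla_a \mc{K}_\Psi$ via the Leibniz rule, using that $\nabla_a \gamma^{cd} = 0$ since $\nabla$ is the Levi-Civita connection. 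Subtracting, the terms containing $\nabla_a \Psi$ cancel, leaving the distinguished $-\gamma^{cd} k_{ac} \nabla_d \Psi$ term along with terms of the form $+\gamma^{cd}\nabla_a k_{u_i c} \Psi_{u_1\hat{d}_i u_l}^{\dots}$ and $-\gamma^{c v_j} \nabla_a k_{cd} \Psi_{u_1 \dots u_l}^{\dots \hat{d}_j \dots}$.

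Now I would add this to $[\mf{L}_t, \nabla_a]\Psi$ from \eqref{eq.comm_lie}. The $\nabla_a k$ terms appearing in \eqref{eq.comm_lie} are exactly cancelled by the $\nabla_a k$ terms produced above (using the symmetry $k_{ab} = k_{ba}$). What remains from each covariant slot is $-\gamma^{cd}(\nabla_{u_i} k_{ac} - \nabla_c k_{a u_i})\Psi_{u_1 \hat{d}_i u_l}^{\dots} = -\gamma^{cd}\mf{C}_{a u_i c}\Psi_{u_1 \hat{d}_i u_l}^{\dots}$, and from each contravariant slot, $+\gamma^{c v_j}(\nabla_d k_{ac} - \nabla_c k_{ad})\Psi_{u_1 \dots u_l}^{\dots \hat{d}_j \dots} = +\gamma^{c v_j}\mf{C}_{adc}\Psi_{u_1 \dots u_l}^{\dots \hat{d}_j \dots}$. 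Together with the surviving $-\gamma^{cd} k_{ac}\nabla_d \Psi$ term, this is precisely \eqref{eq.comm_cov}.

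The calculation is purely algebraic; the only real difficulty is careful index bookkeeping, especially in verifying that the $\nabla_a k$ pieces coming from differentiating the $k$-correction cancel exactly against the corresponding pieces of the Lie derivative commutator, leaving only the antisymmetric combinations that define $\mf{C}$. This cancellation is the entire reason for passing from $\mf{L}_t$ to the covariant evolutionary derivative $\nabla_t$, as emphasized in Section \ref{sec.intro_sprod}, and is the key structural point to highlight even though the verification itself is routine.
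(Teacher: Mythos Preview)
Your argument is correct and is exactly the natural computation: the paper itself offers no proof beyond the remark that \eqref{eq.comm_cov} ``is a result of basic computations that we leave to the reader,'' and your decomposition $[\nabla_t,\nabla_a]=[\mf{L}_t,\nabla_a]+(\mc{K}_{\nabla_a\Psi}-\nabla_a\mc{K}_\Psi)$ together with \eqref{eq.comm_lie} is precisely how one would carry out that exercise. The index bookkeeping you outline (the extra covariant slot producing $-\gamma^{cd}k_{ac}\nabla_d\Psi$, and the $\nabla_a k$ terms cancelling to leave only the antisymmetric combinations defining $\mf{C}$) checks out.
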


The constrast between \eqref{eq.comm_lie} and \eqref{eq.comm_cov} will play a fundamental role in the analysis.
Observe that in \eqref{eq.comm_lie}, we have terms on the right-hand side of the form $\nabla k \otimes \Psi$.
In \eqref{eq.comm_cov}, however, the gradient $\nabla k$ is replaced by $\mf{C}$, which can possess additional structure and regularity in certain situations.
\footnote{For example, if $\gamma$ are metrics induced from a larger pseudo-Riemannian manifold $(M, g)$, then $\mf{C}$ is present in the Codazzi equations relating the curvature of $M$ to the $\mc{S}_\tau$'s.}

We say that $\Psi \in \mc{C}^\infty \ul{T}^r_l \mc{N}$ is $t$\emph{-parallel} (with respect to $\gamma$) iff $\nabla_t \Psi \equiv 0$.
\footnote{Such fields were sometimes called \emph{Fermi-transported}, e.g., in \cite{kl_rod:cg}.}
Most importantly, one can see that $\gamma$, $\gamma^{-1}$, and $\epsilon$ are all $t$-parallel.
Consequently, the Levi-Civita connections $\nabla$ along with the operator $\nabla_t$ combine to form vector bundle connections for the horizontal bundles $\ul{T}^r_l \mc{N}$.
In addition, these connections are compatible with the natural bundle metrics $\langle \cdot, \cdot \rangle$ for the $\ul{T}^r_l \mc{N}$'s induced by $\gamma$.

Given $F \in \mc{C}^\infty T^r_l \mc{S}$, one can solve for the unique $\mf{p} F \in \mc{C}^\infty \ul{T}^r_l \mc{N}$, called the $t$-\emph{parallel transport} of $F$ from $0$, such that $\mf{p} F$ is $t$-parallel and $\mf{p} F [0] = F$.
Note that $t$-parallel horizontal tensor fields preserve the tensor norm, i.e.,
\[ | \mf{p} F | |_{ (\tau, x) } = | F |_x \text{,} \qquad x \in \mc{S} \text{.} \]
Finally, note that since $\mf{L}_t$ and $\nabla_t$ coincide for scalars fields, the equivariant and $t$-parallel propagators $\mf{e}$ and $\mf{p}$ coincide in the scalar setting.

\subsection{Covariant Integration} \label{sec.cfol_cint}

Next, given $\Psi \in \mc{C}^\infty \ul{T}^r_l \mc{N}$, we define its \emph{covariant $t$-integral} $\cint^t_0 \Psi$ from $t = 0$ to be the unique element of $\mc{C}^\infty \ul{T}^r_l \mc{N}$ satisfying
\[ \nabla_t \cint^t_0 \Psi = \Psi \text{,} \qquad \Psi [0] \equiv 0 \text{.} \]

A more explicit description can also be given using $t$-parallel frames and coframes.
If $U$ is an open neighborhood of $\mc{S}$, and if $\tilde{e}_1, \tilde{e}_2 \in \mc{C}^\infty T^1_0 U$ forms a local frame on $U$, then the $t$-parallel transports $e_a = \mf{p} \tilde{e}_a$, $a \in \{ 1, 2 \}$, form a local frame on each of the $U_\tau$'s.
Moreover, if $\tilde{e}_1$ and $\tilde{e}_2$ are orthonormal, then so are $e_1$ and $e_2$.
A completely analogous construction can be made using coframes instead of frames.

Indexing with respect to such a local $t$-parallel frame and coframe, we see that
\[ ( \cint^t_0 \Psi )_{u_1 \dots u_l}^{v_1 \dots v_r} |_{ (\tau, x) } = \int_0^\tau ( \Psi_{u_1 \dots u_l}^{v_1 \dots v_r} ) |_{ (w, x) } dw \text{,} \qquad x \in U \text{.} \]
Here, the right-hand side represents the standard integral over the scalar $\Psi_{u_1 \dots u_l}^{v_1 \dots v_r}$.
Note that if $\phi \in \mc{C}^\infty \mc{N}$, then $\cint^t_0 \phi$ is the usual integral:
\[ \cint^t_0 \phi |_{ (\tau, x) } = \int_0^\tau \phi |_{ (w, x) } dw \text{,} \qquad x \in \mc{S} \text{.} \]
As a result, one can view $\cint^t_0$ as a covariant extension of the standard integral.

From the above and the fundamental theorem of calculus, we have
\[ \cint^t_0 \nabla_t \Psi = \Psi - \mf{p} ( \Psi [0] ) \text{,} \qquad \Psi \in \mc{C}^\infty \ul{T}^r_l \mc{N} \text{.} \]
In particular, when $\Psi [0]$ vanishes entirely, $\cint^t_0$ acts as a formal inverse to $\nabla_t$.
In fact, when we ``integrate" covariant evolution equations involving horizontal tensor fields, we are actually applying these covariant integral operators $\cint^t_0$.

\begin{proposition} \label{thm.est_trace}
If $\Psi \in \mc{C}^\infty \ul{T}^r_l \mc{N}$, then
\begin{equation} \label{eq.est_trace} | \cint^t_0 \Psi | \leq \cint^t_0 | \Psi | \text{,} \qquad | \Psi | \leq | \mf{p} ( \Psi [0] ) | + | \cint^t_0 \nabla_t \Psi | \text{.} \end{equation}
\end{proposition}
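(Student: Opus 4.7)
The plan is to reduce both inequalities to elementary facts via the structure of $t$-parallel frames and the compatibility of $\nabla_t$ with the horizontal metric $\gamma$. Recall from Section~\ref{sec.cfol_ev} that, since $\gamma$, $\gamma^{-1}$, $\epsilon$ are all $t$-parallel, the operator $\nabla_t$ (together with $\nabla$) defines a vector bundle connection on each $\ul{T}^r_l\mc{N}$ that is compatible with the induced bundle metric $\langle\cdot,\cdot\rangle$. Moreover, the explicit description of $\cint^t_0$ in a $t$-parallel local frame realizes it as an ordinary integral in $\tau$, applied componentwise.

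For the first estimate, I would argue invariantly, without a frame. Set $V=\cint^t_0\Psi$, so that by definition $V[0]\equiv 0$ and $\nabla_tV=\Psi$. The scalar $\langle V,V\rangle\in\mc{C}^\infty\mc{N}$ satisfies
\[
\mf{L}_t\langle V,V\rangle \;=\; \nabla_t\langle V,V\rangle \;=\; 2\langle V,\nabla_tV\rangle \;=\; 2\langle V,\Psi\rangle \;\leq\; 2|V|\,|\Psi|,
\]
where the first equality uses that $\mf{L}_t$ and $\nabla_t$ coincide on scalars, the second uses the compatibility of $\nabla_t$ with $\langle\cdot,\cdot\rangle$, and the last step is Cauchy--Schwarz. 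At fixed $x\in\mc{S}$, this is an ODE inequality in $\tau$; on the open set where $|V|>0$ it yields $\mf{L}_t|V|\leq|\Psi|$. To handle the vanishing set, I would regularize by $|V|_\varepsilon=\sqrt{\langle V,V\rangle+\varepsilon^2}$, which is smooth and satisfies $\mf{L}_t|V|_\varepsilon\leq|\Psi|$ pointwise; integrating from $0$ to $\tau$ and letting $\varepsilon\to 0$ gives $|V|\leq\cint^t_0|\Psi|$, which is the first inequality in \eqref{eq.est_trace}.

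For the second estimate, define
\[
W \;=\; \Psi - \mf{p}(\Psi[0]) - \cint^t_0\nabla_t\Psi \;\in\;\mc{C}^\infty\ul{T}^r_l\mc{N}.
\]
At $\tau=0$ we have $W[0]=\Psi[0]-\Psi[0]-0=0$, and since $\mf{p}(\Psi[0])$ is $t$-parallel while $\nabla_t\cint^t_0\nabla_t\Psi=\nabla_t\Psi$ by the defining property of $\cint^t_0$, we see $\nabla_tW=0$. Uniqueness of solutions of the linear parallel transport ODE then forces $W\equiv 0$, so $\Psi=\mf{p}(\Psi[0])+\cint^t_0\nabla_t\Psi$. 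Applying the triangle inequality for $|\cdot|$ yields the second inequality in \eqref{eq.est_trace}.

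The only mildly delicate point in the whole argument is the regularization step near the zero set of $|V|$ in the first inequality; everything else is an immediate consequence of the definitions of $\nabla_t$, $\cint^t_0$, and $\mf{p}$ and the metric compatibility of $\nabla_t$.
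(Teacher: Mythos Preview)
Your proof is correct. The route differs from the paper's mainly in the first inequality: the paper works in a local $t$-parallel orthonormal frame, writes $|\cint^t_0\Psi|^2$ as a finite sum of squares of ordinary integrals of the scalar components, and applies the Minkowski integral inequality to the resulting $\ell^2$-sum; you instead argue invariantly via the differential inequality $\mf{L}_t|V|^2\leq 2|V|\,|\Psi|$ and a standard $\varepsilon$-regularization. Both are equally elementary; your version avoids choosing a frame and makes the role of metric compatibility of $\nabla_t$ explicit, while the paper's version makes the reduction to the scalar Minkowski inequality transparent. For the second inequality, the two arguments are essentially the same: the paper uses the identity $\cint^t_0\nabla_t\Psi=\Psi-\mf{p}(\Psi[0])$ (already stated just before the proposition) and then applies Minkowski in components, whereas you re-derive that identity via uniqueness of the parallel transport ODE and apply the triangle inequality directly.
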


\begin{proof}
Indexing with respect to an orthonormal $t$-parallel frame, we have
\[ | \cint^t_0 \Psi |^2 |_{ (\tau, x) } = \sum_{ \substack{u_1 \dots u_l \\ v_1 \dots v_r} } ( \cint^t_0 \Psi_{u_1 \dots u_l}^{v_1 \dots v_r} )^2 |_{ (\tau, x) } = \sum_{ \substack{u_1 \dots u_l \\ v_1 \dots v_r} } \paren{ \int_0^\tau \Psi_{u_1 \dots u_l}^{v_1 \dots v_r} |_{ (w, x) } d w }^2 \]
for any $x \in \mc{S}$.
By the integral Minkowski inequality,
\[ | \cint^t_0 \Psi | |_{ (\tau, x) } \leq \int_0^\tau { \bigg [ } \sum_{ \substack{u_1 \dots u_l \\ v_1 \dots v_r} } ( \Psi_{u_1 \dots u_l}^{v_1 \dots v_r} )^2 |_{ (w, x) } { \bigg ] }^{1/2} dw = \int_0^\tau | \Psi | |_{(w, x)} dw \text{,} \]
which proves the first inequality of \eqref{eq.est_trace}.
Using the same frame, we also have
\[ \Psi_{u_1 \dots u_l}^{v_1 \dots v_r} |_{ (\tau, x) } = \Psi_{u_1 \dots u_l}^{v_1 \dots v_r} |_{ (0, x) } + ( \cint^t_0 \nabla_t \Psi )_{u_1 \dots u_l}^{v_1 \dots v_r} |_{ (\tau, x) } \text{.} \]
Squaring the above and summing over all the indices yields
\[ | \Psi |^2 |_{ (\tau, x) } = \sum_{ \substack{u_1 \dots u_l \\ v_1 \dots v_r} } \brak{ \Psi_{u_1 \dots u_l}^{v_1 \dots v_r} |_{ (0, x) } + ( \cint^t_0 \nabla_t \Psi )_{u_1 \dots u_l}^{v_1 \dots v_r} }^2 |_{ (\tau, x) } \text{.} \]
Applying Minkowski's inequality yields the second part of \eqref{eq.est_trace}.
\end{proof}

\begin{remark}
As $t$-parallel fields preserve the pointwise tensor norm, the second inequality of \eqref{eq.est_trace} implies the following estimate:
\[ | \Psi | |_{ (\tau, x) } \leq | \Psi |_{ (0, x) } + | \cint^t_0 \nabla_t \Psi | |_{ (\tau, x) } \text{,} \qquad x \in \mc{S} \text{.} \]
\end{remark}

Finally, we briefly discuss commutations involving $\cint^t_0$.
Consider an operator
\[ L: \mc{C}^\infty \ul{T}^{r_1}_{l_1} \mc{N} \rightarrow \mc{C}^\infty \ul{T}^{r_2}_{l_2} \mc{N} \text{.} \]
By the relations between $\cint^t_0$ and $\nabla_t$, we have the formula
\[ [ \cint^t_0, L ] \Psi = - \cint^t_0 [ \nabla_t, L ] \cint^t_0 \Psi - \mf{p} ( L \cint^t_0 \Psi [0] ) \text{.} \]
If the last term on the right-hand side vanishes, then $[ \cint^t_0, L ] \Psi = - \cint^t_0 [ \nabla_t, L ] \cint^t_0 \Psi$.

Note that $\cint^t_0$ commutes with any such $L$ that commutes with $\nabla_t$, as long as $L$ maps any initially vanishing field to another initially vanishing field.
In particular, since $\nabla_t$ commutes with all contractions, metric contractions, and volume form contractions, then $\cint^t_0$ commutes with these operations as well.
Another important case is when $L = \nabla$, as we can now use \eqref{eq.comm_cov} to commute $\nabla$ and $\cint^t_0$:

\begin{proposition} \label{thm.comm_inv}
If $\Psi \in \mc{C}^\infty \ul{T}^r_l \mc{N}$, then
\begin{align}
\label{eq.comm_inv} [ \cint^t_0, \nabla_a ] \Psi_{u_1 \dots u_l}^{v_1 \dots v_r} &= \gamma^{cd} \cint^t_0 ( k_{ac} \nabla_d \cint^t_0 \Psi_{u_1 \dots u_l}^{v_1 \dots v_r} ) + \sum_{i = 1}^l \gamma^{cd} \cint^t_0 ( \mf{C}_{a u_i c} \cint^t_0 \Psi_{u_1 \hat{d}_i u_l}^{v_1 \dots v_r} ) \\
\notag &\qquad - \sum_{j = 1}^r \gamma^{c v_j} \cint^t_0 ( \mf{C}_{adc} \cint^t_0 \Psi_{u_1 \dots u_l}^{v_1 \hat{d}_j v_r} ) \text{.}
\end{align}
\end{proposition}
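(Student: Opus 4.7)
The plan is to combine the general commutator identity for $\cint^t_0$ stated immediately before the proposition, namely
\[
[\cint^t_0, L]\Psi = -\cint^t_0 [\nabla_t, L] \cint^t_0 \Psi - \mf{p}\bigl(L \cint^t_0 \Psi [0]\bigr),
\]
with the known commutator formula \eqref{eq.comm_cov} of Proposition \ref{thm.comm}. Here I take $L = \nabla_a$, which is a well-defined operator $\mc{C}^\infty \ul{T}^r_l \mc{N} \to \mc{C}^\infty \ul{T}^r_{l+1} \mc{N}$.

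The first step is to verify that the boundary contribution $\mf{p}\bigl(\nabla_a \cint^t_0 \Psi [0]\bigr)$ vanishes. This follows directly from the definition of $\cint^t_0$: the field $\cint^t_0 \Psi$ is defined by the initial condition $(\cint^t_0 \Psi)[0] \equiv 0$ together with $\nabla_t \cint^t_0 \Psi = \Psi$. Since $\nabla_a$ is a purely spatial (horizontal) operator, it acts tangentially on $\mc{S}_0$, and hence $(\nabla_a \cint^t_0 \Psi)[0] = \nabla_a \bigl((\cint^t_0 \Psi)[0]\bigr) \equiv 0$. Consequently the $\mf{p}$-term drops out and one is left with
\[
[\cint^t_0, \nabla_a]\Psi = -\cint^t_0 [\nabla_t, \nabla_a] \cint^t_0 \Psi.
\]

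The second step is simply to substitute \eqref{eq.comm_cov} of Proposition \ref{thm.comm}, applied to the horizontal field $\cint^t_0 \Psi$ in place of $\Psi$, into the right-hand side. This produces the three terms
\[
-\gamma^{cd} k_{ac} \nabla_d (\cint^t_0 \Psi)_{u_1 \dots u_l}^{v_1 \dots v_r} - \sum_{i=1}^l \gamma^{cd} \mf{C}_{a u_i c} (\cint^t_0 \Psi)_{u_1 \hat{d}_i u_l}^{v_1 \dots v_r} + \sum_{j=1}^r \gamma^{c v_j} \mf{C}_{adc} (\cint^t_0 \Psi)_{u_1 \dots u_l}^{v_1 \hat{d}_j v_r},
\]
and after applying the outer $-\cint^t_0$ (which moves inside each summand since $\cint^t_0$ commutes with contractions and metric/volume-form contractions, and since $\gamma^{-1}$ and $k$ depend on the $t$-variable but are not differentiated by $\cint^t_0$ here---they simply appear as factors inside the integrand), one obtains exactly the three terms on the right-hand side of \eqref{eq.comm_inv}.

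I do not expect any real obstacle in this argument: the whole proof is a two-line assembly of previously established identities. The only subtle point worth highlighting is the vanishing of the $\mf{p}$-boundary contribution, which depends essentially on $\nabla_a$ being horizontal and on the defining initial condition of the operator $\cint^t_0$; without these, an additional parallel-transported term would need to be carried along.
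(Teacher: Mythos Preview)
Your proposal is correct and follows exactly the approach indicated in the paper: apply the general commutator identity $[\cint^t_0, L]\Psi = -\cint^t_0[\nabla_t, L]\cint^t_0\Psi - \mf{p}(L\cint^t_0\Psi[0])$ with $L = \nabla_a$, observe that the boundary term vanishes since $(\cint^t_0\Psi)[0] \equiv 0$, and then substitute \eqref{eq.comm_cov}. The paper in fact does not give a separate proof, simply remarking that one ``can now use \eqref{eq.comm_cov} to commute $\nabla$ and $\cint^t_0$,'' so your write-up spells out precisely what was intended.
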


Finally, integrating the second identity of \eqref{eq.str_ev_vol}, we have
\[ \mf{L}_t \{ \exp [ - \cint^t_0 ( \trace k ) ] \cdot \epsilon \} \equiv 0 \text{.} \]
This equivariance implies the following identity:
\[ \{ \exp [ - \cint^t_0 ( \trace k ) ] \cdot \epsilon \} [\tau] = \epsilon [0] \text{.} \]
Define the associated \emph{Jacobian} (of $\epsilon$, with respect to $\epsilon [0]$) to be the factor
\[ \mc{J} = \exp \cint^t_0 ( \trace k ) \in \mc{C}^\infty \mc{N} \text{.} \]
$\mc{J}$ acts as a ``change of measure" quantity, as it satisfies
\begin{equation} \label{eq.jacobian} \epsilon [\tau] = \mc{J} [\tau] \cdot \epsilon [0] \text{,} \qquad \nabla_t \mc{J} = \trace k \cdot \mc{J} \text{.} \end{equation}

\subsection{Evolutionary Bounds} \label{sec.cfol_reg}

We had defined various iterated integral norms in Section \ref{sec.fol_hor} for $(\mc{N}, \gamma)$.
Given the covariant evolutionary structures defined in Sections \ref{sec.cfol_ev} and \ref{sec.cfol_cint}, we can now define the following additional norms:
\begin{itemize}
\item We can reverse the order of integration.
Given $p, q \in [1, \infty)$, we define
\begin{align*}
\| \Psi \|_{ L^{q, p}_{x, t} } &= \brak{ \int_{ \mc{S} } \paren{ \int_0^\delta \left. | \Psi |^p \mc{J}^\frac{p}{q} \right|_{ (\tau, x) } d \tau }^\frac{q}{p} d \epsilon [0]_x }^\frac{1}{q} \text{,} \\
\| \Psi \|_{ L^{q, \infty}_{x, t} } &= \brak{ \int_{ \mc{S} } \paren{ \left. \sup_{ 0 \leq \tau \leq \delta } | \Psi | \mc{J}^\frac{1}{q} \right|_{ (\tau, x) } }^q d \epsilon [0]_x }^\frac{1}{q} \text{.}
\end{align*}
Furthermore, when $q = \infty$, we define
\begin{align*}
\| \Psi \|_{ L^{\infty, p}_{x, t} } &= \sup_{ x \in \mc{S} } \paren{ \int_0^\delta \left. | \Psi |^p \right|_{ (\tau, x) } d \tau }^\frac{1}{p} \text{,} \qquad \| \Psi \|_{ L^{\infty, \infty}_{x, t} } = \sup_{ x \in \mc{S} } \sup_{ 0 \leq \tau \leq \delta } | \Psi | |_{ (\tau, x) } \text{.}
\end{align*}

\item We also define the following iterated first-order Sobolev norm
\[ \| \Psi \|_{ N^1_{t, x} } = \| \nabla_t \Psi \|_{ L^{2, 2}_{t, x} } + \| \nabla \Psi \|_{ L^{2, 2}_{t, x} } + \| \Psi \|_{ L^{2, 2}_{t, x} } \text{.} \]
\end{itemize}

In Section \ref{sec.fol}, we derived various estimates for $(\mc{N}, \gamma)$, under the assumption that the $(\mc{S}, \gamma [\tau])$'s were ``uniformly regular" in the sense of the \ass{R0}{}, \ass{R1}{}, and \ass{R2}{} conditions.
Our next task is to \emph{derive} these \ass{R0}{}, \ass{R1}{}, and \ass{R2}{} conditions from relatively weak assumptions on how the geometries of the $\mc{S}_\tau$'s evolve.

These evolutionary conditions are stated as integral bounds on the second fundamental form $k$.
The bounds we will reference throughout the paper are below:
\begin{align}
\label{eqr.sff_tr} \| \trace k \|_{ L^{\infty, 1}_{x, t} } &\leq 2 B \text{,} \\
\label{eqr.sff} \| k \|_{ L^{\infty, 1}_{x, t} } &\leq B \text{,} \\
\label{eqr.sffd_tr} \| \nabla (\trace k) \|_{ L^{2, 1}_{x, t} } &\leq 2 B \text{,} \\
\label{eqr.sffd} \| \nabla k \|_{ L^{2, 1}_{x, t} } &\leq B \text{,} \\
\label{eqr.sffcurl} \inf \{ \| \Phi \|_{ L^{4, \infty}_{x, t} } \mid \Phi \in \mc{C}^\infty \ul{T}^0_3 \mc{N} \text{, } \nabla_t \Phi = \mf{C} \} &\leq B \text{.}
\end{align}
Note \eqref{eqr.sff} trivially implies \eqref{eqr.sff_tr}, and similarly for \eqref{eqr.sffd} and \eqref{eqr.sffd_tr}.
Moreover, \eqref{eqr.sffcurl} states that some covariant $t$-antiderivative of $\mf{C}$ has $L^4_x$-control.

We now discuss some basic consequences of some of the above conditions.
First of all, the bound \eqref{eqr.sff_tr} trivially implies uniform bounds for the Jacobian $\mc{J}$.

\begin{proposition} \label{thm.vol_comp}
If \eqref{eqr.sff_tr} holds, then
\begin{equation} \label{eq.vol_comp} e^{-2 B} \leq \mc{J} \leq e^{2 B} \text{.} \end{equation}
\end{proposition}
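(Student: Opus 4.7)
The plan is to unwind the definition of $\mc{J}$ and reduce the inequality to an elementary pointwise bound on the covariant $t$-antiderivative of $\trace k$. Since $\trace k$ is a scalar, the covariant integral $\cint^t_0$ coincides with the ordinary fiberwise integral along the $t$-direction, as noted after Proposition \ref{thm.est_trace}. So for any $(\tau, x) \in \mc{N}$,
\[ \cint^t_0 (\trace k)\big|_{(\tau, x)} = \int_0^\tau (\trace k)|_{(w, x)}\, dw \text{.} \]

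First, I would apply the triangle inequality for the ordinary scalar integral, and then extend the $w$-integration from $[0, \tau]$ to $[0, \delta]$, yielding
\[ \left| \cint^t_0 (\trace k) \right| \big|_{(\tau, x)} \leq \int_0^\delta |\trace k|\big|_{(w, x)}\, dw \text{.} \]
Taking the supremum over $x \in \mc{S}$ on the right-hand side and recalling the definition of the $L^{\infty, 1}_{x, t}$ norm yields the bound
\[ \left| \cint^t_0 (\trace k) \right| \leq \| \trace k \|_{ L^{\infty, 1}_{x, t} } \leq 2 B \]
pointwise on $\mc{N}$, by hypothesis \eqref{eqr.sff_tr}.

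Finally, since $\mc{J} = \exp \cint^t_0 (\trace k)$ by definition, the monotonicity of the exponential converts the two-sided bound $-2B \leq \cint^t_0(\trace k) \leq 2B$ into $e^{-2B} \leq \mc{J} \leq e^{2B}$, which is \eqref{eq.vol_comp}. There is no real obstacle here; the statement is essentially just the observation that the $L^{\infty, 1}_{x, t}$ norm of $\trace k$ controls $\cint^t_0 (\trace k)$ uniformly, and the exponential is monotone.
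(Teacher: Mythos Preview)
Your proof is correct and matches the paper's treatment: the paper states this proposition without proof, regarding it as immediate from the definition $\mc{J} = \exp \cint^t_0 (\trace k)$ and the bound \eqref{eqr.sff_tr}. Your argument supplies exactly this trivial reasoning. (One minor point: the observation that $\cint^t_0$ reduces to the ordinary integral on scalars is noted in the discussion of covariant integration preceding Proposition~\ref{thm.est_trace}, not after it.)
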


Next, we apply \eqref{eq.vol_comp} to derive some basic integrated calculus estimates.

\begin{proposition} \label{thm.int_ineq}
Assume \eqref{eqr.sff_tr}, and fix $q \in [1, \infty]$.
If $\Psi \in \mc{C}^\infty \ul{T}^r_l \mc{N}$, then
\begin{equation} \label{eq.int_ineq} \| \cint^t_0 \Psi \|_{ L^{q, \infty}_{x, t} } \lesssim_B \| \Psi \|_{ L^{q, 1}_{x, t} } \text{.} \end{equation}
\end{proposition}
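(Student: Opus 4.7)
The plan is to reduce everything to the pointwise bound in Proposition \ref{thm.est_trace} and then pay only a harmless constant depending on $B$ when converting between integrands that carry $\mc{J}^{1/q}$ and those that do not.

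First I would invoke the first inequality of \eqref{eq.est_trace}, which gives the pointwise (in $x$) control
\[ \abs{ \cint^t_0 \Psi } (\tau, x) \leq \int_0^\tau \abs{\Psi} (w, x) \, dw \leq \int_0^\delta \abs{\Psi} (w, x) \, dw \text{,} \]
since $\abs{\Psi}$ is nonnegative and the right-hand side is monotone in $\tau$. Crucially, this bound already makes the left-hand side independent of $\tau$, so the supremum over $\tau$ inside the $L^{q,\infty}_{x,t}$ norm contributes nothing beyond the Jacobian weight.

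Next I would handle the weights. By Proposition \ref{thm.vol_comp}, we have $\mc{J}^{1/q} \leq e^{2B/q}$ uniformly, and likewise $\mc{J}^{-1/q} \leq e^{2B/q}$. Applying the first bound to the factor $\mc{J}^{1/q}$ that appears in the $L^{q,\infty}_{x,t}$ norm, and rewriting $\abs{\Psi}(w,x) = \abs{\Psi} \mc{J}^{1/q} \cdot \mc{J}^{-1/q}(w,x) \leq e^{2B/q} \abs{\Psi} \mc{J}^{1/q}(w,x)$ inside the $w$-integral, gives the pointwise (in $x$) estimate
\[ \sup_{0 \leq \tau \leq \delta} \bigl( \abs{ \cint^t_0 \Psi } \mc{J}^{1/q} \bigr)(\tau, x) \leq e^{4B/q} \int_0^\delta \bigl( \abs{\Psi} \mc{J}^{1/q} \bigr)(w, x) \, dw \text{.} \]
Taking $L^q(\mc{S}, \epsilon[0])$-norms of both sides and recognizing the right-hand side as $e^{4B/q} \| \Psi \|_{L^{q,1}_{x,t}}$ then yields \eqref{eq.int_ineq} for $q \in [1, \infty)$.

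For the endpoint $q = \infty$ there is no Jacobian in either norm, so the same pointwise bound immediately gives
\[ \sup_{x, \tau} \abs{ \cint^t_0 \Psi }(\tau, x) \leq \sup_x \int_0^\delta \abs{\Psi}(w, x) \, dw = \| \Psi \|_{L^{\infty,1}_{x,t}} \text{,} \]
which is the claim with constant $1$. There is no real obstacle here; the only subtlety is bookkeeping the $\mc{J}^{1/q}$ factors correctly so that the bound from \eqref{eq.vol_comp} applies symmetrically on both sides of the inequality, which is why the constant depends on $B$ (and degenerates as $q \to \infty$ into the clean bound above).
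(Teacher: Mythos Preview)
Your proposal is correct and follows essentially the same approach as the paper: both apply the pointwise bound from \eqref{eq.est_trace}, then use the Jacobian bounds \eqref{eq.vol_comp} to absorb the $\mc{J}^{1/q}$ weights on each side, arriving at the constant $e^{4B/q}$ for $q<\infty$ and constant $1$ for $q=\infty$. The paper's proof is simply a terser rendering of the same computation.
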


\begin{proof}
First, assume $q < \infty$.
Applying \eqref{eq.est_trace} along with \eqref{eq.vol_comp}, then
\[ \| \cint^t_0 \Psi \|_{ L^{q, \infty}_{x, t} }^q \leq e^{2 B} \int_{ \mc{S} } \paren{ \int_0^\delta | \Psi | |_{ (\tau, x) } d \tau }^q d \epsilon [0]_x \leq e^{4 B} \| \Psi \|_{ L^{q, 1}_{x, t} }^q \text{.} \]
The remaining case $q = \infty$ is proved similarly:
\[ \| \cint^t_0 \Psi \|_{ L^{\infty, \infty}_{x, t} } \leq \sup_{ x \in \mc{S} } \int_0^\delta | \Psi | |_{ (\tau, x) } d \tau = \| \Psi \|_{ L^{\infty, 1}_{x, t} } \text{.} \qedhere \]
\end{proof}

\begin{corollary} \label{thm.int_ineq_bi}
Fix $p_1, p_2, q \in [1, \infty]$ and $q_1, q_2 \in [q, \infty]$, with
\[ q_1^{-1} + q_2^{-1} = q^{-1} \text{,} \qquad p_1^{-1} + p_2^{-1} = 1 \text{.} \]
Assume that \eqref{eqr.sff_tr} holds.
If $\Psi_i \in \mc{C}^\infty \ul{T}^{r_i}_{l_i} \mc{N}$, where $i \in \{ 1, 2 \}$, then
\begin{equation} \label{eq.int_ineq_bi} \| \cint^t_0 ( \Psi_1 \otimes \Psi_2 ) \|_{ L^{q, \infty}_{x, t} } \lesssim_B \| \Psi_1 \|_{ L^{q_1, p_1}_{x, t} } \| \Psi_2 \|_{ L^{q_2, p_2}_{x, t} } \text{.} \end{equation}
\end{corollary}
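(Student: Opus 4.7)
The plan is to reduce the estimate to Proposition \ref{thm.int_ineq} applied at exponent $q$, and then split the resulting $L^{q,1}_{x,t}$-norm of the product into the two factor norms via a two-stage application of H\"older's inequality (once in $\tau$, once in $x$). Since the pointwise tensor norm is submultiplicative, $| \Psi_1 \otimes \Psi_2 | \leq | \Psi_1 | | \Psi_2 |$, so \eqref{eq.int_ineq} gives
\[ \| \cint^t_0 ( \Psi_1 \otimes \Psi_2 ) \|_{ L^{q, \infty}_{x, t} } \lesssim_B \| \Psi_1 \otimes \Psi_2 \|_{ L^{q, 1}_{x, t} } \leq \brak{ \int_{ \mc{S} } \paren{ \int_0^\delta | \Psi_1 | | \Psi_2 | \mc{J}^{1/q} |_{(\tau, x)} d \tau }^q d \epsilon [0]_x }^{1/q} \text{,} \]
with the obvious modification when $q = \infty$.

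Next, I would exploit the relation $q^{-1} = q_1^{-1} + q_2^{-1}$ to factor the Jacobian weight as $\mc{J}^{1/q} = \mc{J}^{1/q_1} \cdot \mc{J}^{1/q_2}$, and apply H\"older's inequality in the $\tau$-variable with conjugate exponents $p_1, p_2$ to the integrand $( | \Psi_1 | \mc{J}^{1/q_1} ) \cdot ( | \Psi_2 | \mc{J}^{1/q_2} )$. This yields the pointwise-in-$x$ bound
\[ \int_0^\delta | \Psi_1 | | \Psi_2 | \mc{J}^{1/q} d \tau \leq \prod_{i = 1}^2 \paren{ \int_0^\delta | \Psi_i |^{p_i} \mc{J}^{p_i / q_i} d \tau }^{1/p_i} \text{.} \]

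Raising this to the $q$-th power and integrating over $\mc{S}$ produces a pairing to which I would apply a second H\"older inequality in $x$, this time with the exponents $q_1 / q$ and $q_2 / q$; these are both at least $1$ by the assumption $q_i \geq q$, and they are conjugate because $q / q_1 + q / q_2 = 1$. The chosen exponents convert the outer $\tau$-integral power $q / p_i$ into $(q / p_i)(q_i / q) = q_i / p_i$, which is precisely the exponent appearing in the definition of $\| \Psi_i \|_{ L^{q_i, p_i}_{x, t} }$. Taking a $q$-th root yields the desired inequality.

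The argument is essentially a bookkeeping exercise, so the only real obstacle is handling the endpoint cases where one or more of $p_i$, $q_i$, or $q$ equals $\infty$. In each such case the corresponding H\"older application is replaced by an $L^\infty$-$L^1$ (or $L^\infty$-$L^\infty$) pairing, using the sup-norm convention from the definition of $L^{q, p}_{x, t}$; the structural identities $q^{-1} = q_1^{-1} + q_2^{-1}$ and $p_1^{-1} + p_2^{-1} = 1$ make every endpoint degeneration consistent.
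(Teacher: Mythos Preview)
Your proposal is correct and matches the paper's approach exactly: the paper's proof consists of the single sentence ``This follows trivially from \eqref{eq.int_ineq} and H\"older's inequality,'' and you have simply spelled out the two H\"older applications (in $\tau$ and then in $x$) that this entails.
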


\begin{proof}
This follows trivially from \eqref{eq.int_ineq} and H\"older's inequality.
\end{proof}

Finally, we prove a basic estimate for the derivative of the Jacobian.

\begin{proposition} \label{thm.vold_comp}
If \eqref{eqr.sff} holds, and if $q \in [1, \infty]$, then
\begin{equation} \label{eq.vold_comp} \| \nabla \mc{J} \|_{ L^{q, \infty}_{x, t} } \lesssim_B \| \nabla ( \trace k ) \|_{ L^{q, 1}_{x, t} } \text{.} \end{equation}
\end{proposition}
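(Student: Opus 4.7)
The plan is to differentiate the defining identity $\mc{J} = \exp\cint^t_0(\trace k)$, which (since $\nabla_t$ and $\mf{L}_t$ agree on scalars) yields $\nabla \mc{J} = \mc{J} \cdot \nabla \cint^t_0(\trace k)$. Proposition \ref{thm.vol_comp} already furnishes the uniform bound $\mc{J} \lesssim_B 1$, so the task reduces to controlling $\nabla \cint^t_0(\trace k)$ by $\nabla(\trace k)$ in the appropriate norm. The covariant derivative $\nabla$ does not commute with $\cint^t_0$, however, so the heart of the matter is handling this commutator via Proposition \ref{thm.comm_inv}.

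Applying Proposition \ref{thm.comm_inv} with $\Psi = \trace k$, and noting that all $\mf{C}$-terms drop out since $\trace k$ is a scalar, I obtain
\[ \nabla_a \cint^t_0(\trace k) = \cint^t_0 \nabla_a(\trace k) - \gamma^{cd} \cint^t_0\!\bigl( k_{ac} \nabla_d \cint^t_0(\trace k)\bigr) \text{.} \]
Taking pointwise tensor norms, using the first inequality of \eqref{eq.est_trace} and the elementary bound $|\gamma^{cd} k_{ac} G_d| \lesssim |k||G|$, I arrive at the pointwise Volterra inequality
\[ h(\tau,x) \leq \int_0^\tau |\nabla \trace k|(w,x)\,dw + C \int_0^\tau |k|(w,x)\, h(w,x)\,dw \text{,} \]
where $h(\tau,x) := |\nabla \cint^t_0(\trace k)|(\tau,x)$.

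Next I apply the classical Gronwall inequality in $\tau$, at each fixed $x \in \mc{S}$. The crucial input is precisely \eqref{eqr.sff}, which gives $\int_0^\delta |k|(w,x)\,dw \leq B$ uniformly in $x$, so that the Gronwall exponential factor $\exp\bigl(C\int_0^\tau |k|(w,x)\,dw\bigr)$ is controlled by $e^{CB}$ independently of $x$ and $\tau$. This yields the pointwise bound
\[ |\nabla \mc{J}|(\tau,x) \leq \mc{J}(\tau,x)\, h(\tau,x) \lesssim_B \cint^t_0 |\nabla \trace k|(\tau,x) \text{,} \]
and Proposition \ref{thm.int_ineq} applied to the scalar $|\nabla \trace k|$ then delivers the conclusion $\|\nabla \mc{J}\|_{L^{q,\infty}_{x,t}} \lesssim_B \|\nabla \trace k\|_{L^{q,1}_{x,t}}$.

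The only delicate point, and what would be the real obstacle if it failed, is that the commutator \eqref{eq.comm_inv} pairs the unknown $\nabla \cint^t_0(\trace k)$ against $k$ itself (with an outer $\cint^t_0$) rather than against $\nabla k$; this is precisely what lets Gronwall close against the weak integrated hypothesis \eqref{eqr.sff}, whereas a bound involving $\nabla k$ would require the stronger assumption \eqref{eqr.sffd}. The covariant formulation of the evolution and integration operators is thus essential.
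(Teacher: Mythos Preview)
Your proof is correct and follows essentially the same approach as the paper: reduce via $\nabla\mc{J} = \mc{J}\cdot\nabla\cint^t_0(\trace k)$ and the uniform bound on $\mc{J}$, use the commutator formula \eqref{eq.comm_inv} (with the $\mf{C}$-terms absent for the scalar $\trace k$) to obtain a Volterra inequality, close with Gr\"onwall against \eqref{eqr.sff}, and take the $L^{q,\infty}_{x,t}$-norm. The paper's presentation differs only cosmetically, e.g.\ it bounds $\int_0^\tau|\nabla(\trace k)|\,dw$ by $\int_0^\delta|\nabla(\trace k)|\,dw$ before applying Gr\"onwall and then takes the norm directly rather than invoking Proposition~\ref{thm.int_ineq}.
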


\begin{proof}
We begin by applying \eqref{eq.vol_comp} to obtain
\[ \| \nabla \mc{J} \|_{ L^{q, \infty}_{x, t} } = \| \mc{J} \cdot \nabla \cint^t_0 ( \trace k ) \|_{ L^{q, \infty}_{x, t} } \lesssim \| \nabla \cint^t_0 ( \trace k ) \|_{ L^{q, \infty}_{x, t} } \text{.} \]
For any $(\tau, x) \in \mc{N}$, we have from \eqref{eq.est_trace} and \eqref{eq.comm_inv} that
\[ | \nabla \cint^t_0 ( \trace k ) | |_{ (\tau, x) } \leq \int_0^\delta | \nabla ( \trace k ) | |_{ (w, x) } dw + \int_0^\tau | k | | \nabla \cint^t_0 ( \trace k ) | |_{ (w, x) } dw \text{.} \]
By the Gr\"onwall inequality, then
\[ | \nabla \cint^t_0 ( \trace k ) | |_{ (\tau, x) } \leq ( \exp \| k \|_{ L^{\infty, 1}_{x, t} } ) \int_0^\delta | \nabla ( \trace k ) | |_{ (w, x) } dw \text{.} \]
Taking an $L^{q, \infty}_{x, t}$-norm of the above, we obtain
\[ \| \nabla \cint^t_0 ( \trace k ) \|_{ L^{q, \infty}_{x, t} } \lesssim \| \nabla ( \trace k ) \|_{ L^{q, 1}_{x, t} } \text{.} \]
Combining the above completes the proof of \eqref{eq.vold_comp}.
\end{proof}

\subsection{Frame Estimates} \label{sec.est_frame}

The next task is to apply our evolutionary assumptions to derive estimates for equivariant and $t$-parallel fields.
Later, we will apply these estimates in order to control families of equivariant and $t$-parallel horizontal frames.
Throughout, we let $U$ denote an open subset of $\mc{S}$.

We begin first with equivariant fields.
These are essentially the estimates resulting from the ``weak regularity" condition used in \cite{kl_rod:cg, kl_rod:stt, parl:bdc, shao:bdc_nv, wang:cg, wang:cgp}.

\begin{proposition} \label{thm.equiv_est_0}
If $Z \in \mc{C}^\infty \ul{T}^r_l \mc{N}_U$ is equivariant, and if \eqref{eqr.sff} holds, then
\begin{equation} \label{eq.equiv_est_0} \| Z \|_{ L^{\infty, \infty}_{t, x} } \leq e^{ (r + l) B } \| Z [0] \|_{ L^\infty_x } \text{.} \end{equation}
\end{proposition}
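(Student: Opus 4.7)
The plan is to convert the equivariance condition $\mf{L}_t Z = 0$ into a pointwise differential inequality for $|Z|$ involving only $k$ and $Z$, and then close via Grönwall in the $t$ direction at each fixed spatial point.

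First I would use the definition of the covariant evolutionary derivative together with $\mf{L}_t Z \equiv 0$ to write, in index notation,
\[ \nabla_t Z_{u_1 \dots u_l}^{v_1 \dots v_r} = - \sum_{i=1}^l \gamma^{cd} k_{u_i c} Z_{u_1 \hat{d}_i u_l}^{v_1 \dots v_r} + \sum_{j=1}^r \gamma^{c v_j} k_{cd} Z_{u_1 \dots u_l}^{v_1 \hat{d}_j v_r} \text{.} \]
Evaluated in an orthonormal horizontal frame, each of the $r + l$ terms on the right is pointwise bounded in the tensor norm by $|k| \cdot |Z|$, so
\[ |\nabla_t Z| \leq (r + l) |k| |Z| \text{.} \]

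Next I would apply the second inequality in Proposition \ref{thm.est_trace}, together with the observation that $t$-parallel transport preserves the pointwise tensor norm (so $|\mf{p}(Z[0])||_{(\tau,x)} = |Z[0]|_x$), to obtain for each $x \in U$ and $\tau \in [0,\delta]$
\[ |Z| |_{(\tau,x)} \leq |Z[0]|_x + \int_0^\tau |\nabla_t Z| |_{(w,x)} \, dw \leq |Z[0]|_x + (r + l) \int_0^\tau |k| |Z| |_{(w,x)} \, dw \text{.} \]
For each fixed $x$, the function $\tau \mapsto |Z||_{(\tau,x)}$ is continuous, so the scalar Grönwall inequality yields
\[ |Z| |_{(\tau,x)} \leq |Z[0]|_x \exp \paren{ (r + l) \int_0^\delta |k| |_{(w,x)} \, dw } \text{.} \]

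Finally, the exponent is controlled by $(r+l) \| k \|_{L^{\infty,1}_{x,t}}$, which by the hypothesis \eqref{eqr.sff} is at most $(r+l) B$. Taking the supremum of both sides over $(\tau, x) \in \mc{N}_U$ gives the stated estimate \eqref{eq.equiv_est_0}. There is no serious obstacle here: once the algebraic identity reducing $\nabla_t Z$ to a term of the form $k \otimes Z$ is extracted from equivariance, everything reduces to a one-variable Grönwall argument on the integral curves of the vertical direction, and the combinatorial factor $r + l$ appears naturally from counting contractions.
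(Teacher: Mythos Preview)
Your proof is correct and follows essentially the same approach as the paper: both exploit equivariance to write $\nabla_t Z$ as a sum of $r+l$ contractions of $k \otimes Z$, bound $|\nabla_t Z| \leq (r+l)|k||Z|$, invoke the integral inequality from Proposition~\ref{thm.est_trace}, and close with Gr\"onwall along each vertical curve using \eqref{eqr.sff}. Your version is slightly more explicit in writing out the index formula, but the argument is identical.
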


\begin{proof}
Since $\mf{L}_t Z \equiv 0$, then $\nabla_t Z$ is the sum of $r + l$ terms, each of the form $k \otimes Z$, with one $\gamma$-contraction.
Thus, by \eqref{eq.est_trace}, if $(\tau, x) \in \mc{N}$, then
\begin{align*}
| Z | |_{ (\tau, x) } &\leq | Z | |_{ (0, x) } + \cint^t_0 | \nabla_t Z | |_{ (\tau, x) } \\
&\leq | Z | |_{ (0, x) } + ( r + l ) \int_0^\tau | k | | Z | |_{ (w, x) } dw \text{.}
\end{align*}
Applying Gr\"onwall's inequality to the above yields
\[ | Z |_{ (\tau, x) } \leq | Z | |_{ (0, x) } \exp [ ( r + l ) \| k \|_{ L^{\infty, 1}_{x, t} } ] \leq e^{(r + l) B} | Z | |_{ (0, x) } \text{.} \]
The estimate \eqref{eq.equiv_est_0} follows immediately from the above.
\end{proof}

We also obtain an estimate for the covariant derivative of a equivariant field.
Just as the zero-order estimate \eqref{eq.equiv_est_0} required some control for $k$, a corresponding first-order estimate will require analogous control for the derivative of $k$.

\begin{proposition} \label{thm.equiv_est_1}
If $Z \in \mc{C}^\infty \ul{T}^r_l \mc{N}_U$ is equivariant, and if \eqref{eqr.sff} holds, then for any $q \in [1, \infty]$, we have the following estimate:
\begin{equation} \label{eq.equiv_est_1} \| \nabla Z \|_{ L^{q, \infty}_{x, t} } \lesssim_{B, r, l} \| \nabla Z [0] \|_{ L^q_x } + \| Z [0] \|_{ L^\infty_x } \| \nabla k \|_{ L^{q, 1}_{x, t} } \text{.} \end{equation}
\end{proposition}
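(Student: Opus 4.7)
The plan is to mimic the proof of Proposition \ref{thm.equiv_est_0}, but applied to $\nabla Z$ rather than $Z$ itself, using a Gr\"onwall argument and the commutation formula \eqref{eq.comm_cov} to handle the extra derivative.

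First, I would apply the second inequality of \eqref{eq.est_trace} to the horizontal field $\nabla Z \in \mc{C}^\infty \ul{T}^r_{l+1} \mc{N}_U$:
\[
|\nabla Z|_{(\tau,x)} \leq |\nabla Z[0]|_x + \cint^t_0 |\nabla_t \nabla Z||_{(\tau,x)},
\]
where I have used that $t$-parallel transport preserves the pointwise norm. The next step is to expand $\nabla_t \nabla Z = \nabla \nabla_t Z + [\nabla_t,\nabla] Z$. Since $Z$ is equivariant, $\mf{L}_t Z \equiv 0$, and the definition of $\nabla_t$ then gives $\nabla_t Z$ schematically as a sum of at most $r+l$ terms of the form $k \otimes Z$ with a single $\gamma$-contraction, so $\nabla \nabla_t Z$ splits into pieces of the form $\nabla k \otimes Z$ and $k \otimes \nabla Z$. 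Meanwhile, \eqref{eq.comm_cov} gives $[\nabla_t,\nabla] Z$ as terms of the form $k \otimes \nabla Z$ and $\mf{C} \otimes Z$; absorbing $\mf{C}$ into $\nabla k$, this yields the pointwise bound
\[
|\nabla_t \nabla Z| \lesssim_{r,l} |\nabla k|\,|Z| + |k|\,|\nabla Z|.
\]

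Combining with the preceding display and invoking the zeroth-order estimate \eqref{eq.equiv_est_0} to control $|Z|_{(w,x)} \lesssim_{B,r,l} |Z[0]|_x$, I obtain, for each fixed $x \in \mc{S}$,
\[
|\nabla Z|_{(\tau,x)} \lesssim_{B,r,l} |\nabla Z[0]|_x + |Z[0]|_x \int_0^\delta |\nabla k|_{(w,x)}\,dw + \int_0^\tau |k|_{(w,x)}\,|\nabla Z|_{(w,x)}\,dw.
\]
Applying Gr\"onwall's inequality in $\tau$ for fixed $x$, and using \eqref{eqr.sff} to bound $\int_0^\delta |k|_{(w,x)}\,dw$ by $B$, yields
\[
\sup_{0 \leq \tau \leq \delta} |\nabla Z|_{(\tau,x)} \lesssim_{B,r,l} |\nabla Z[0]|_x + |Z[0]|_x \int_0^\delta |\nabla k|_{(w,x)}\,dw.
\]

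Finally, \eqref{eq.vol_comp} implies $\mc{J}^{1/q} \lesssim_B 1$, so I may multiply through by $\mc{J}^{1/q}$ and take the $L^q_x$-norm (with respect to $\epsilon[0]$) of both sides; by Minkowski's inequality the rightmost term becomes $\|Z[0]\|_{L^\infty_x}\|\nabla k\|_{L^{q,1}_{x,t}}$, giving \eqref{eq.equiv_est_1}. The case $q=\infty$ proceeds identically with suprema in place of $L^q_x$. The only mild subtlety is the bookkeeping to confirm that every term appearing in $\nabla_t \nabla Z$ is of the asserted schematic form $\nabla k \otimes Z$ or $k \otimes \nabla Z$; once this is verified, the rest is a direct Gr\"onwall argument.
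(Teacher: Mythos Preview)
Your proposal is correct and follows essentially the same approach as the paper's proof: both derive the pointwise bound $|\nabla_t \nabla Z| \lesssim_{r,l} |k|\,|\nabla Z| + |\nabla k|\,|Z|$ from the commutation formula \eqref{eq.comm_cov} together with the schematic form of $\nabla_t Z$, then apply \eqref{eq.est_trace}, \eqref{eq.equiv_est_0}, Gr\"onwall, and finally take the $L^q_x$-norm using \eqref{eq.vol_comp}.
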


\begin{proof}
By the commutation formula \eqref{eq.comm_cov}, we have the estimate
\[ | \nabla_t \nabla Z | \leq | \nabla \nabla_t Z | + | k | | \nabla Z | + (r + l) | \mf{C} | | Z | \text{.} \]
Recalling the form of $\nabla_t Z$ in the proof of Proposition \ref{thm.equiv_est_0}, and noting the crude estimate $| \mf{C} | \leq 2 | \nabla k |$, then we have the bound
\[ | \nabla_t \nabla Z | \leq (r + l + 1) | k | | \nabla Z | + 3 (r + l) | Z | | \nabla k | \text{.} \]

As a result of the above and of \eqref{eq.est_trace}, if $(\tau, x) \in \mc{N}$, then
\begin{align*}
| \nabla Z | |_{ (\tau, x) } &\leq | \nabla Z | |_{ (0, x) } + 3 (r + l) \| Z \|_{ L^{\infty, \infty}_{t, x} } \int_0^\tau | \nabla k | |_{ (w, x) } dw \\
&\qquad + ( r + l + 1 ) \int_0^\tau | k | | \nabla Z | |_{ (w, \omega) } dw \text{.}
\end{align*}
Applying \eqref{eq.equiv_est_0} and Gronwall's inequality yields
\[ \sup_{0 \leq \tau \leq \delta} | \nabla Z | |_{ (\tau, x) } \lesssim | \nabla Z | |_{ (0, x) } + \| Z [0] \|_{ L^\infty_x } \int_0^\delta | \nabla k |_{ (w, x) } dw \text{.} \]
Taking an $L^q$-norm of the above over $\mc{S}$ and recalling \eqref{eq.vol_comp} completes the proof.
\end{proof}

\begin{remark}
We can only expect $L^{2, 2}_{t, x}$-type bounds for $\nabla k$ in our motivational problem of regular null cones with bounded curvature flux.
Thus, by applying Proposition \ref{thm.equiv_est_1}, we can only obtain $L^{2, \infty}_{x, t}$-type estimates for $\nabla Z$.
\end{remark}

We now present estimates analogous to Propositions \ref{thm.equiv_est_0} and \ref{thm.equiv_est_1}, but for $t$-parallel fields.
First of all, if $\Psi \in \mc{C}^\infty \ul{T}^r_l \mc{N}_U$ is $t$-parallel, then $\nabla_t | \Psi |^2 \equiv 0$, and hence $| \Psi |$ is constant with respect to $t$.
This implies the following identity.

\begin{proposition} \label{thm.par_est_0}
If $\Psi \in \mc{C}^\infty \ul{T}^r_l \mc{N}_U$ is $t$-parallel, then
\begin{equation} \label{eq.par_est_0} \| \Psi \|_{ L^{\infty, \infty}_{t, x} } = \| \Psi [0] \|_{ L^\infty_x } \text{.} \end{equation}
\end{proposition}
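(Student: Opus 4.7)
The plan is to exploit the compatibility of the covariant evolutionary derivative $\nabla_t$ with the bundle metric $\langle\cdot,\cdot\rangle$ on $\ul{T}^r_l \mc{N}$ induced by $\gamma$, which was recorded in Section \ref{sec.cfol_ev} (since $\gamma$, $\gamma^{-1}$, and $\epsilon$ are all $t$-parallel). This reduces the problem to a trivial fundamental-theorem-of-calculus argument on the scalar function $|\Psi|^2$.

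First, I would write $|\Psi|^2 = \langle \Psi, \Psi\rangle \in \mc{C}^\infty \mc{N}_U$ and apply the Leibniz rule guaranteed by metric compatibility to obtain
\[
\nabla_t |\Psi|^2 = 2\langle \nabla_t \Psi, \Psi\rangle = 0,
\]
using the hypothesis $\nabla_t \Psi \equiv 0$. Since $|\Psi|^2$ is a scalar field, $\nabla_t$ agrees with $\mf{L}_t$ on it (as noted after the definition of $\nabla_t$), so $\mf{L}_t |\Psi|^2 \equiv 0$. By the description of $\mf{L}_t$ via the identifications $\Xi_\tau$, this means $|\Psi|^2[\tau] = |\Psi|^2[0]$ for every $\tau \in [0,\delta]$, i.e., pointwise
\[
|\Psi| \big|_{(\tau, x)} \;=\; |\Psi[0]| \big|_{x}, \qquad (\tau, x) \in \mc{N}_U.
\]

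Finally, I would take the supremum of this pointwise identity first in $x \in U$ and then in $\tau \in [0,\delta]$, which by the definition of $L^{\infty,\infty}_{t,x}$ in Section \ref{sec.fol_hor} yields
\[
\|\Psi\|_{L^{\infty,\infty}_{t,x}} \;=\; \sup_{0 \leq \tau \leq \delta} \sup_{x \in U} |\Psi[0]|_x \;=\; \|\Psi[0]\|_{L^\infty_x},
\]
as claimed. There is no real obstacle here: the only substantive ingredient is the compatibility of $\nabla_t$ with $\gamma$, which has already been established, and the rest is bookkeeping. Unlike the equivariant case in Propositions \ref{thm.equiv_est_0}--\ref{thm.equiv_est_1}, no Gr\"onwall argument or dependence on $\|k\|_{L^{\infty,1}_{x,t}}$ is needed, which is precisely the reason why the $t$-parallel transport is better behaved than equivariant transport for our subsequent estimates.
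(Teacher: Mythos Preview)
Your proof is correct and follows essentially the same approach as the paper, which simply observes that $\nabla_t|\Psi|^2 \equiv 0$ by metric compatibility, so $|\Psi|$ is constant in $t$. You have merely written out the details more explicitly.
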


On the other hand, since $\nabla \Psi$ is no longer $t$-parallel, the above is no longer true for covariant derivatives of $\Psi$.
However, given some control for $k$, we can still provide some control for $\nabla \Psi$.
One example is the following proposition.

\begin{proposition} \label{thm.par_est_1}
Suppose $\Psi \in \mc{C}^\infty \ul{T}^r_l \mc{N}_U$ is $t$-parallel, and assume \eqref{eqr.sff} holds.
If $q \in [1, \infty]$, and if $\Phi \in \mc{C}^\infty \ul{T}^0_3 \mc{N}$ is such that $\nabla_t \Phi = \mf{C}$, then
\begin{equation} \label{eq.par_est_1} \| \nabla \Psi \|_{ L^{q, \infty}_{x, t} } \lesssim_B \| \nabla \Psi [0] \|_{ L^q_x } + (r + l) \| \Psi [0] \|_{ L^\infty_x } \| \Phi \|_{ L^{q, \infty}_{x, t} } \text{.} \end{equation}
\end{proposition}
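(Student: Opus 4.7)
The plan is to exploit the commutator identity \eqref{eq.comm_cov} together with the fact that $\nabla_t\Psi \equiv 0$, but crucially to handle the $\mf{C}\otimes\Psi$ terms by an integration by parts in the $t$-direction rather than by putting absolute values on them. Concretely, since $\gamma^{-1}$ and $\Psi$ are both $t$-parallel and $\nabla_t$ commutes with contractions, the identity $\nabla_t\Phi=\mf{C}$ immediately gives, for each $i$ and $j$ appearing in \eqref{eq.comm_cov},
\begin{equation*}
\gamma^{cd}\mf{C}_{au_ic}\Psi_{u_1\hat{d}_iu_l}^{v_1\dots v_r} = \nabla_t\bigl(\gamma^{cd}\Phi_{au_ic}\Psi_{u_1\hat{d}_iu_l}^{v_1\dots v_r}\bigr),
\end{equation*}
and similarly for the upper-index terms. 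Writing $\mf{C}\cdot\Psi$ schematically for the sum of all such $(r+l)$ contracted products (and $\Phi\cdot\Psi$ for the corresponding primitive), the commutator reduces to $\nabla_t(\nabla_a\Psi) = -\gamma^{cd}k_{ac}\nabla_d\Psi + \nabla_t(\Phi\cdot\Psi)$.

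Next I would apply \eqref{eq.est_trace} to $\nabla\Psi$ and the fundamental-theorem identity $\cint^t_0\nabla_t(\Phi\cdot\Psi) = \Phi\cdot\Psi - \mf{p}((\Phi\cdot\Psi)[0])$. Using Proposition \ref{thm.par_est_0} to see that $|\Psi|$ is constant in $t$ and equal to $|\Psi[0]|$ (and that $\mf{p}$ preserves the pointwise norm), this yields the pointwise bound
\begin{equation*}
|\nabla\Psi||_{(\tau,x)} \;\le\; |\nabla\Psi[0]|_x + (r+l)\bigl(|\Phi||_{(\tau,x)}+|\Phi[0]|_x\bigr)|\Psi[0]|_x + \int_0^\tau |k||\nabla\Psi|\,|_{(w,x)}\,dw.
\end{equation*}
The obvious bound $|\Phi[0]|_x \le \sup_{0\le\tau\le\delta}|\Phi||_{(\tau,x)}$ lets us absorb the boundary term, and then Gr\"onwall's inequality in $\tau$ (with $g(w)=|k||_{(w,x)}$ and $\|k\|_{L^{\infty,1}_{x,t}}\le B$ from \eqref{eqr.sff}) gives
\begin{equation*}
\sup_{0\le\tau\le\delta}|\nabla\Psi||_{(\tau,x)} \;\lesssim_{B}\; |\nabla\Psi[0]|_x + (r+l)\,|\Psi[0]|_x \sup_{0\le\tau\le\delta}|\Phi||_{(\tau,x)}.
\end{equation*}

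Finally, I would take the $L^q_x$-norm of this pointwise inequality (with respect to $\epsilon[0]$). To convert the bare supremum $\sup_\tau|\Phi|$ inside the integral to the weighted norm $\|\Phi\|_{L^{q,\infty}_{x,t}}$ used on the right-hand side of \eqref{eq.par_est_1}, I would insert the Jacobian factor $\mc{J}^{1/q}$ and pay for its removal using the uniform lower bound $\mc{J}\ge e^{-2B}$ from Proposition \ref{thm.vol_comp}; this contributes only a constant depending on $B$. Pulling the $L^\infty_x$-norm of $\Psi[0]$ out of the product then yields \eqref{eq.par_est_1}. The main obstacle, conceptually, is the ``integration by parts in time'' trick in the first step: once one recognizes that $\mf{C}\cdot\Psi$ is an exact $\nabla_t$-derivative when $\Psi$ is $t$-parallel, the argument reduces to a routine Gr\"onwall estimate and avoids any direct bound on $\|\mf{C}\|_{L^{q,1}_{x,t}}$ (which is precisely what would be unavailable under our stringent curvature-flux-type assumptions, cf. the remark following Proposition \ref{thm.equiv_est_1}).
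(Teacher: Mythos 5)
Your argument is essentially identical to the paper's proof: both use the commutator formula \eqref{eq.comm_cov}, recognize that $\mf{C}\otimes\Psi = \nabla_t(\Phi\otimes\Psi)$ when $\Psi$ is $t$-parallel so that the covariant integral of this term can be evaluated exactly via the fundamental theorem of calculus, apply Gr\"onwall's inequality to the resulting pointwise bound, invoke Proposition \ref{thm.par_est_0} for the constancy of $|\Psi|$, and finish by taking an $L^q_x$-norm and absorbing the Jacobian factors via \eqref{eq.vol_comp}. The only cosmetic difference is that you track $|\Psi[0]|_x$ pointwise before pulling out its $L^\infty_x$-norm, whereas the paper substitutes $\|\Psi\|_{L^{\infty,\infty}_{t,x}}$ at once.
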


\begin{proof}
Like in the proof of Proposition \ref{thm.equiv_est_1}, we apply \eqref{eq.comm_cov} and \eqref{eq.est_trace} to obtain
\[ | \nabla \Psi | |_{ (\tau, x) } \lesssim | \nabla \Psi | |_{ (0, x) } + | \cint^t_0 ( k \otimes \nabla \Psi ) | |_{ (\tau, x) } + (r + l) | \cint^t_0 ( \mf{C} \otimes \Psi ) | |_{ (\tau, x) } \]
for any $(\tau, x) \in \mc{N}$.
Since
\[ \cint^t_0 ( \mf{C} \otimes \Psi ) = \cint^t_0 ( \nabla_t \Phi \otimes \Psi ) = \Phi \otimes \Psi - \mf{p} \{ ( \Phi \otimes \Psi ) [0] \} \text{,} \]
where we integrated by parts and recalled that $\Psi$ is $t$-parallel, then
\begin{align*}
| \nabla \Psi |_{ (\tau, x) } &\lesssim | \nabla \Psi |_{ (0, x) } + \int_0^\tau | k | | \nabla \Psi | |_{ (w, x) } dw + (r + l) \| \Psi \|_{ L^{\infty, \infty}_{t, x} } \sup_{ 0 \leq w \leq \delta } | \Phi |_{ (w, x) } \\
&\lesssim | \nabla \Psi |_{ (0, x) } + (r + l) \| \Psi [0] \|_{ L^\infty_x } \sup_{ 0 \leq w \leq \delta } | \Phi |_{ (w, x) } + \int_0^\tau | k | | \nabla \Psi | |_{ (w, x) } dw \text{.}
\end{align*}
Applying Gr\"onwall's inequality to the above yields
\[ \sup_{ 0 \leq \tau \leq \delta } | \nabla \Psi |_{ (\tau, x) } \lesssim | \nabla \Psi |_{ (0, x) } + (r + l) \| \Psi [0] \|_{ L^\infty_x } \sup_{ 0 \leq \tau \leq \delta } | \Phi |_{ (\tau, x) } \text{.} \]
Taking an $L^q$-norm of this over $\mc{S}$ and recalling \eqref{eq.vol_comp} proves \eqref{eq.par_est_1}.
\end{proof}

In summary, the estimate \eqref{eq.equiv_est_1} depends on the regularity of $\nabla k$, while \eqref{eq.par_est_1} depends only on $\mf{C}$.
Thus, if $\mf{C}$ has better bounds than $\nabla k$, then Proposition \ref{thm.par_est_1} may yield strictly better control than Proposition \ref{thm.equiv_est_1}.

\begin{remark}
In our motivational problem of regular null cones with bounded curvature flux, we can find $\Phi$ satisfying the hypotheses of Proposition \ref{thm.par_est_1} which has good $L^{4, \infty}_{x, t}$-control.
Thus, Proposition \ref{thm.par_est_1} yields strictly better estimates for $t$-parallel frames than Proposition \ref{thm.equiv_est_1} does for equivariant frames.
\end{remark}

\subsection{Propagation of Regularity} \label{sec.est_prop}

In Section \ref{sec.geom_reg}, we described regularity conditions (\ass{r0}{}, \ass{r1}{}, \ass{r2}{}) on a fixed surface $(\mc{S}, h)$.
Next, in Section \ref{sec.fol_reg}, we described their analogues (\ass{R0}{}, \ass{R1}{}, and \ass{R2}{}) on $(\mc{N}, \gamma)$.
These roughly correspond to the fixed surface regularity conditions applying uniformly to every $(\mc{S}, \gamma [\tau])$.

Here, we add to this discussion the notions of covariant evolution introduced in this section.
We will show that if we assume the integral bounds \eqref{eqr.sff_tr}-\eqref{eqr.sffcurl} on our foliation $(\mc{N}, \gamma)$, then any regularity conditions that hold on the initial surface $(\mc{S}, \gamma [0])$ can be propagated to every $(\mc{S}, \gamma [\tau])$ in a very explicit manner.

For convenience, we first define the following regularity conditions on $(\mc{N}, \gamma)$.
\begin{itemize}
\item $(\mc{N}, \gamma)$ satisfies \ass{F0}{C, N, B}, with data $\{ U_i, \varphi_i, \eta_i \}_{i = 1}^N$, iff $(\mc{S}, \gamma [0])$ satisfies \ass{r0}{C, N}, with the same data, and \eqref{eqr.sff} holds.

\item $(\mc{N}, \gamma)$ satisfies \ass{F1}{C, N, B}, with data $\{ U_i, \varphi_i, \eta_i, \tilde{\eta}_i, e^i \}_{i = 1}^N$, iff $(\mc{S}, \gamma [0])$ satisfies \ass{r1}{C, N}, with the same data, and \eqref{eqr.sff}, \eqref{eqr.sffd_tr}, and \eqref{eqr.sffcurl} hold.

\item $(\mc{N}, \gamma)$ satisfies \ass{F2}{C, N, B}, with data $\{ U_i, \varphi_i, \eta_i, \tilde{\eta}_i, e^i \}_{i = 1}^N$, iff $(\mc{S}, \gamma [0])$ satisfies \ass{r2}{C, N}, with the same data, and \eqref{eqr.sff}, \eqref{eqr.sffd}, and \eqref{eqr.sffcurl} hold.
\end{itemize}
Note the \ass{F1}{} condition trivially implies the \ass{F0}{} condition, and similarly for the \ass{F2}{} and \ass{F1}{} conditions.
We can also show that if the \ass{F0}{}, \ass{F1}{}, or \ass{F2}{} condition holds, then so does the \ass{R0}{}, \ass{R1}{}, or \ass{R2}{} condition, respectively, with explicitly constructed data.
This result can be stated precisely as follows.

\begin{proposition} \label{thm.reg_prop}
Given an open $U \subseteq \mc{S}$ and $e = (e_1, e_2) \in \mc{C}^\infty T^1_0 U \times \mc{C}^\infty T^1_0 U$, we define $\mf{p} e$ to be the $t$-parallel transports of the components of $e$:
\[ \mf{p} e = ( \mf{p} e_1, \mf{p} e_2 ) \in \mc{C}^\infty \ul{T}^1_0 \mc{N}_U \times \mc{C}^\infty \ul{T}^1_0 \mc{N}_U \text{.} \]
The following statements hold:
\begin{itemize}
\item Suppose $(\mc{N}, \gamma)$ satisfies \ass{F0}{C, N, B}, with data $\{ U_i, \varphi_i, \eta_i \}_{i = 1}^N$.
Then, $(\mc{N}, \gamma)$ also satisfies \ass{R0}{C^\prime, N}, with the same data $\{ U_i, \varphi_i, \eta_i \}_{i = 1}^N$, for some constant $C^\prime$ depending on $C$, $N$, and $B$.

\item Suppose $(\mc{N}, \gamma)$ satisfies \ass{F1}{C, N, B}, with data $\{ U_i, \varphi_i, \eta_i, \tilde{\eta}_i, e^i \}_{i = 1}^N$.
Then, $(\mc{N}, \gamma)$ also satisfies \ass{R1}{C^\prime, N}, with data $\{ U_i, \varphi_i, \eta_i, \tilde{\eta}_i, \mf{p} e^i \}_{i = 1}^N$, for some constant $C^\prime$ depending on $C$, $N$, and $B$.

\item Suppose $(\mc{N}, \gamma)$ satisfies \ass{F2}{C, N, B}, with data $\{ U_i, \varphi_i, \eta_i, \tilde{\eta}_i, e^i \}_{i = 1}^N$.
Then, $(\mc{N}, \gamma)$ also satisfies \ass{R2}{C^\prime, N}, with data $\{ U_i, \varphi_i, \eta_i, \tilde{\eta}_i, \mf{p} e^i \}_{i = 1}^N$, for some constant $C^\prime$ depending on $C$, $N$, and $B$.
\end{itemize}
\end{proposition}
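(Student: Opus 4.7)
The plan is to handle the three conclusions sequentially, using the two-sided Jacobian bound of Proposition \ref{thm.vol_comp} as the common tool for transferring information from $\tau=0$ to arbitrary $\tau \in [0,\delta]$.

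For the \ass{R0}{} conclusion, the area bound is immediate from $|\mc{S}_\tau| = \int_\mc{S} \mc{J}[\tau]\,d\epsilon[0]$, Proposition \ref{thm.vol_comp}, and the area bound from \ass{r0}{C,N} at $\tau = 0$. For the uniform positivity of $\gamma[\tau]$ in the $\varphi_i$-coordinates, I apply Proposition \ref{thm.equiv_est_0} to the equivariant transports $\partial^i_a := \mf{e}\tilde{\partial}^i_a$ of the $\varphi_i$-coordinate vector fields; since $\gamma[\tau]_{ab} = \langle \partial^i_a, \partial^i_b \rangle[\tau]$, Cauchy--Schwarz produces an upper bound $\gamma[\tau]_{ab}\xi^a\xi^b \lesssim_{C,B} |\xi|^2$. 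For the matching lower bound, reading off \eqref{eq.jacobian} in $\varphi_i$-coordinates gives $\vartheta_i[\tau] = \mc{J}[\tau]\vartheta_i[0]$; combined with Proposition \ref{thm.vol_comp} and Proposition \ref{thmr.R_extra}, this yields $\det (\gamma[\tau]_{ab}) = \vartheta_i[\tau]^2 \simeq_{C,B} 1$. A $2\times 2$ symmetric positive-definite matrix with bounded trace and determinant bounded below has its minimal eigenvalue bounded below by the ratio, giving the lower bound.

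For the \ass{R1}{} conclusion, the \ass{R0}{} portion has just been established. Orthonormality of $\mf{p}e^i$ on each $(U_i)_\tau$ is automatic because both $\gamma$ and the frame are $t$-parallel, so $\nabla_t \langle \mf{p}e^i_a, \mf{p}e^i_b \rangle \equiv 0$. For the derivative bound, I apply Proposition \ref{thm.par_est_1} with $q = 4$, using \eqref{eqr.sffcurl} to select $\Phi$ with $\nabla_t \Phi = \mf{C}$ and $\|\Phi\|_{L^{4,\infty}_{x,t}} \leq B$, together with $\|e^i_a[0]\|_{L^\infty_x} = 1$ and the \ass{r1}{C,N} bound on $\|\nabla e^i_a[0]\|_{L^4_x}$, to deduce $\|\nabla \mf{p}e^i_a\|_{L^{4,\infty}_{x,t}} \lesssim_{C,B} 1$. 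Upgrading this to the required $L^{\infty,4}_{t,x}$ bound is done by exchanging sup and integral and using Proposition \ref{thm.vol_comp} to compare $\epsilon[\tau]$ with $\epsilon[0]$. For $\nabla \vartheta_i$, write $\vartheta_i = \mc{J}\cdot \mf{e}\vartheta_i[0]$ and distribute $\nabla$ via Leibniz: Proposition \ref{thm.vold_comp} with \eqref{eqr.sffd_tr} controls $\|\nabla \mc{J}\|_{L^{2,\infty}_{x,t}}$, while a scalar Gr\"onwall argument based on \eqref{eq.comm_cov} (for a scalar $f$ with $\mf{L}_t f \equiv 0$, \eqref{eq.comm_cov} reduces to $\nabla_t \nabla_a f = -\gamma^{cd} k_{ac} \nabla_d f$) controls $\|\nabla \mf{e}\vartheta_i[0]\|_{L^{\infty,2}_{t,x}}$.

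For the \ass{R2}{} conclusion, the second coordinate derivative bound on $\eta_i$ transfers verbatim from \ass{r2}{C,N}, leaving only the bound on $\|\nabla(\mf{e}\partial^i_a)\|_{L^{\infty,2}_{t,x}}$. This is a direct application of Proposition \ref{thm.equiv_est_1} with $q = 2$, invoking \eqref{eqr.sffd} and the \ass{r2}{C,N} bound on $\|\nabla \partial^i_a[0]\|_{L^2_x}$, followed by the same $L^{2,\infty}_{x,t}$-to-$L^{\infty,2}_{t,x}$ upgrade as before. The main technical annoyance throughout will be the systematic mismatch between the $L^{q,\infty}_{x,t}$ norms naturally output by the propagation estimates of Section \ref{sec.est_frame} (whose Gr\"onwall arguments are pointwise in $x$) and the $L^{\infty,q}_{t,x}$ norms demanded in the \ass{R1}{} and \ass{R2}{} statements; the Jacobian bound combined with $\sup_t \int \leq \int \sup_t$ handles the conversion each time, but must be invoked consistently.
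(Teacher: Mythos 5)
Your proposal is correct and follows the same broad strategy as the paper: propagate the initial-slice regularity using Gr\"onwall-type bounds for equivariant and $t$-parallel fields, then reconcile the $L^{q,\infty}_{x,t}$ outputs with the $L^{\infty,q}_{t,x}$ requirements. For the \ass{R1}{} and \ass{R2}{} parts your argument is essentially identical to the paper's (Proposition~\ref{thm.par_est_1} + \eqref{eqr.sffcurl} for the parallel frame bound; the Jacobian identity $\vartheta_i[\tau]=\mc J[\tau]\vartheta_i[0]$ together with \eqref{eq.vold_comp} for $\nabla\vartheta_i$; and Proposition~\ref{thm.equiv_est_1} + \eqref{eqr.sffd} for $\nabla(\mf e\partial^i_a)$).

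The one place you diverge is in the \ass{R0}{} positivity bound. The paper applies a single Gr\"onwall argument directly to $|X|^2$ for an arbitrary equivariant vector $X=\xi^a\partial^i_a$, using $|X|^2|_{(\tau,x)}=|X|^2|_{(0,x)}+2\int_0^\tau k(X,X)$, which delivers $|X|^2|_{(\tau,x)}\simeq_B |X|^2|_{(0,x)}$ and hence both the upper and lower bounds on $\gamma[\tau]_{ab}\xi^a\xi^b$ simultaneously. You split the job: Proposition~\ref{thm.equiv_est_0} (which internally runs the same Gr\"onwall) for the upper bound, and a determinant argument via $\vartheta_i[\tau]=\mc J[\tau]\vartheta_i[0]$ for the lower bound. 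Both routes are valid; the paper's is marginally cleaner since one Gr\"onwall step suffices and no eigenvalue argument is needed.

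One small correction: the conversion from $L^{q,\infty}_{x,t}$ to $L^{\infty,q}_{t,x}$ is not a ``main technical annoyance'' and does not actually require the Jacobian bound of Proposition~\ref{thm.vol_comp}. Because the paper's definition of $\|\Psi\|_{L^{q,\infty}_{x,t}}$ already carries the weight $\mc J^{1/q}$, one has unconditionally
\[
\|\Psi[\tau]\|_{L^q_x}^q=\int_{\mc S}|\Psi[\tau]|^q\,\mc J[\tau]\,d\epsilon[0]\le\int_{\mc S}\Bigl(\sup_w|\Psi|[w]\,\mc J[w]^{1/q}\Bigr)^q\,d\epsilon[0]=\|\Psi\|_{L^{q,\infty}_{x,t}}^q
\]
for each fixed $\tau$, so $\|\Psi\|_{L^{\infty,q}_{t,x}}\le\|\Psi\|_{L^{q,\infty}_{x,t}}$ follows in one line. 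The paper treats this as immediate and does not spell it out.
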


\begin{proof}
Throughout, we adopt the same conventions as in Section \ref{sec.fol}---we denote the equivariant transports of $\varphi_i$, $\eta_i$, and $\tilde{\eta}_i$ by $\varphi_i$, $\eta_i$, and $\tilde{\eta}_i$, respectively.

We begin with the first statement (assuming \ass{F0}{C, N, B} holds).
By definition,
\[ | \mc{S} |_\tau = \int_{ \mc{S} } \mc{J} |_{ (\tau, x) } d \epsilon [0]_x \text{.} \]
Since $(\mc{S}, \gamma [0])$ satisfies \ass{r0}{C, N}, then \eqref{eq.vol_comp} yields that $| \mc{S} |_\tau \simeq 1$.
Next, letting $X \in \mc{C}^\infty \ul{T}^1_0 \mc{N}$ be equivariant, and letting $x \in U_i$, with $1 \leq i \leq N$, we have
\[ | X |^2 |_{ (\tau, x) } = | X |^2 |_{ (0, x) } + 2 \int_0^\tau k (X, X) |_{ (w, x) } dw \text{,} \]
and hence
\[ \abs{ | X |^2 |_{ (\tau, x) } - | X |^2_{ (0, x) } } \leq \int_0^\tau | k | |_{ (w, x) } | X |^2 |_{ (w, x) } dw \text{.} \]
Applying Gr\"onwall's inequality and \eqref{eqr.sff} yields
\[ | X |^2 |_{ (\tau, x) } \simeq | X |^2 |_{ (0, x) } \text{.} \]
By the \ass{r0}{C, N} condition for $(\mc{S}, \gamma [0])$, we see that $\gamma [\tau]$ also satisfies the uniform positivity property in the \ass{r0}{} condition (with some constant depending on $C$, $N$, and $B$).
It follows that $(\mc{N}, \gamma)$ satisfies \ass{R0}{C^\prime, N} for some appropriate $C^\prime$.

Next, suppose \ass{F1}{C, N, B} holds.
Applying \eqref{eqr.sffcurl}, \eqref{eq.par_est_1}, and \ass{r1}{C, N}, we obtain
\[ \| \nabla ( \mf{p} e^i_a ) \|_{ L^{4, \infty}_{x, t} } \lesssim 1 \text{,} \qquad 1 \leq i \leq N \text{,} \quad a \in \{ 1, 2 \} \text{,} \]
since $\mf{p} e^i_a$ is by definition $t$-parallel.
Now, let $\vartheta_i \in \mc{C}^\infty \mc{N}_{U_i}$ denote the area density for the $\varphi_i$-coordinates (see Section \ref{sec.fol_reg}).
Since $\vartheta_i [\tau] = \mc{J} [\tau] \cdot \vartheta_i [0]$ by definition, then
by \ass{r1}{C, N}, \eqref{eqr.sffd_tr}, \eqref{eq.vol_comp}, and \eqref{eq.vold_comp} we have
\[ \| \nabla \vartheta_i \|_{ L^{2, \infty}_{x, t} } \lesssim 1 \text{.} \]
From this, it follows that $(\mc{N}, \gamma)$ satisfies \ass{R1}{C^\prime, N} for some appropriate $C^\prime$, with the data given in the statement of the proposition.

Finally, suppose \ass{F2}{C, N, B} holds.
Since $\mf{e} \partial^i_a$ is equivariant by definition, then
\[ \| \nabla ( \mf{e} \partial^i_a ) \|_{ L^{2, \infty}_{x, t} } \lesssim 1 \text{,} \]
where we applied \eqref{eqr.sffd}, \ass{r2}{C, N}, and \eqref{eq.equiv_est_1}.
As a result, $(\mc{N}, \gamma)$ satisfies \ass{R2}{C^\prime, N}, for the appropriate $C^\prime$ and with the specified data, as desired.
\end{proof}

\subsection{Foliation Estimates} \label{sec.cfol_sob}

We conclude this section by establishing some tensorial Sobolev estimates on all of $\mc{N}$.
These were also proved in \cite[Lemma 3.2]{wang:cg}.
For completeness, we repeat some of the proofs found there.

\begin{proposition} \label{thm.nsob_ineq_pre}
Assume \eqref{eqr.sff_tr}, and let $\Psi \in \mc{C}^\infty \ul{T}^r_l \mc{N}$.
Then,
\begin{equation} \label{eq.nsob_ineq_pre} \| \Psi \|_{ L^{2, \infty}_{x, t} } \lesssim_B \| \Psi [0] \|_{ L^2_x } + \| \nabla_t \Psi \|_{ L^{2, 2}_{t, x} }^\frac{1}{2} \| \Psi \|_{ L^{2, 2}_{t, x} }^\frac{1}{2} \text{.} \end{equation}
\end{proposition}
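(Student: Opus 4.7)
The plan is to square both sides and prove the equivalent bound
\[ \| \Psi \|_{ L^{2, \infty}_{x, t} }^2 \lesssim_B \| \Psi [0] \|_{ L^2_x }^2 + \| \nabla_t \Psi \|_{ L^{2, 2}_{t, x} } \| \Psi \|_{ L^{2, 2}_{t, x} } \text{,} \]
from which \eqref{eq.nsob_ineq_pre} follows by $(a + b)^{1/2} \leq a^{1/2} + b^{1/2}$. First I would invoke Proposition \ref{thm.vol_comp} to dispose of the Jacobian weight in the definition of $\| \cdot \|_{ L^{2, \infty}_{x, t} }$: since $\mc{J} \leq e^{2B}$, we have pointwise
\[ \sup_{ 0 \leq \tau \leq \delta } ( | \Psi |^2 \mc{J} ) |_{ (\tau, x) } \leq e^{2B} \sup_{ 0 \leq \tau \leq \delta } | \Psi |^2 |_{ (\tau, x) } \text{,} \]
so it suffices to estimate $\int_\mc{S} \sup_\tau | \Psi |^2 |_{ (\tau, x) } \, d \epsilon [0]_x$ by the right-hand side.

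Next, for fixed $x$, I would apply the fundamental theorem of calculus to the \emph{scalar} function $| \Psi |^2 (\cdot, x)$. Since $\nabla_t$ coincides with $\mf{L}_t$ on scalars and is compatible with $\gamma$ (as $\gamma$ is $t$-parallel), we have $\partial_\tau | \Psi |^2 = \nabla_t | \Psi |^2 = 2 \langle \nabla_t \Psi, \Psi \rangle$, so
\[ \sup_{ 0 \leq \tau \leq \delta } | \Psi |^2 |_{ (\tau, x) } \leq | \Psi [0] |^2 |_x + 2 \int_0^\delta | \nabla_t \Psi | | \Psi | |_{ (w, x) } \, d w \text{.} \]
Integrating this over $\mc{S}$ against $d \epsilon [0]_x$ produces the two natural terms on the right: the first integrates to $\| \Psi [0] \|_{ L^2_x }^2$ exactly, and the second becomes a double integral over $[0, \delta] \times \mc{S}$.

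For the double integral, I would switch the order of integration and then pass from $d \epsilon [0]_x$ to the $w$-dependent measure $d \epsilon [w]_x$ using \eqref{eq.jacobian} together with $\mc{J}^{-1} \leq e^{2B}$ (Proposition \ref{thm.vol_comp}). This gives
\[ \int_\mc{S} \int_0^\delta | \nabla_t \Psi | | \Psi | |_{ (w, x) } \, d w \, d \epsilon [0]_x \lesssim_B \int_0^\delta \int_\mc{S} | \nabla_t \Psi | | \Psi | |_{ (w, x) } \, d \epsilon [w]_x \, d w \text{.} \]
Two applications of Cauchy-Schwarz (first in $x$ with respect to $d \epsilon [w]_x$, then in $w$) bound the right-hand side by $\| \nabla_t \Psi \|_{ L^{2, 2}_{t, x} } \| \Psi \|_{ L^{2, 2}_{t, x} }$, and combining yields the desired squared estimate.

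There is no real obstacle here beyond bookkeeping: the only subtlety is keeping track of whether $\mc{J}$-factors appear with exponent $+1$ or $-1$ when switching between $d \epsilon [0]$ and $d \epsilon [w]$, but Proposition \ref{thm.vol_comp} bounds both directions by $e^{\pm 2 B}$, so all such factors are absorbed into the implicit constant $\lesssim_B$. Note in particular that no bound on $\nabla k$ or on $\mf{C}$ is needed---only \eqref{eqr.sff_tr}---so that this is a genuine first-order evolutionary Sobolev embedding for horizontal fields.
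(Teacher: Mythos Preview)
Your proposal is correct and follows essentially the same approach as the paper: apply the fundamental theorem of calculus to the scalar $|\Psi|^2$, take the supremum in $\tau$, integrate over $\mc{S}$, and finish with Cauchy--Schwarz. The only difference is that you are more explicit about the Jacobian bookkeeping (invoking Proposition~\ref{thm.vol_comp} to pass between $d\epsilon[0]$ and $d\epsilon[w]$), whereas the paper leaves this implicit in the phrase ``integrating over $\mc{S}$ and applying H\"older's inequality.''
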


\begin{proof}
For any $x \in \mc{S}$, we have
\begin{align*}
| \Psi |^2 |_{ (\tau, x) } &= | \Psi |^2 |_{ (0, x) } + \int_0^\tau \nabla_t | \Psi |^2 |_{ (w, x) } dw \\
&\leq | \Psi |^2 |_{ (0, x) } + \int_0^\tau | \nabla_t \Psi | | \Psi | |_{ (w, x) } dw \text{.}
\end{align*}
Taking the supremum over all $\tau \in [0, \delta]$ yields
\[ \sup_{ 0 \leq \tau \leq \delta } | \Psi |^2 |_{ (\tau, x) } \leq | \Psi |^2 |_{ (0, x) } + \int_0^\delta | \nabla_t \Psi | | \Psi | |_{ (w, x) } dw \text{.} \]
Integrating over $\mc{S}$ and applying H\"older's inequality yields \eqref{eq.nsob_ineq_pre}.
\end{proof}

\begin{proposition} \label{thm.nsob_ineq}
Assume $(\mc{N}, \gamma)$ satisfies \ass{F0}{C, N, B}.
If $\Psi \in \mc{C}^\infty \ul{T}^r_l \mc{N}$, then
\begin{equation} \label{eq.nsob_ineq} \| \Psi \|_{ L^{4, \infty}_{x, t} } \lesssim_{C, N, B} \| \Psi [0] \|_{ L^4_x } + \| \nabla_t \Psi \|_{ L^{2, 2}_{t, x} }^\frac{1}{2} ( \| \nabla \Psi \|_{ L^{2, 2}_{t, x} } + \| \Psi \|_{ L^{2, 2}_{t, x} } )^\frac{1}{2} \text{.} \end{equation}
\end{proposition}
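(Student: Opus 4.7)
The plan is to reduce the estimate to the scalar field $|\Psi|^2$ and then invoke Proposition \ref{thm.nsob_ineq_pre}. The first key observation is that $|\Psi|^2$ is a scalar on $\mc{N}$ with $\nabla_t |\Psi|^2 = 2 \langle \nabla_t \Psi, \Psi \rangle$, hence $|\nabla_t |\Psi|^2| \leq 2 |\nabla_t \Psi| |\Psi|$; moreover the Jacobian bound \eqref{eq.vol_comp} gives the identification $\|\Psi\|_{L^{4,\infty}_{x,t}}^2 \simeq_B \||\Psi|^2\|_{L^{2,\infty}_{x,t}}$, since $\mc{J} \simeq_B 1$. Applying Proposition \ref{thm.nsob_ineq_pre} to the scalar $|\Psi|^2$ therefore yields
\[
\|\Psi\|_{L^{4,\infty}_{x,t}}^2 \lesssim_B \|\Psi[0]\|_{L^4_x}^2 + \||\nabla_t \Psi||\Psi|\|_{L^{2,2}_{t,x}}^{1/2} \|\Psi\|_{L^{4,4}_{t,x}}.
\]

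The second step is to control each of the two nonlinear factors on the right by a ``sup-extraction'' trick. Using the pointwise bound $|\Psi(\tau, x)|^2 \leq M^2(x) := \sup_{\tau'} |\Psi(\tau', x)|^2$ together with Cauchy--Schwarz in $x$, one obtains
\[
\||\nabla_t \Psi||\Psi|\|_{L^{2,2}_{t,x}}^2 \leq \|M^2\|_{L^2_x} \cdot \|\textstyle\int_0^\delta |\nabla_t \Psi|^2\, dw\|_{L^2_x}, \qquad \|\Psi\|_{L^{4,4}_{t,x}}^4 \leq \|M^2\|_{L^2_x} \cdot \|\textstyle\int_0^\delta |\Psi|^2\, dw\|_{L^2_x},
\]
and the identification $\|M^2\|_{L^2_x} \simeq_B \|\Psi\|_{L^{4,\infty}_{x,t}}^2$ extracts factors of $\|\Psi\|_{L^{4,\infty}_{x,t}}$ that are homogeneous of degree strictly less than $2$. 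Young's inequality then absorbs the resulting $\|\Psi\|_{L^{4,\infty}_{x,t}}^{3/2}$-type term back into the left-hand side, reducing the problem to bounding the two $L^2_x$-norms above.

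Third, the remaining scalar quantities are estimated using the two-dimensional Gagliardo--Nirenberg--Sobolev inequality \eqref{eq.gns_1_pre}. For instance, setting $U(x) = \int_0^\delta |\Psi(w, x)|^2\, dw$, the bounds $\|U\|_{L^1_x} = \|\Psi\|_{L^{2,2}_{t,x}}^2$ and $\|\nabla U\|_{L^1_x} \leq 2 \|\Psi\|_{L^{2,2}_{t,x}} \|\nabla \Psi\|_{L^{2,2}_{t,x}}$ combined with \eqref{eq.gns_1_pre} give $\|U\|_{L^2_x} \lesssim_{C, N} \|\Psi\|_{L^{2,2}_{t,x}} (\|\nabla \Psi\|_{L^{2,2}_{t,x}} + \|\Psi\|_{L^{2,2}_{t,x}})$. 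The main obstacle is the analogous control for $\int_0^\delta |\nabla_t \Psi|^2\, dw$: naively applying GNS would require the uncontrolled norm $\|\nabla \nabla_t \Psi\|_{L^{2,2}_{t,x}}$, so one must instead exploit the extracted $\sup_\tau |\Psi|$ factor already present from the previous step, combined with a refined bootstrap that keeps $\nabla_t \Psi$ paired only with $|\Psi|$ (so it appears only in $L^1_x$ after integration in time) and never in isolation. Collecting these pieces and taking square roots yields the stated inequality.
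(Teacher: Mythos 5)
Your opening move---reducing to the scalar $|\Psi|^2$ and applying Proposition~\ref{thm.nsob_ineq_pre} as a black box---already commits you to a H\"older split that cannot be undone, and the obstacle you acknowledge at the end is in fact fatal to this route. The paper's proof deliberately does \emph{not} invoke Proposition~\ref{thm.nsob_ineq_pre} with $F=|\Psi|^2$; it re-derives the pointwise bound directly for $|\Psi|^4$,
\[
\sup_{0\le\tau\le\delta}|\Psi|^4|_{(\tau,x)}\ \lesssim\ |\Psi|^4|_{(0,x)}\ +\ \int_0^\delta|\nabla_t\Psi|\,|\Psi|^3\,|_{(w,x)}\,dw\text{,}
\]
integrates over $\mc{S}$, and then chooses the H\"older split
\[
\int_{\mc{S}}\int_0^\delta|\nabla_t\Psi|\,|\Psi|^3\ \le\ \|\nabla_t\Psi\|_{L^{2,2}_{t,x}}\,\|\Psi\|_{L^{6,6}_{t,x}}^3\text{.}
\]
The entire point of this split is that $\nabla_t\Psi$ is isolated in $L^{2,2}_{t,x}$, so \emph{no spatial regularity of $\nabla_t\Psi$ is ever needed}; the $\|\Psi\|_{L^{6,6}_{t,x}}$ factor is then closed by \eqref{eq.gns_1_pre} applied to $|\Psi|^3$ (yielding a factor of $\|\Psi\|_{L^{4,\infty}_{x,t}}^{1/2}$, absorbed by Young's inequality), which is the same mechanism you already propose for $\|U\|_{L^2_x}$.

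Applying Proposition~\ref{thm.nsob_ineq_pre} verbatim to $F=|\Psi|^2$ instead silently executes the other H\"older split, $\int\int|\nabla_t F||F|\le\|\nabla_t F\|_{L^{2,2}_{t,x}}\|F\|_{L^{2,2}_{t,x}}$, i.e.\ $\||\nabla_t\Psi||\Psi|\|_{L^{2,2}_{t,x}}\||\Psi|^2\|_{L^{2,2}_{t,x}}$, in which $\nabla_t\Psi$ is already glued to a copy of $|\Psi|$. Your sup-extraction then rewrites $\||\nabla_t\Psi||\Psi|\|_{L^{2,2}_{t,x}}^2\le\|M^2\|_{L^2_x}\|V\|_{L^2_x}$ with $V=\int_0^\delta|\nabla_t\Psi|^2\,dw$, and after absorbing $\|M^2\|_{L^2_x}\simeq\|\Psi\|_{L^{4,\infty}_{x,t}}^2$ by Young you are left needing to control $\|V\|_{L^2_x}^{1/2}\|U\|_{L^2_x}^{1/2}$. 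While $\|V\|_{L^1_x}\simeq_B\|\nabla_t\Psi\|_{L^{2,2}_{t,x}}^2$ is available, upgrading to $\|V\|_{L^2_x}$ via \eqref{eq.gns_1_pre} forces $\|\nabla V\|_{L^1_x}$, hence $\|\nabla\nabla_t\Psi\|_{L^{2,2}_{t,x}}$, which does not appear among the hypotheses. Your suggested fix---``keeping $\nabla_t\Psi$ paired only with $|\Psi|$ so it appears only in $L^1_x$ after $t$-integration''---is in tension with the Cauchy--Schwarz step you just performed, which already separated $|\Psi|$ out into $M^2$ and left $|\nabla_t\Psi|^2$ alone; once $|\nabla_t\Psi|^2$ has been integrated in $t$ only, controlling it in any $L^p_x$ with $p>1$ requires a spatial derivative, and no repackaging of the extracted $\sup_\tau|\Psi|$ factor avoids this. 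The correct repair is not to refine the bootstrap but to discard the black-box application of Proposition~\ref{thm.nsob_ineq_pre} and redo the pointwise estimate for $|\Psi|^4$, so that the very first H\"older already places $\nabla_t\Psi$ alone in $L^{2,2}_{t,x}$.
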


\begin{proof}
First, by Proposition \ref{thm.reg_prop}, the \ass{r0}{C^\prime, N} condition holds on every $(\mc{S}, \gamma [\tau])$ for some constant $C^\prime$ depending on $C$, $N$, and $B$.

Repeating roughly the first part of the proof of \eqref{eq.nsob_ineq_pre}, we obtain for any $x \in \mc{S}$,
\[ \sup_{ 0 \leq \tau \leq \delta } | \Psi |^4 |_{ (\tau, x) } \lesssim | \Psi |^4 |_{ (0, x) } + \int_0^\delta | \nabla_t \Psi | | \Psi |^3 |_{ (w, x) } \text{.} \]
Integrating the above over $\mc{S}$ and applying H\"older's inequality yields
\[ \| \Psi \|_{ L^{4, \infty}_{x, t} } \lesssim \| \Psi [0] \|_{ L^4_x } + \| \nabla_t \Psi \|_{ L^{2, 2}_{t, x} }^\frac{1}{4} \| \Psi \|_{ L^{6, 6}_{t, x} }^\frac{3}{4} \text{.} \]
The inequality \eqref{eq.gns_1_pre} yields for any $\tau$ that
\begin{align*}
\| \Psi [\tau] \|_{ L^6_x }^6 &\lesssim \{ \| \nabla ( | \Psi |^3 ) [\tau] \|_{ L^1_x } + \| | \Psi |^3 [\tau] \|_{ L^1_x } \}^2 \\
&\lesssim ( \| \nabla \Psi [\tau] \|_{ L^2_x } + \| \Psi [\tau] \|_{ L^2_x } )^2 \| \Psi [\tau] \|_{ L^4_x }^4 \text{.}
\end{align*}
Integrating the above over the time interval $[0, \delta]$, we obtain
\[ \| \Psi \|_{ L^{6, 6}_{t, x} }^\frac{3}{4} \lesssim ( \| \nabla \Psi \|_{ L^{2, 2}_{t, x} } + \| \Psi \|_{ L^{2, 2}_{t, x} } )^\frac{1}{4} \| \Psi \|_{ L^{4, \infty}_{x, t} }^\frac{1}{2} \text{.} \]
Combining all the above yields \eqref{eq.nsob_ineq}.
\end{proof}

\section{Parallel Scalar Reductions} \label{sec.thm}

Assume the foliation $(\mc{N}, \gamma)$ and all definitions and notations associated with it, as in Sections \ref{sec.fol} and \ref{sec.cfol}.
In Section \ref{sec.cfol}, we showed that the bounds \eqref{eqr.sff_tr}-\eqref{eqr.sffcurl} suffice to propagate the regularity conditions of Section \ref{sec.geom_reg} from $\mc{S}_0$ uniformly to all the $\mc{S}_\tau$'s.
This was shown explicitly in Proposition \ref{thm.reg_prop}.

In this section, we revisit the scalar reduction scheme of Section \ref{sec.fol_scal}, but with the specific data constructed from Proposition \ref{thm.reg_prop}.
We will then use this construction to give simple and concise proofs of the main bilinear product estimates of this paper.
The basic strategy is similar to that of Section \ref{sec.fol_est}; we reduce the geometric tensorial estimates to their corresponding estimates in Euclidean space, which are proved in Appendix \ref{sec.eucl}.
The main new feature of the constructions in this section is that the local orthonormal frames in the data for the \ass{R1}{} condition here are $t$-parallel.
As a result, these frames behave very well with respect to the covariant operators $\nabla_t$ and $\cint^t_0$ that will be present in our main estimates.

\subsection{Scalar Reductions} \label{sec.thm_scal}

Suppose for the moment that $(\mc{N}, \gamma)$ satisfies \ass{F1}{C, N, B}, with data $\{ U_i, \varphi_i, \eta_i, \tilde{\eta}_i, e^i \}_{i = 1}^N$.
Proposition \ref{thm.reg_prop} then implies that $(\mc{N}, \gamma)$ satisfies \ass{R1}{C^\prime, N}, with data $\{ U_i, \varphi_i, \eta_i, \tilde{\eta}_i, \mf{p} e^i \}_{i = 1}^N$, where $C^\prime$ depends on $C$, $N$, and $B$.
For convenience, we will call the above data for the \ass{R1}{C^\prime, N} condition the \emph{parallel data} induced by the data for the \ass{F1}{C, N, B} condition.

We can now consider the scalar reduction scheme of Section \ref{sec.fol_scal}, with this parallel data.
In particular, any element $X \in i \mf{X}^r_l$, where $1 \leq i \leq N$, is $t$-parallel.

\begin{proposition} \label{thm.scalar_red_H}
Let $\Psi \in \mc{C}^\infty \ul{T}^r_l \mc{N}$, and assume $(\mc{N}, \gamma)$ satisfies \ass{F1}{C, N, B}, with data $\{ U_i, \varphi_i, \eta_i, \tilde{\eta}_i, e^i \}_{i = 1}^N$.
Then, with respect to the induced parallel data,
\begin{equation} \label{eq.scalar_red_H} \sum_{ X \in i \mc{X}^r_l } \| \Psi (X) \|_{ N^1_{t, x} } \lesssim_{C, N, B, r, l} \| \Psi \|_{ N^1_{t, x} } \text{,} \qquad 1 \leq i \leq N \text{.} \end{equation}
\end{proposition}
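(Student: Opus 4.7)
The plan is to reduce the $N^1_{t,x}$-norm of the scalar $\Psi(X)$ into three pieces corresponding to the three summands in the definition of $N^1_{t,x}$, and to handle each using a different ingredient already in place.

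First I would invoke Proposition \ref{thm.reg_prop} so that the induced parallel data makes $(\mc{N},\gamma)$ satisfy \ass{R1}{C',N} for some $C'$ depending on $C,N,B$. This gives me access to the machinery of Section \ref{sec.fol_scal}, and in particular \eqref{eq.frame_basis}, which guarantees $\|X\|_{L^{\infty,\infty}_{t,x}}\leq 1$ for every $X\in i\mc{X}^r_l$.

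Next I would exploit the fact that each $X\in i\mc{X}^r_l$ is built, by construction, as a tensor product of the $t$-parallel transports $\mf{p}e^i_1,\mf{p}e^i_2$ (and their duals). Since $\nabla_t$ commutes with contractions, metric contractions and tensor products, the entire frame element $X$ is $t$-parallel, i.e.\ $\nabla_t X\equiv 0$. For the time derivative term this yields the clean identity
\[
\nabla_t[\Psi(X)] \;=\; (\nabla_t\Psi)(X),
\]
and hence, via $|X|\le 1$ and Cauchy--Schwarz on the fibres,
\[
\|\nabla_t[\Psi(X)]\|_{L^{2,2}_{t,x}} \;\leq\; \|\nabla_t\Psi\|_{L^{2,2}_{t,x}}.
\]
The zeroth-order term $\|\Psi(X)\|_{L^{2,2}_{t,x}}\leq\|\Psi\|_{L^{2,2}_{t,x}}$ is immediate for the same reason. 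For the spatial derivative term I would directly apply Proposition \ref{thm.scalar_red_D} with $p=2$, which is available precisely because \ass{R1}{C',N} holds with the parallel data.

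Summing the three bounds and then summing over the $2^{r+l}$ elements of $i\mc{X}^r_l$ yields \eqref{eq.scalar_red_H}. There is no real obstacle here: the key conceptual point, and the reason the estimate is so clean, is that choosing the frame to be $t$-parallel kills any connection-coefficient contribution in the evolutionary direction, so one does not need any analogue of Proposition \ref{thm.scalar_red_D} for $\nabla_t$. This is precisely the improvement that parallel transport gives over an equivariant choice of frame, and it is what makes the remaining bilinear product proofs in this section go through with only the Euclidean estimates of Appendix \ref{sec.eucl}.
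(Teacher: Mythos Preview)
Your proposal is correct and is precisely the argument the paper has in mind; the paper's own proof is the one-line statement that the result follows from \eqref{eq.scalar_red_D} together with the fact that each $X\in i\mc{X}^r_l$ is $t$-parallel, and you have simply unpacked that into its three constituent pieces.
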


\begin{proof}
This follows from \eqref{eq.scalar_red_D} and the fact that each $X \in \mf{p} \mc{X}^r_l (i)$ is $t$-parallel.
\end{proof}

First of all, we can apply this \emph{parallel} scalar reduction scheme to compare two geometric Besov norms, \emph{with respect to different metrics}.
One useful statement of this property is given in the subsequent proposition.

\begin{proposition} \label{thm.norm_comp_besov}
Assume $(\mc{N}, \gamma)$ satisfies \ass{F1}{C, N, B}, and suppose $a \in [1, \infty]$ and $s \in (-1, 1)$.
If $\Psi \in \mc{C}^\infty \ul{T}^r_l \mc{N}$ is $t$-parallel, then
\begin{align}
\label{eq.norm_comp_besov} \| \Psi \|_{ B^{a, \infty, s}_{\ell, t, x} } &\lesssim_{ C, N, B, s, r, l } \| \Psi [0] \|_{ B^{a, s}_{\ell, x} } \text{.}
\end{align}
\end{proposition}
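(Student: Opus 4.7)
The plan is to combine the Besov comparison of Proposition \ref{thm.comp_main}, applied using the parallel data induced from the \ass{F1}{C, N, B} assumption, with the observation that in a $t$-parallel frame, the scalar components of a $t$-parallel horizontal tensor field are constant in $t$. First I would use Proposition \ref{thm.reg_prop} to pass from \ass{F1}{C, N, B} with data $\{ U_i, \varphi_i, \eta_i, \tilde{\eta}_i, e^i \}_{i = 1}^N$ to \ass{R1}{C^\prime, N} with the parallel data $\{ U_i, \varphi_i, \eta_i, \tilde{\eta}_i, \mf{p} e^i \}_{i = 1}^N$, where $C^\prime$ depends on $C$, $N$, and $B$. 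Applying Proposition \ref{thm.comp_main} with $p = \infty$ to this parallel data then gives
\[
\| \Psi \|_{ B^{a, \infty, s}_{\ell, t, x} } \lesssim \| \Psi \|_{ \mc{B}^{a, \infty, s}_{\ell, t, x} } = \sum_{i = 1}^N \sum_{ X \in i \mc{X}^r_l } \| [ \eta_i \cdot \Psi (X) ] \circ \varphi_i^{-1} \|_{ B^{a, \infty, s}_{\ell, t, x} } \text{,}
\]
where here the local frames $i \mc{X}^r_l$ are built from the $t$-parallel frame elements $\mf{p} e^i_a$ and their duals.

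Next I would argue that each $\Psi (X)$ is a $t$-parallel scalar. Since $\gamma$, $\gamma^{-1}$, and $\epsilon$ are all $t$-parallel, the operator $\nabla_t$ commutes with every contraction and satisfies the Leibniz rule. Differentiating the duality relation $e^{*i,a}(e^i_b) = \delta^a_b$ shows that the dual coframe of a $t$-parallel frame is again $t$-parallel, so every $X \in i \mc{X}^r_l$ is $t$-parallel. Combined with $\nabla_t \Psi = 0$, this yields $\nabla_t [ \Psi (X) ] = 0$, and since $\nabla_t = \mf{L}_t$ on scalars, the scalar $\Psi (X)$ is equivariant, i.e., $\Psi (X) [\tau] = \Psi [0] ( X [0] )$ for every $\tau \in [0, \delta]$. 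Because $\eta_i$ and $\varphi_i$ are also identified with their equivariant transports, the pullback $[ \eta_i \Psi (X) ] \circ \varphi_i^{-1}$ is a smooth function on $[0, \delta] \times \varphi_i ( U_i ) \subset [0, \delta] \times \R^2$ that is independent of $t$.

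For such a $t$-independent function $g(y)$, the Euclidean L-P operators act only in the spatial variables, and the supremum in $t$ implicit in $\| \cdot \|_{ L^{\infty, 2}_{t, x} }$ is trivial. Hence
\[
\| [ \eta_i \Psi (X) ] \circ \varphi_i^{-1} \|_{ B^{a, \infty, s}_{\ell, t, x} } = \| [ \eta_i \Psi [0] ( X [0] ) ] \circ \varphi_i^{-1} \|_{ B^{a, s}_{\ell, x} } \text{.}
\]
Summing over $i$ and $X$ and applying Corollary \ref{thm.comp_mainf} at $\tau = 0$ (noting that $X [0]$ is built from $e^i_a$, so the \ass{r1}{C, N} data for $(\mc{S}, \gamma [0])$ is exactly the original $\{ U_i, \varphi_i, \eta_i, \tilde{\eta}_i, e^i \}_{i = 1}^N$) bounds the sum by $\| \Psi [0] \|_{ B^{a, s}_{\ell, x} }$. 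Chaining the three displayed estimates yields \eqref{eq.norm_comp_besov}.

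The only real subtlety is the commutativity of contractions and of the frame-coframe duality with $\nabla_t$, which reduces a tensorial time-sup statement to a spatial estimate on the fixed initial surface $(\mc{S}, \gamma [0])$. Everything else is bookkeeping once Proposition \ref{thm.reg_prop}, Proposition \ref{thm.comp_main}, and Corollary \ref{thm.comp_mainf} are in hand, so I do not anticipate any genuine analytic obstacle.
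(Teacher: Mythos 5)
Your proof is correct and follows essentially the same route as the paper: apply Proposition \ref{thm.comp_main} with the parallel data induced by \ass{F1}{}, observe that the localized scalars $\eta_i \Psi(X)$ are $t$-independent because both $\Psi$ and the frame contractors $X \in i\mc{X}^r_l$ are $t$-parallel, and then pass back to $\| \Psi[0] \|_{B^{a,s}_{\ell,x}}$ via Corollary \ref{thm.comp_mainf} at $\tau = 0$. The extra care you take in verifying that $\nabla_t$ commutes with contractions and that the dual coframe of a $t$-parallel frame is itself $t$-parallel is correct but is left implicit in the paper's more terse argument.
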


\begin{proof}
Applying \eqref{eq.comp_main}, we have
\[ \| \Psi \|_{ B^{a, \infty, s}_{\ell, t, x} } \lesssim \sum_{i = 1}^N \sum_{X \in i \mc{X}^r_l} \| \eta_i \Psi (X) \circ \varphi_i^{-1} \|_{ B^{a, \infty, s}_{\ell, t, x} } \text{.} \]
Moreover, for each $1 \leq i \leq N$ and $X \in i \mc{X}^r_l$, the localized quantity $\eta_i \Psi (X)$ is $t$-parallel, that is, it is independent of the $t$-variable.
As a result, by \eqref{eq.comp_mainf},
\[ \| \Psi \|_{ B^{a, \infty, s}_{\ell, t, x} } \lesssim \sum_{i = 1}^N \sum_{X \in i \mc{X}^r_l} \| \eta_i ( \Psi [0] ) ( X [0] ) \circ \varphi_i^{-1} \|_{ B^{a, s}_{\ell, x} } \lesssim \| \Psi [0] \|_{ B^{a, s}_{\ell, x} } \text{.} \qedhere \]
\end{proof}

Moreover, by taking advantage also of our parallel scalar reduction scheme, we can prove a fractional Sobolev and Besov variant of Proposition \ref{thm.nsob_ineq}.

\begin{proposition} \label{thm.nsob_trace}
Assume $(\mc{N}, \gamma)$ satisfies \ass{F1}{C, N, B}.
If $\Psi \in \mc{C}^\infty \ul{T}^r_l \mc{N}$, then
\begin{equation} \label{eq.nsob_trace} \| \Psi \|_{ B^{2, \infty, 1/2}_{\ell, t, x} } \lesssim_{C, N, B, r, l} \| \Psi [0] \|_{ H^{1/2}_x } + \| \nabla_t \Psi \|_{ L^{2, 2}_{t, x} }^\frac{1}{2} ( \| \nabla \Psi \|_{ L^{2, 2}_{t, x} } + \| \Psi \|_{ L^{2, 2}_{t, x} } )^\frac{1}{2} \text{.} \end{equation}
\end{proposition}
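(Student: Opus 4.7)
The plan is to reduce \eqref{eq.nsob_trace} to a scalar Euclidean trace-type inequality via the parallel scalar reduction scheme. Let $\{U_i, \varphi_i, \eta_i, \tilde{\eta}_i, e^i\}_{i=1}^N$ be data realizing \ass{F1}{C, N, B}, and consider the induced parallel data $\{U_i, \varphi_i, \eta_i, \tilde{\eta}_i, \mf{p} e^i\}_{i=1}^N$, which by Proposition \ref{thm.reg_prop} realizes \ass{R1}{C^\prime, N} for some $C^\prime$ depending on $C$, $N$, and $B$. Then Proposition \ref{thm.comp_main} gives
\[ \| \Psi \|_{B^{2,\infty,1/2}_{\ell,t,x}} \simeq \sum_{i=1}^N \sum_{X \in i\mc{X}^r_l} \| [\eta_i \cdot \Psi(X)] \circ \varphi_i^{-1} \|_{B^{2,\infty,1/2}_{\ell,t,x}}, \]
where the norms on the right-hand side are the standard Euclidean Besov norms on $[0,\delta] \times \R^2$. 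Thus it suffices to bound each scalar piece $f_{i,X} = [\eta_i \Psi(X)] \circ \varphi_i^{-1}$ appropriately.

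Next, I would apply to each $f_{i,X}$ a Euclidean trace-type estimate of the form
\[ \| f \|_{B^{2,\infty,1/2}_{\ell,t,x}} \lesssim \| f[0] \|_{B^{2,1/2}_{\ell,x}} + \| \partial_t f \|_{L^{2,2}_{t,x}}^{1/2} ( \| \partial f \|_{L^{2,2}_{t,x}} + \| f \|_{L^{2,2}_{t,x}} )^{1/2}, \]
which follows in Euclidean space from the fact that $H^{1/2}$ is the trace space of $H^1$ together with the Littlewood-Paley characterization of Sobolev and Besov norms; this is the scalar analogue to be supplied from Appendix \ref{sec.eucl}. The crucial point is that the parallel frame elements $X \in i\mc{X}^r_l$ satisfy $\nabla_t X = 0$, so that
\[ \partial_t f_{i,X} = [ \eta_i \cdot ( \nabla_t \Psi )(X) ] \circ \varphi_i^{-1}, \]
and hence $\| \partial_t f_{i,X} \|_{L^{2,2}_{t,x}} \lesssim \| \nabla_t \Psi \|_{L^{2,2}_{t,x}}$ by \eqref{eqr.change_of_coord} applied on each time slice. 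This is precisely the feature that forces the use of the \emph{parallel} scalar reduction rather than the equivariant one.

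For the remaining terms, the initial-data quantity $\| f_{i,X}[0] \|_{B^{2,1/2}_{\ell,x}}$ is bounded, after summing in $i$ and $X$, by $\| \Psi[0] \|_{H^{1/2}_x}$ via Corollary \ref{thm.comp_mainf} applied to $(\mc{S}, \gamma[0])$ (which satisfies \ass{r1}{C, N}) together with Proposition \ref{thm.sobolev}. The spatial gradient $\partial f_{i,X}$ is handled using \eqref{eqr.change_of_coord} to pass from the Euclidean gradient to $\nabla[\Psi(X)]$, followed by the scalar reduction bound \eqref{eq.scalar_red_D} of Proposition \ref{thm.scalar_red_D}, yielding
\[ \sum_{i=1}^N \sum_{X \in i\mc{X}^r_l} \| \partial f_{i,X} \|_{L^{2,2}_{t,x}} \lesssim \| \nabla \Psi \|_{L^{2,2}_{t,x}} + \| \Psi \|_{L^{2,2}_{t,x}}, \]
and similarly for the undifferentiated $L^{2,2}_{t,x}$ piece. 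Assembling these and summing over $i$ and $X$ produces \eqref{eq.nsob_trace}.

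The main obstacle is the Euclidean trace estimate itself, i.e., obtaining the sharp geometric-mean structure $\|\partial_t f\|^{1/2} \|f\|_{H^1}^{1/2}$ on the right-hand side rather than a pure $H^1$-control. This requires a Littlewood-Paley argument interpolating the standard $H^1 \hookrightarrow L^\infty_t H^{1/2}_x$ trace inclusion against the crude bound for high frequencies, and is essentially the Euclidean counterpart of \eqref{eqi.est_trace}-\eqref{eqi.est_prod}; conditional on this scalar inequality from Appendix \ref{sec.eucl}, the rest of the argument is a mechanical application of Propositions \ref{thm.comp_main}, \ref{thm.scalar_red_D}, and \ref{thm.reg_prop}.
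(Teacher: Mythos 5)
Your proposal is correct and follows essentially the same route as the paper: reduce to coordinate Besov norms via the parallel data and Proposition \ref{thm.comp_main}, apply the scalar Euclidean trace estimate \eqref{eqc.nsob_trace} from Appendix \ref{sec.eucl} to each localized piece, exploit the $t$-parallelism of $X \in i\mc{X}^r_l$ to identify $\partial_t f_{i,X}$ with $[\eta_i \nabla_t\Psi(X)]\circ\varphi_i^{-1}$, and then assemble using \eqref{eq.comp_mainf}, \eqref{eqr.change_of_coord}, and \eqref{eq.scalar_red_D}. The only minor inaccuracy is your characterization of the scalar Euclidean lemma as an interpolation argument; in the paper it is proved by a direct fundamental-theorem-of-calculus computation on each dyadic Littlewood--Paley piece, but this does not affect the correctness or structure of your argument.
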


\begin{proof}
Let $\{ U_i, \varphi_i, \eta_i, \tilde{\eta}_i, \mf{p} e^i \}_{i = 1}^N$ denote the parallel data induced from the \ass{F1}{} condition.
By Proposition \ref{thm.comp_main}, it suffices to show
\[ \| \Psi \|_{ \mc{B}^{2, \infty, 1/2}_{\ell, t, x} } \lesssim \| \Psi [0] \|_{ H^{1/2}_x } + \| \nabla_t \Psi \|_{ L^{2, 2}_{t, x} }^\frac{1}{2} ( \| \nabla \Psi \|_{ L^{2, 2}_{t, x} } + \| \Psi \|_{ L^{2, 2}_{t, x} } )^\frac{1}{2} \text{.} \]
The first, and main, step is to apply \eqref{eqc.nsob_trace}:
\begin{align*}
\| \Psi \|_{ \mc{B}^{2, \infty, 1/2}_{\ell, t, x} } &\lesssim \sum_{i = 1}^N \sum_{ X \in i \mc{X}^r_l } \| \eta_i \Psi (X) \circ \varphi_i^{-1} \|_{ B^{2, \infty, 1/2}_{k, t, x} } \\
&\lesssim \sum_{i = 1}^N \sum_{ X \in i \mc{X}^r_l } \| \eta_i \Psi (X) \circ \varphi_i^{-1} [0] \|_{ B^{2, 1/2}_{k, x} } \\
&\qquad + \sum_{i = 1}^N \sum_{ X \in i \mc{X}^r_l } \| \partial_t [ \eta_i \Psi (X) \circ \varphi_i^{-1} ] \|_{ L^{2, 2}_{t, x} }^\frac{1}{2} \| \partial [ \eta_i \Psi (X) \circ \varphi_i^{-1} ] \|_{ L^{2, 2}_{t, x} }^\frac{1}{2} \\
&\qquad + \sum_{i = 1}^N \sum_{ X \in i \mc{X}^r_l } \| \partial_t [ \eta_i \Psi (X) \circ \varphi_i^{-1} ] \|_{ L^{2, 2}_{t, x} }^\frac{1}{2} \| \eta_i \Psi (X) \circ \varphi_i^{-1} \|_{ L^{2, 2}_{t, x} }^\frac{1}{2} \\
&= I_1 + I_2 + I_3 \text{.}
\end{align*}
Applying \eqref{eq.comp_mainf} and Proposition \ref{thm.sobolev}, we have
\[ I_1 \lesssim \| \Psi [0] \|_{ B^{2, 1/2}_{\ell, x} } \simeq \| \Psi [0] \|_{ H^{1/2}_x } \text{.} \]
Furthermore, by \ass{F1}{C, N, B}, Proposition \ref{thm.reg_prop}, \eqref{eqr.change_of_coord}, and \eqref{eq.scalar_red_D}, we can also bound
\[ I_2 + I_3 \lesssim \| \nabla_t \Psi \|_{ L^{2, 2}_{t, x} }^\frac{1}{2} ( \| \nabla \Psi \|_{ L^{2, 2}_{t, x} } + \| \Psi \|_{ L^{2, 2}_{t, x} } )^\frac{1}{2} \text{.} \]
Note in particular we used that each $X \in i \mc{X}^r_l$ is $t$-parallel.
\end{proof}

Finally, analogous constructions can be made using equivariant coordinate vector fields.
\footnote{In fact, scalar reductions in \cite{kl_rod:stt, wang:cg} were made in this fashion.}
We will briefly require this in Section \ref{sec.thm_stt}.
Assume that $(\mc{N}, \gamma)$ satisfies \ass{F2}{C, N, B}, with data $\{ U_i, \varphi_i, \eta_i, \tilde{\eta}_i, e^i \}_{i = 1}^N$.
Given $1 \leq i \leq N$, we define the family
\[ i \mc{Z} = \{ \partial^i_a = \mf{e} \partial^i_a \in \mc{C}^\infty \ul{T}^1_0 \mc{N}_{U_i} \mid a \in \{ 1, 2 \} \} \text{,} \]
where the $\partial^i_a$'s are the equivariant transports of the coordinate vector fields for the $\varphi_i$-coordinates (see Section \ref{sec.fol_reg}).
Moreover, from the \ass{F2}{C, N, B} condition and Proposition \ref{thm.reg_prop}, we have for any $Z \in i \mc{Z}$ the estimates
\begin{equation} \label{eq.cframe_basis_prop} \| Z \|_{ L^{\infty, \infty}_{t, x} } \lesssim_{C, B} 1 \text{,} \qquad \| \nabla Z \|_{ L^{2, \infty}_{x, t} } \lesssim_{C, B} 1 \text{.} \end{equation}

\subsection{Non-Integrated Product Estimates} \label{sec.thm_nint}

We now have the requisite tools to prove our main bilinear product estimates.
We begin with the non-integrated product estimates, which do not involve the integral operator $\cint^t_0$.

\begin{theorem} \label{thm.est_prod}
Assume that $(\mc{N}, \gamma)$ satisfies \ass{F1}{C, N}.
Furthermore, consider horizontal tensor fields $\Psi \in \mc{C}^\infty \ul{T}^{r_1}_{l_1} \mc{N}$ and $\Phi \in \mc{C}^\infty \ul{T}^{r_2}_{l_2} \mc{N}$.
\begin{itemize}
\item If $a \in [1, \infty]$, $s \in (-1, 1)$, and $\Psi$ is $t$-parallel, then
\begin{align}
\label{eq.est_prod_imp} \| \Phi \otimes \Psi \|_{ B^{a, 2, s}_{\ell, t, x} } &\lesssim_{ C, N, B, s, r_1, l_1, r_2, l_2 } ( \| \nabla \Phi \|_{ L^{2, 2}_{t, x} } + \| \Phi \|_{ L^{\infty, 2}_{x, t} } ) \| \Psi [0] \|_{ B^{a, s}_{\ell, x} } \text{.}
\end{align}

\item In addition, the following estimate holds:
\begin{align}
\label{eq.est_prod_ex} \| \Phi \otimes \Psi \|_{ B^{1, \infty, 0}_{\ell, t, x} } &\lesssim_{ C, N, B, r_1, l_1, r_2, l_2 } ( \| \Phi \|_{ N^1_{t, x} } + \| \Phi [0] \|_{ H^{1/2}_x } ) \\
\notag &\qquad\qquad\qquad\qquad \cdot ( \| \Psi \|_{ N^1_{t, x} } + \| \Psi [0] \|_{ H^{1/2}_x } ) \text{.}
\end{align}
\end{itemize}
\end{theorem}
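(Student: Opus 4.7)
The plan is to reduce both estimates to Euclidean analogues (to be established in Appendix \ref{sec.eucl}) via the parallel scalar reduction scheme introduced in Section \ref{sec.thm_scal}. By Proposition \ref{thm.comp_main}, it suffices to prove both estimates with the geometric norms $B^{a,p,s}_{\ell,t,x}$ replaced by the coordinate-based norms $\mc{B}^{a,p,s}_{\ell,t,x}$, defined with respect to the parallel data $\{U_i,\varphi_i,\eta_i,\tilde\eta_i,\mf{p} e^i\}_{i=1}^N$ induced from the \ass{F1}{C,N,B} hypothesis by Proposition \ref{thm.reg_prop}. Writing out the definitions as in the proof of Theorem \ref{thm.est_prod_elem}, $\Phi\otimes\Psi$ then decomposes into a finite sum of scalar bilinear expressions $[\tilde\eta_i\Phi(X)\cdot\eta_i\Psi(Y)]\circ\varphi_i^{-1}$ on $[0,\delta]\times\R^2$, with $X\in i\mc{X}^{r_1}_{l_1}$ and $Y\in i\mc{X}^{r_2}_{l_2}$.

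For \eqref{eq.est_prod_imp}, the key observation is that when $\Psi$ is $t$-parallel and $Y$ is built from $t$-parallel frames, the scalar $\Psi(Y)$ satisfies $\nabla_t \Psi(Y)=0$, which is equivalent to $\tau$-independence. Hence $\eta_i\Psi(Y)\circ\varphi_i^{-1}$ depends only on the spatial variable and coincides with its initial value. I would then apply a Euclidean bound of the schematic form
\[
\|fg\|_{B^{a,2,s}_{\ell,t,x}}\lesssim_s (\|\partial f\|_{L^{2,2}_{t,x}}+\|f\|_{L^{\infty,2}_{x,t}})\,\|g\|_{B^{a,s}_{\ell,x}}
\]
with $g$ time-independent, sum over $i,X,Y$, use Proposition \ref{thm.scalar_red_D} (together with the two-dimensional Sobolev embedding applied slice-wise) to control the scalar derivatives by $\|\nabla\Phi\|_{L^{2,2}_{t,x}}+\|\Phi\|_{L^{\infty,2}_{x,t}}$, and finally apply Corollary \ref{thm.comp_mainf} on $\mc{S}_0$ to consolidate the spatial coordinate Besov norms into $\|\Psi[0]\|_{B^{a,s}_{\ell,x}}$.

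For \eqref{eq.est_prod_ex}, the approach is symmetric. After the coordinate reduction, I would invoke the Euclidean bilinear estimate
\[
\|fg\|_{B^{1,\infty,0}_{\ell,t,x}}\lesssim \|f\|_{B^{2,\infty,1/2}_{\ell,t,x}}\,\|g\|_{B^{2,\infty,1/2}_{\ell,t,x}},
\]
then use Proposition \ref{thm.comp_main} in the opposite direction to recover the geometric norms $\|\Phi\|_{B^{2,\infty,1/2}_{\ell,t,x}}$ and $\|\Psi\|_{B^{2,\infty,1/2}_{\ell,t,x}}$, and finally bound each of these by Proposition \ref{thm.nsob_trace}, combined with the elementary AM--GM inequality
\[
\|\nabla_t\Psi\|_{L^{2,2}_{t,x}}^{1/2}(\|\nabla\Psi\|_{L^{2,2}_{t,x}}+\|\Psi\|_{L^{2,2}_{t,x}})^{1/2}\lesssim \|\Psi\|_{N^1_{t,x}},
\]
and analogously for $\Phi$. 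This directly yields the claimed right-hand side.

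The main technical subtlety I anticipate is ensuring that the scalar decomposition interacts cleanly with the covariant operators $\nabla_t$ and $\nabla$ appearing in $N^1_{t,x}$; this is precisely the purpose of working with the $t$-parallel data, which causes all $\nabla_t$-commutators to vanish identically, while spatial commutators arising when $\nabla$ falls on a parallel frame element are absorbed using the uniform $L^{\infty,4}_{t,x}$-bound \eqref{eq.frame_basis} furnished by the induced \ass{R1}{} data. All remaining bookkeeping is entirely analogous to that in the proof of Theorem \ref{thm.est_prod_elem}.
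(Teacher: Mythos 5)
Your argument for \eqref{eq.est_prod_imp} follows the paper's proof essentially step for step: parallel scalar reduction, reduction to coordinate-based Besov norms via Proposition \ref{thm.comp_main}, the observation that $\Psi(Y)$ is $t$-independent, the Euclidean bound \eqref{eqc.est_prod_imp}, and the scalar reduction lemmas; this half is correct and matches the paper.

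For \eqref{eq.est_prod_ex}, you take a genuinely different (but correct) route. The paper's proof applies the packaged Euclidean product estimate \eqref{eqc.est_prod_ex} directly to each scalar localization $[\tilde\eta_i\Phi(X)\cdot\eta_i\Psi(Y)]\circ\varphi_i^{-1}$, producing products of Euclidean $N^1_{t,x}$- and $H^{1/2}_x$-norms, and then recovers the geometric right-hand side via \eqref{eq.scalar_red_H} and \eqref{eq.comp_mainf}. You instead apply only the first half of the Euclidean machinery, namely \eqref{eqc.est_prod_sob} (with $s=0$, $p_1=p_2=\infty$) to bound the product by Euclidean $B^{2,\infty,1/2}$-norms of the scalar localizations, then pull these back to the geometric $B^{2,\infty,1/2}_{\ell,t,x}$-norms of $\Phi$ and $\Psi$ using \eqref{eq.comp_main} and \eqref{eq.comp_main_tech}, and finally invoke the geometric trace estimate Proposition \ref{thm.nsob_trace} together with AM--GM. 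Both routes work and both pass through the same parallel scalar reduction, but they package the ``trace'' step differently: the paper's \eqref{eqc.est_prod_ex} already bakes \eqref{eqc.nsob_trace} into the Euclidean lemma and then translates once, whereas you translate to the geometric $B^{2,\infty,1/2}$-norm and apply Proposition \ref{thm.nsob_trace} (which is itself proved via the same scalar reduction) at the geometric level. The net dependencies are identical, so this is a cosmetic rearrangement rather than a genuinely shorter or more general argument, but it is a perfectly valid one; in fact it makes explicit the intermediate role of the geometric $B^{2,\infty,1/2}_{\ell,t,x}$-norm, which the paper's proof does not surface.
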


\begin{proof}
Let $\{ U_i, \varphi_i, \eta_i, \tilde{\eta}_i, \mf{p} e_i \}_{i = 1}^N$ be the parallel data induced from the \ass{F1}{C, N} condition.
Note that by Proposition \ref{thm.comp_main}, in both \eqref{eq.est_prod_imp} and \eqref{eq.est_prod_ex}, we can replace the geometric norms by their coordinate-based analogues.

For \eqref{eq.est_prod_imp}, we employ, as usual, a scalar decomposition:
\[ \| \Phi \otimes \Psi \|_{ \mc{B}^{a, 2, s}_{\ell, t, x} } = \sum_{i = 1}^N \sum_{ X \in i \mc{X}^{r_1}_{l_1} } \sum_{ Y \in i \mc{X}^{r_2}_{l_2} } \| [ \tilde{\eta}_i \Phi(X) \cdot \eta_i \Psi (Y) ] \circ \varphi_i^{-1} \|_{ B^{a, 2, s}_{\ell, t, x} } \text{.} \]
Since $\Psi$ is $t$-parallel, then $\eta_i \Psi (Y)$ is $t$-parallel for each $1 \leq i \leq N$ and $Y \in i \mc{X}^{r_2}_{l_2}$.
Thus, we can apply \eqref{eqc.est_prod_imp} to the right-hand side above:
\begin{align*}
\| \Phi \otimes \Psi \|_{ \mc{B}^{a, 2, s}_{\ell, t, x} } &\lesssim \sup_{ 1 \leq i \leq N } \sum_{ X \in \mc{X}^{r_1}_{l_1} (i) } \{ \| \nabla [ \Phi (X) ] \|_{ L^{2, 2}_{t, x} } + \| \Phi (X) \|_{ L^{\infty, 2}_{x, t} } \} \\
&\qquad \cdot \sum_{ i = 1 }^N \sum_{ Y \in \mc{X}^{r_2}_{l_2} (i) } \| \eta_i \Psi (Y) \circ \varphi_i^{-1} [0] \|_{ B^{a, s}_{\ell, x} } \text{.}
\end{align*}
Applying \eqref{eqr.change_of_coord}, \eqref{eq.scalar_red_D}, \eqref{eq.comp_mainf}, and Proposition \ref{thm.reg_prop} completes the proof of \eqref{eq.est_prod_imp}.

Similarly, to prove \eqref{eq.est_prod_ex}, we decompose and apply \eqref{eqc.est_prod_ex}:
\footnote{The $N^1$-norm below is the analogue on the Euclidean space $[0, \delta] \times \R^2$ of the similarly named norm defined in Section \ref{sec.cfol_reg}; see the beginning of Appendix \ref{sec.eucl}.}
\begin{align*}
\| \Phi \otimes \Psi \|_{ \mc{B}^{1, \infty, 0}_{\ell, t, x} } &= \sum_{i = 1}^N \sum_{ X \in i \mc{X}^{r_1}_{l_1} } \sum_{ Y \in i \mc{X}^{r_2}_{l_2} } \| [ \tilde{\eta}_i \Phi(X) \cdot \eta_i \Psi (Y) ] \circ \varphi_i^{-1} \|_{ B^{1, \infty, 0}_{\ell, t, x} } \\
&\lesssim \sum_{i = 1}^N \sum_{ X \in i \mc{X}^{r_1}_{l_1} } \{ \| \tilde{\eta}_i \Phi (X) \circ \varphi_i^{-1} \|_{ N^1_{t, x} } + \| \tilde{\eta}_i \Phi (X) \circ \varphi_i^{-1} [0] \|_{ H^{1/2}_x } \} \\
&\qquad \cdot \sum_{ Y \in i \mc{X}^{r_2}_{l_2} } \{ \| \eta_i \Psi (Y) \circ \varphi_i^{-1} \|_{ N^1_{t, x} } + \| \eta_i \Psi (Y) \circ \varphi_i^{-1} [0] \|_{ H^{1/2}_x } \} \text{.}
\end{align*}
Similar to the preceding proofs, we can now apply \eqref{eqr.change_of_coord}, \eqref{eq.comp_mainf}, Proposition \ref{thm.reg_prop}, and \eqref{eq.scalar_red_H} in order to obtain the desired estimate \eqref{eq.est_prod_ex}.
\end{proof}

\begin{remark}
Like in Theorem \ref{thm.est_prod_elemf}, the estimates \eqref{eq.est_prod_imp} and \eqref{eq.est_prod_ex} still hold if the tensor products $\Phi \otimes \Psi$ on the left-hand sides are replaced by zero or more contractions, metric contractions, and volume form contractions applied to $\Phi \otimes \Psi$.
\end{remark}

\subsection{Integrated Product Estimates} \label{sec.thm_int}

Next are the integrated product estimates.
The basic strategy is the same as before, except we require one additional observation: contractions by $t$-parallel fields commute with the covariant integrals $\cint^t_0$.
This is due to the fact that such contractions commute with $\nabla_t$.

\begin{theorem} \label{thm.est_trace_sh}
Assume that $(\mc{N}, \gamma)$ satisfies \ass{F1}{C, N}.
Furthermore, consider horizontal tensor fields $\Psi \in \mc{C}^\infty \ul{T}^{r_1}_{l_1} \mc{N}$ and $\Phi \in \mc{C}^\infty \ul{T}^{r_2}_{l_2} \mc{N}$.
\begin{itemize}
\item If $a \in [1, \infty]$ and $s \in (-1, 1)$, then
\begin{align}
\label{eq.est_trace_sh} \| \cint^t_0 ( \Phi \otimes \Psi ) \|_{ B^{a, \infty, s}_{\ell, t, x} } &\lesssim_{ C, N, B, s, r_1, l_1, r_2, l_2 } ( \| \nabla \Phi \|_{ L^{2, 2}_{t, x} } + \| \Phi \|_{ L^{\infty, 2}_{x, t} } ) \| \Psi \|_{ B^{a, 2, s}_{\ell, t, x} } \text{.}
\end{align}

\item If $a \in [1, \infty]$ and $s \in (-1, 1)$, then
\begin{align}
\label{eq.est_trace_shp} \| \Phi \otimes \cint^t_0 \Psi \|_{ B^{a, 2, s}_{\ell, t, x} } &\lesssim_{ C, N, B, s, r_1, l_1, r_2, l_2} ( \| \nabla \Phi \|_{ L^{2, 2}_{t, x} } + \| \Phi \|_{ L^{\infty, 2}_{x, t} } ) \| \Psi \|_{ B^{a, 1, s}_{\ell, t, x} } \text{.}
\end{align}

\item In addition, the following estimate holds:
\begin{align}
\label{eq.est_trace_ex} \| \cint^t_0 ( \nabla_t \Phi \otimes \Psi ) \|_{ B^{1, \infty, 0}_{\ell, t, x} } &\lesssim_{ C, N, B, r_1, l_1, r_2, l_2} ( \| \Phi \|_{ N^1_{t, x} } + \| \Phi [0] \|_{ H^{1/2}_x } ) \\
\notag &\qquad\qquad\qquad\qquad \cdot ( \| \Psi \|_{ N^1_{t, x} } + \| \Psi [0] \|_{ H^{1/2}_x } ) \text{.}
\end{align}
\end{itemize}
\end{theorem}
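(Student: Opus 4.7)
The plan is to follow exactly the template of the proofs of Theorems \ref{thm.est_prod_elem} and \ref{thm.est_prod}: first invoke Proposition \ref{thm.comp_main} to replace each geometric Besov norm appearing on the left of \eqref{eq.est_trace_sh}--\eqref{eq.est_trace_ex} by its coordinate-based counterpart $\mc{B}^{a,p,s}_{\ell,t,x}$, then decompose the tensorial factors via the parallel scalar reduction scheme of Section \ref{sec.thm_scal}, and finally appeal to the corresponding Euclidean bilinear estimates proved in Appendix \ref{sec.eucl} (which are the analogues of \eqref{eqi.est_trace}). Throughout I would fix $\{ U_i, \varphi_i, \eta_i, \tilde{\eta}_i, \mf{p} e^i \}_{i=1}^N$ to be the parallel data induced from the \ass{F1}{} hypothesis, so that every frame element in $i \mc{X}^r_l$ is $t$-parallel by construction.

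The new ingredient, compared to Theorem \ref{thm.est_prod}, is the presence of the covariant integral $\cint^t_0$, which must be pushed all the way down to the scalar level. The key observation is that, since every $X \in i\mc{X}^{r_1+r_2}_{l_1+l_2}$ is $t$-parallel, full contraction against $X$ commutes with $\nabla_t$, and hence with $\cint^t_0$ (which is characterized as the inverse of $\nabla_t$ that vanishes at $t=0$). Combined with the equivariance of $\eta_i$ and the identity $\eta_i = \eta_i \tilde{\eta}_i$, this produces
\[
\bigl[ \eta_i \cdot \cint^t_0 (\Phi \otimes \Psi)(X_1 \otimes X_2) \bigr] \circ \varphi_i^{-1}
= \int_0^t \bigl[ \tilde{\eta}_i \Phi(X_1) \cdot \eta_i \Psi(X_2) \bigr] \circ \varphi_i^{-1} \, d\tau,
\]
where the right-hand side is a genuine Euclidean time integral of a product of two localized smooth scalar fields on $[0,\delta] \times \R^2$.

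Estimate \eqref{eq.est_trace_sh} then follows by applying to each such summand the Euclidean analogue of \eqref{eqi.est_trace} from Appendix \ref{sec.eucl}, with the spatial-gradient-and-$L^{\infty,2}_{x,t}$ norm of $\tilde{\eta}_i \Phi(X_1) \circ \varphi_i^{-1}$ converted back to the tensorial norms of $\Phi$ via \eqref{eq.scalar_red_D} and \eqref{eqr.change_of_coord}, and the Besov norm of $\eta_i \Psi(X_2) \circ \varphi_i^{-1}$ summed back into $\mc{B}^{a,2,s}_{\ell,t,x}$ and compared to $B^{a,2,s}_{\ell,t,x}$ via \eqref{eq.comp_main_tech}. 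Estimate \eqref{eq.est_trace_shp} is handled identically, but invoking the variant Euclidean estimate in which $\cint^t_0$ acts on the second factor rather than on the product. For \eqref{eq.est_trace_ex}, the same commutation further yields $(\nabla_t \Phi)(X_1) = \partial_t[\Phi(X_1)]$ at the scalar level, reducing the left-hand side to a pure trace-type Euclidean expression to which the counterpart of \eqref{eqi.est_trace} applies directly; the $N^1_{t,x}$- and $H^{1/2}_x$-norms on the right are then recovered via Proposition \ref{thm.scalar_red_H} and Proposition \ref{thm.nsob_trace}, in a manner parallel to the treatment of \eqref{eq.est_prod_ex}.

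The main obstacle will be the bookkeeping of localization: one has to check carefully that the cutoffs $\eta_i$ and $\tilde{\eta}_i$ can be moved inside and across $\cint^t_0$ without producing error terms, which rests on their equivariance and the covering property $\tilde{\eta}_i \equiv 1$ on $\mathrm{supp}\,\eta_i$. One must also verify that the three Euclidean estimates drawn from Appendix \ref{sec.eucl} are stated in exactly the form needed here (the two $\cint^t_0$-on-the-product/on-one-factor variants for \eqref{eq.est_trace_sh} and \eqref{eq.est_trace_shp}, and the trace-type variant for \eqref{eq.est_trace_ex}). Once those matchings are in place, the reduction from the covariant tensorial setting to the Euclidean scalar setting is mechanical, and the remainder of the argument is essentially a repackaging of the proof of Theorem \ref{thm.est_prod}.
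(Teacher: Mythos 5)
Your proposal matches the paper's proof essentially verbatim: replace geometric Besov norms by their coordinate analogues via Proposition \ref{thm.comp_main}, exploit the $t$-parallel frames so that $\cint^t_0$ commutes with contractions against frame elements and with the equivariant cutoffs, reduce each summand to a scalar time-integral on $[0,\delta]\times\R^2$, and conclude from the Euclidean estimates \eqref{eqc.est_trace_sh}, \eqref{eqc.est_trace_shp}, \eqref{eqc.est_trace_ex} together with \eqref{eqr.change_of_coord}, \eqref{eq.scalar_red_D}, \eqref{eq.comp_mainf}, Proposition \ref{thm.reg_prop}, and Proposition \ref{thm.scalar_red_H}. The only slip is a misattribution at the end: to recover $\|\Psi[0]\|_{H^{1/2}_x}$ in \eqref{eq.est_trace_ex} the paper uses Corollary \ref{thm.comp_mainf} (the fixed-surface norm comparison at $\tau = 0$), not Proposition \ref{thm.nsob_trace}, which is only invoked internally in the proof of the Euclidean estimate \eqref{eqc.est_trace_ex}.
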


\begin{proof}
Let $\{ U_i, \varphi_i, \eta_i, \tilde{\eta}_i, \mf{p} e_i \}_{i = 1}^N$ be the parallel data induced from the \ass{F1}{C, N} condition.
Again, we can replace all geometric norms by their coordinate analogues.

For \eqref{eq.est_trace_sh}, we once again resort to our usual scalar reduction:
\[ \| \cint^t_0 ( \Phi \otimes \Psi ) \|_{ \mc{B}^{a, \infty, s}_{\ell, t, x} } = \sum_{i = 1}^N \sum_{ X \in i \mc{X}^{r_1}_{l_1} } \sum_{ Y \in i \mc{X}^{r_2}_{l_2} } \| \cint^t_0 [ \tilde{\eta}_i \Phi (X) \cdot \eta_i \Psi (Y) ] \circ \varphi_i^{-1} \|_{ B^{a, \infty, s}_{\ell, t, x} } \text{.} \]
Here, we have noted that $\cint^t_0$ commutes with contractions with both $X$ and $Y$, as well as with multiplication by the ($t$-independent) cutoff functions $\eta_i$ and $\tilde{\eta}_i$.
Applying the Euclidean analogue \eqref{eqc.est_trace_sh} to the above, we have
\begin{align*}
\| \cint^t_0 ( \Phi \otimes \Psi ) \|_{ \mc{B}^{a, \infty, s}_{\ell, t, x} } &\lesssim \sum_{i = 1}^N \sum_{ X \in i \mc{X}^{r_1}_{l_1} } \{ \| \partial [ \tilde{\eta}_i \Phi (X) \circ \varphi_i^{-1} ] \|_{ L^{2, 2}_{t, x} } + \| \tilde{\eta}_i \Phi (X) \circ \varphi_i^{-1} \|_{ L^{\infty, 2}_{x, t} } \} \\
&\qquad \cdot \sum_{ Y \in i \mc{X}^{r_2}_{l_2} } \| \eta_i \Psi (Y) \circ \varphi_i^{-1} \|_{ B^{a, 2, s}_{\ell, t, x} } \text{.}
\end{align*}
Analogous to the proof of Theorem \ref{thm.est_prod}, we can apply \eqref{eqr.change_of_coord}, \eqref{eq.scalar_red_D}, and Proposition \ref{thm.reg_prop} to the above, which suffices to complete the proof of \eqref{eq.est_trace_sh}.

Similarly, for \eqref{eq.est_trace_shp}, we again decompose
\[ \| \Phi \otimes \cint^t_0 \Psi \|_{ \mc{B}^{a, 2, s}_{\ell, t, x} } = \sum_{i = 1}^N \sum_{ X \in i \mc{X}^{r_1}_{l_1} } \sum_{ Y \in i \mc{X}^{r_2}_{l_2} } \| \{ \tilde{\eta}_i \Phi (X) \cdot \cint^t_0 [ \eta_i \Psi (Y) ] \} \circ \varphi_i^{-1} \|_{ B^{a, 2, s}_{\ell, t, x} } \text{.} \]
Again, $\cint^t_0$ commutes with contractions with $Y$ and $\eta_i$.
Applying \eqref{eqc.est_trace_shp} yields
\begin{align*}
\| \Phi \otimes \cint^t_0 \Psi \|_{ \mc{B}^{a, 2, s}_{\ell, t, x} } &\lesssim \sum_{i = 1}^N \sum_{ X \in i \mc{X}^{r_1}_{l_1} } \{ \| \partial [ \tilde{\eta}_i \Phi (X) \circ \varphi_i^{-1} ] \|_{ L^{2, 2}_{t, x} } + \| \tilde{\eta}_i \Phi (X) \circ \varphi_i^{-1} \|_{ L^{\infty, 2}_{x, t} } \} \\
&\qquad \cdot \sum_{ Y \in i \mc{X}^{r_2}_{l_2} } \| \eta_i \Psi (Y) \circ \varphi_i^{-1} \|_{ B^{a, 1, s}_{\ell, t, x} } \text{.}
\end{align*}
From here, the proof is completed using the same reasoning as for \eqref{eq.est_trace_sh}.

Finally, for \eqref{eq.est_trace_ex}, by the usual scalar reduction, we must control
\[ \sum_{i = 1}^N \sum_{ X \in i \mc{X}^{r_1}_{l_1} } \sum_{ Y \in i \mc{X}^{r_2}_{l_2} } \| \cint^t_0 \{ \nabla_t [ \tilde{\eta}_i \Phi (X) ] \eta_i \Psi (Y) \} \circ \varphi_i^{-1} \|_{ B^{1, \infty, 0}_{\ell, t, x} } \text{.} \]
On the right-hand side, since the quantities being differentiated and integrated are all scalar, then $\nabla_t$ and $\cint^t_0$ are the standard derivative and integral, respectively, with respect to $t$.
Thus, applying \eqref{eqc.est_trace_ex}, the above is bounded by
\footnote{Again, the $N^1$-norm below are the Euclidean analogues.}
\begin{align*}
&\sum_{i = 1}^N \sum_{ X \in i \mc{X}^{r_1}_{l_1} } ( \| \tilde{\eta}_i \Phi (X) \circ \varphi_i^{-1} \|_{ N^1_{t, x} } + \| \tilde{\eta}_i \Phi (X) \circ \varphi_i^{-1} [0] \|_{ H^{1/2}_x } ) \\
&\qquad \cdot \sum_{ Y \in i \mc{X}^{r_2}_{l_2} } ( \| \eta_i \Psi (Y) \circ \varphi_i^{-1} \|_{ N^1_{t, x} } + \| \eta_i \Psi (Y) \circ \varphi_i^{-1} [0] \|_{ H^{1/2}_x } ) \text{.}
\end{align*}
From here, the proof proceeds like the end of the proof of \eqref{eq.est_prod_ex}.
\end{proof}

\begin{remark}
Like in Theorem \ref{thm.est_prod}, the estimates \eqref{eq.est_trace_sh}-\eqref{eq.est_trace_ex} still hold if zero or more contractions, metric contractions, and volume form contractions are applied to the quantities within the norms on the left-hand sides.
\end{remark}

\subsection{The Sharp Trace Theorem} \label{sec.thm_stt}

Finally, we discuss another proof of the sharp trace theorem that was essential to \cite{kl_rod:cg, parl:bdc, shao:bdc_nv, wang:cg, wang:tbdc}.
In particular, this proof avoids direct applications of the geometric L-P theory, which in turn allows us to avoid altogether the Gauss curvatures of the $\mc{S}_\tau$'s.

First, we establish the following preliminary Besov estimate.

\begin{lemma} \label{thm.besov_sh_sc}
Assume $(\mc{N}, \gamma)$ satisfies \ass{F2}{C, N, B}.
If $\phi \in C^\infty \mc{N}$, then
\footnote{The families $i \mc{Z}$ were defined in Section \ref{sec.fol_scal}.}
\begin{align}
\label{eq.besov_sh_sc} \| \phi \|_{ L^{\infty, \infty}_{t, x} } &\lesssim_{ C, N, B } \sum_{ i = 1 }^N \sum_{ Z \in i \mc{Z} } \| ( \eta_i \cdot Z \phi ) \circ \varphi_i^{-1} \|_{ B^{1, \infty, 0}_{\ell, t, x} } \\
\notag &\qquad\qquad + \sum_{ i = 1 }^N \sum_{ Z \in i \mc{Z} } \| ( \tilde{\eta}_i \cdot Z \phi ) \circ \varphi_i^{-1} \|_{ L^{\infty, 2}_{t, x} } + \| \phi \|_{ L^{\infty, 2}_{t, x} } \text{.}
\end{align}
\end{lemma}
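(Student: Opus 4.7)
The plan is to reduce to the Euclidean setting via the partition of unity $\{\eta_i\}$ and apply the two-dimensional sharp Sobolev embedding $B^1_{2,1}(\R^2)\hookrightarrow L^\infty(\R^2)$, carefully handling the terms in which derivatives fall on the cutoff functions. First, by Proposition \ref{thm.reg_prop}, the \ass{F2}{C,N,B} hypothesis upgrades to \ass{R2}{C',N} on $(\mc{N},\gamma)$ with the indicated data, so \eqref{eqr.change_of_coord} applies uniformly in $\tau$ and the coordinate derivatives $\partial^i_a\eta_i$, $\partial^i_b\partial^i_a\eta_i$ are uniformly bounded. Since $\sum_i\eta_i\equiv 1$,
\[
\|\phi\|_{L^{\infty,\infty}_{t,x}} \leq \sum_{i=1}^N \|\eta_i\phi\|_{L^{\infty,\infty}_{t,x}} \lesssim_{C,N,B} \sum_{i=1}^N \|g_i\|_{L^{\infty,\infty}_{t,x}}, \qquad g_i := (\eta_i\phi)\circ\varphi_i^{-1},
\]
with each $g_i$ compactly supported in $[0,\delta]\times\R^2$.

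The Euclidean sharp Sobolev embedding in dimension two, combined with the routine swap $\sup_t\sum_k \leq \sum_k\sup_t$, yields
\[
\|g\|_{L^{\infty,\infty}_{t,x}} \lesssim \|\partial g\|_{B^{1,\infty,0}_{\ell,t,x}} + \|g\|_{L^{\infty,2}_{t,x}}
\]
for compactly supported $g$. Applied to $g_i$, the product rule gives
\[
\partial_a g_i = \bigl[(\partial^i_a\eta_i)\phi + \eta_i(Z_a\phi)\bigr]\circ\varphi_i^{-1}, \qquad Z_a = \mf{e}\partial^i_a \in i\mc{Z}.
\]
The piece with $\eta_i(Z_a\phi)$ contributes directly to the first summand on the right-hand side of \eqref{eq.besov_sh_sc}, and $\|g_i\|_{L^{\infty,2}_{t,x}}\lesssim \|\phi\|_{L^{\infty,2}_{t,x}}$ contributes the third.

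The crux of the proof is controlling
\[
\|((\partial^i_a\eta_i)\phi)\circ\varphi_i^{-1}\|_{B^{1,\infty,0}_{\ell,t,x}}
\]
without invoking any Besov regularity of $\phi$. For this I would use the cheap embedding
\[
\|f\|_{B^{1,\infty,0}_{\ell,t,x}} \lesssim \|\partial f\|_{L^{\infty,2}_{t,x}} + \|f\|_{L^{\infty,2}_{t,x}},
\]
which follows from the Euclidean reverse finite band estimate $\|P_k f(t)\|_{L^2} \lesssim 2^{-k}\|\partial f(t)\|_{L^2}$ and summing the geometric series $\sum_{k\geq 0}2^{-k}$. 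Computing the derivative produces
\[
\partial_b\bigl[((\partial^i_a\eta_i)\phi)\circ\varphi_i^{-1}\bigr] = \bigl((\partial^i_b\partial^i_a\eta_i)\phi + (\partial^i_a\eta_i)(Z_b\phi)\bigr)\circ\varphi_i^{-1}.
\]
The first summand is bounded in $L^{\infty,2}_{t,x}$ by $\|\phi\|_{L^{\infty,2}_{t,x}}$ thanks to the uniform bound on $\partial^i_b\partial^i_a\eta_i$ afforded by the \ass{R2}{} condition. For the second, the key support observation is that $\operatorname{supp}(\partial^i_a\eta_i)\subset\operatorname{supp}(\eta_i)\subset\{\tilde{\eta}_i=1\}$, so $(\partial^i_a\eta_i)(Z_b\phi) = (\partial^i_a\eta_i)\tilde{\eta}_i(Z_b\phi)$, which is dominated pointwise by $|\tilde{\eta}_i Z_b\phi|$ and therefore controlled in $L^{\infty,2}_{t,x}$ by $\|(\tilde{\eta}_i Z_b\phi)\circ\varphi_i^{-1}\|_{L^{\infty,2}_{t,x}}$, the second summand on the right of \eqref{eq.besov_sh_sc}. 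The main obstacle is exactly this manoeuvre: a naive product estimate would require Besov control of $\phi$ itself, which the hypothesis does not provide, so the argument hinges on combining the cheap $\sum 2^{-k}$ embedding with the support inclusion for $\partial^i_a\eta_i$, both of which are available precisely because \ass{F2}{} propagates through Proposition \ref{thm.reg_prop} to give the full \ass{R2}{} regularity.
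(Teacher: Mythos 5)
Your proof is correct and follows essentially the same strategy as the paper's: decompose via the partition of unity, apply the Euclidean sharp embedding \eqref{eq.besov_sh} to each $(\eta_i\phi)\circ\varphi_i^{-1}$, split $\partial[(\eta_i\phi)\circ\varphi_i^{-1}]$ by the Leibniz rule, and control the commutator piece $(\partial^i_a\eta_i)\phi$ using the cheap embedding $B^{1,\infty,0}_{\ell,t,x}\lesssim L^{\infty,2}_{t,x}$-of-the-derivative plus $L^{\infty,2}_{t,x}$, together with the support inclusion $\operatorname{supp}(\partial^i_a\eta_i)\subset\{\tilde\eta_i=1\}$ and the \ass{R2}{} bound on $\partial^i_b\partial^i_a\eta_i$. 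This is precisely the paper's argument, just written out in a bit more detail.
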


\begin{proof}
We begin by applying \eqref{eqr.change_of_coord} and the classical sharp Besov embedding \eqref{eq.besov_sh}:
\begin{align*}
\| \phi \|_{ L^{\infty, \infty}_{t, x} } &\lesssim \sum_{ i = 1 }^N \| \eta_i \phi \circ \varphi_i^{-1} \|_{ L^{\infty, \infty}_{t, x} } \\
&\lesssim \sum_{ i = 1 }^N \sum_{ Z \in i \mc{Z} } [ \| ( \eta_i Z \phi ) \circ \varphi_i^{-1} \|_{ B^{1, \infty, 0}_{\ell, t, x} } + \| ( Z \eta_i \cdot \phi ) \circ \varphi_i^{-1} \|_{ B^{1, \infty, 0}_{\ell, t, x} } ] + \| \phi \|_{ L^{\infty, 2}_{t, x} } \text{.}
\end{align*}
It remains only to control the second term on the right-hand side, which we denote by $I$.
From the \ass{F2}{C, N, B} condition (see Proposition \ref{thm.reg_prop} and \ass{R2}{}), we have
\[ \| \partial ( \eta_i \circ \varphi_i^{-1} ) \|_{ L^{\infty, \infty}_{t, x} } \lesssim 1 \text{,} \qquad \| \partial^2 ( \eta_i \circ \varphi_i^{-1} ) \|_{ L^{\infty, \infty}_{t, x} } \lesssim 1 \text{.} \]
Thus, by a trivial embedding along with H\"older's inequality,
\begin{align*}
I &\lesssim \sum_{ i = 1 }^N \{ \| \partial [ \partial ( \eta_i \circ \varphi_i^{-1} ) \cdot ( \phi \circ \varphi_i^{-1} ) ] \|_{ L^{\infty, 2}_{t, x} } + \| \partial ( \eta_i \circ \varphi_i^{-1} ) \cdot ( \phi \circ \varphi_i^{-1} ) \|_{ L^{\infty, 2}_{t, x} } \} \\
&\lesssim \sum_{ i = 1 }^N [ \| \phi \circ \varphi_i^{-1} \|_{ L^{\infty, 2}_{t, x} } + \| ( \tilde{\eta}_i \circ \varphi_i^{-1} ) \cdot \partial ( \phi \circ \varphi_i^{-1} ) \|_{ L^{\infty, 2}_{t, x} } ] \\
&\lesssim \| \phi \|_{ L^{\infty, 2}_{t, x} } + \sum_{ i = 1 }^N \sum_{ Z \in i \mc{Z} } \| ( \tilde{\eta}_i \cdot Z \phi ) \circ \varphi_i^{-1} \|_{ L^{\infty, 2}_{t, x} } \text{.} \qedhere
\end{align*}
\end{proof}

We are now prepared to prove the main sharp trace estimate.

\begin{theorem} \label{thm.sharp_trace}
Assume $(\mc{N}, \gamma)$ satisfies \ass{F2}{C, N, B}.
Let $\Psi \in \mc{C}^\infty \ul{T}^r_l \mc{N}$, and suppose $\Psi_1, \Psi_2 \in \mc{C}^\infty \ul{T}^r_{l + 1} \mc{N}$ are such that the decomposition
\[ \nabla \Psi = \nabla_t \Psi_1 + \Psi_2 \]
holds.
Then, we have the following estimate:
\begin{align}
\label{eq.sharp_trace} \| \Psi \|_{ L^{\infty, 2}_{x, t} } &\lesssim_{ C, N, B, r, l } ( 1 + \| k \|_{ L^{2, \infty}_{x, t} } ) ( \| \Psi_1 \|_{ N^1_{t, x} } + \| \Psi_1 [0] \|_{ H^{1/2}_x } + \| \Psi_2 \|_{ B^{1, 2, 0}_{\ell, t, x} } ) \\
\notag &\qquad\qquad\quad + ( 1 + \| k \|_{ L^{2, \infty}_{x, t} } ) ( \| \Psi \|_{ N^1_{t, x} } + \| \Psi [0] \|_{ H^{1/2}_x } ) \text{.}
\end{align}
\end{theorem}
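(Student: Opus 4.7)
The strategy is to apply Lemma \ref{thm.besov_sh_sc} to the non-negative scalar $G = \cint^t_0 |\Psi|^2 \in \mc{C}^\infty \mc{N}$, and then unfold the resulting bound using the decomposition $\nabla \Psi = \nabla_t \Psi_1 + \Psi_2$ together with the product/trace estimates from Theorems \ref{thm.est_prod} and \ref{thm.est_trace_sh} and the commutator identity \eqref{eq.comm_inv}. Since $G$ is non-decreasing in $\tau$, the key initial reduction is
\[
\| \Psi \|_{ L^{\infty, 2}_{x, t} }^2 = \sup_x G (\delta, x) \leq \| G \|_{ L^{\infty, \infty}_{t, x} },
\]
after which it remains to control the three families of terms on the right-hand side of Lemma \ref{thm.besov_sh_sc}: the localized Besov pieces $\| \eta_i Z G \circ \varphi_i^{-1} \|_{ B^{1, \infty, 0}_{\ell, t, x} }$, the localized $L^{\infty, 2}_{t, x}$ pieces, and the remainder $\| G \|_{ L^{\infty, 2}_{t, x} }$, for each $1 \leq i \leq N$ and each equivariant coordinate vector field $Z \in i \mc{Z}$.

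For the $Z G$ terms, I would first commute $Z = Z^a \nabla_a$ with $\cint^t_0$ via Proposition \ref{thm.comm_inv}, which generates a commutator term proportional to $k$ applied to $\nabla \cint^t_0 |\Psi|^2$. Then, using $\nabla |\Psi|^2 = 2 \langle \nabla \Psi, \Psi \rangle$ together with the hypothesis $\nabla \Psi = \nabla_t \Psi_1 + \Psi_2$, one writes
\[
Z |\Psi|^2 = 2 \langle (\nabla_t \Psi_1)(Z), \Psi \rangle + 2 \langle \Psi_2 (Z), \Psi \rangle .
\]
An integration by parts in $\tau$, exploiting the metric compatibility of $\nabla_t$ and the equivariance relation $\nabla_t Z = k(\cdot, Z)^\sharp$, converts $\cint^t_0 \langle (\nabla_t \Psi_1)(Z), \Psi \rangle$ into the boundary pair $\langle \Psi_1 (Z), \Psi \rangle - \mf{p} [ \langle \Psi_1 (Z), \Psi \rangle |_0 ]$ together with $\cint^t_0$-integrals of $\Psi_1 (Z) \otimes \nabla_t \Psi$ and the $k$-weighted product $\Psi_1 (k \cdot Z) \otimes \Psi$. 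Each resulting piece is a bilinear expression in $\Psi_1, \Psi_2, \Psi$, and can be bounded in $B^{1, \infty, 0}_{\ell, t, x}$ by the non-integrated product estimate \eqref{eq.est_prod_ex}, the integrated trace estimates \eqref{eq.est_trace_ex}, \eqref{eq.est_trace_sh}, \eqref{eq.est_trace_shp}, and the elementary product estimate \eqref{eq.est_prod_elem} (used to absorb the frame $Z$ and the factor $k$); scalar-reduction bounds like \eqref{eq.scalar_red_H} then pass from localized contractions (e.g.\ $\Psi_1 (Z)$) back to the intrinsic norms of $\Psi_1$.

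Summing, one obtains a bound of the form $\| G \|_{ L^{\infty, \infty}_{t, x} } \lesssim (1 + \| k \|_{ L^{2, \infty}_{x, t} })^2 A B$ plus the remainder, where
\[
A = \| \Psi_1 \|_{ N^1_{t, x} } + \| \Psi_1 [0] \|_{ H^{1/2}_x } + \| \Psi_2 \|_{ B^{1, 2, 0}_{\ell, t, x} }, \qquad B = \| \Psi \|_{ N^1_{t, x} } + \| \Psi [0] \|_{ H^{1/2}_x } .
\]
The remainder $\| G \|_{ L^{\infty, 2}_{t, x} }$ is estimated via the pointwise inequality $G \leq \| \Psi (\cdot, x) \|_{ L^2_t }^2$ followed by H\"older, yielding $\| G \|_{ L^{\infty, 2}_{t, x} } \lesssim \| \Psi \|_{ L^{\infty, 2}_{x, t} } \cdot \| \Psi \|_{ L^{2, 2}_{x, t} }$; Young's inequality absorbs half of this into $\| \Psi \|_{ L^{\infty, 2}_{x, t} }^2$ on the left and leaves a contribution to $B$ via $\| \Psi \|_{ L^{2, 2}_{x, t} } \lesssim_B \| \Psi \|_{ N^1_{t, x} }$. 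Taking square roots in $\| \Psi \|_{ L^{\infty, 2}_{x, t} }^2 \lesssim (1 + \| k \|_{ L^{2, \infty}_{x, t} })^2 A B$ and invoking $\sqrt{A B} \leq (A + B)/2$ then yields exactly the stated linear estimate.

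The main obstacle will be the careful bookkeeping of how the factor $(1 + \| k \|_{ L^{2, \infty}_{x, t} })^2$ arises: each of the two places where $k$ enters---the spatial commutator $[Z, \cint^t_0]$ from Proposition \ref{thm.comm_inv}, and the IBP correction $\Psi_1 (\nabla_t Z) = \Psi_1 (k \cdot Z)$ from the equivariance of $Z$---must produce exactly one factor of $\| k \|_{ L^{2, \infty}_{x, t} }$, with the residual norms cleanly absorbed into $A$ or $B$. A secondary technical point is the passage from contracted norms like $\| \Psi_1 (Z) \|_{ N^1_{t, x} }$ to the intrinsic $\| \Psi_1 \|_{ N^1_{t, x} }$ (and analogous Besov reductions), which follows from Propositions \ref{thm.scalar_red_H} and \ref{thm.comp_main} together with the \ass{R2}{C', N} control on the equivariant coordinate frames $\partial^i_a$ furnished by Proposition \ref{thm.reg_prop} applied to the \ass{F2}{C, N, B} hypothesis.
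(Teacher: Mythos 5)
Your starting point — apply Lemma \ref{thm.besov_sh_sc} to $\cint_0^t |\Psi|^2$ and use the hypothesis $\nabla\Psi = \nabla_t\Psi_1 + \Psi_2$ inside $Z|\Psi|^2 = 2\langle\nabla\Psi, Z\otimes\Psi\rangle$ — is the paper's, and the lower-order terms $I_2$, $I_3$ are handled the way you sketch. But your treatment of the main Besov term contains two errors, one minor and one fatal. The minor one: there is no $[Z,\cint_0^t]$ commutator to compute. The field $Z\in i\mc{Z}$ is equivariant and is being applied to the \emph{scalar} $\cint_0^t|\Psi|^2$, for which the covariant integral is just the ordinary $t$-integral, so $Z\cint_0^t\phi = \cint_0^t Z\phi$ by differentiation under the integral sign. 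Proposition \ref{thm.comm_inv} concerns the tensor-valued $[\cint_0^t,\nabla_a]$, which is a different operation.

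The fatal problem is the integration by parts in $\tau$. Once you move $\nabla_t$ off $\Psi_1$, the correction $\nabla_t Z\sim k\cdot Z$ lands on $\Psi_1$, producing $\cint_0^t(\Psi_1(k\cdot Z)\otimes\Psi)$ and, inside $\|\Psi_1(Z)\|_{N^1_{t,x}}$, the piece $\|\Psi_1(\nabla_t Z)\|_{L^{2,2}_{t,x}}$. Any H\"older split of $\|\Psi_1 k\|_{L^{2,2}_{t,x}}$ forces either $\|k\|_{L^{2,\infty}_{x,t}}\|\Psi_1\|_{L^{\infty,2}_{x,t}}$ or a norm of $k$ not present in the statement; and $\|\Psi_1\|_{L^{\infty,2}_{x,t}}$ is precisely the sharp-trace quantity the theorem is designed to bound, does not appear on the right of \eqref{eq.sharp_trace}, and cannot be absorbed. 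For the same reason the reduction $\|\Psi_1(Z)\|_{N^1_{t,x}}\lesssim\|\Psi_1\|_{N^1_{t,x}}$ you propose via \eqref{eq.scalar_red_H} fails here: that proposition is stated for the $t$-parallel frames $\mf{p}e^i$, where $\nabla_t X\equiv 0$, whereas for equivariant $Z\in i\mc{Z}$ the extra term $\Psi_1(\nabla_t Z)$ is exactly the obstruction. The paper avoids all of this by \emph{not} integrating by parts: it applies \eqref{eq.est_trace_ex} directly, taking $\Phi=\Psi_1$ and the second factor $\tilde\eta_i Z\otimes\Psi$, so the $\nabla_t Z$ correction appears only inside $\|\tilde\eta_i Z\otimes\Psi\|_{N^1_{t,x}}$, where it multiplies $\Psi$ rather than $\Psi_1$ and produces $(1+\|k\|_{L^{2,\infty}_{x,t}})\|\Psi\|_{L^{\infty,2}_{x,t}}$ — a term in $\Psi$ that a weighted Young's inequality absorbs into the left-hand side.
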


\begin{proof}
Applying Lemma \ref{thm.besov_sh_sc}, along with \eqref{eqr.change_of_coord} and \eqref{eq.comp_main}, we obtain
\begin{align*}
\| \Psi \|_{ L^{\infty, 2}_{x, t} }^2 &\lesssim \| \cint_0^t | \Psi |^2 \|_{ L^{\infty, \infty}_{t, x} } \\
&\lesssim \sum_{ i = 1 }^N \sum_{ Z \in i \mc{Z} } [ \| \cint_0^t ( \tilde{\eta}_i Z | \Psi |^2 ) \|_{ B^{1, \infty, 0}_{\ell, t, x} } + \| \cint_0^t ( \tilde{\eta}_i Z | \Psi |^2 ) \|_{ L^{\infty, 2}_{t, x} } ] + \| \cint_0^t | \Psi |^2 \|_{ L^{\infty, 2}_{t, x} } \\
&= I_1 + I_2 + I_3 \text{.}
\end{align*}
Here, the Besov norm in $I_1$ is the \emph{geometric} Besov norm.
Note that we could treat $\tilde{\eta}_i Z$ as a global vector field, i.e., as an element of $\mc{C}^\infty \ul{T}^1_0 \mc{N}$, due to the support of $\tilde{\eta}_i$.
The lower-order term $I_3$ can be handled using \eqref{eq.gns_1} and \eqref{eq.int_ineq_bi}:
\[ I_3 \lesssim \| \Psi \|_{ L^{4, 2}_{x, t} }^2 \lesssim \| \Psi \|_{ N^1_{t, x} }^2 \text{.} \]
Moreover, applying \ass{F0}{C, N, B}, \eqref{eq.int_ineq_bi}, and \eqref{eq.cframe_basis_prop}, we obtain
\begin{align*}
I_2 &\lesssim \| \cint_0^t ( \nabla \Psi \otimes \Psi ) \|_{ L^{\infty, 2}_{t, x} } \lesssim \| \nabla \Psi \|_{ L^{2, 2}_{t, x} } \| \Psi \|_{ L^{\infty, 2}_{x, t} } \lesssim \| \Psi \|_{ N^1_{t, x} } \| \Psi \|_{ L^{\infty, 2}_{x, t} } \text{.}
\end{align*}

For $I_1$, writing $Z | \Psi |^2 = 2 \langle \nabla \Psi, Z \otimes \Psi \rangle$, we have
\begin{align*}
I_1 &\lesssim \sum_{ i = 1 }^N \sum_{ Z \in i \mc{Z} } ( \| \cint_0^t \langle \nabla_t \Psi_1 , \tilde{\eta}_i Z \otimes \Psi \rangle \|_{ B^{1, \infty, 0}_{\ell, t, x} } + \| \cint_0^t \langle \Psi_2, \tilde{\eta}_i Z \otimes \Psi \rangle \|_{ B^{1, \infty, 0}_{\ell, t, x} } ) \text{.}
\end{align*}
Applying \eqref{eq.cframe_basis_prop}, \eqref{eq.est_trace_sh}, \eqref{eq.est_trace_ex}, and the remark following Theorem \ref{thm.est_trace_sh}, then
\begin{align*}
I_1 &\lesssim ( \| \Psi_1 \|_{ N^1_{t, x} } + \| \Psi_1 [0] \|_{ H^{1/2}_x } ) \sum_{ i = 1 }^N \sum_{ Z \in i \mc{Z} } \{ \| \tilde{\eta}_i Z \otimes \Psi \|_{ N^1_{t, x} } + \| ( \tilde{\eta}_i Z \otimes \Psi ) [0] \|_{ H^{1/2}_x } \} \\
&\qquad + \| \Psi_2 \|_{ B^{1, 2, 0}_{\ell, t, x} } \sum_{ i = 1 }^N \sum_{ Z \in i \mc{Z} } \| \nabla ( \tilde{\eta}_i Z \otimes \Psi ) \|_{ L^{2, 2}_{t, x} } + \| \Psi_2 \|_{ B^{1, 2, 0}_{\ell, t, x} } \| \Psi \|_{ L^{\infty, 2}_{x, t} } \text{.}
\end{align*}

Fix now any such $\tilde{\eta}_i Z$.
Applying H\"older's inequality yields
\[ \| \tilde{\eta}_i Z \otimes \Psi \|_{ N^1_{t, x} } \lesssim \| \Psi \|_{ N^1_{t, x} } + ( \| \nabla_t Z \|_{ L^{2, \infty}_{x, t} } + \| \nabla Z \|_{ L^{2, \infty}_{x, t} } ) \| \Psi \|_{ L^{\infty, 2}_{x, t} } \text{.} \]
Applying \eqref{eq.cframe_basis_prop} and recalling that $| \nabla_t Z | \lesssim | k | | Z |$, then
\[ \| \tilde{\eta}_i Z \otimes \Psi \|_{ N^1_{t, x} } \lesssim \| \Psi \|_{ N^1_{t, x} } + ( 1 + \| k \|_{ L^{2, \infty}_{x, t} } ) \| \Psi \|_{ L^{\infty, 2}_{x, t} } \text{.} \]
Moreover, by Proposition \ref{thm.sobolev}, \eqref{eq.est_prod_elemf}, and \eqref{eq.cframe_basis_prop},
\begin{align*}
\| ( \tilde{\eta}_i Z \otimes \Psi ) [0] \|_{ H^{1/2}_x } &\lesssim ( \| \nabla Z [0] \|_{ L^2_x } + \| Z [0] \|_{ L^\infty_x } ) \| \Psi [0] \|_{ H^{1/2}_x } \lesssim \| \Psi [0] \|_{ H^{1/2}_x } \text{.}
\end{align*}

Finally, combining all the above, we obtain
\begin{align*}
\| \Psi \|_{ L^{\infty, 2}_{x, t} }^2 &\lesssim ( \| \Psi \|_{ N^1_{t, x} } + \| \Psi_1 \|_{ N^1_{t, x} } + \| \Psi_1 [0] \|_{ H^{1/2}_x } + \| \Psi_2 \|_{ B^{1, 2, 0}_{\ell, t, x} } ) \\
&\qquad \cdot [ \| \Psi \|_{ N^1_{t, x} } + \| \Psi [0] \|_{ H^{1/2}_x } + ( 1 + \| k \|_{ L^{2, \infty}_{x, t} } ) \| \Psi \|_{ L^{\infty, 2}_{x, t} } ] \text{.}
\end{align*}
Applying a weighted Young's inequality completes the proof.
\end{proof}

\section{Weakly Spherical Foliations} \label{sec.curv}

In this section, we restrict ourselves to the special case of foliations $\mc{S}$ being diffeomorphic to $\Sph^2$.
We maintain the same objects and notations as before, in particular the fixed surface $(\mc{S}, h)$ and the foliation $(\mc{N}, \gamma)$.
We will discuss the role of the Gauss curvature in deriving elliptic estimates.
In particular, we would like to consider situations in which one has extremely weak curvature control.

\subsection{Weak Elliptic Estimates} \label{sec.curv_ell}

The first task is to quantify our weak curvature control.
This is accomplished by defining the following curvature regularity conditions, one for $(\mc{S}, h)$ and a corresponding one for $(\mc{N}, \gamma)$.

First, we say $(\mc{S}, h)$ satisfies \ass{k}{C, D}, with data $( f, W, V )$, iff:
\begin{itemize}
\item $f \in \mc{C}^\infty \mc{S}$ satisfies, for any $x \in \mc{S}$, the bounds
\[ C^{-1} \leq f |_x \leq C \text{.} \]

\item $V \in \mc{C}^\infty T^0_1 \mc{S}$ and $W \in \mc{C}^\infty \mc{S}$ satisfy
\[ \| V \|_{ H^{1/2}_x } \leq D \text{,} \qquad \| W \|_{ L^2_x } \leq D \text{.} \]

\item $\mc{K}$ can be decomposed in the form
\[ \mc{K} - f = h^{ab} \nabla_a V_b + W \text{.} \]
\end{itemize}
Moreover, we will only consider the case in which $D$ is very small.
The \ass{k}{} condition can then be interpreted as each $(\mc{S}, \gamma [\tau])$ being very close, in a very weak sense, to the standard Euclidean sphere.
More specifically, $\mc{K}$ is comparable to $1$, except for a ``good" error term $W$ that is $L^2$-bounded, and a ``bad" error term, which is not $L^2$-bounded but can be expressed as a divergence of an $H^{1/2}$-controlled $1$-form $V$.

Similarly, we say $(\mc{N}, \gamma)$ satisfies \ass{K}{C, D}, with data $( f, W, V )$, iff:
\begin{itemize}
\item $f \in \mc{C}^\infty \mc{N}$ satisfies, for any $(\tau, x) \in \mc{N}$, the bounds
\[ C^{-1} \leq f |_{(\tau, x)} \leq C \text{.} \]

\item $V \in \mc{C}^\infty \ul{T}^0_1 \mc{N}$ and $W \in \mc{C}^\infty \mc{N}$ satisfy
\[ \| V \|_{ B^{2, \infty, 1/2}_{\ell, t, x} } \leq D \text{,} \qquad \| W \|_{ L^{\infty, 2}_{t, x} } \leq D \text{.} \]

\item $\mc{K}$ can be decomposed in the form
\[ \mc{K} - f = \gamma^{ab} \nabla_a V_b + W \text{.} \]
\end{itemize}
Note that if $(\mc{N}, \gamma)$ satisfies \ass{K}{C, N}, with data $( f, W, V )$, then any $(\mc{S}, \gamma [\tau])$ trivially satisfies \ass{k}{C, D}, with data $( f [\tau], W [\tau], V [\tau])$.

We now discuss basic elliptic estimates on $(\mc{S}, h)$ using the \ass{k}{} condition.
First is the following general divergence-curl estimate.

\begin{proposition} \label{thm.div_curl_est}
Assume $(\mc{S}, h)$ satisfies \ass{r1}{C, N} and \ass{k}{C, D}, with $D \ll 1$ sufficiently small.
Then, for any $F \in \mc{C}^\infty T^r_{l+1} \mc{S}$,
\footnote{Here, $h^{ab} \nabla_a F_b \in \mc{C}^\infty T^r_l \mc{S}$ refers to the metric contraction of $\nabla F$ in the derivative component and a fixed covariant component of $F$.  The expression $\omega^{ab} \nabla_a F_b$ is defined similarly.}
\begin{equation} \label{eq.div_curl_est} \| \nabla F \|_{ L^2_x } \lesssim_{C, N, r, l} \| h^{ab} \nabla_a F_b \|_{ L^2_x } + \| \omega^{ab} \nabla_a F_b \|_{ L^2_x } + \| F \|_{ L^2_x } \text{,} \end{equation}
\end{proposition}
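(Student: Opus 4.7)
The strategy is to establish a tensorial Bochner-type identity generalizing the first line of \eqref{eq.hodge_id} (which handles only the case $r = 0$, $l = 0$) to arbitrary-rank tensors, and then to control the resulting curvature term via the structural decomposition from \ass{k}{C, D}. The smallness of $D$ will ultimately be what allows $\|\nabla F\|_{L^2_x}^2$ contributions from the curvature integral to be absorbed back into the left-hand side.

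For the Bochner identity, I exploit the 2D identity $\omega^{ab}\omega^{cd} = h^{ac}h^{bd} - h^{ad}h^{bc}$ to obtain the pointwise algebraic decomposition
\[ |\nabla F|^2 - |h^{ab}\nabla_a F_b|^2 - |\omega^{ab}\nabla_a F_b|^2 = (h^{ad}h^{bc} - h^{ab}h^{cd})(\nabla_a F_{b \cdots})(\nabla_c F_{d\cdots}) T^{\cdots}, \]
where $T^{\cdots}$ contracts the spectator indices of $F$ via $h$. Integrating by parts in $a$ (using $\nabla h = 0$) and then commuting covariant derivatives, the identity $h^{ab}h^{cd}\nabla_a \nabla_c F_{d\cdots} = \nabla^b(h^{ab}\nabla_a F_{b\cdots})$ introduces no commutator, while $h^{ad}h^{bc}\nabla_a\nabla_c F_{d\cdots}$ produces the same divergence term plus $[\nabla^d, \nabla^b]F_{d\cdots}$. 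In 2D the Riemann tensor is entirely determined by $\mc{K}$, so every such commutator, applied one per tensor index of $F$, collapses to $\mc{K}$ times an algebraic contraction of $F$ with itself. Collecting yields
\[ \int_{\mc{S}} |\nabla F|^2 d\omega = \int_{\mc{S}} |h^{ab}\nabla_a F_b|^2 d\omega + \int_{\mc{S}} |\omega^{ab}\nabla_a F_b|^2 d\omega + \int_{\mc{S}} \mc{K}\,\mathcal{Q}(F, F)\,d\omega, \]
with $\mathcal{Q}$ a purely algebraic bilinear form in $F$ satisfying $|\mathcal{Q}(F, F)| \lesssim_{r, l} |F|^2$.

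It remains to control the curvature integral. Decomposing $\mc{K} = f + \nabla^a V_a + W$ from \ass{k}{C, D}, the $f$-piece is dominated trivially by $\|F\|_{L^2_x}^2$. For the $W$-piece, H\"older combined with Corollary~\ref{thm.sob_frac_shf} and the Gagliardo-Nirenberg bound $\|F\|_{L^4_x}^2 \lesssim \|F\|_{L^2_x}(\|F\|_{L^2_x} + \|\nabla F\|_{L^2_x})$ yields
\[ \Bigl|\int_{\mc{S}} W\,\mathcal{Q}(F, F)\,d\omega\Bigr| \lesssim D \|F\|_{L^4_x}^2 \lesssim D(\|F\|_{L^2_x}^2 + \|F\|_{L^2_x}\|\nabla F\|_{L^2_x}). \]
For the divergence piece, integrate by parts: since $\mathcal{Q}$ has parallel (metric-built) coefficients, $|\nabla \mathcal{Q}(F, F)| \lesssim |F||\nabla F|$, so using $\|V\|_{L^4_x} \lesssim \|V\|_{H^{1/2}_x} \leq D$ (Corollary~\ref{thm.sob_frac_shf}),
\[ \Bigl|\int_{\mc{S}}(\nabla^a V_a)\,\mathcal{Q}(F, F)\,d\omega\Bigr| \lesssim \|V\|_{L^4_x} \|F\|_{L^4_x} \|\nabla F\|_{L^2_x} \lesssim D\|F\|_{L^4_x}\|\nabla F\|_{L^2_x}. \]
A weighted Cauchy-Schwarz bounds this by $\tilde{\varepsilon}\|\nabla F\|_{L^2_x}^2 + C_{\tilde{\varepsilon}} D^2 \|F\|_{L^4_x}^2$, with $\tilde{\varepsilon} = O(D)$; and the $\|F\|_{L^4_x}^2$ factor is again absorbed via Gagliardo-Nirenberg. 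Substituting into the Bochner identity and absorbing the small multiple of $\|\nabla F\|_{L^2_x}^2$ into the left-hand side delivers \eqref{eq.div_curl_est}. The main technical obstacle is the careful combinatorial bookkeeping for the Bochner identity in Paragraph 2, since $[\nabla^d, \nabla^b]$ acting on each index of $F$ generates its own Riemann-curvature contribution; the essential saving is that in 2D all such contributions are proportional to $\mc{K}$ and reduce to algebraic expressions in $F$, so they assemble cleanly into a single $\mc{K}\,\mathcal{Q}(F, F)$ term.
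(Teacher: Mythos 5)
Your proposal is correct and follows essentially the same route as the paper: a tensorial Bochner (divergence--curl) identity producing a curvature error term of the form $\cint \mc{K}\cdot F\otimes F$, then decomposing $\mc{K}$ via \ass{k}{C, D}, integrating the $\nabla^a V_a$ piece by parts, estimating the resulting terms with H\"older, Gagliardo--Nirenberg, and Corollary \ref{thm.sob_frac_shf}, and absorbing the small multiple of $\|\nabla F\|_{L^2_x}^2$. The only difference is cosmetic: you derive the Bochner identity explicitly from $\omega^{ab}\omega^{cd} = h^{ac}h^{bd} - h^{ad}h^{bc}$, while the paper cites it as a standard integration by parts.
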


\begin{proof}
A standard integration by parts yields the identity
\[ \int_{ \mc{S} } | \nabla F |^2 d \omega = \int_{ \mc{S} } | h^{ab} \nabla_a F_b |^2 d \omega + \int_\mc{S} | \omega^{ab} \nabla_a F_b |^2 d \omega + \int_\mc{S} E \cdot d \omega \text{,} \]
where the error term $E$ is a sum of $L$ terms, with $L \lesssim r + l + 1$, and where each such term can be expressed as contractions of $\mc{K} \otimes F \otimes F$.

Let $(f, W, V)$ be the data for the \ass{k}{C, D} condition.
To control the error terms, we decompose $\mc{K}$ using the \ass{k}{C, D} and integrate the divergence term by parts:
\begin{align*}
\abs{ \int_\mc{S} \mc{K} \otimes F \otimes F \cdot d \omega } &\lesssim \abs{ \int_\mc{S} ( \mc{K} - f ) \otimes F \otimes F \cdot d \omega } + \| F \|_{ L^2_x }^2 \\
&\lesssim \int_\mc{S} | V | | \nabla F | | F | d \omega + \int_\mc{S} | W | | F |^2 d \omega + \| F \|_{ L^2_x }^2 \\
&= I_1 + I_2 + \| F \|_{ L^2_x }^2 \text{.}
\end{align*}
For $I_1$, we apply H\"older's inequality, \eqref{eq.gns_1}, \eqref{eq.sob_frac_shf}, and \ass{k}{C, D}:
\begin{align*}
(r + l + 1) I_1 &\lesssim (r + l + 1) \| V \|_{ L^4_x } \| \nabla F \|_{ L^2_x } \| F \|_{ L^4_x } \\
&\lesssim D [ \| \nabla F \|_{ L^2_x }^2 + (r + l + 1)^4 \| F \|_{ L^2_x }^2 ] \text{,}
\end{align*}
Similarly, for $I_2$, we apply H\"older's inequality, \eqref{eq.gns_1}, and \ass{k}{C, D}:
\begin{align*}
(r + l + 1) I_2 &\lesssim (r + l + 1) \| W \|_{ L^2_x } \| F \|_{ L^4_x }^2 \\
&\lesssim D [ \| \nabla F \|_{ L^2_x }^2 + (r + l + 1)^2 \| F \|_{ L^2_x }^2 ] \text{.}
\end{align*}

Combining all of the above, we obtain
\[ \| \nabla F \|_{ L^2_x }^2 \lesssim \| h^{ab} \nabla_a F_b \|_{ L^2_x }^2 + \| \omega^{ab} \nabla_a F_b \|_{ L^2_x }^2 + D \| \nabla F \|_{ L^2_x }^2 + (r + l + 1)^4 \| F \|_{ L^2_x }^2 \text{.} \]
Since $D$ is small, the desired estimate \eqref{eq.div_curl_est} follows.
\end{proof}

\begin{corollary} \label{thm.ell_est_sc}
Assume $(\mc{S}, h)$ satisfies \ass{r1}{C, N} and \ass{k}{C, D}, with $D \ll 1$ sufficiently small.
If $f \in \mc{C}^\infty \mc{S}$, then the following estimate holds:
\begin{equation} \label{eq.ell_est_sc} \| \nabla^2 f \|_{ L^2_x } \lesssim_{C, N} \| \lapl f \|_{ L^2_x } + \| \nabla f \|_{ L^2_x } \text{.} \end{equation}
\end{corollary}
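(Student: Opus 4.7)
The plan is to deduce Corollary \ref{thm.ell_est_sc} as a direct application of the divergence-curl estimate in Proposition \ref{thm.div_curl_est}, applied to the $1$-form $F = \nabla f \in \mc{C}^\infty T^0_1 \mc{S}$.

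First, I would identify the two first-order combinations appearing on the right-hand side of \eqref{eq.div_curl_est}. The metric trace is
\[ h^{ab} \nabla_a F_b = h^{ab} \nabla_a \nabla_b f = \lapl f, \]
by the very definition of the Bochner Laplacian on scalars. The antisymmetric contraction is
\[ \omega^{ab} \nabla_a F_b = \omega^{ab} \nabla_a \nabla_b f, \]
which vanishes identically: the Hessian $\nabla_a \nabla_b f$ is symmetric in $a,b$ for scalar $f$ (the Levi-Civita connection is torsion-free), while $\omega^{ab}$ is antisymmetric. Thus the ``curl'' term contributes zero.

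Next, since $(\mc{S}, h)$ satisfies \ass{r1}{C, N} and \ass{k}{C, D} with $D$ small, Proposition \ref{thm.div_curl_est} applies to $F = \nabla f$ (taking $r = 0$, $l = 0$) and yields
\[ \| \nabla^2 f \|_{ L^2_x } = \| \nabla F \|_{ L^2_x } \lesssim_{C, N} \| \lapl f \|_{ L^2_x } + \| \nabla f \|_{ L^2_x }, \]
which is precisely \eqref{eq.ell_est_sc}. There is no real obstacle here; the entire content of the corollary is packaged inside the already-proven Proposition \ref{thm.div_curl_est}, and the only thing to observe is the symmetry of the scalar Hessian that kills the curl contribution.
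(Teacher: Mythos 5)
Your proof is correct and is exactly the argument the paper gives: apply Proposition \ref{thm.div_curl_est} with $F = \nabla f$, noting that $h^{ab}\nabla_a\nabla_b f = \lapl f$ and $\omega^{ab}\nabla_a\nabla_b f = 0$ by the symmetry of the scalar Hessian, while the residual $\| F \|_{L^2_x}$ term is $\| \nabla f \|_{L^2_x}$. The paper states this in one line; you have simply spelled out the identifications, which is fine.
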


\begin{proof}
Apply \eqref{eq.div_curl_est}, with $F = \nabla f$.
\end{proof}

\begin{corollary} \label{thm.glp_bochner_sc}
Assume $(\mc{S}, h)$ satisfies \ass{r1}{C, N} and \ass{k}{C, D}, with $D \ll 1$ sufficiently small.
Then, for any integer $k \geq 0$ and for any $f \in \mc{C}^\infty \mc{S}$,
\begin{align}
\label{eq.glp_bochner_sc} \| \nabla^2 P_k f \|_{ L^2_x } + \| P_k \nabla^2 f \|_{ L^2_x } &\lesssim_{C, N, r, l} 2^{ 2 k } \| f \|_{ L^2_x } \text{,} \\
\notag \| \nabla^2 P_{< 0} f \|_{ L^2_x } + \| P_{< 0} \nabla^2 f \|_{ L^2_x } &\lesssim_{C, N, r, l} \| f \|_{ L^2_x } \text{.}
\end{align}
\end{corollary}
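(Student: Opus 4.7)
Both bounds should follow by combining the scalar elliptic estimate of Corollary \ref{thm.ell_est_sc} with the finite-band properties in Proposition \ref{thm.glp}. The first inequality is a direct application; the second requires a short duality argument.

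For $\|\nabla^2 P_k f\|_{L^2_x}$, since $P_k f$ is scalar, Corollary \ref{thm.ell_est_sc} gives
\[
\|\nabla^2 P_k f\|_{L^2_x} \lesssim \|\lapl P_k f\|_{L^2_x} + \|\nabla P_k f\|_{L^2_x} \lesssim 2^{2k}\|f\|_{L^2_x} + 2^k\|f\|_{L^2_x} \lesssim 2^{2k}\|f\|_{L^2_x},
\]
where the second inequality uses \eqref{eq.glp_fbl} and \eqref{eq.glp_fb}, and the last uses $k \geq 0$. The corresponding $P_{<0}$ bound is identical, invoking the $P_{<0}$ variants of the finite-band estimates.

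For $\|P_k \nabla^2 f\|_{L^2_x}$, I would argue by duality. Since $\nabla^2 f$ is symmetric, the duality supremum can be restricted to symmetric $G \in \mc{C}^\infty T^0_2 \mc{S}$ with $\|G\|_{L^2_x} \leq 1$; for such $G$, self-adjointness of $P_k$ and two integrations by parts give
\[
\langle P_k \nabla^2 f, G\rangle = \langle \nabla^2 f, P_k G\rangle = \int_{\mc{S}} f \cdot \nabla^a \nabla^b (P_k G)_{ab}\, d\omega,
\]
so it suffices to show $\|\nabla^a\nabla^b (P_k G)_{ab}\|_{L^2_x} \lesssim 2^{2k}\|G\|_{L^2_x}$. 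Because $P_k$ commutes with metric contractions and thus preserves the decomposition of symmetric tensors into trace and trace-free parts, writing $G = \tfrac{1}{2}(\trace G)\,h + G^\circ$ gives the scalar identity
\[
\nabla^a \nabla^b (P_k G)_{ab} = \tfrac{1}{2}\lapl P_k(\trace G) + \real\bigl(\mc{D}_1 \mc{D}_2(P_k G^\circ)\bigr).
\]
The first summand is controlled by $2^{2k}\|G\|_{L^2_x}$ directly via \eqref{eq.glp_fbl}. For the second, I would use the Hodge identities \eqref{eq.hodge_sq} to rewrite $\mc{D}_1 \mc{D}_2$ in terms of $\lapl$ and multiplication by $\mc{K}$, then combine the finite-band bounds with absorption of the resulting $\mc{K}$-factors through the structural decomposition $\mc{K} = f + h^{ab}\nabla_a V_b + W$ from \ass{k}{C, D}, using the same integration-by-parts argument that underlies Proposition \ref{thm.div_curl_est}. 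The $P_{<0}$ case is handled by the same procedure with $k$ replaced by $0$ throughout.

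The hard part is the trace-free contribution: commuting $P_k$ past $\mc{D}_2$ (or unwinding $\mc{D}_1 \mc{D}_2$ via Hodge identities) necessarily introduces factors of $\mc{K}$, which lies only in the weak space encoded by \ass{k}{C, D} and not in $L^\infty$. These factors must be absorbed using the divergence-plus-remainder structure of $\mc{K} - f$ together with the smallness of $D$, exactly as in the proof of Proposition \ref{thm.div_curl_est}. Once this absorption argument is carried out, taking the supremum over symmetric $G$ yields $\|P_k \nabla^2 f\|_{L^2_x} \lesssim 2^{2k}\|f\|_{L^2_x}$.
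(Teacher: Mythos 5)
Your treatment of $\|\nabla^2 P_k f\|_{L^2_x}$ (and its $P_{<0}$ analogue) is exactly the paper's argument: apply Corollary \ref{thm.ell_est_sc} and then the finite-band estimates \eqref{eq.glp_fbl}--\eqref{eq.glp_fb}. No issues there. The paper, however, dismisses $\|P_k\nabla^2 f\|_{L^2_x}$ with only the phrase ``standard duality argument'' plus a footnote about commutation with metric contractions, so the burden of the comparison falls entirely on whether your concrete dualization holds up.

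Your dualization is set up correctly: reduce to bounding $\|\nabla^a\nabla^b(P_kG)_{ab}\|_{L^2_x}$, split $G$ into trace and trace-free parts (legitimate because $P_k$ commutes with metric contractions), handle the trace part via $\tfrac{1}{2}\lapl P_k(\trace G)$ and \eqref{eq.glp_fbl}, and identify $\nabla^a\nabla^b(P_kG^\circ)_{ab}=\real\bigl(\mc{D}_1\mc{D}_2 P_kG^\circ\bigr)$. That identity is correct.

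The gap is in the key step: you propose to ``use the Hodge identities \eqref{eq.hodge_sq} to rewrite $\mc{D}_1\mc{D}_2$ in terms of $\lapl$ and multiplication by $\mc{K}$.'' No such identity exists. The four identities in \eqref{eq.hodge_sq} give $\mc{D}_1\mc{D}_1^\ast$, $\mc{D}_1^\ast\mc{D}_1$, $\mc{D}_2\mc{D}_2^\ast$, $\mc{D}_2^\ast\mc{D}_2$ --- always an operator composed with its own adjoint, landing back in the space one started from. By contrast $\mc{D}_1\mc{D}_2$ maps $\mc{C}^\infty H_2\mc{S}\to\mc{C}^\infty H_0\mc{S}$, whereas $\lapl$ and multiplication by $\mc{K}$ map $\mc{C}^\infty H_2\mc{S}\to\mc{C}^\infty H_2\mc{S}$; the operators are not of the same type and cannot be equated. (What is true is the commutator identity $[\lapl,\mc{D}_2]H = -\mc{K}\,\mc{D}_2 H - 2\,\mc{D}_2(\mc{K}H)$, which you could extract from \eqref{eq.hodge_sq}, but that introduces a $\mc{D}_2(\mc{K}H)$ term, i.e., a derivative of $\mc{K}$ --- far worse than what the \ass{k}{C,D} condition controls, and not something the absorption mechanism of Proposition \ref{thm.div_curl_est} handles.)

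Relatedly, the absorption step you allude to is not a drop-in of the argument from Proposition \ref{thm.div_curl_est}. There, the curvature enters as the \emph{integral} error $\int_{\mc{S}}\mc{K}\otimes F\otimes F$, which is redistributed by integrating the divergence piece $h^{ab}\nabla_a V_b$ by parts onto $|F|^2$. In the duality argument the curvature instead appears as a pointwise multiplier ($\mc{K}\otimes P_kG^\circ$ or a commutator term), and the term $W\otimes P_kG^\circ$ needs $\|P_kG^\circ\|_{L^\infty_x}$, which the weak Bernstein estimate \eqref{eq.glp_bernstein} does not give (it is restricted to $q<\infty$). The \ass{k}{C,D} structure does not automatically rescue a pointwise bound of this type. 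So the final paragraph of your argument identifies the right obstacle but does not in fact resolve it; as written, the duality half of your proof is incomplete.
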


\begin{proof}
The estimate for $\nabla^2 P_k f$ in \eqref{eq.glp_bochner_sc} follows from Proposition \ref{thm.glp} and \eqref{eq.ell_est_sc}:
\[ \| \nabla^2 P_k f \|_{ L^2_x } \lesssim \| \lapl P_k f \|_{ L^2_x } + \| \nabla P_k f \|_{ L^2_x } \lesssim 2^{2 k} \| f \|_{ L^2_x } \text{.} \]
The estimate for $P_k \nabla^2 f$ follows by a standard duality argument.
\footnote{Recall in particular that all the geometric L-P operators commute with metric contractions.}
The second part of \eqref{eq.glp_bochner_sc} can be proved using similar methods.
\end{proof}

\begin{remark}
Similar estimates hold in the foliation setting.
For example, if $(\mc{N}, \gamma)$ satisfies \ass{R1}{C, N} and \ass{K}{C, D}, if $p \in [1, \infty]$, and if $\Psi \in \mc{C}^\infty \ul{T}^r_l \mc{N}$, then
\[ \| \nabla \Psi \|_{ L^{p, 2}_{t, x} } \lesssim_{C, N, r, l} \| \gamma^{ab} \nabla_a \Psi_b \|_{ L^{p, 2}_{t, x} } + \| \epsilon^{ab} \nabla_a \Psi_b \|_{ L^{p, 2}_{t, x} } + \| \Psi \|_{ L^{p, 2}_{t, x} } \text{.} \]
Other elliptic estimates on $(\mc{S}, h)$ will have analogous extensions to $(\mc{N}, \gamma)$.
\end{remark}

\subsection{Hodge Elliptic Estimates} \label{sec.curv_hodge}

Next, we derive similar elliptic estimates for the symmetric Hodge operators.
The basic strategy is the same as in Proposition \ref{thm.div_curl_est}.

\begin{proposition} \label{thm.hodge_est}
Assume $(\mc{S}, h)$ satisfies \ass{r1}{C, N} and \ass{k}{C, D}, with $D \ll 1$ sufficiently small.
Then, the following Hodge-elliptic estimates hold:
\begin{itemize}
\item If $X \in \mc{C}^\infty H_1 \mc{S}$, then
\begin{equation} \label{eq.hodge_est_D1} \| \nabla X \|_{ L^2_x } + \| X \|_{ L^2_x } \lesssim_{C, N} \| \mc{D}_1 X \|_{ L^2_x } \text{.} \end{equation}

\item If $X \in \mc{C}^\infty H_2 \mc{S}$, then
\begin{equation} \label{eq.hodge_est_D2} \| \nabla X \|_{ L^2_x } + \| X \|_{ L^2_x } \lesssim_{C, N} \| \mc{D}_2 X \|_{ L^2_x } \text{.} \end{equation}

\item If $X \in \mc{C}^\infty H_0 \mc{S}$, then
\begin{equation} \label{eq.hodge_est_D1a} \| \nabla X \|_{ L^2_x } \simeq \| \mc{D}_1^\ast X \|_{ L^2_x } \text{.} \end{equation}

\item If $X \in \mc{C}^\infty H_1 \mc{S}$, then
\begin{equation} \label{eq.hodge_est_D2a} \| \nabla X \|_{ L^2_x } \lesssim_{C, N} \| \mc{D}_2^\ast X \|_{ L^2_x } + \| X \|_{ L^2_x } \text{.} \end{equation}
\end{itemize}
\end{proposition}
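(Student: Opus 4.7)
The plan is to derive all four estimates from the Hodge identities in \eqref{eq.hodge_id}, and then control the resulting Gauss curvature terms $\int \mc{K}\,|X|^2\,d\omega$ by inserting the decomposition $\mc{K} = f + h^{ab}\nabla_a V_b + W$ furnished by \ass{k}{C, D}. The estimate \eqref{eq.hodge_est_D1a} is already the third identity in \eqref{eq.hodge_id} verbatim, so only the other three cases require genuine work; moreover, all three are handled by essentially the same error analysis as in the proof of Proposition \ref{thm.div_curl_est}.

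For \eqref{eq.hodge_est_D1} and \eqref{eq.hodge_est_D2}, the relevant Hodge identities are
\[ \|\mc{D}_1 X\|_{L^2_x}^2 = \|\nabla X\|_{L^2_x}^2 + \int_\mc{S} \mc{K}\,|X|^2\,d\omega, \qquad 2\|\mc{D}_2 X\|_{L^2_x}^2 = \|\nabla X\|_{L^2_x}^2 + 2\int_\mc{S} \mc{K}\,|X|^2\,d\omega. \]
Substituting the decomposition of $\mc{K}$, the $f$-contribution is bounded below by $C^{-1}\|X\|_{L^2_x}^2$ thanks to the pointwise lower bound $f \geq C^{-1}$, which supplies the zero-order coercivity. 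For the divergence error I integrate by parts, writing $\int (h^{ab}\nabla_a V_b)|X|^2\,d\omega = -2\int V^a \langle X, \nabla_a X\rangle\,d\omega$, and then apply H\"older's inequality, the fractional Sobolev embedding \eqref{eq.sob_frac_shf} (to obtain $\|V\|_{L^4_x} \lesssim \|V\|_{H^{1/2}_x} \leq D$), and the Gagliardo--Nirenberg inequality \eqref{eq.gns_1} with $q = 4$ to estimate $\|X\|_{L^4_x}$. A weighted Young's inequality then gives
\[ \left|\int_\mc{S}(h^{ab}\nabla_a V_b)|X|^2\,d\omega\right| + \left|\int_\mc{S} W\,|X|^2\,d\omega\right| \lesssim_{C, N} D\bigl(\|\nabla X\|_{L^2_x}^2 + \|X\|_{L^2_x}^2\bigr). \]
Since $D$ is small, the $\|\nabla X\|_{L^2_x}^2$ error is absorbed into the left-hand side, and the $\|X\|_{L^2_x}^2$ error is dominated by the $f$-contribution, yielding \eqref{eq.hodge_est_D1} and \eqref{eq.hodge_est_D2}.

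For \eqref{eq.hodge_est_D2a}, the corresponding identity $\|\nabla X\|_{L^2_x}^2 = 2\|\mc{D}_2^\ast X\|_{L^2_x}^2 + \int_\mc{S} \mc{K}\,|X|^2\,d\omega$ has $\mc{K}$ with the opposite sign, so the $f$-term no longer provides useful coercivity. However, the desired estimate allows $\|X\|_{L^2_x}$ on the right-hand side, and applying exactly the same divergence-and-H\"older bookkeeping produces an error of the form $D\|\nabla X\|_{L^2_x}^2 + O(\|X\|_{L^2_x}^2)$. Absorbing the $D\|\nabla X\|_{L^2_x}^2$ part into the left-hand side delivers \eqref{eq.hodge_est_D2a}.

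The main obstacle is precisely that $\mc{K}$ is not assumed to lie in $L^2$, so the only way to control $\int(h^{ab}\nabla_a V_b)|X|^2\,d\omega$ is to exploit the divergence structure enforced by the \ass{k}{} condition. This structure, combined with the $H^{1/2} \hookrightarrow L^4$ embedding from Corollary \ref{thm.sob_frac_shf}, converts the otherwise dangerous ``bad'' part of $\mc{K}$ into a small error that is harmlessly absorbed; without this divergence form one would be forced into the much heavier $H^{-1/2}$-duality machinery used in \cite{kl_rod:cg, wang:cg}.
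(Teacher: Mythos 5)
Your proof is correct and follows essentially the same route as the paper: apply the Hodge integral identities \eqref{eq.hodge_id}, substitute the \ass{k}{C, D} decomposition $\mc{K} = f + h^{ab}\nabla_a V_b + W$, extract coercivity from the pointwise bounds on $f$, and absorb the divergence and $W$ contributions after integrating by parts and using H\"older, \eqref{eq.sob_frac_shf}, and \eqref{eq.gns_1}. The paper handles the error term by reference to the proof of Proposition \ref{thm.div_curl_est}, while you write out that bookkeeping explicitly; the substance is identical.
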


\begin{proof}
For \eqref{eq.hodge_est_D1}, we use the first identity in \eqref{eq.hodge_id} to obtain
\[ \| \nabla X \|_{ L^2_x }^2 + \int_{ \mc{S} } f | X |^2 d \epsilon = \| \mc{D}_1 X \|_{ L^2_x }^2 - \int_{ \mc{S} } ( \mc{K} - f ) | X |^2 d \epsilon \text{.} \]
The last term on the right-hand side can be handled in an analogous manner as in the proof of \eqref{eq.div_curl_est}.
Recalling the lower bound for $f$ in \ass{k}{C, D}, then
\[ \| \nabla X \|_{ L^2_x }^2 + \| X \|_{ L^2_x }^2 \lesssim \| \mc{D}_1 X \|_{ L^2_x }^2 + D ( \| \nabla X \|_{ L^2_x }^2 + \| X \|_{ L^2_x }^2 ) \text{,} \]
and \eqref{eq.hodge_est_D1} follows.
The remaining estimates \eqref{eq.hodge_est_D2}-\eqref{eq.hodge_est_D2a} are similarly proved.
\end{proof}

Suppose for the moment that $(\mc{S}, h)$ satisfies \ass{r1}{C, N} and \ass{k}{C, D}, with $D$ small in the sense of Proposition \ref{thm.hodge_est}.
Then, \eqref{eq.hodge_est_D1} and \eqref{eq.hodge_est_D2} imply that $\mc{D}_1$ and $\mc{D}_2$ are one-to-one, and that both operators have $L^2$-bounded inverses.
Furthermore, we can extend these inverses to $L^2$-bounded operators
\[ \mc{D}_i^{-1} : \mc{C}^\infty H_{i-1} \mc{S} \rightarrow \mc{C}^\infty H_i \mc{S} \text{,} \qquad i \in \{ 1, 2 \} \text{,} \]
by defining $\mc{D}_i^{-1} X$ to be the (actual) inverse of $\mc{D}_i$ acting on the $L^2$-orthogonal projection of $X$ onto the (closed) range of $\mc{D}_i$.
If we let $\mc{P}_i$ denote this $L^2$-projection onto the range of $\mc{D}_i$, then by the above definitions, we have
\[ \mc{D}_i^{-1} \mc{D}_i = I \text{,} \qquad \mc{D}_i \mc{D}_i^{-1} = \mc{P}_i \text{.} \]

Next, one can use the above to partially invert the $\mc{D}_i^\ast$'s as well.
Since $\mc{D}_i$ is injective, then $\mc{D}_i^\ast$ (which also has closed range) is surjective, and its inverse image of any element of $\mc{C}^\infty H_i \mc{S}$ is a coset of the nullspace of $\mc{D}_i^\ast$.
\footnote{To be fully rigorous, we must invoke some functional analytic technicalities and consider the $\mc{D}_i$'s and $\mc{D}_i^\ast$'s as densely defined unbounded operators on the appropriate $L^2$-spaces.}
Since the nullspace of $\mc{D}_i^\ast$ is the orthogonal complement of the range of $\mc{D}_i$, we can define $\mc{D}_i^{\ast -1} X$ to be the unique element of the corresponding inverse image of $X$ that is in the range of $\mc{D}_i$.
In summary, we have the following identities:
\[ \mc{D}_i^{\ast -1} \mc{D}_i^\ast = \mc{P}_i \text{,} \qquad \mc{D}_i^\ast \mc{D}_i^{\ast -1} = I \text{.} \]

In addition, we can make the following observations:
\begin{itemize}
\item The range of $\mc{D}_1$ is the space of ($h$-)mean-free functions.
Thus, $I - \mc{P}_1$ applied to any $X \in \mc{C}^\infty H_0 \mc{S}$ is simply the mean of $X$.

\item The range of $\mc{D}_2$ is the orthogonal complement of the space of conformal Killing $1$-forms on $\mc{S}$ (with respect to $h$).
As a result, $I - \mc{P}_2$ projects the space of $1$-forms onto the subspace of conformal Killing $1$-forms.
\end{itemize}
In particular, as projections, the operators $I - \mc{P}_1$, and $I - \mc{P}_2$ are $L^2$-bounded.

By combining Proposition \ref{thm.hodge_est} with the above constructions of the ``inverse" Hodge elliptic operators, we can establish the following estimates.

\begin{proposition} \label{thm.hodge_inv_est}
Assume $(\mc{S}, h)$ satisfies \ass{r1}{C, N} and \ass{k}{C, D}, with $D \ll 1$ sufficiently small.
Let $\mc{D}$ denote any one of the operators $\mc{D}_1$, $\mc{D}_2$, $\mc{D}_1^\ast$, $\mc{D}_2^\ast$, and suppose $X$ is a smooth section of the appropriate Hodge bundle on $\mc{S}$.
\begin{itemize}
\item The following Hodge-elliptic estimates hold:
\begin{equation} \label{eq.hodge_inv_est} \| \nabla \mc{D}^{-1} X \|_{ L^2_x } + \| \mc{D}^{-1} X \|_{ L^2_x } \lesssim_{C, N} \| X \|_{ L^2_x } \text{.} \end{equation}

\item Furthermore, if $k \geq 0$ is an integer, then
\begin{equation} \label{eq.glp_hodge} \| P_k \mc{D}^{-1} X \|_{ L^2_x } \lesssim_{C, N} 2^{-k} \| X \|_{ L^2_x } \text{,} \qquad \| \mc{D}^{-1} P_k X \|_{ L^2_x } \lesssim_{C, N} 2^{-k} \| X \|_{ L^2_x } \text{.} \end{equation}
\end{itemize}
\end{proposition}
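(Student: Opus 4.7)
The plan is to prove \eqref{eq.hodge_inv_est} first directly from Proposition \ref{thm.hodge_est}, then bootstrap to \eqref{eq.glp_hodge} using the reverse finite band estimate \eqref{eq.glp_fbr} and a duality argument. The essential mechanism is that the ``inverse'' Hodge operators are genuine inverses modulo the finite-dimensional projections $\mc{P}_i$, and the kernels of $\mc{D}_1^\ast$, $\mc{D}_2^\ast$ are orthogonal to exactly the frequency content picked up by $P_k$ for $k \geq 0$.

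For \eqref{eq.hodge_inv_est} with $\mc{D} = \mc{D}_i$ ($i \in \{1,2\}$), set $Y = \mc{D}_i^{-1} X$, so $\mc{D}_i Y = \mc{P}_i X$. Since $\mc{P}_i$ is an $L^2$-projection, $\| \mc{D}_i Y \|_{ L^2_x } \leq \| X \|_{ L^2_x }$, and applying \eqref{eq.hodge_est_D1} or \eqref{eq.hodge_est_D2} to $Y$ yields the bound. For $\mc{D} = \mc{D}_i^\ast$, set $Y = \mc{D}_i^{\ast -1} X$, so $\mc{D}_i^\ast Y = X$ and $Y$ lies in the range of $\mc{D}_i$. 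Writing $Y = \mc{D}_i Z$ with $Z = \mc{D}_i^{-1} Y$ and integrating by parts,
\[ \| Y \|_{ L^2_x }^2 = \int_{\mc{S}} \langle Y, \mc{D}_i Z \rangle \, d\omega = \int_{\mc{S}} \langle \mc{D}_i^\ast Y, Z \rangle \, d\omega = \int_{\mc{S}} \langle X, Z \rangle \, d\omega \leq \| X \|_{ L^2_x } \| Z \|_{ L^2_x } \text{.} \]
The already-established case for $\mc{D}_i$ bounds $\| Z \|_{ L^2_x } \lesssim \| Y \|_{ L^2_x }$, giving $\| Y \|_{ L^2_x } \lesssim \| X \|_{ L^2_x }$. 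The gradient bound then follows by inserting this into \eqref{eq.hodge_est_D1a} or \eqref{eq.hodge_est_D2a}.

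For \eqref{eq.glp_hodge}, the first estimate is immediate: by \eqref{eq.glp_fbr} and the part already proved,
\[ \| P_k \mc{D}^{-1} X \|_{ L^2_x } \lesssim 2^{-k} \| \nabla \mc{D}^{-1} X \|_{ L^2_x } \lesssim 2^{-k} \| X \|_{ L^2_x } \text{.} \]
For the second estimate, the strategy is to dualize and push $P_k$ onto a factor controlled by part~1. Treating $\mc{D} = \mc{D}_1$ for concreteness, set $W = \mc{D}_1^{-1} P_k X$; since $P_k X$ is mean-free for $k \geq 0$, we have $\mc{D}_1 W = P_k X$. For any test $Z \in \mc{C}^\infty H_1 \mc{S}$, set $U = \mc{D}_1^{\ast -1} Z$, so $\mc{D}_1^\ast U = Z$. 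Integrating by parts and then using self-adjointness of $P_k$,
\[ \int_{\mc{S}} \langle W, Z \rangle \, d\omega = \int_{\mc{S}} \langle W, \mc{D}_1^\ast U \rangle \, d\omega = \int_{\mc{S}} \langle P_k X, U \rangle \, d\omega = \int_{\mc{S}} \langle X, P_k U \rangle \, d\omega \text{.} \]
Then \eqref{eq.glp_fbr} combined with the already-proven bound on $\nabla \mc{D}_1^{\ast -1}$ gives $\| P_k U \|_{ L^2_x } \lesssim 2^{-k} \| \nabla U \|_{ L^2_x } \lesssim 2^{-k} \| Z \|_{ L^2_x }$; taking a supremum over $\| Z \|_{ L^2_x } \leq 1$ yields $\| W \|_{ L^2_x } \lesssim 2^{-k} \| X \|_{ L^2_x }$. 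The remaining cases are handled by an identical argument, swapping the roles of the starred and unstarred Hodge operators and (for $\mc{D}_1^\ast$) restricting test functions to the mean-free subspace so that $\mc{D}_1^{-1}$ acts as a true inverse.

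The one nontrivial point is bookkeeping with the projections and kernels: one must verify that $P_k X$ (for $k \geq 0$) always lies in the range on which the partial inverse behaves as an actual inverse, so that the identity $\mc{D} \mc{D}^{-1} = I$ holds at the step where it is used. For the unstarred operators this is automatic since their ranges are codimension-one complements of $\ker \lapl$, while for the starred operators one exploits that $P_k$ for $k \geq 0$ annihilates the constant (and more generally the $\lapl$-kernel) component. Once these elementary spectral observations are in place, the proof reduces to the clean sequence of integrations by parts and applications of \eqref{eq.glp_fbr} described above.
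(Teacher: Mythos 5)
Your proposal is correct and follows essentially the same route as the paper: the unstarred cases of \eqref{eq.hodge_inv_est} come directly from Proposition~\ref{thm.hodge_est}; the starred cases come from the observation that $\mc{D}_i^{\ast-1}$ is the $L^2$-adjoint of $\mc{D}_i^{-1}$ (you derive this explicitly by integration by parts rather than citing the abstract adjoint fact, but the content is identical), followed by \eqref{eq.hodge_est_D1a}--\eqref{eq.hodge_est_D2a}; the first part of \eqref{eq.glp_hodge} comes from \eqref{eq.glp_fbr}; and the second part follows by duality using the first. One small remark: the bookkeeping you do in the last paragraph about whether $P_k X$ falls in the range of $\mc{D}_i$ is not actually needed, since in your pairing $\int \langle \mc{D}_i W, U \rangle = \int \langle \mc{P}_i P_k X, U \rangle = \int \langle P_k X, \mc{P}_i U \rangle$ and the test quantity $U = \mc{D}_i^{\ast-1} Z$ already lies in the range of $\mc{D}_i$, so $\mc{P}_i U = U$ regardless of $X$; the duality argument goes through for all of $\mc{D}_1$, $\mc{D}_2$, $\mc{D}_1^\ast$, $\mc{D}_2^\ast$ uniformly without any spectral case-checking.
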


\begin{proof}
The cases $\mc{D} = \mc{D}_1$ and $\mc{D} = \mc{D}_2$ in \eqref{eq.hodge_inv_est} follow immediately from \eqref{eq.hodge_est_D1} and \eqref{eq.hodge_est_D2}.
Since $\mc{D}_1^{-1}$ and $\mc{D}_2^{-1}$ are $L^2$-bounded, their adjoints $( \mc{D}_1^{-1} )^\ast = \mc{D}_1^{\ast -1}$ and $( \mc{D}_2^{-1} )^\ast = \mc{D}_2^{\ast -1}$ are also $L^2$-bounded.
Thus, the cases $\mc{D} = \mc{D}_1^\ast$ and $\mc{D} = \mc{D}_2^\ast$ follow from \eqref{eq.hodge_est_D1a}, \eqref{eq.hodge_est_D2a}, and the above observation.
This proves \eqref{eq.hodge_inv_est}.

Next, the first estimate of \eqref{eq.glp_hodge} follows immediately from \eqref{eq.glp_fbr} and \eqref{eq.hodge_inv_est}:
\[ \| P_k \mc{D}^{-1} X \|_{ L^2_x } \lesssim 2^{-k} \| \nabla \mc{D}^{-1} X \|_{ L^2_x } \lesssim 2^{-k} \| X \|_{ L^2_x } \text{.} \]
The second estimate of \eqref{eq.glp_hodge} again follows from the first by duality.
\end{proof}

\begin{remark}
In particular, the case $\mc{D} = \mc{D}_1^\ast$ in \eqref{eq.hodge_inv_est} contains the Poincar\'e inequality:
\begin{equation} \label{eq.poincare} \| f \|_{ L^2_x } \lesssim_{C, N} \| \nabla f \|_{ L^2_x } \text{,} \qquad f \in \mc{C}^\infty \mc{S} \text{,} \quad \int_{ \mc{S} } f \cdot d \omega = 0 \text{.} \end{equation}
\end{remark}

Finally, we address a technical point and note that higher frequencies have nonlethal interactions with the operators $I - \mc{P}_1$ and $I - \mc{P}_2$.

\begin{proposition} \label{thm.hodge_proj}
Assume $(\mc{S}, h)$ satisfies \ass{r1}{C, N} and \ass{k}{C, D}, with $D \ll 1$ sufficiently small.
Then, for any integer $k \geq 0$, $i \in \{ 0, 1 \}$, and $X \in \mc{C}^\infty H_i \mc{S}$,
\begin{equation} \label{eq.hodge_proj} \| P_k ( I - \mc{P}_{i+1} ) X \|_{ L^2_x } \lesssim_{ C, N } 2^{-k} \| X \|_{ L^2_x } \text{.} \end{equation}
\end{proposition}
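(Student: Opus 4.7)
The plan is to reduce the bound, via duality and the self-adjointness of $P_k$ and $I - \mc{P}_{i+1}$, to an estimate on $P_k W$ for $W \in \ker \mc{D}_{i+1}^*$. Writing
\[ \| P_k (I-\mc{P}_{i+1}) X \|_{ L^2_x } = \sup_{ \| Y \|_{ L^2_x } = 1 } \langle X, (I-\mc{P}_{i+1}) P_k Y \rangle, \]
it suffices to show $\| (I-\mc{P}_{i+1}) P_k Y \|_{ L^2_x } \lesssim 2^{-k} \| Y \|_{ L^2_x }$. Setting $W = (I-\mc{P}_{i+1}) P_k Y$, the identity $W = (I-\mc{P}_{i+1}) W$ combined with self-adjointness yields
\[ \| W \|_{ L^2_x }^2 = \langle W, P_k Y \rangle = \langle P_k W, Y \rangle \leq \| P_k W \|_{ L^2_x } \| Y \|_{ L^2_x }, \]
so the whole proposition reduces to proving $\| P_k W \|_{ L^2_x } \lesssim 2^{-k} \| W \|_{ L^2_x }$ for any $W \in \ker \mc{D}_{i+1}^*$.

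In the case $i = 0$, $\ker \mc{D}_1^*$ consists of constants, which lie in $\ker \lapl$; since the profile $\varsigma$ is supported in $\{ 1/2 \leq |\xi| \leq 2 \}$, we have $P_k W = \varsigma(0) W = 0$ outright for $k \geq 0$, and the estimate is trivial. For $i = 1$, $W$ is a conformal Killing $1$-form, and the fourth identity of \eqref{eq.hodge_id} gives the key energy identity
\[ \| \nabla W \|_{ L^2_x }^2 = \int_{ \mc{S} } \mc{K} | W |^2 d \omega. \]
I will substitute the \ass{k}{C, D} decomposition $\mc{K} = f + h^{ab} \nabla_a V_b + W_0$ and integrate the divergence term by parts onto $| W |^2$. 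Using the pointwise bound $| f | \lesssim 1$, the Sobolev embedding $\| V \|_{ L^4_x } \lesssim \| V \|_{ H^{1/2}_x } \leq D$ from \eqref{eq.sob_frac_shf}, the Gagliardo-Nirenberg inequality $\| W \|_{ L^4_x }^2 \lesssim \| \nabla W \|_{ L^2_x } \| W \|_{ L^2_x } + \| W \|_{ L^2_x }^2$ from \eqref{eq.gns_1}, and $\| W_0 \|_{ L^2_x } \leq D$, the error terms organize into $\| W \|_{ L^2_x }^2$ plus an $O(D)$-multiple of $\| \nabla W \|_{ L^2_x }^2 + \| W \|_{ L^2_x }^2$ (after applying a weighted Young's inequality to a $\| \nabla W \|_{ L^2_x }^{3/2} \| W \|_{ L^2_x }^{1/2}$ cross-term). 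For $D \ll 1$, the bad piece is absorbed on the left, yielding $\| \nabla W \|_{ L^2_x } \lesssim_{C, N} \| W \|_{ L^2_x }$. Applying the reverse finite band estimate \eqref{eq.glp_fbr} then closes the argument:
\[ \| P_k W \|_{ L^2_x } \lesssim 2^{-k} \| \nabla W \|_{ L^2_x } \lesssim_{C, N} 2^{-k} \| W \|_{ L^2_x }. \]

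The main obstacle is taming the curvature integral $\int_{ \mc{S} } \mc{K} | W |^2 d\omega$, because $\mc{K}$ itself is not $L^2$-controlled under \ass{k}{C, D}. This is precisely the purpose of the divergence structure built into the \ass{k}{} condition: moving one derivative from $V$ onto $| W |^2$ pairs the ``bad'' part of $\mc{K}$ with the small $L^4_x$-norm of $V$, exactly as in the proof of Proposition \ref{thm.div_curl_est}, allowing the resulting contribution to be absorbed into a small multiple of $\| \nabla W \|_{ L^2_x }^2$.
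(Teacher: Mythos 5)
Your proposal is correct, and the core mechanism is the same as the paper's: apply the reverse finite band estimate \eqref{eq.glp_fbr} to reduce to $\| \nabla W \|_{L^2_x} \lesssim \| W \|_{L^2_x}$ for $W$ in the kernel of $\mc{D}_{i+1}^\ast$. That said, you do more work than necessary in two places. First, the duality reduction at the start is superfluous: since $W' := (I - \mc{P}_{i+1}) X$ already lies in $\ker \mc{D}_{i+1}^\ast$ and $I - \mc{P}_{i+1}$ is a contraction, you can apply your target estimate $\| P_k W' \|_{L^2_x} \lesssim 2^{-k} \| W' \|_{L^2_x}$ directly and conclude $\lesssim 2^{-k} \| X \|_{L^2_x}$; the detour through $\langle X, (I - \mc{P}_{i+1}) P_k Y \rangle$ and the self-adjoint manipulation of $W = (I - \mc{P}_{i+1}) P_k Y$ merely re-derives what you could have used immediately. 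Second, your treatment of the conformal Killing case ($i = 1$) via the energy identity from \eqref{eq.hodge_id} and absorption of the $\ass{k}{}$-decomposed curvature integral is a re-proof of the already-established Hodge-elliptic estimate \eqref{eq.hodge_est_D2a}: setting $\mc{D}_2^\ast W = 0$ there yields $\| \nabla W \|_{L^2_x} \lesssim \| W \|_{L^2_x}$ at once, which is what the paper cites. Your separate handling of $i = 0$ by spectral support ($W$ constant $\Rightarrow P_k W = 0$) is a nice shortcut, though the paper's citation of \eqref{eq.hodge_est_D1a} gives $\nabla W = 0$ there as well. So everything checks out; the argument is just longer than it needs to be given what Proposition \ref{thm.hodge_est} already provides.
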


\begin{proof}
First, we apply \eqref{eq.glp_fbr} along with either \eqref{eq.hodge_est_D1a} or \eqref{eq.hodge_est_D2a}:
\[ \| P_k ( I - \mc{P}_{i+1} ) X \|_{ L^2_x } \lesssim 2^{-k} [ \| \mc{D}_i^\ast ( I - \mc{P}_{i+1} ) X \|_{ L^2_x } + \| ( I - \mc{P}_{i+1} ) X \|_{ L^2_x } ] \text{.} \]
Since the range of $I - \mc{P}_{i+1}$ lies in the kernel of $\mc{D}_i^\ast$, then we obtain \eqref{eq.hodge_proj}:
\[ \| P_k ( I - \mc{P}_{i+1} ) X \|_{ L^2_x } \lesssim 2^{-k} \| ( I - \mc{P}_{i+1} ) X \|_{ L^2_x } \lesssim 2^{-k} \| X \|_{ L^2_x } \text{.} \qedhere \]
\end{proof}

\subsection{The Weak Besov-Elliptic Estimate} \label{sec.curv_wbdh}

Proposition \ref{thm.hodge_inv_est} showed (under certain regularity conditions) that the operators $\nabla \mc{D}^{-1}$, where $\mc{D}$ denotes any one of $\mc{D}_1$, $\mc{D}_2$, $\mc{D}_1^\ast$, $\mc{D}_2^\ast$, are $L^2$-bounded.
One can then ask a related question: are these operators $\nabla \mc{D}^{-1}$ also bounded in the geometric Besov spaces?

As a preliminary step, we answer this question affirmatively here for the specific case $\mc{D} = \mc{D}_1^\ast$.
In this case, the quantity in question is scalar, hence we can take advantage of the elliptic estimates proved in Section \ref{sec.curv_ell}.

\begin{lemma} \label{thm.intertwining_weak}
Assume $(\mc{S}, h)$ satisfies \ass{r1}{C, N} and \ass{k}{C, D}, with $D \ll 1$ sufficiently small.
Then, for any integers $k, m \geq 0$ and $X \in \mc{C}^\infty H_0 \mc{S}$,
\begin{align}
\label{eq.intertwining_weak} \| P_k \nabla \mc{D}_1^{\ast -1} P_m X \|_{ L^2_x } &\lesssim_{C, N} 2^{- |k - m|} \| P_{\sim m} X \|_{ L^2_x } \text{,} \\
\notag \| P_k \nabla \mc{D}_1^{\ast -1} P_{< 0} X \|_{ L^2_x } &\lesssim_{C, N} 2^{-k} \| P_{\lesssim 0} X \|_{ L^2_x } \text{,} \\
\notag \| P_{< 0} \nabla \mc{D}_1^{\ast -1} P_m X \|_{ L^2_x } &\lesssim_{C, N} 2^{-m} \| P_{\sim m} X \|_{ L^2_x } \text{,} \\
\notag \| P_{< 0} \nabla \mc{D}_1^{\ast -1} P_{< 0} X \|_{ L^2_x } &\lesssim_{C, N} \| P_{\lesssim 0} X \|_{ L^2_x } \text{.}
\end{align}
\end{lemma}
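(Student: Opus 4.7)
The four estimates in \eqref{eq.intertwining_weak} express the almost-diagonal action of the zero-order operator $\nabla \mc{D}_1^{\ast -1}$ on the geometric Littlewood--Paley decomposition. The plan is to split the main case $k, m \geq 0$ into a low-high regime $k \leq m$ and a high-low regime $k > m$, using the Hodge factorization $\mc{D}_1 \mc{D}_1^\ast = -\lapl$ from \eqref{eq.hodge_sq} to trade $\mc{D}_1^{\ast -1}$ against $\mc{D}_1$ modulo inverse powers of $-\lapl$. The three remaining estimates, in which one or both of $P_k, P_m$ is replaced by $P_{<0}$, then follow by minor variations of the same scheme.

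The low-high case $k \leq m$ I would handle by a direct composition of the already-established building blocks. The finite-band property \eqref{eq.glp_fb} gives $\|P_k \nabla F\|_{L^2_x} \lesssim 2^k \|F\|_{L^2_x}$; applying this with $F = \mc{D}_1^{\ast -1} P_m X$, using the self-replication $P_m = P_m P_{\sim m}$, and invoking \eqref{eq.glp_hodge} to get $\|\mc{D}_1^{\ast -1} P_m P_{\sim m} X\|_{L^2_x} \lesssim 2^{-m} \|P_{\sim m} X\|_{L^2_x}$, yields the claimed bound $\lesssim 2^{k-m}\|P_{\sim m}X\|_{L^2_x}$. The analogous subcases with $P_{<0}$ replacing $P_k$ are treated identically, using the $L^2$-boundedness of $P_{<0}$ from Proposition \ref{thm.glp} in place of the $2^k$ factor.

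The high-low case $k > m$ is the principal obstacle, since the preceding chain gives no gain when $2^k \geq 2^m$. Here I would introduce the spectral identity $2^{2m} P_m' = -\lapl P_m'$ and the Hodge factorization to rewrite
\[
\mc{D}_1^{\ast -1} P_m X \;=\; 2^{-2m}\,\mc{D}_1^{\ast -1}(-\lapl) P_m' P_{\sim m} X \;=\; 2^{-2m}\,\mc{D}_1^{\ast -1}\mc{D}_1\mc{D}_1^\ast P_m' P_{\sim m} X,
\]
so that, up to a Hodge-projection error $(I-\mc{P}_1)\mc{D}_1^\ast P_m' P_{\sim m} X$, the factor $\mc{D}_1^{\ast -1}$ is cancelled against $\mc{D}_1$ and the expression reduces to $2^{-2m}\mc{D}_1^\ast P_m' P_{\sim m} X$. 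The projection error is harmless because \eqref{eq.hodge_proj} supplies an extra $2^{-k}$ factor when $P_k$ is applied, which already absorbs into the target $2^{m-k}$ after the trivial $L^2$-bounds. For the main term $2^{-2m} P_k \nabla \mc{D}_1^\ast P_m' P_{\sim m} X$, I would combine the reverse finite-band estimate \eqref{eq.glp_fbr} for $P_k$ with iterated use of the Bochner-type bounds \eqref{eq.glp_bochner_sc} applied to the frequency-$\sim m$-localized quantity $P_m' P_{\sim m} X$; the net gains produce an $L^2$-bound of the form $2^{-k}\cdot 2^{3m}\|P_{\sim m}X\|_{L^2_x}$, which, after multiplication by the prefactor $2^{-2m}$, yields the desired $\lesssim 2^{m-k}\|P_{\sim m}X\|_{L^2_x}$.

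The principal technical difficulty throughout lies in controlling the curvature commutators produced when manipulating the Hodge operators against the Littlewood--Paley projections: these errors are unavoidable because $\mc{K}$ itself is not $L^2$-bounded, but under the \ass{k}{C, D} condition with $D \ll 1$ they are of lower order and can be absorbed, exactly as in the proofs of Propositions \ref{thm.div_curl_est}--\ref{thm.hodge_proj} from the preceding subsections. The three $P_{<0}$ variants are then proved in exactly the same manner: the spectral identity $P_m = 2^{-2m}(-\lapl)P_m'$ is replaced by the bounded-operator property of $(-\lapl)^{-1} P_{<0}$ on the range of $\mc{D}_1$ (which itself follows from \eqref{eq.hodge_inv_est} together with \eqref{eq.glp_fbr}), while the finite-band $2^k$ factor is replaced by a bounded constant whenever $P_{<0}$ occupies the output slot.
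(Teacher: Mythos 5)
Your low-high case ($k \leq m$) matches the paper's argument exactly. The high-low case ($k > m$) contains a genuine gap, and the route you propose cannot be repaired within the tools available in Section \ref{sec.curv}.

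The factorization you write down, $\mc{D}_1^{\ast-1}(-\lapl)P'_m = \mc{D}_1^{\ast-1}\mc{D}_1\mc{D}_1^\ast P'_m$, applies the Hodge-square identity $-\lapl = \mc{D}_1\mc{D}_1^\ast$ to $P'_m X$. But the domain of $\mc{D}_1^{\ast -1}$ is $H_1\mc{S}$ (one-forms), so $X$ must be a one-form (the ``$X \in \mc{C}^\infty H_0\mc{S}$'' in the statement is a typo), and on $H_1\mc{S}$ the identity in \eqref{eq.hodge_sq} is $\mc{D}_1^\ast\mc{D}_1 = -\lapl + \mc{K}$, \emph{not} $\mc{D}_1\mc{D}_1^\ast = -\lapl$. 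Your rewriting introduces the curvature $\mc{K}$ when done correctly, which is precisely the term that is not $L^2$-controllable under the \ass{k}{} assumption; done as written, the composition does not even parse, since $\mc{D}_1^\ast$ takes scalars, not one-forms. Compounding this, the claim that ``$\mc{D}_1^{\ast-1}$ is cancelled against $\mc{D}_1$'' up to an $(I-\mc{P}_1)$ error has no algebraic basis: the available identities are $\mc{D}_1^\ast\mc{D}_1^{\ast-1}=I$ and $\mc{D}_1^{\ast-1}\mc{D}_1^\ast=\mc{P}_1$, while $\mc{D}_1^{\ast-1}$ and $\mc{D}_1$ both map $H_1\to H_0$ and are not inverses of each other, so $\mc{D}_1^{\ast -1}\mc{D}_1$ is undefined. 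Finally, your invocation of the Bochner bound \eqref{eq.glp_bochner_sc} for the one-form $P'_m P_{\sim m}X$ is out of reach: Corollary \ref{thm.glp_bochner_sc} is stated and proved only for scalars, precisely because a tensorial Bochner inequality requires $L^2$-control of $\mc{K}$, which is exactly what the \ass{k}{} hypothesis does not provide and which Section \ref{sec.curv} works hard to avoid until after the conformal regularization.

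The key structural idea that your proposal misses is that the Laplacian should be created on the scalar $f = \mc{D}_1^{\ast-1}P_m X \in H_0\mc{S}$, not pushed onto the one-form $P_m X$. Working in $H_0$ makes two things available that fail in $H_1$: the clean scalar identity $\lapl = -\mc{D}_1\mc{D}_1^\ast$ from \eqref{eq.hodge_sq}, and the scalar Bochner estimate \eqref{eq.ell_est_sc}. Concretely, apply the reverse finite-band bound \eqref{eq.glp_fbr} to get $\|P_k\nabla f\|_{L^2_x} \lesssim 2^{-k}\|\nabla^2 f\|_{L^2_x}$, then \eqref{eq.ell_est_sc} to get $\|\nabla^2 f\|_{L^2_x} \lesssim \|\lapl f\|_{L^2_x} + \|\nabla f\|_{L^2_x}$, then compute $\lapl f = -\mc{D}_1\mc{D}_1^\ast\mc{D}_1^{\ast-1}P_m X = -\mc{D}_1 P_m X$ using $\mc{D}_1^\ast\mc{D}_1^{\ast-1}=I$, so that the inverse operator cancels \emph{on the left} against $\mc{D}_1^\ast$ (not against $\mc{D}_1$), and no curvature term appears. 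The finite-band bound $\|\mc{D}_1 P_m X\|_{L^2_x}\lesssim\|\nabla P_m X\|_{L^2_x}\lesssim 2^m\|P_{\sim m}X\|_{L^2_x}$ and \eqref{eq.hodge_inv_est} for the lower-order $\|\nabla f\|$ then yield the asserted $2^{m-k}$ decay.
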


\begin{proof}
We show only the first estimate here, since the remaining bounds are similarly proved and are easier.
First if $k \leq m$, we can apply \eqref{eq.glp_fb} and \eqref{eq.glp_hodge}:
\[ \| P_k \nabla \mc{D}_1^{\ast -1} P_m X \|_{ L^2_x } \lesssim 2^k \| \mc{D}_1^{\ast -1} P_m X \|_{ L^2_x } \lesssim 2^{k - m} \| P_{\sim m} X \|_{ L^2_x } \text{.} \]
On the other hand, if $k \geq m$, then we apply \eqref{eq.glp_fbr} and \eqref{eq.ell_est_sc}:
\begin{align*}
\| P_k \nabla \mc{D}_1^{\ast -1} P_m X \|_{ L^2_x } &\lesssim 2^{-k} [ \| \lapl \mc{D}_1^{\ast -1} P_m X \|_{ L^2_x } + \| \nabla \mc{D}_1^{\ast -1} P_m X \|_{ L^2_x } ] \text{.}
\end{align*}
The second term on the right-hand side is trivially bounded using \eqref{eq.hodge_inv_est}.
For the first term, we recall the identity $\lapl = - \mc{D}_1 \mc{D}_1^\ast$ from \eqref{eq.hodge_sq} along with \eqref{eq.glp_fb}.
Then,
\begin{align*}
\| P_k \nabla \mc{D}_1^{\ast -1} P_m X \|_{ L^2_x } &\lesssim 2^{-k} [ \| \mc{D}_1 P_m X \|_{ L^2_x } + \| P_m X \|_{ L^2_x } ] \lesssim 2^{-k + m} \| P_{\sim m} X \|_{ L^2_x } \text{.} \qedhere
\end{align*}
\end{proof}

Suppose for the moment that $(\mc{N}, \gamma)$ satisfies \ass{R1}{C, N} and \ass{K}{C, D}.
Since every $(\mc{S}, \gamma [\tau])$ satisfies both \ass{r1}{} and \ass{k}{}, then the inverse Hodge operators are well-defined on each $(\mc{S}, \gamma [\tau])$, by the developments of Section \ref{sec.curv_hodge}.
Consequently, we can aggregate these operators in the usual way; given $i \in \{ 1, 2 \}$, we have
\begin{align*}
\mc{D}_i^{-1}: \mc{C}^\infty \ul{H}_{i-1} \mc{N} \rightarrow \mc{C}^\infty \ul{H}_i \mc{N} \text{,} \qquad \mc{D}_i^{\ast -1}: \mc{C}^\infty \ul{H}_i \mc{N} \rightarrow \mc{C}^\infty \ul{H}_{i-1} \mc{N} \text{.}
\end{align*}

\begin{theorem} \label{thm.besov_weak}
Assume $(\mc{N}, \gamma)$ satisfies \ass{R1}{C, N} and \ass{K}{C, D}, with $D \ll 1$ sufficiently small.
Then, for any $a, p \in [1, \infty]$, $s \in (-1, 1)$, and $\xi \in \mc{C}^\infty \ul{H}_0 \mc{N}$,
\begin{equation} \label{eq.besov_weak} \| \nabla \mc{D}_1^{\ast -1} \xi \|_{ B^{a, p, s}_{\ell, t, x} } \lesssim_{C, N, s} \| \xi \|_{ B^{a, p, s}_{\ell, t, x} } \text{.} \end{equation}
\end{theorem}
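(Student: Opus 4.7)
The plan is to reduce Theorem \ref{thm.besov_weak} to the almost-orthogonality estimates in Lemma \ref{thm.intertwining_weak}, applied fiberwise in $\tau$, combined with a Schur-type summation in the dyadic frequency indices. Since \ass{R1}{C,N} and \ass{K}{C,D} hold on $(\mc{N},\gamma)$, every level surface $(\mc{S},\gamma[\tau])$ satisfies \ass{r1}{C,N} and \ass{k}{C,D} uniformly in $\tau$, so Lemma \ref{thm.intertwining_weak} is available on each $\mc{S}_\tau$ with constants depending only on $C$ and $N$.

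First, fix $\tau\in[0,\delta]$ and decompose $\xi[\tau]=P_{<0}\xi[\tau]+\sum_{m\ge 0}P_m\xi[\tau]$. Applying $P_k\nabla\mc{D}_1^{\ast-1}$ term by term, summing the four bounds of \eqref{eq.intertwining_weak}, and using the self-replication identities $P_m=P_mP_{\sim m}$ gives
\[
\|P_k\nabla\mc{D}_1^{\ast-1}\xi[\tau]\|_{L^2_x}\lesssim\sum_{m\ge 0}2^{-|k-m|}\|P_{\sim m}\xi[\tau]\|_{L^2_x}+2^{-k}\|P_{\lesssim 0}\xi[\tau]\|_{L^2_x},
\]
with a strictly analogous inequality for $\|P_{<0}\nabla\mc{D}_1^{\ast-1}\xi[\tau]\|_{L^2_x}$.

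Second, multiply both sides by $2^{sk}$ and observe that the resulting kernel $K(k,m)=2^{-|k-m|+s(k-m)}$ decays like $2^{-(1-|s|)|k-m|}$, hence is summable in either variable with $\ell^1$-norm bounded by a constant $C_s$ whenever $s\in(-1,1)$. Applying Schur's test (equivalently, Young's convolution inequality on $\ell^a(\Z)$) to the sequence $\{2^{sm}\|P_{\sim m}\xi[\tau]\|_{L^2_x}\}_{m\ge 0}$, and treating the low-frequency contribution $2^{-k}\|P_{\lesssim 0}\xi[\tau]\|_{L^2_x}$ separately (the sequence $2^{(s-1)k}$ lies in $\ell^a$ for any $a\in[1,\infty]$ since $s<1$), yields
\[
\|\nabla\mc{D}_1^{\ast-1}\xi[\tau]\|_{B^{a,s}_{\ell,x}}\lesssim_{C,N,s}\|\xi[\tau]\|_{B^{a,s}_{\ell,x}}.
\]

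Third, to obtain the time-integrated estimate \eqref{eq.besov_weak}, I take the $L^p_t$ norm in $\tau$ of the pointwise inequality above. Since the Schur bound is a non-negative linear combination of $\|P_{\sim m}\xi[\tau]\|_{L^2_x}$, Minkowski's inequality in $L^p_t(\ell^a)$ (applied directly when $a\ge p$, or via duality and Minkowski in $\ell^a(L^p_t)$ when $a\le p$) allows one to exchange the $\ell^a$ summation with the $L^p_t$ integration, producing on the right-hand side the iterated norm $\|\xi\|_{B^{a,p,s}_{\ell,t,x}}$ defined in Section \ref{sec.fol_hor}. The cases $a=\infty$ or $p=\infty$ pose no difficulty since sup commutes with the positive linear bound.

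The main obstacle is purely bookkeeping: one must carefully verify that the Schur estimate survives the $L^p_t$ integration regardless of the ordering of the $\ell^a$-sum and $L^p_t$-norm dictated by the definition of $B^{a,p,s}_{\ell,t,x}$. This is standard once one notes the positivity of the kernel. The hypothesis $s\in(-1,1)$ is sharp at this level: it is exactly what is needed for $K(k,m)$ to be uniformly summable in both $k$ and $m$, and the weaker hypothesis \ass{K}{C,D} (allowing curvature that is not $L^2$-bounded but only of divergence-plus-$L^2$ type) enters only through Lemma \ref{thm.intertwining_weak}, which in turn relies on Corollary \ref{thm.ell_est_sc} and the Hodge identities \eqref{eq.hodge_sq}.
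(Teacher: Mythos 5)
There is a genuine gap in the way you handle the time integration. In your second step you close up the $\ell^a$ summation at fixed $\tau$, producing the fiberwise bound $\|\nabla\mc{D}_1^{\ast-1}\xi[\tau]\|_{B^{a,s}_{\ell,x}}\lesssim\|\xi[\tau]\|_{B^{a,s}_{\ell,x}}$, and in your third step you take $L^p_t$ of this. What this yields is control of $\nabla\mc{D}_1^{\ast-1}\xi$ in the mixed norm $L^p_t\,\ell^a_k\,L^2_x$ in terms of $\|\xi\|_{L^p_t\,\ell^a_k\,L^2_x}$. But the norm $B^{a,p,s}_{\ell,t,x}$ defined in Section \ref{sec.fol_hor} is $\ell^a_k\,L^p_t\,L^2_x$, with the dyadic summation \emph{outside} the time integral. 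Minkowski's inequality lets you pass from $\ell^a_k L^p_t$ to $L^p_t\ell^a_k$ on the left side of your chain only when $a\ge p$, while passing from $L^p_t\ell^a_k$ back to $\ell^a_k L^p_t$ on the right side requires $p\ge a$; these two demands are compatible only at $a=p$. Positivity of the Schur kernel, which you invoke, does not change the direction in which Minkowski runs, so the "standard bookkeeping" you allude to does not go through for $a\neq p$.

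The fix is to stop one step earlier. Your first step already furnishes, uniformly in $\tau$ and for each fixed $k$, the dyadic-indexed inequality
\[
\|P_k\nabla\mc{D}_1^{\ast-1}\xi[\tau]\|_{L^2_x}\lesssim\sum_{m\ge0}2^{-|k-m|}\|P_{\sim m}\xi[\tau]\|_{L^2_x}+2^{-k}\|P_{\lesssim 0}\xi[\tau]\|_{L^2_x}\text{.}
\]
Rather than summing in $k$ at this point, take the $L^p_t$ norm of both sides \emph{first} (the triangle inequality on the $m$-sum inside the $L^p_t$ norm needs no Minkowski). This gives $\|P_k\nabla\mc{D}_1^{\ast-1}\xi\|_{L^{p,2}_{t,x}}\lesssim\sum_m 2^{-|k-m|}\|P_{\sim m}\xi\|_{L^{p,2}_{t,x}}+2^{-k}\|P_{\lesssim 0}\xi\|_{L^{p,2}_{t,x}}$, and only now applying your Schur estimate to the sequences $k\mapsto\|P_k\cdot\|_{L^{p,2}_{t,x}}$ produces the correct $B^{a,p,s}_{\ell,t,x}$ norms on both sides, for all $a,p\in[1,\infty]$. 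This is, in substance, what the paper's proof does (it writes out the endpoints $a=1$ and $a=\infty$ and interpolates; your Schur/Young phrasing handles general $a$ in one stroke, which is a fine minor variant). Your reliance on Lemma \ref{thm.intertwining_weak} applied slice-by-slice, the kernel decay rate $2^{-(1-|s|)|k-m|}$, and the identification of where the hypothesis \ass{K}{C,D} enters are all correct; only the order in which you perform the $\ell^a$ sum and the $L^p_t$ integral needs to be reversed.
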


\begin{proof}
Note that one needs only to consider the cases $a = 1$ and $a = \infty$, as the remaining cases can be retrieved via interpolation arguments.
Moreover, note each $(\mc{S}, \gamma [\tau])$ satisfies both \ass{k}{C, D} and \ass{r1}{C^\prime, N}, for some $C^\prime$ depending on $C$ and $N$.

For the case $a = 1$, we decompose the left-hand side and apply \eqref{eq.intertwining_weak}:
\begin{align*}
\| \nabla \mc{D}_1^{\ast -1} \xi \|_{ B^{1, p, s}_{\ell, t, x} } &\lesssim \sum_{k, m \geq 0} 2^{s k} \| P_k \nabla \mc{D}_1^{\ast -1} P_m \xi \|_{ L^{p, 2}_{t, x} } + \sum_{k \geq 0} 2^{s k} \| P_k \nabla \mc{D}_1^{\ast -1} P_{< 0} \xi \|_{ L^{p, 2}_{t, x} } \\
&\qquad + \sum_{m \geq 0} \| P_{< 0} \nabla \mc{D}_1^{\ast -1} P_m \xi \|_{ L^{p, 2}_{t, x} } + \| P_{< 0} \nabla \mc{D}_1^{\ast -1} P_{< 0} \xi \|_{ L^{p, 2}_{t, x} } \\
&\lesssim \sum_{k, m \geq 0} 2^{- |k - m|} 2^{s k} \| P_{\sim m} \xi \|_{ L^{p, 2}_{t, x} } + \sum_{k \geq 0} 2^{s k} 2^{-k} \| P_{\lesssim 0} \xi \|_{ L^{p, 2}_{t, x} } \\
&\qquad + \sum_{m \geq 0} 2^{-m} \| P_{\sim m} \xi \|_{ L^{p, 2}_{t, x} } + \| P_{\lesssim 0} \xi \|_{ L^{p, 2}_{t, x} } \text{.}
\end{align*}
Evaluating the above sums, we obtain
\begin{align*}
\| \nabla \mc{D}_1^{\ast -1} \xi \|_{ B^{1, p, s}_{\ell, t, x} } &\lesssim \sum_{m \geq 0} 2^{sm} \| P_{\sim m} \xi \|_{ L^{p, 2}_{t, x} } + \| P_{\lesssim 0} \xi \|_{ L^{p, 2}_{t, x} } \lesssim \| \xi \|_{ B^{1, p, s}_{\ell, t, x} } \text{.}
\end{align*}
Similarly, if $a = \infty$, a similar decomposition and application of \eqref{eq.intertwining_weak} yields
\begin{align*}
\| \nabla \mc{D}_1^{\ast -1} \xi \|_{ B^{\infty, p, s}_{\ell, t, x} } &\lesssim \sup_{k \geq 0} \left( \sum_{m \geq 0} 2^{- |k - m|} 2^{s k} \| P_{\sim m} \xi \|_{ L^{p, 2}_{t, x} } + 2^{s k} 2^{-k} \| P_{\lesssim 0} \xi \|_{ L^{p, 2}_{t, x} } \right) \\
&\qquad + \sum_{m \geq 0} 2^{-m} \| P_{\sim m} \xi \|_{ L^{p, 2}_{t, x} } + \| P_{\lesssim 0} \xi \|_{ L^{p, 2}_{t, x} } \\
&\lesssim \| \xi \|_{ B^{\infty, p, s}_{\ell, t, x} } \text{.} \qedhere
\end{align*}
\end{proof}

As usual, by considering $p = \infty$ and equivariant $\gamma$ in Theorem \ref{thm.besov_weak}, we obtain the direct analogue of \eqref{eq.besov_weak} for a single surface.

\begin{corollary} \label{thm.besov_weakf}
Assume $(\mc{S}, h)$ satisfies \ass{r1}{C, N} and \ass{k}{C, D}, with $D \ll 1$ sufficiently small.
Then, for any $a \in [1, \infty]$, $s \in (-1, 1)$, and $X \in \mc{C}^\infty H_0 \mc{S}$,
\begin{equation} \label{eq.besov_weakf} \| \nabla \mc{D}_1^{\ast -1} X \|_{ B^{a, s}_{\ell, x} } \lesssim_{C, N, s} \| X \|_{ B^{a, s}_{\ell, x} } \text{.} \end{equation}
\end{corollary}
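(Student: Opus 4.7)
The plan is to reduce the statement to Theorem \ref{thm.besov_weak} by the equivariant-foliation trick used repeatedly in the paper (cf.\ Corollary \ref{thm.sob_frac_shf}, Corollary \ref{thm.comp_mainf}, Corollary \ref{thm.est_prod_elemf}). Namely, given $(\mc{S}, h)$ satisfying the hypotheses, I would set $\mc{N} = [0, \delta] \times \mc{S}$ for any fixed $\delta > 0$, endow it with the equivariant horizontal metric $\gamma = \mf{e} h$, and lift $X$ to its equivariant extension $\mf{e} X \in \mc{C}^\infty \ul{H}_0 \mc{N}$. Since $\gamma$ is equivariant we have $k \equiv 0$, so every horizontal geometric operator on $(\mc{N}, \gamma)$ acts leaf-by-leaf as the corresponding operator on $(\mc{S}, h)$; in particular $\nabla \mc{D}_1^{\ast -1} \mf{e} X = \mf{e} ( \nabla \mc{D}_1^{\ast -1} X )$, and each geometric L-P operator $P_k$ on $(\mc{N}, \gamma)$ restricts on each leaf to $P_k$ on $(\mc{S}, h)$.

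Next I would verify the two hypotheses of Theorem \ref{thm.besov_weak}. The \ass{r1}{C, N} data for $(\mc{S}, h)$ extends equivariantly to data for $(\mc{N}, \gamma)$; because $k \equiv 0$, the $L^{\infty, 4}_{t, x}$- and $L^{\infty, 2}_{t, x}$-bounds in the \ass{R1}{} condition collapse to the corresponding $L^4_x$- and $L^2_x$-bounds in \ass{r1}{}, giving \ass{R1}{C, N}. For the curvature hypothesis, let $(f, W, V)$ denote the data for \ass{k}{C, D} on $(\mc{S}, h)$, and take $(\mf{e} f, \mf{e} W, \mf{e} V)$ as the data for \ass{K}{} on $(\mc{N}, \gamma)$. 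Equivariance gives $\| \mf{e} W \|_{ L^{\infty, 2}_{t, x} } = \| W \|_{ L^2_x } \leq D$, and the Sobolev equivalence $H^{1/2}_x \simeq B^{2, 1/2}_{\ell, x}$ from Proposition \ref{thm.sobolev}, together with the fact (noted above) that the L-P operators on the equivariant foliation are themselves equivariant, yields
\[
\| \mf{e} V \|_{ B^{2, \infty, 1/2}_{\ell, t, x} } \simeq \| V \|_{ B^{2, 1/2}_{\ell, x} } \simeq \| V \|_{ H^{1/2}_x } \lesssim D,
\]
so \ass{K}{C, D'} holds with $D'$ still sufficiently small (adjusting the universal implicit constant if necessary).

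Finally, Theorem \ref{thm.besov_weak} applied to $\mf{e} X$ with parameters $(a, \infty, s)$ gives
\[
\| \nabla \mc{D}_1^{\ast -1} \mf{e} X \|_{ B^{a, \infty, s}_{\ell, t, x} } \lesssim_{C, N, s} \| \mf{e} X \|_{ B^{a, \infty, s}_{\ell, t, x} }.
\]
For any equivariant $\Psi \in \mc{C}^\infty \ul{T}^r_l \mc{N}$, the equivariance of the L-P operators gives $\| P_k \Psi \|_{ L^{\infty, 2}_{t, x} } = \| P_k ( \Psi [0] ) \|_{ L^2_x }$ for each $k$, whence
$\| \Psi \|_{ B^{a, \infty, s}_{\ell, t, x} } = \| \Psi [0] \|_{ B^{a, s}_{\ell, x} }$ by inspection of the definitions. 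Applying this on both sides of the displayed inequality recovers exactly \eqref{eq.besov_weakf}.

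I do not expect any genuine obstacle; the only point requiring care is the identification of the time-integrated geometric Besov norm on equivariant fields with the single-surface Besov norm, which boils down to checking that $P_k$ commutes with equivariant transport — this is immediate from the definition of $P_k$ via the spectral decomposition of $-\lapl$, since $-\lapl$ itself is the leaf-by-leaf application of the single-surface Laplacian when $\gamma$ is equivariant.
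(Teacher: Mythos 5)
Your proposal is correct and matches the paper's own approach exactly: the paper dispatches this corollary with the one-sentence remark that it follows ``by considering $p = \infty$ and equivariant $\gamma$ in Theorem \ref{thm.besov_weak},'' and your argument is precisely a careful unpacking of that sketch, including the two verifications (that \ass{r1}{} with $k \equiv 0$ yields \ass{R1}{}, and that \ass{k}{} lifts to \ass{K}{} with constants absorbed via Proposition \ref{thm.sobolev}) that the paper leaves implicit.
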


\subsection{Conformal Renormalization} \label{sec.curv_conf}

One may next ask whether the remaining operators $\nabla \mc{D}^{-1}$, with $\mc{D}$ being any one of $\mc{D}_1$, $\mc{D}_2$, or $\mc{D}_2^\ast$, are also bounded in these geometric Besov norms.
In these nonscalar settings, however, one must deal much more directly with the curvature $\mc{K}$.
This adds considerable difficulty to the process, since $\mc{K}$ is so irregular under our \ass{k}{} and \ass{K}{} conditions.

In the remainder of this section, we give an affirmative answer to the aforementioned question when $s = 0$ by using a conformal renormalization argument.
We begin the process here by constructing the conformal transformation.
Suppose for now that $(\mc{N}, \gamma)$ satisfies \ass{R1}{C, N} and \ass{K}{C, D}, the latter with data $(f, W, V)$ and with $D$ sufficiently small.
Recall the \ass{K}{C, D} condition gives a decomposition of $\mc{K}$ into a ``good" term which is in $L^2_x$ and a ``bad" term which is not.

The main idea is that the ``bad" term has a special divergence form, which will allow us to renormalize it away.
Indeed, since $\gamma^{ab} \nabla_a V_b$ has zero mean on each $\mc{S}_\tau$, we can define $u \in \mc{C}^\infty \mc{N}$ to be the solution of the Poisson equations
\begin{equation} \label{eq.conf_curv} \lapl u = \gamma^{ab} \nabla_a V_b \text{,} \qquad \int_\mc{S} u [\tau] \cdot d \epsilon [\tau] = 0 \text{,} \qquad \tau \in [0, \delta] \text{.} \end{equation}
With this $u$, we have our desired conformal transformation,
\begin{equation} \label{eq.conf_transform} \bar{\gamma} = e^{2 u} \gamma \text{,} \qquad \bar{\epsilon} = e^{2 u} \epsilon \text{.} \end{equation}

Like in Section \ref{sec.fol_conf}, geometric objects defined with respect to $\bar{\gamma}$ and $\bar{\epsilon}$ will be denoted with a bar above.
By \eqref{eq.conf_gc}, we have
\begin{align}
\label{eq.conf_gc_id} e^{2 u} \bar{\mc{K}} = \mc{K} - \lapl u = f + W \text{.}
\end{align}
Note that the worst divergence term in $\mc{K}$ is no longer present in $\bar{\mc{K}}$.
As a result, we expect $\bar{\mc{K}}$ to have $L^2$-control.
However, actually achieving this bound is a bit subtle, as one must first control the conformal factor $u$.

\begin{proposition} \label{thm.conf}
Assume $(\mc{N}, \gamma)$ satisfies \ass{R1}{C, N} and \ass{K}{C, D}, the latter with data $( f, W, V )$ and with $D \ll 1$ sufficiently small.
Furthermore, define $u \in \mc{C}^\infty \mc{N}$ and the conformally transformed metric $\bar{\gamma} \in \mc{C}^\infty \ul{T}^0_2 \mc{N}$ as in \eqref{eq.conf_curv} and \eqref{eq.conf_transform}.
\begin{itemize}
\item The following estimates hold for $u$:
\begin{equation} \label{eq.conf_good} \| \nabla u \|_{ B^{2, \infty, 1/2}_{\ell, t, x} } \lesssim_{ C, N } D \text{,} \qquad \| u \|_{ L^{\infty, \infty}_{t, x} } + \| \nabla u \|_{ L^{\infty, 4}_{t, x} } \lesssim_{ C, N } D \text{.} \end{equation}

\item $(\mc{N}, \bar{\gamma})$ satisfies \ass{R1}{C^\prime, N} for some constant $C^\prime$ depending on $C$ and $N$.

\item $(\mc{N}, \bar{\gamma})$ satisfies \ass{K}{C^\prime, D^\prime} for some constants $C^\prime$ and $D^\prime$ depending on $C$ and $N$, with data $( e^{-2u} f, e^{-2u} W, 0)$ and with $D^\prime \ll 1$ sufficiently small.

\item The following estimate holds:
\begin{equation} \label{eq.conf_curv_est} \| \bar{\mc{K}} - e^{-2 u} f \|_{ \bar{L}^{\infty, 2}_{t, x} } \lesssim_{C, N} D \text{.} \end{equation}

\item If $p \in [1, \infty]$ and $\Psi \in \mc{C}^\infty \ul{T}^r_l \mc{N}$, then
\begin{equation} \label{eq.besov_embed} \| \Psi \|_{ \bar{L}^{p, \infty}_{t, x} } \lesssim_{C, N, r, l} \| \Psi \|_{ \bar{B}^{1, p, 1}_{\ell, t, x} } \text{.} \end{equation}

\item If $a \in [1, \infty]$, $p \in [1, \infty]$, $s \in (-1, 1)$, and $\Psi \in \mc{C}^\infty \ul{T}^r_l \mc{N}$, then
\begin{align}
\label{eq.besov_nabla} \| \bar{\nabla} \Psi \|_{ \bar{B}^{a, p, s}_{\ell, t, x} } &\lesssim_{C, N, s, r, l} \| \Psi \|_{ \bar{B}^{a, p, s + 1}_{\ell, t, x} } \text{,} \\
\notag \| \Psi \|_{ \bar{B}^{a, p, s + 1}_{\ell, t, x} } &\lesssim_{C, N, s, r, l} \| \bar{\nabla} \Psi \|_{ \bar{B}^{a, p, s}_{\ell, t, x} } + \| \Psi \|_{ \bar{L}^{p, 2}_{t, x} } \text{.}
\end{align}

\item Let $\bar{\mc{D}}$ be one of $\bar{\mc{D}}_1$, $\bar{\mc{D}}_2$, $\bar{\mc{D}}_1^\ast$, $\bar{\mc{D}}_2^\ast$, with $\xi$ a smooth section of the appropriate Hodge bundle on $\mc{N}$.
If $a \in [1, \infty]$, $p \in [1, \infty]$, and $s \in (-1, 1)$, then
\begin{equation} \label{eq.besov_hodge} \| \bar{\mc{D}}^{-1} \xi \|_{ \bar{B}^{a, p, s + 1}_{\ell, t, x} } \lesssim_{C, N, s} \| \xi \|_{ \bar{B}^{a, p, s}_{\ell, t, x} } \text{.} \end{equation}
\end{itemize}
\end{proposition}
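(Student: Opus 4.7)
The plan hinges on part (1); parts (2)--(4) follow readily once $u$ is controlled, and (5), (6), (7) are then statements about the renormalized geometry.

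For (1), I would first use the Hodge structure to express $\nabla u$ in terms of $V$. Since $\lapl u = \gamma^{ab}\nabla_a V_b$ and $\mc{D}_1 \mc{D}_1^\ast = -\lapl$ on real scalars, with $\mc{D}_1^\ast u = -\nabla u$ (as $u$ is real), one checks, after handling the zero-mean normalization, that $\nabla u$ can be written as $\nabla \mc{D}_1^{\ast -1}$ applied to a scalar quantity built from $V$ whose Besov norm is controlled by $\|V\|_{B^{2,\infty,1/2}_{\ell,t,x}}$. Theorem \ref{thm.besov_weak} then yields $\|\nabla u\|_{B^{2,\infty,1/2}_{\ell,t,x}} \lesssim \|V\|_{B^{2,\infty,1/2}_{\ell,t,x}} \leq D$. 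The remaining pointwise bounds follow from Sobolev embeddings: Proposition \ref{thm.sob_frac_sh} gives $\|\nabla u\|_{L^{\infty,4}_{t,x}} \lesssim \|\nabla u\|_{B^{2,\infty,1/2}_{\ell,t,x}} \lesssim D$, and on each slice the $2$-dimensional embedding $W^{1,4}\hookrightarrow L^\infty$ combined with the Poincar\'e inequality \eqref{eq.poincare} (applicable since each $u[\tau]$ has zero mean) yields $\|u\|_{L^{\infty,\infty}_{t,x}} \lesssim \|\nabla u\|_{L^{\infty,4}_{t,x}} \lesssim D$.

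With \eqref{eq.conf_good} in hand, Proposition \ref{thm.conf_reg} immediately furnishes (2) together with the conformal Besov comparisons needed later. For (3) and (4), I would invoke the identity \eqref{eq.conf_gc_id}, which rewrites as $\bar{\mc{K}} - e^{-2u}f = e^{-2u}W$ and so exhibits $\bar{\mc{K}}$ in the form required by \ass{K}{C',D'}, with $\bar f = e^{-2u}f$, $\bar V = 0$, and $\bar W = e^{-2u}W$: the pointwise bounds on $\bar f$ follow from those on $f$ and the smallness of $u$, while $\|\bar W\|_{\bar L^{\infty,2}_{t,x}} \lesssim \|e^{-2u}\|_{L^{\infty,\infty}_{t,x}}\|W\|_{L^{\infty,2}_{t,x}} \lesssim D$, which simultaneously proves (4).

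For (5) I would use the $L^2$-$L^\infty$ Bernstein inequality $\|\bar P_k \Psi\|_{\bar L^\infty_x} \lesssim 2^k\|\bar P_k \Psi\|_{\bar L^2_x}$ (derivable from \eqref{eq.gns_2} and \eqref{eq.glp_fbl}), sum dyadically, and take the appropriate $L^p_t$ norm to identify the right side as $\|\Psi\|_{\bar B^{1,p,1}_{\ell,t,x}}$. For (6), both inequalities follow from the finite-band estimates \eqref{eq.glp_fb} together with the almost orthogonality \eqref{eq.glp_almost_ortho}, applied dyadically to move $\bar\nabla$ past $\bar P_k$ modulo harmless errors absorbed into the $\|\Psi\|_{\bar L^{p,2}_{t,x}}$ term. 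The main obstacle is (7): one must show that the inverse Hodge operators gain a full derivative of Besov regularity, strictly better than Theorem \ref{thm.besov_weak}. The key point is that by (4) the renormalized curvature $\bar{\mc{K}}$ is $L^2$-controlled, so the delicate divergence-form argument of Section \ref{sec.curv_ell} can be replaced by a clean second-order elliptic estimate $\|\bar\nabla^2 \psi\|_{\bar L^2_x} \lesssim \|\bar\lapl\psi\|_{\bar L^2_x} + \|\psi\|_{\bar L^2_x}$, from which the desired Besov-Hodge bound follows by an intertwining argument in the spirit of Lemma \ref{thm.intertwining_weak}. Given the technicality involved, I would defer the detailed proof of (7) to Appendix \ref{sec.cell}, in line with the paper's overall organization.
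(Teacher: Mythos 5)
Your proposal matches the paper's argument: part (1) is controlled via the inverse Hodge operator $\mc{D}_1^{\ast -1}$ together with Theorem \ref{thm.besov_weak} and the Poincar\'e/Sobolev embeddings, parts (2)--(4) come from Propositions \ref{thm.conf_reg} and \ref{thm.conf_regf} together with the conformal curvature identity \eqref{eq.conf_gc_id}, and parts (5)--(7) are deferred to Appendix \ref{sec.cell} exactly as the paper does. One small slip: $\mc{D}_1^{\ast -1}$ is applied directly to the $1$-form $V$ (giving the complex scalar $\nu$ with $u = \real \nu$), not to ``a scalar quantity built from $V$''; and the sharp Bernstein inequality you invoke for (5) requires the conformal Bochner estimate \eqref{eq.ell_est} (not merely \eqref{eq.glp_fbl}), which is the same $L^2$-curvature control you already flag as the driving force behind (7).
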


\begin{proof}
We begin by complexifying $u$.
Noting that $\lapl = - \mc{D}_1 \mc{D}_1^\ast$ and defining
\footnote{Note $\mc{D}_1^{\ast -1}$ is well-defined from the arguments in Section \ref{sec.curv_hodge}.}
\[ \nu \in \mc{C}^\infty \ul{H}_0 \mc{N} \text{,} \qquad \nu = \mc{D}_1^{\ast -1} V \text{,} \]
it follows that $u = \real \nu$.
To control $\nu$, we apply \eqref{eq.besov_weak} in order to obtain
\[ \| \nabla \nu \|_{ B^{2, \infty, 1/2}_{\ell, t, x} } = \| \nabla \mc{D}_1^{\ast -1} V \|_{ B^{2, \infty, 1/2}_{\ell, t, x} } \lesssim \| V \|_{ B^{2, \infty, 1/2}_{\ell, t, x} } \leq D \text{.} \]
Next, applying Proposition \ref{thm.gns_ineq} and \eqref{eq.sob_frac_sh} yields
\begin{align*}
\| \nu \|_{ L^{\infty, \infty}_{t, x} } + \| \nabla \nu \|_{ L^{\infty, 4}_{t, x} } &\lesssim \| \nabla \nu \|_{ L^{\infty, 4}_{t, x} } + \| \nu \|_{ L^{\infty, 4}_{t, x} } \lesssim \| \nabla \nu \|_{ B^{2, \infty, 1/2}_{\ell, t, x} } + \| \nu \|_{ L^{\infty, 2}_{t, x} } \text{.}
\end{align*}
Since the Poincar\'e inequality \eqref{eq.poincare} implies
\[ \| \nu \|_{ L^{\infty, 2}_{t, x} } \lesssim \| \nabla \nu \|_{ L^{\infty, 2}_{t, x} } \lesssim \| \nabla \nu \|_{ B^{2, \infty, 1/2}_{\ell, t, x} } \text{,} \]
then from the above, we have obtained
\begin{align*}
\| \nu \|_{ L^{\infty, \infty}_{t, x} } + \| \nabla \nu \|_{ L^{\infty, 4}_{t, x} } &\lesssim \| \nabla \nu \|_{ B^{2, \infty, 1/2}_{\ell, t, x} } \lesssim D \text{.}
\end{align*}
Since $u = \real \nu$, this proves \eqref{eq.conf_good}.

As a result of \eqref{eq.conf_good}, now Proposition \ref{thm.conf_reg} applies to $(\mc{N}, \gamma)$, and hence $(\mc{N}, \bar{\gamma})$ satisfies \ass{R1}{C^\prime, N}.
Furthermore, Proposition \ref{thm.conf_regf} applies to every $(\mc{S}, \gamma [\tau])$.
In addition, from \eqref{eq.conf_unif}, \eqref{eq.conf_comp}, and \eqref{eq.conf_gc_id}, we can derive \eqref{eq.conf_curv_est}:
\begin{align*}
\| \bar{\mc{K}} - e^{-2 u} f \|_{ \bar{L}^{\infty, 2}_{t, x} } = \| e^{-2 u} W \|_{ \bar{L}^{\infty, 2}_{t, x} } \lesssim \| W \|_{ L^{\infty, 2}_{t, x} } \leq D \text{.}
\end{align*}
Since $e^{-2 u} f \simeq 1$, again by \eqref{eq.conf_unif}, then $(\mc{N}, \bar{\gamma})$ also satisfies the \ass{K}{C^\prime, D^\prime}.

Finally, the remaining estimates \eqref{eq.besov_embed}-\eqref{eq.besov_hodge} are proved in Appendix \ref{sec.cell}.
\end{proof}

\begin{remark}
Note that by taking considering equivariant $\gamma$, we can once again obtain a direct fixed-surface analogue of Proposition \ref{thm.conf}.
\end{remark}

\subsection{Improved Besov Estimates} \label{sec.curv_impr}

We now return to the question of whether an analogue of Theorem \ref{thm.besov_weak} holds for $\nabla \mc{D}^{-1}$, where $\mc{D}$ is any one of $\mc{D}_1$ or $\mc{D}_2$.
We show that in our setting, such an analogue does hold when $s = 0$.

An attempt at a direct proof of this statement immediately encounters troubles, since $\mc{K}$ is so irregular.
Our strategy is to make use of the conformally smoothed metric defined in Proposition \ref{thm.conf}.
In particular, \eqref{eq.besov_nabla} and \eqref{eq.besov_hodge} answer the above question affirmatively for $(\mc{N}, \bar{\gamma})$.
Here, we complete our argument by showing that we can pull back some of these estimates for $(\mc{N}, \bar{\gamma})$ back to $(\mc{N}, \gamma)$.

\begin{theorem} \label{thm.besov_impr}
Assume $(\mc{N}, \gamma)$ satisfies \ass{R1}{C, N} and \ass{K}{C, D}, with $D \ll 1$ sufficiently small.
In addition, suppose $p \in [1, \infty]$.
\begin{itemize}
\item If $\Psi \in \mc{C}^\infty \ul{T}^r_l \mc{N}$, then
\begin{equation} \label{eq.besov_impr_sh} \| \Psi \|_{ L^{p, \infty}_{t, x} } \lesssim_{ C, N, r, l } \| \nabla \Psi \|_{ B^{1, p, 0}_{\ell, t, x} } + \| \Psi \|_{ L^{p, 2}_{t, x} } \text{.} \end{equation}

\item If $a \in [1, \infty]$, if $\mc{D}$ is any one of the operators $\mc{D}_1$, $\mc{D}_2$, $\mc{D}_1^\ast$, and if $\xi$ is a smooth section of the appropriate Hodge bundle on $\mc{N}$, then
\begin{equation} \label{eq.besov_impr_bdd} \| \nabla \mc{D}^{-1} \xi \|_{ B^{a, p, 0}_{\ell, t, x} } \lesssim_{C, N} \| \xi \|_{ B^{a, p, 0}_{\ell, t, x} } \text{.} \end{equation}
\end{itemize}
\end{theorem}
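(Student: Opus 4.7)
The plan is to reduce everything to the conformally renormalized metric $\bar{\gamma}$ from Proposition \ref{thm.conf}, which enjoys $L^{\infty, 2}_{t, x}$-control on its Gauss curvature. In this barred setting, the strong estimates \eqref{eq.besov_embed}, \eqref{eq.besov_nabla}, \eqref{eq.besov_hodge} are directly available. The main task is then to transfer these back to $(\mc{N}, \gamma)$ using the Besov comparison \eqref{eq.conf_besov} at $s = 0$, the $L^p$-equivalence from \eqref{eq.conf_comp}, and the conformal transformation laws \eqref{eq.conf_hodge} for the Hodge operators, while controlling the correction generated by $\bar{\nabla} - \nabla$.

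For \eqref{eq.besov_impr_sh}, I would chain
\begin{equation*}
\| \Psi \|_{ L^{p, \infty}_{t, x} } \simeq \| \Psi \|_{ \bar{L}^{p, \infty}_{t, x} } \lesssim \| \Psi \|_{ \bar{B}^{1, p, 1}_{\ell, t, x} } \lesssim \| \bar{\nabla} \Psi \|_{ \bar{B}^{1, p, 0}_{\ell, t, x} } + \| \Psi \|_{ \bar{L}^{p, 2}_{t, x} }
\end{equation*}
via \eqref{eq.conf_comp}, \eqref{eq.besov_embed}, and \eqref{eq.besov_nabla}, and then transfer the right-hand side to $(\mc{N}, \gamma)$ using \eqref{eq.conf_besov} and \eqref{eq.conf_comp}. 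The conformal difference formula \eqref{eq.conf_cd} gives $\bar{\nabla} \Psi = \nabla \Psi + E$, where $E$ is a sum of contractions of $\nabla u \otimes \Psi$, so the question reduces to controlling $\| E \|_{ B^{1, p, 0}_{\ell, t, x} }$.

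For \eqref{eq.besov_impr_bdd}, the case $\mc{D} = \mc{D}_1^\ast$ is an immediate corollary of Theorem \ref{thm.besov_weak} at $s = 0$. For $\mc{D} = \mc{D}_i$ with $i \in \{ 1, 2 \}$, the key observation is that both $\mc{D}_i$ and $\bar{\mc{D}}_i = e^{-2u} \mc{D}_i$ have trivial kernel in our weakly spherical setting, thanks to the Bochner identities \eqref{eq.hodge_sq} combined with the $\ass{k}{C, D}$ condition, so $\Phi := \mc{D}_i^{-1} \xi$ satisfies $\bar{\mc{D}}_i \Phi = e^{-2u} \mc{P}_i \xi$ and coincides with $\bar{\mc{D}}_i^{-1} ( e^{-2u} \mc{P}_i \xi )$. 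Chaining \eqref{eq.besov_nabla} and \eqref{eq.besov_hodge} in the barred setting then yields
\begin{equation*}
\| \bar{\nabla} \Phi \|_{ \bar{B}^{a, p, 0}_{\ell, t, x} } \lesssim \| \Phi \|_{ \bar{B}^{a, p, 1}_{\ell, t, x} } \lesssim \| e^{-2u} \mc{P}_i \xi \|_{ \bar{B}^{a, p, 0}_{\ell, t, x} } \lesssim \| \xi \|_{ \bar{B}^{a, p, 0}_{\ell, t, x} } \simeq \| \xi \|_{ B^{a, p, 0}_{\ell, t, x} },
\end{equation*}
where I use the uniform bound on $e^{-2u}$ from \eqref{eq.conf_unif}, Proposition \ref{thm.hodge_proj} to absorb $I - \mc{P}_i$ at high frequencies, and \eqref{eq.conf_besov}. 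The passage from $\bar{\nabla} \Phi$ back to $\nabla \Phi$ produces the same kind of correction as in Part 1.

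The hard part in both parts is absorbing this conformal correction. Applying \eqref{eq.est_prod_sob} at $s = 0$ together with \eqref{eq.conf_good} gives
\begin{equation*}
\| \nabla u \otimes \Psi \|_{ B^{1, p, 0}_{\ell, t, x} } \lesssim \| \nabla u \|_{ B^{2, \infty, 1/2}_{\ell, t, x} } \| \Psi \|_{ B^{2, p, 1/2}_{\ell, t, x} } \lesssim D \, \| \Psi \|_{ B^{2, p, 1/2}_{\ell, t, x} }.
\end{equation*}
Since $D \ll 1$, I would aim to absorb this into the left-hand side of \eqref{eq.besov_impr_sh}. The half-derivative norm is controlled via the standard Cauchy--Schwarz interpolation
\begin{equation*}
\| \Psi \|_{ B^{2, p, 1/2}_{\ell, t, x} }^2 \lesssim \| \Psi \|_{ B^{2, p, 0}_{\ell, t, x} } \| \Psi \|_{ B^{2, p, 1}_{\ell, t, x} },
\end{equation*}
where $B^{2, p, 0}_{\ell, t, x} \simeq L^{p, 2}_{t, x}$ by Proposition \ref{thm.sobolev}, and $\| \Psi \|_{ B^{2, p, 1}_{\ell, t, x} }$ is handled in the barred setting via \eqref{eq.besov_nabla} at the cost of one more conformal correction of size $D$. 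A weighted Young's inequality then closes the estimate, and the analogous scheme handles the correction in Part 2.
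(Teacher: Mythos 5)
Your overall strategy—pass to the conformally renormalized metric $\bar\gamma$, use the strong estimates \eqref{eq.besov_embed}, \eqref{eq.besov_nabla}, \eqref{eq.besov_hodge} there, then transfer back via \eqref{eq.conf_besov}, \eqref{eq.conf_comp}, and the conformal invariance \eqref{eq.conf_hodge}—is exactly the paper's. The Hodge identity $\mc{D}_i^{-1}\xi = \bar{\mc{D}}_i^{-1}(e^{-2u}\mc{P}_i\xi)$ and the subsequent use of $\bar{\mc{D}}_i^{-1}\bar{\mc{D}}_i = I$, \eqref{eq.conf_unif}, and Proposition \ref{thm.hodge_proj} also matches the paper's intermediate estimate for $\|\mc{D}_i^{-1}\xi\|_{\bar B^{a,p,1}_{\ell,t,x}}$. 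Where you part ways is in how the conformal correction $\bar\nabla\Psi - \nabla\Psi \sim \nabla u\otimes\Psi$ is absorbed, and that is precisely where there is a gap.

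The issue: after applying \eqref{eq.est_prod_sob} you obtain the bound $D\,\|\Psi\|_{B^{2,p,1/2}_{\ell,t,x}}$, and you propose the interpolation
$\|\Psi\|_{B^{2,p,1/2}_{\ell,t,x}}^2 \lesssim \|\Psi\|_{B^{2,p,0}_{\ell,t,x}}\,\|\Psi\|_{B^{2,p,1}_{\ell,t,x}}$
followed by controlling $\|\Psi\|_{B^{2,p,1}_{\ell,t,x}}$ ``in the barred setting.'' But \eqref{eq.conf_besov}, and hence the comparison between $B^{a,p,s}_{\ell,t,x}$ and $\bar{B}^{a,p,s}_{\ell,t,x}$, is only established for $s\in(-1,1)$; it is not available at $s=1$. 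This is not a casual technicality: the restriction comes from Proposition \ref{thm.comp_main}, whose proof breaks for $|s|\ge 1$. So the unbarred $\|\Psi\|_{B^{2,p,1}_{\ell,t,x}}$ cannot be traded for the barred version. You could repair this by keeping every norm barred throughout the correction estimate—after noting that $\bar\gamma$ also satisfies $\ass{R1}{C',N}$ so \eqref{eq.est_prod_sob} applies with bars, and that $\nabla u = \bar\nabla u$—in which case the interpolation and Young's inequality do close. But this detour is unnecessary. The paper simply observes that the elementary Besov embedding \eqref{eq.besov_triv_2} gives
$\|\Psi\|_{\bar{B}^{2,p,1/2}_{\ell,t,x}} \lesssim \|\Psi\|_{\bar{B}^{a,p,1}_{\ell,t,x}}$
for every $a\in[1,\infty]$ (since $1/2 < 1$, one can trade the gain in regularity for worsening the summability exponent). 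This lands you directly on $D\,\|\Psi\|_{\bar{B}^{a,p,1}_{\ell,t,x}}$, which matches the norm on the left-hand side of the barred Bochner inequality in \eqref{eq.besov_nabla} and is absorbed with no interpolation and no Young's inequality at all. Critically, all the $s=1$ Besov norms stay barred and are never transferred; only the $s=0$ and $s=1/2$ norms cross between metrics, which is within the permitted range. In summary: your architecture is correct, but you should replace the interpolation/Young step by the embedding \eqref{eq.besov_triv_2}, and be careful that every $s=1$ Besov norm stays barred.
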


\begin{proof}
Let $(f, W, V)$ be the data associated with the \ass{K}{C, D} condition, and let $u$ and $\bar{\gamma}$ be defined as in \eqref{eq.conf_curv} and \eqref{eq.conf_transform}.
In particular, the conclusions of Proposition \ref{thm.conf} hold for our conformally renormalized system $(\mc{N}, \bar{\gamma})$.
The first step is to establish the following intermediate estimates, for any $i \in \{ 1, 2 \}$:
\begin{align}
\label{eql.besov_impr_nabla_pre} \| \bar{\nabla} \Psi - \nabla \Psi \|_{ \bar{B}^{a, p, 0}_{\ell, t, x} } &\lesssim_{ C, N, r, l } D \| \Psi \|_{ \bar{B}^{a, p, 1}_{\ell, t, x} } \text{,} \\
\label{eql.besov_impr_hodge_pre} \| \mc{D}_i^{-1} \xi \|_{ \bar{B}^{a, p, 1}_{\ell, t, x} } &\lesssim_{ C, N } \| \xi \|_{ B^{a, p, 0}_{\ell, t, x} } \text{.}
\end{align}

For \eqref{eql.besov_impr_nabla_pre}, we begin with \eqref{eq.besov_triv_1} and the identity \eqref{eq.conf_cd}:
\[ \| \bar{\nabla} \Psi - \nabla \Psi \|_{ \bar{B}^{a, p, 0}_{\ell, t, x} } \lesssim \| \nabla u \otimes \Psi \|_{ \bar{B}^{1, p, 0}_{\ell, t, x} } \text{.} \]
Applying \eqref{eq.besov_triv_2}, \eqref{eq.est_prod_sob}, and \eqref{eq.conf_besov} yields \eqref{eql.besov_impr_nabla_pre}:
\[ \| \bar{\nabla} \Psi - \nabla \Psi \|_{ \bar{B}^{a, p, 0}_{\ell, t, x} } \lesssim \| \nabla u \|_{ B^{2, \infty, 1/2}_{\ell, t, x} } \| \Psi \|_{ \bar{B}^{2, p, 1/2}_{\ell, t, x} } \lesssim D \| \Psi \|_{ \bar{B}^{a, p, 1}_{\ell, t, x} } \text{.} \]

For \eqref{eql.besov_impr_hodge_pre}, we use the identity $\bar{\mc{D}}_i^{-1} \bar{\mc{D}}_i = I$ in order to obtain
\begin{align*}
\| \mc{D}_i^{-1} \xi \|_{ \bar{B}^{a, p, 1}_{\ell, t, x} } = \| \bar{\mc{D}}_i^{-1} \bar{\mc{D}}_i \mc{D}_i^{-1} \xi \|_{ \bar{B}^{a, p, 1}_{\ell, t, x} } \lesssim \| \bar{\mc{D}}_i \mc{D}_i^{-1} \xi \|_{ \bar{B}^{a, p, 0}_{\ell, t, x} } \text{,}
\end{align*}
where we used \eqref{eq.besov_hodge} in the last step.
By \eqref{eq.conf_hodge}, \eqref{eq.conf_unif}, \eqref{eq.est_prod_elem}, and \eqref{eq.conf_besov},
\begin{align*}
\| \mc{D}_i^{-1} \xi \|_{ \bar{B}^{a, p, 1}_{\ell, t, x} } = \| e^{-2 u} \mc{D}_i \mc{D}_i^{-1} \xi \|_{ \bar{B}^{a, p, 0}_{\ell, t, x} } \lesssim \| \mc{D}_i \mc{D}_i^{-1} X \|_{ B^{a, p, 0}_{\ell, t, x} } \text{.}
\end{align*}
Recalling that $\mc{D}_i \mc{D}_i^{-1} = I - ( I - \mc{P}_i )$, then
\[ \| \mc{D}_i^{-1} \xi \|_{ \bar{B}^{a, p, 1}_{\ell, t, x} } \lesssim \| \xi \|_{ B^{a, p, 0}_{\ell, t, x} } + \| ( I - \mc{P}_i ) \xi \|_{ B^{a, p, 0}_{\ell, t, x} } \lesssim \| \xi \|_{ B^{a, p, 0}_{\ell, t, x} } \text{,} \]
where in the last step, we applied \eqref{eq.hodge_proj}.
This proves \eqref{eql.besov_impr_hodge_pre}.

We are now ready for the main estimates.
From \eqref{eq.besov_nabla} and \eqref{eql.besov_impr_nabla_pre}, we obtain
\[ \| \Psi \|_{ \bar{B}^{1, p, 1}_{\ell, t, x} } \lesssim \| \bar{\nabla} \Psi \|_{ \bar{B}^{1, p, 0}_{\ell, t, x} } + \| \Psi \|_{ \bar{L}^{p, 2}_{t, x} } \lesssim \| \nabla \Psi \|_{ \bar{B}^{1, p, 0}_{\ell, t, x} } + \| \Psi \|_{ \bar{L}^{p, 2}_{t, x} } + D \| \Psi \|_{ \bar{B}^{1, p, 1}_{\ell, t, x} } \text{.} \]
Since $D$ is small, then by \eqref{eq.besov_embed}, we have
\[ \| \Psi \|_{ L^{p, \infty}_{t, x} } \lesssim \| \Psi \|_{ \bar{B}^{1, p, 1}_{\ell, t, x} } \lesssim \| \nabla \Psi \|_{ \bar{B}^{1, p, 0}_{\ell, t, x} } + \| \Psi \|_{ \bar{L}^{p, 2}_{t, x} } \text{.} \]
The estimate \eqref{eq.besov_impr_sh} now follows from \eqref{eq.conf_comp} and \eqref{eq.conf_besov}.

For \eqref{eq.besov_impr_bdd}, first note the case $\mc{D} = \mc{D}_1^\ast$ is just a special case of \eqref{eq.besov_weak}, thus we need only consider when $\mc{D}$ is $\mc{D}_1$ or $\mc{D}_2$.
We begin by applying \eqref{eq.conf_besov} and \eqref{eql.besov_impr_nabla_pre}:
\begin{align*}
\| \nabla \mc{D}^{-1} \xi \|_{ B^{a, p, 0}_{\ell, t, x} } \lesssim \| \nabla \mc{D}^{-1} \xi \|_{ \bar{B}^{a, p, 0}_{\ell, t, x} } \lesssim \| \bar{\nabla} \mc{D}^{-1} \xi \|_{ \bar{B}^{a, p, 0}_{\ell, t, x} } + D \| \mc{D}^{-1} \xi \|_{ \bar{B}^{a, p, 1}_{\ell, t, x} } \text{.}
\end{align*}
The proof is completed by applying \eqref{eq.besov_nabla} and \eqref{eql.besov_impr_hodge_pre}:
\[ \| \nabla \mc{D}^{-1} \xi \|_{ B^{a, p, 0}_{\ell, t, x} } \lesssim \| \mc{D}^{-1} \xi \|_{ \bar{B}^{a, p, 1}_{\ell, t, x} } \lesssim \| \xi \|_{ B^{a, p, 0}_{\ell, t, x} } \text{.} \qedhere \]
\end{proof}

\begin{remark}
Note that the proof of \eqref{eq.besov_impr_bdd} fails when $\mc{D} = \mc{D}_2^\ast$, since $\mc{D}_2^\ast$, in contrast to $\mc{D}_1$, $\mc{D}_1^\ast$, and $\mc{D}_2$, fails to be conformally invariant.
\end{remark}

Finally, from the \ass{K}{} condition, we can still obtain the same $H^{-1/2}$-estimate for $\mc{K}$ that was obtained in \cite{kl_rod:cg, wang:cg}.
Moreover, this can be accomplished very quickly using the conformal renormalization scheme from Section \ref{sec.curv_conf}.

\begin{proposition} \label{thm.curv_sob}
Assume $(\mc{N}, \gamma)$ satisfies \ass{R1}{C, N} and \ass{K}{C, D}, the latter with data $(f, W, V)$, and with $D \ll 1$ sufficiently small.
Then,
\begin{equation} \label{eq.curv_sob} \| \mc{D}_1 V \|_{ B^{2, \infty, -1/2}_{\ell, t, x} } \lesssim D \text{,} \qquad \| \mc{K} - f \|_{ B^{2, \infty, -1/2}_{\ell, t, x} } \lesssim D \text{.} \end{equation}
\end{proposition}

\begin{proof}
By the \ass{K}{C, D} condition, we need only show
\[ \| W \|_{ B^{2, \infty, -1/2}_{\ell, t, x} } \lesssim D \text{,} \qquad \| \mc{D}_1 V \|_{ B^{2, \infty, -1/2}_{\ell, t, x} } \lesssim D \text{.} \]
Moreover, as the first inequality is trivial, it remains only to show the bound for $V$.
As before, we let $u$ and $\bar{\gamma}$ be defined as in \eqref{eq.conf_curv} and \eqref{eq.conf_transform}.

From \eqref{eq.conf_hodge}, \eqref{eq.conf_unif}, \eqref{eq.est_prod_elem}, and \eqref{eq.conf_besov}, we have
\[ \| \mc{D}_1 V \|_{ B^{2, \infty, -1/2}_{\ell, t, x} } \lesssim \| e^{2u} \bar{\mc{D}}_1 V \|_{ \bar{B}^{2, \infty, -1/2}_{\ell, t, x} } \lesssim \| \bar{\mc{D}}_1 V \|_{ \bar{B}^{2, \infty, -1/2}_{\ell, t, x} } \text{.} \]
Applying \eqref{eq.conf_besov} again, along with \eqref{eq.besov_nabla}, yields
\[ \| \mc{D}_1 V \|_{ B^{2, \infty, -1/2}_{\ell, t, x} } \lesssim \| V \|_{ \bar{B}^{2, \infty, 1/2}_{\ell, t, x} } \lesssim \| V \|_{ B^{2, \infty, 1/2}_{\ell, t, x} } \text{.} \qedhere \]
\end{proof}

\section{Regular Null Cones in Vacuum Spacetimes} \label{sec.nc}

In this final section, we provide an informal discussion of how the abstract formalisms in this paper apply to our main setting of interest: regular null cones on Lorentzian manifolds.
For simplicity, we only discuss geodesically foliated truncated null cones in Einstein-vacuum spacetimes, beginning from a sphere; this is the setting found in \cite{kl_rod:cg}.
Other variants of this setting (e.g., other foliations of null cones, null cones beginning from a vertex, null cones in non-vacuum spacetimes) can also be described using the formalisms of this section, although the specifics will differ very slightly from the development here.

\subsection{Geometry of Null Cones} \label{sec.nc_geom}

Let $(M, g)$ be a four-dimensional time-oriented Lorentzian manifold.
Also, assume $(M, g)$ is \emph{Einstein-vacuum}, i.e., that $\operatorname{Ric}_g \equiv 0$.
\begin{itemize}
\item Fix a compact two-dimensional submanifold $\mc{S}$ of $M$ diffeomorphic to $\Sph^2$.

\item We smoothly assign to each $x \in \mc{S}$ a future (or alternately, past) null vector $\ell_x$ at $x$ which is in addition orthogonal to $\mc{S}$.

\item For each $x \in \mc{S}$, we let $\lambda_x$ denote the future (or past) null geodesic satisfying the initial conditions $\lambda_x (0) = x$ and $\lambda_x^\prime (0) = \ell_x$.

\item Let the \emph{truncated null cone} $\mc{N}$ denote a smooth portion of the null hypersurface traced out by the congruence $\{ \lambda_x | x \in \mc{S} \}$ of null geodesics.

\item Let $t: \mc{N} \rightarrow \R$ denote the \emph{affine parameter}, which maps $\lambda_x (\tau) \in \mc{N}$, where $x \in \mc{S}$ and $\tau \in \R$, to the number $\tau$.
\end{itemize}

An important question is to determine when the above constructions are well-defined and smooth.
It turns out that (see, e.g., \cite{haw_el:gr, on:srg}) this is the case as long as $\mc{N}$ avoids both null conjugate and null cut locus points as one traverses the geodesics $\lambda_x$, $x \in \mc{S}$.
Here, we assume a priori that the above holds for $\mc{N}$.
In particular, the affine parameter $t$ is well-defined, and $t \in \mc{C}^\infty (\mc{N})$.

Identifying $\mc{S}$ with $\Sph^2$, we can now consider $\mc{N}$ as the foliation
\[ \mc{N} \simeq [0, \delta] \times \Sph^2 \text{,} \qquad \delta > 0 \text{.} \]
In addition, we define the following on $\mc{N}$:
\begin{itemize}
\item Define the vector field $L$ on $\mc{N}$ to be the tangent vector fields of the $\lambda_x$'s.
By definition, $L$ is geodesic, and $L t \equiv 1$ everywhere.
This implies that the level sets $\mc{S}_\tau$ of $t$ are Riemannian submanifolds of $\mc{N}$ and $M$.

\item Let the horizontal metric $\gamma \in \mc{C}^\infty \ul{T}^0_2 \mc{N}$ be defined as the metrics on the $\mc{S}_\tau$'s induced from the spacetime metric $g$.

\item Let $\ul{L}$ denote the conjugate null vector field on $\mc{N}$, which is orthogonal to every $\mc{S}_\tau$ and satisfies $g (L, \ul{L}) \equiv -2$.
Note that $\ul{L}$ is transverse to $\mc{N}$: for any $z \in \mc{N}$, the vector $\ul{L} |_z$ is a tangent vector for $M$, but not $\mc{N}$.
\end{itemize}
Next, we define the \emph{Ricci coefficients}, which are horizontal connection quantities that describe the derivatives of $L$ and $\ul{L}$ in the directions tangent to $\mc{N}$.
\begin{itemize}
\item Define $2$-tensors $\chi, \ul{\chi} \in \mc{C}^\infty \ul{T}^0_2 \mc{N}$ by
\[ \chi (X, Y) = g ( D_X L, Y ) \text{,} \qquad \ul{\chi} (X, Y) = g ( D_X \ul{L}, Y ) \text{,} \qquad X, Y \in \mc{C}^\infty \ul{T}^1_0 \mc{N} \text{,} \]
where $D$ is the restriction of the spacetime Levi-Civita connection to $\mc{N}$.

\item Define $\zeta \in \mc{C}^\infty \ul{T}^0_1 \mc{N}$ by
\[ \zeta (X) = \frac{1}{2} g ( D_X L, \ul{L} ) \text{,} \qquad X \in \mc{C}^\infty \ul{T}^1_0 \mc{N} \text{.} \]
\end{itemize}
Let $R$ denote the spacetime Riemann curvature tensor associated with $g$.
We define the following curvature components, which comprise the standard null decomposition of $R$ with respect to our given geodesic foliation.
\begin{alignat*}{3}
\alpha &\in \mc{C}^\infty \ul{T}^0_2 \mc{N} \text{,} &\qquad \alpha (X, Y) &= R (L, X, L, Y) \text{,} \\
\beta &\in \mc{C}^\infty \ul{T}^0_1 \mc{N} \text{,} &\qquad \beta (X) &= \frac{1}{2} R (L, X, L, \ul{L}) \text{,} \\
\rho &\in \mc{C}^\infty \mc{N} \text{,} &\qquad \rho &= \frac{1}{4} R (L, \ul{L}, L, \ul{L}) \text{,} \\
\sigma &\in \mc{C}^\infty \mc{N} \text{,} &\qquad \sigma &= \frac{1}{4} {}^\star R (L, \ul{L}, L, \ul{L}) \text{,} \\
\ul{\beta} &\in \mc{C}^\infty \ul{T}^0_1 \mc{N} \text{,} &\qquad \ul{\beta} (X) &= \frac{1}{2} R (\ul{L}, X, \ul{L}, L) \text{,} \\
\ul{\alpha} &\in \mc{C}^\infty \ul{T}^0_2 \mc{N} \text{,} &\qquad \ul{\alpha} (X, Y) &= R (\ul{L}, X, \ul{L}, Y) \text{,}
\end{alignat*}
where ${}^\star R$ denotes the left (spacetime) Hodge dual of $R$.
In the vacuum setting, these coefficients comprise all the independent components of $R$.

By a direct computation, cf. \cite{kl_rod:cg}, one can see that, with respect to $\gamma$,
\begin{equation} \label{eq.nc_sff} k = \chi \text{.} \end{equation}
We can define the evolutionary covariant derivative $\nabla_t$ as before.
One can show that this operator $\nabla_t$ coincides with the appropriate projection $\nabla_L$ onto the $\mc{S}_\tau$'s of the spacetime covariant derivative operator $D_L$.

\begin{remark}
In previous works, e.g., \cite{kl_rod:cg, kl_rod:stt, parl:bdc, shao:bdc_nv, wang:cg, wang:cgp}, the ``horizontal covariant evolution" operation was defined to be this projection of $D_L$ mentioned above.
Our definition of $\nabla_t$ here generalizes this description.
\end{remark}

The Ricci and curvature coefficients defined above are related to each other via a family of geometric differential equations, known as the \emph{null structure equations}.

\begin{proposition} \label{thm.structure}
The following structure equations hold on $\mc{N}$.
\begin{itemize}
\item Evolution equations:
\begin{align}
\label{eq.structure_ev} \nabla_t \chi_{ab} &= - \gamma^{cd} \chi_{ac} \chi_{bd} - \alpha_{ab} \text{,} \\
\notag \nabla_t \zeta_a &= - 2 \gamma^{bc} \chi_{ab} \zeta_c - \beta_a \text{,} \\
\notag \nabla_t \ul{\chi}_{ab} &= - ( \nabla_a \zeta_b + \nabla_b \zeta_a ) - \frac{1}{2} \gamma^{cd} ( \chi_{ac} \ul{\chi}_{bd} + \chi_{bc} \ul{\chi}_{ad} ) + 2 \zeta_a \zeta_b + \rho \gamma_{ab} \text{.}
\end{align}

\item Null Bianchi equations:
\begin{align}
\label{eq.structure_nb} \nabla_t \beta_a &= \mc{D}_2 \alpha_a - 2 ( \trace \chi ) \beta_a + \gamma^{bc} \zeta_b \alpha_{ac} \text{,} \\
\notag \nabla_t ( \rho + i \sigma ) &= \mc{D}_1 \beta - \frac{3}{2} ( \trace \chi ) ( \rho + i \sigma ) - ( \gamma^{ab} - i \epsilon^{ab} ) ( \zeta_a \beta_b + \frac{1}{2} \gamma^{cd} \ul{\chi}_{ac} \alpha_{bd} ) \text{,} \\
\notag \nabla_t \ul{\beta}_a &= \mc{D}_1^\ast ( \rho - i \sigma ) - ( \trace \chi ) \ul{\beta}_a + 3 \zeta_a \rho - 3 \epsilon_a{}^b \zeta_b \sigma + 2 \gamma^{bc} \ul{\hat{\chi}}_{ab} \beta_c \text{.}
\end{align}

\item Gauss-Codazzi equations:
\begin{align}
\label{eq.structure_gc} \nabla_b \chi_{ac} - \nabla_c \chi_{ab} &= - \epsilon_{bc} \epsilon_a{}^d \beta_d + \chi_{ab} \zeta_c - \chi_{ac} \zeta_b \text{,} \\
\notag \mc{K} &= - \rho + \frac{1}{2} ( \gamma^{ac} \gamma^{bd} - \gamma^{ab} \gamma^{cd} ) \chi_{ab} \ul{\chi}_{cd} \text{.}
\end{align}
\end{itemize}
\end{proposition}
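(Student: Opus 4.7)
The plan is to recognize that Proposition \ref{thm.structure} is a compilation of classical null structure equations for vacuum spacetimes, and the real task is to verify that these translate faithfully into the abstract formalism of this paper. First I would invoke the already-noted identifications: under \eqref{eq.nc_sff} we have $k = \chi$, and our covariant evolutionary derivative $\nabla_t$ coincides with the horizontal projection $\nabla_L$ of the spacetime Levi-Civita derivative $D_L$. With this in hand, the operators appearing on the left-hand sides of \eqref{eq.structure_ev}--\eqref{eq.structure_gc} agree with those used in standard references such as \cite{chr_kl:stb_mink, kl_nic:stb_mink}, so the proof reduces to quoting or reproducing the classical derivations in our notation.

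For the evolution equations \eqref{eq.structure_ev}, I would differentiate the defining relations for $\chi$, $\zeta$, $\ul{\chi}$ along $L$ and apply the spacetime Ricci identity, using the geodesic equation $D_L L = 0$, the normalization $g(L,\ul{L}) = -2$, and the transport equation for $\ul{L}$. The first equation is then the Raychaudhuri equation (with the full $\alpha$ rather than a Ricci trace, since vacuum forces the trace to vanish); the second follows by projecting the commutator $[D_L, D_X]\ul{L}$; the third equation for $\nabla_t \ul{\chi}$ requires expressing $D_L \ul{L}$ in terms of $\zeta$ and then commuting $D_L$ past $D_X$ on $\ul{L}$, which is precisely where the symmetrized gradient of $\zeta$ appears.

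For the null Bianchi equations \eqref{eq.structure_nb}, I would contract the second Bianchi identity $D_{[e} R_{ab]cd} = 0$ with appropriate combinations of $L$, $\ul{L}$, and horizontal frame vectors, invoking $\operatorname{Ric}_g \equiv 0$ to freely reduce Ricci contractions to zero. The decisive observation is that the horizontal divergence and curl combinations that emerge on the right-hand side are precisely the Hodge operators $\mc{D}_2$, $\mc{D}_1$, and $\mc{D}_1^\ast$, once one groups $\rho$ and $\sigma$ into the complex combination $\rho + i\sigma$ using the pointwise identity $\gamma^{ab} - i\epsilon^{ab}$ arising from $\mc{D}_1^\ast$'s definition. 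For the Gauss-Codazzi equations \eqref{eq.structure_gc}, I would compare the induced horizontal Levi-Civita connection to $D$ via orthogonal projection: antisymmetrizing $\nabla \chi$ and using the Ricci identity for $D$ applied to $L$ yields the Codazzi equation, while taking a suitable trace of the Gauss equation (and once again reducing $\operatorname{Ric}$ terms to zero by vacuum) produces the stated expression for $\mc{K}$.

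The main obstacle is purely bookkeeping: tracking signs, factors, and the precise normalization of the null decomposition of $R$, and matching the complex combination $\rho + i\sigma$ with the real-imaginary decomposition built into $\mc{D}_1, \mc{D}_1^\ast$. Since no new analytic ingredient is needed and the derivations are carried out in full in the cited references, my proposal is to verify compatibility of conventions (particularly $k = \chi$, $\nabla_t = \nabla_L$, and the sign in $g(L,\ul{L}) = -2$) and then quote the classical computations, adding detailed calculations only where our formalism differs notationally from the references.
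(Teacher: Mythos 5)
Your proposal is correct and takes the same route as the paper: the paper does not prove Proposition \ref{thm.structure} but, after noting the identifications $k = \chi$ and $\nabla_t = \nabla_L$, simply cites \cite{chr_kl:stb_mink, kl_nic:stb_mink} for the derivations and \cite{kl_rod:cg, wang:cg} for the full list of structure equations. Your added sketch of how the Ricci identity, the vacuum Bianchi identity, and the Gauss--Codazzi relations produce each line is sound and consistent with the paper's conventions (one trivial slip: the combination $\gamma^{ab} - i\epsilon^{ab}$ arises from the definition of $\mc{D}_1$ rather than $\mc{D}_1^\ast$, though you allude to both afterward).
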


For derivations of the null structure equations, see, e.g., \cite{chr_kl:stb_mink, kl_nic:stb_mink}.
For the full list of structure equations on geodesically foliated null cones in Einstein-vacuum spacetimes, see \cite{kl_rod:cg, wang:cg}.
For other foliations, see \cite{parl:bdc, shao:bdc_nv, wang:tbdc}.
These equations play a fundamental role in controlling the geometry of $\mc{N}$.

In particular, note that the curl $\mf{C}$ of $k$ is described precisely by the first equation of \eqref{eq.structure_gc}.
In fact, this explicit formula for $\mf{C}$ is essential for explaining how the estimates in this paper apply to this setting of regular null cones.

\subsection{Finite Curvature Flux} \label{sec.nc_cf}

For simplicity, we assume now that $\delta = 1$, as was done in \cite{kl_rod:cg}.
In addition, we assume now small curvature flux,
\begin{equation} \label{eq.curv_flux} \| \alpha \|_{ L^{2, 2}_{t, x} } + \| \beta \|_{ L^{2, 2}_{t, x} } + \| \rho \|_{ L^{2, 2}_{t, x} } + \| \sigma \|_{ L^{2, 2}_{t, x} } + \| \rho \|_{ L^{2, 2}_{t, x} } \leq C \text{,} \end{equation}
where $C$ is some sufficiently small constant.
\footnote{In some cases, one can also consider bounded but possibly large curvature flux, as long as the size $\delta$ of the null cone segment is sufficiently small with respect to the flux; see \cite{shao:bdc_nv}.}
We must also assume that certain quantities depending on the Ricci coefficients have similarly small values on $\mc{S}_0$.
We will not expand this point here; for details, see \cite{kl_rod:cg}.

Under the above assumptions, one has associated bounds for the Ricci coefficients $\chi$, $\zeta$, and $\ul{\chi}$.
Some examples of these bounds are as follows:
\begin{align}
\label{eq.bootstrap} \| \chi - r^{-1} \gamma \|_{ N^1_{t, x} } + \| \chi - r^{-1} \gamma \|_{ L^{\infty, 2}_{x, t} } &\leq \Delta \text{,} \\
\notag \| \zeta \|_{ N^1_{t, x} } + \| \zeta \|_{ L^{\infty, 2}_{x, t} } &\leq \Delta \text{,} \\
\notag \| \ul{\chi} + r^{-1} \gamma \|_{ L^{2, \infty}_{x, t} } &\leq \Delta \text{.}
\end{align}
Here, $r \in C^\infty \mc{N}$ maps each $(\tau, x) \in \mc{N}$ to the ``radius" of $\mc{S}_\tau$, i.e.,
\[ 4 \pi \cdot r^2 |_{ (\tau, x) } = | \mc{S}_\tau | \text{.} \]
The constant $\Delta$ in \eqref{eq.bootstrap} is some positive number related to $C$.
\footnote{In the setting of \cite{kl_rod:cg}, this $\Delta$ must be sufficiently small, but large with respect to $C$.}

One generally reaches the estimates \eqref{eq.bootstrap} in two ways:
\begin{itemize}
\item Under appropriate conditions, these bounds are consequences of \eqref{eq.curv_flux}.
For example, these are some of the main estimates proved in \cite{kl_rod:cg}.

\item These bounds are also used as bootstrap assumptions.
This was the strategy adopted in \cite{kl_rod:cg} in order to prove the main estimates \eqref{eq.bootstrap}.
In this process, one assumes \eqref{eq.bootstrap} in a standard continuity argument and proceeds to prove a strictly improved version of \eqref{eq.bootstrap}, e.g., with $\Delta$ replaced by $\Delta/2$.
\end{itemize}
The regularity assumptions \eqref{eqr.sff_tr}-\eqref{eqr.sffcurl} and their consequences reveal why the bootstrap assumptions mentioned above are fundamental to the analysis in \cite{kl_rod:cg}.
Indeed, in order to make use of many of the calculus estimates required in this analysis, one had to first show that such regularity conditions hold on $\mc{N}$.
That these conditions hold follows only from appropriate bootstrap assumptions, including \eqref{eq.bootstrap}.

Below, we shall briefly sketch how the abstract assumptions used throughout this section follow from \eqref{eq.bootstrap}.
In other words, we will justify that the estimates proved in this paper apply to our intended settings.
\footnote{Although we only describe the setting of \cite{kl_rod:cg} here, these arguments, with a few minor adjustments, can also be applied to other variants of the general null cone problem.}

First of all, the assumptions on $\chi$ in the first line of \eqref{eq.bootstrap} along with the observation \eqref{eq.nc_sff} imply the estimates \eqref{eqr.sff_tr}-\eqref{eqr.sffd}.
Moreover, since $\mc{S}$ is compact, as it is diffeomorphic to $\Sph^2$, then the regularity conditions from Section \ref{sec.geom_reg} hold for some constants $C, N$ depending on the geometry of $(\mc{S}, \gamma [0])$.

We now consider the condition \eqref{eqr.sffcurl}, which is the new observation not applied in previous works \cite{kl_rod:cg, parl:bdc, shao:bdc_nv, wang:cg, wang:cgp} on various null cone settings.
Recall that since $k = \chi$, then $\mf{C}$ is given precisely by the Codazzi equation in \eqref{eq.structure_gc}.
Combining this with the second evolution equation in \eqref{eq.structure_ev}, we see that
\[ \cint^t_0 \mf{C} \sim \zeta + \xi \text{,} \]
where $\xi$ represents ``lower-order" terms obtained by integrating the remaining terms in the above structure equations.
From the bootstrap conditions and \eqref{eq.nsob_ineq},
\footnote{This also requires appropriate control for the initial value of $\zeta$.}
\[ \| \zeta \|_{ L^{4, \infty}_{x, t} } \lesssim \Delta \text{.} \]
On top of this, with a bit of extra work, the ``lower-order" terms $\xi$ can also be controlled in the $N^1$-norm.
Combining the above with \eqref{eq.nsob_ineq}, we have
\[ \| \cint^t_0 \mf{C} \|_{ L^{4, \infty}_{x, t} } \lesssim \Delta \text{,} \]
which establishes \eqref{eqr.sffcurl}.
Thus, $(\mc{N}, \gamma)$ indeed satisfies the \ass{F2}{C, N, B} condition.

The above heuristic argument shows that the estimates found in Sections \ref{sec.fol}, \ref{sec.cfol}, and \ref{sec.thm} are applicable to the null cone setting of \cite{kl_rod:cg}.
It remains only to demonstrate the \ass{K}{} condition and hence validate the elliptic estimates of Section \ref{sec.curv}.

From a direct calculation, one can see that $\rho$ can be expressed as
\[ \rho = - \gamma^{ab} \nabla_a \zeta_b + E \text{,} \]
where $E$ is a ``sufficiently nice" error term.
This, combined with the second equation in \eqref{eq.structure_gc} (i.e., the Gauss equation), yields the relation
\[ \mc{K} - \frac{1}{r} = \gamma^{ab} \nabla_a \zeta_b + \xi \text{,} \]
where $\xi$ is an error term that can be shown to be $L^2_x$-controlled.
Furthermore, from the bootstrap assumptions and from \eqref{eq.nsob_trace}, one can show that $\zeta$ can be controlled in Sobolev-type norms involving $1/2$ horizontal derivatives.

Since $r$ is comparable to $1$ in the setting of \cite{kl_rod:cg} (this is again a consequence of the bootstrap assumptions), we see that $(\mc{N}, \gamma)$ indeed satisfies the \ass{K}{} condition.
As a result, Section \ref{sec.curv} now provides both an improvement and a dramatic simplification of various Hodge and elliptic estimates involving Besov norms.

\appendix

\section{Estimates on Euclidean Spaces} \label{sec.eucl}

In this appendix, we establish Euclidean analogues of many of the product estimates in this paper.
For the sake of consistency, we retain much of the formalisms detailed in Sections \ref{sec.geom} and \ref{sec.fol}.
More specifically, here we examine the special cases
\[ \mc{S} = \R^2 \text{,} \qquad \mc{N} = [0, \delta] \times \R^2 \text{,} \]
with $h$ and $\gamma$ representing the standard Euclidean metric on $\R^2$.

In this section, we will be dealing mostly with two families of functions:
\begin{itemize}
\item Let $\mc{S}_x \R^2$ denote the ``Schwartz space" of smooth functions on $\R^2$ for which all its partial derivatives are rapidly decreasing.

\item Let $\mc{C}^\infty_t \mc{S}_x \mc{N}$ denote the space of smooth functions on $\mc{N}$ for which all of its partial derivatives, when restricted to any level set of $t$, belong in $\mc{S}_x \R^2$.
\end{itemize}
Moreover, throughout the section, we will assume $f, g \in \mc{S}_x \R^2$ and $\phi, \psi \in \mc{C}^\infty_t \mc{S}_x \mc{N}$.

We will adopt the notations of the previous sections whenever convenient.
In particular, this applies to many of the integral norms we use here, such as the norms in Sections \ref{sec.fol} and \ref{sec.cfol}, which can be applied to any $\phi \in \mc{C}^\infty_t \mc{S}_x \mc{N}$.

For consistency, we also adopt the following notational conventions:
\begin{itemize}
\item Let $\partial$ denote the usual Euclidean gradient on $\R^2$.

\item Let $\partial_t$ denote the $t$-partial derivative on $\mc{N} = [0, \delta] \times \R^2$.
\end{itemize}
Thus, the $N^1_{t, x}$-norm is given by
\[ \| \phi \|_{ N^1_{t, x} } = \| \partial_t \phi \|_{ L^{2, 2}_{t, x} } + \| \partial \phi \|_{ L^{2, 2}_{t, x} } + \| \phi \|_{ L^{2, 2}_{t, x} } \text{.} \]

\subsection{Littlewood-Paley Theory} \label{sec.eucl_lp}

We very briefly review some basic elements of the classical dyadic Littlewood-Paley theory on $\R^2$.
\begin{itemize}
\item Fix a cutoff function $\varsigma \in \mc{S}_x \R^2$, supported in the annulus $1/2 \leq | \xi | \leq 2$, and define $\varsigma_k \in \mc{S}_x \R^2$, $k \in \Z$, by $\varsigma_k ( \xi ) = \varsigma ( 2^{-k} \xi )$, such that
\[ \sum_{k \in \Z} \varsigma_k = \chi_{ \R^2 \setminus \{ (0, 0) \} } \text{.} \]

\item For any $k \in \Z$, the L-P ``projection" $E_k$ is defined as a Fourier multiplier operator, $\mc{F} ( E_k f ) = \varsigma_k \mc{F} f$, where $\mc{F}$ is the usual Fourier transform on $\R^2$.

\item In addition, for any $k \in \Z$, we can define the operators
\[ E_{\sim k} = E_{k - 1} + E_k + E_{k + 1} \text{,} \qquad E_{< k} = \sum_{ l = -\infty }^{k - 1} E_l \text{,} \qquad E_{\lesssim k} = E_{< k + 1} \text{.} \]
\end{itemize}

The properties satisfied by these multipliers are well-known; see, e.g., \cite{kl:intro_anal, tao:disp_eq}.
Here, we list some basic properties we will use later.
\begin{itemize}
\item \emph{Almost Orthogonality:} If $k_1, k_2 \in \Z$ and $| k_1 - k_2 | > 1$, then $E_{k_1} E_{k_2} \equiv 0$.
This implies for any $k \in \Z$ the self-replication properties
\[ E_k = E_k E_{\sim k} \text{,} \qquad E_{< k} = E_{< k} E_{\lesssim k} \text{.} \]

\item \emph{Boundedness:} For any $q \in [1, \infty]$ and $k \in \Z$,
\begin{equation} \label{eqc.glp_bdd} \| E_k f \|_{ L^q_x } + \| E_{< k} f \|_{ L^q_x } + \| E_{\geq k} f \|_{ L^q_x } \lesssim \| f \|_{ L^q_x } \text{.} \end{equation}

\item \emph{Finite Band:} For any $q \in [1, \infty]$ and $k \in \Z$,
\begin{align}
\label{eqc.glp_fb} \| \partial E_{< 0} f \|_{ L^q_x } + 2^{-k} \| \partial E_k f \|_{ L^q_x } \lesssim \| f \|_{ L^q_x } \text{,} \qquad 2^k \| E_k f \|_{ L^q_x } \lesssim \| \partial f \|_{ L^q_x } \text{.}
\end{align}

\item \emph{Bernstein Inequalities:} If $q, q^\prime \in [1, \infty]$, $q \leq q^\prime$, and $k \in \Z$, then
\begin{equation} \label{eqc.glp_bernstein} \| E_{< 0} f \|_{ L^{q^\prime}_x } \lesssim \| f \|_{ L^q_x } \text{,} \qquad \| E_k f \|_{ L^{q^\prime}_x } \lesssim 2^{ 2 k ( \frac{1}{q} - \frac{1}{q^\prime} ) } \| f \|_{ L^q_x } \text{.} \end{equation}
\end{itemize}

Another important point is to observe that we can write $2^k E_k = \partial \tilde{E}_k$ for any $k \in \Z$, where the operator $\tilde{E}_k$ is like $E_k$ but is constructed from a slightly different Fourier multiplier.
This new operator $\tilde{E}_k$ satisfies the many of the same properties as $E_k$, including having the same Fourier support as $E_k$ and sharing the basic estimates \eqref{eqc.glp_bdd}-\eqref{eqc.glp_bernstein} (although with different constants).

We extend these L-P operators to act on functions on $\mc{N}$ by taking the appropriate Fourier localization of $\phi$ with respect to only the spatial components.

L-P operators provide a practical way to express fractional Sobolev and Besov norms on $\R^2$.
Given $s \in \R$, the standard $L^2$-based Sobolev norms satisfy
\[ \| f \|_{ H^s_x }^2 = \| \langle \partial \rangle^s f \|_{ L^2_x }^2 \simeq_s \sum_{k \geq 0} 2^{2sk} \| E_k f \|_{ L^2_x }^2 + \| E_{< 0} f \|_{ L^2_x }^2 \text{.} \]
Next, given any $a \in [1, \infty)$ and $s \in \R$, we define the general Besov-type norms
\begin{align*}
\| f \|_{ B^{a, s}_{\ell, x} }^a &= \| f \|_{ B^s_{2, a} }^a = \sum_{k \geq 0} 2^{ask} \| E_k f \|_{ L^2_x }^a + \| E_{< 0} f \|_{ L^2_x }^a \text{,} \\
\| f \|_{ B^{\infty, s}_{\ell, x} } &= \| f \|_{ B^s_{2, \infty} } = \max \left( \sup_{k \geq 0} \| E_k f \|_{ L^2_x }, \| E_{< 0} f \|_{ L^2_x } \right) \text{.}
\end{align*}
Similarly, if in addition $p \in [1, \infty]$, then we can define the norms
\begin{align*}
\| \phi \|_{ B^{a, p, s}_{\ell, t, x} } &= \sum_{k \geq 0} 2^{ask} \| E_k \phi \|_{ L^{p, 2}_{t, x} }^a + \| E_{< 0} \phi \|_{ L^{p, 2}_{t, x} }^a \text{,} \\
\| \phi \|_{ B^{\infty, p, s}_{\ell, t, x} } &= \max \left( \sup_{k \geq 0} \| E_k \phi \|_{ L^{p, 2}_{t, x} }, \| E_{< 0} \phi \|_{ L^{p, 2}_{t, x} } \right) \text{.}
\end{align*}

For example, recall that one has the following sharp embedding estimate:
\begin{equation} \label{eq.besov_sh} \| \phi \|_{ L^{\infty, \infty}_{t, x} } \lesssim \| \phi \|_{ B^{1, \infty, 1}_{\ell, t, x} } \lesssim \| \partial \phi \|_{ B^{0, \infty, 1}_{\ell, t, x} } + \| \phi \|_{ L^{\infty, 2}_{t, x} } \text{.} \end{equation}

\subsection{Non-Integrated Product Estimates I} \label{sec.eucl_nint}

We begin the process of establishing the Euclidean analogues of our main bilinear product estimates.
We start here with the simpler non-integrated bilinear product estimates, for which we will require an L-P decomposition of only one of the factors.

\begin{lemma} \label{thmc.intertwining_prod}
Fix $k, l \geq 0$, let $f, g \in \mc{S}_x \R^2$, and let $\phi, \psi \in \mc{C}^\infty_t \mc{S}_x \mc{N}$.
\begin{itemize}
\item The following estimates hold:
\begin{align}
\label{eqc.intertwining_prod_elem} \| E_k ( f \cdot E_l g ) \|_{ L^2_x } &\lesssim 2^{ - |k - l| } ( \| \partial f \|_{ L^2_x } + \| f \|_{ L^\infty_x } ) \| E_{\sim l} g \|_{ L^2_x } \text{,} \\
\notag \| E_k ( f \cdot E_{< 0} g ) \|_{ L^2_x } &\lesssim 2^{ - k } ( \| \partial f \|_{ L^2_x } + \| f \|_{ L^\infty_x } ) \| E_{\lesssim 0} g \|_{ L^2_x } \text{,} \\
\notag \| E_{< 0} ( f \cdot E_l g ) \|_{ L^2_x } &\lesssim 2^{ - l } ( \| \partial f \|_{ L^2_x } + \| f \|_{ L^\infty_x } ) \| E_{\sim l} g \|_{ L^2_x } \text{,} \\
\notag \| E_{< 0} ( f \cdot E_{< 0} g ) \|_{ L^2_x } &\lesssim \| f \|_{ L^\infty_x } \| E_{\lesssim 0} g \|_{ L^2_x } \text{.} 
\end{align}

\item If $\psi$ is $t$-parallel, then
\footnote{In other words, if $\psi (\tau, x) = \psi (0, x)$ for every $\tau \in [0, \delta]$ and $x \in \R^2$.}
\begin{align}
\label{eqc.intertwining_prod_imp} \| E_k ( \phi \cdot E_l \psi ) \|_{ L^{2, 2}_{t, x} } &\lesssim 2^{ - |k - l| } ( \| \partial \phi \|_{ L^{2, 2}_{t, x} } + \| \phi \|_{ L^{\infty, 2}_{x, t} } ) \| E_{\sim l} \psi [0] \|_{ L^2_x } \text{,} \\
\notag \| E_k ( \phi \cdot E_{< 0} \psi ) \|_{ L^{2, 2}_{t, x} } &\lesssim 2^{ - k } ( \| \partial \phi \|_{ L^{2, 2}_{t, x} } + \| \phi \|_{ L^{\infty, 2}_{x, t} } ) \| E_{\lesssim 0} \psi [0] \|_{ L^2_x } \text{,} \\
\notag \| E_{< 0} ( \phi \cdot E_l \psi ) \|_{ L^{2, 2}_{t, x} } &\lesssim 2^{ - l } ( \| \partial \phi \|_{ L^{2, 2}_{t, x} } + \| \phi \|_{ L^{\infty, 2}_{x, t} } ) \| E_{\sim l} \psi [0] \|_{ L^2_x } \text{,} \\
\notag \| E_{< 0} ( \phi \cdot E_{< 0} \psi ) \|_{ L^{2, 2}_{t, x} } &\lesssim \| \phi \|_{ L^{\infty, 2}_{x, t} } \| E_{\lesssim 0} \psi [0] \|_{ L^2_x } \text{.}
\end{align}
\end{itemize}
\end{lemma}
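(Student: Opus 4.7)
The plan is to establish the gain $2^{-|k-l|}$ by exploiting the Fourier support mismatch between $E_k$ and $E_l$ through a case analysis on the relative sizes of $k$ and $l$: a high-low regime $k \geq l + 3$, a low-high regime $k \leq l - 3$, and a diagonal regime $|k - l| \leq 2$. In the first two regimes, Fourier support considerations restrict $f$ (respectively $\phi$) to a single frequency band, $E_{\sim k} f$ or $E_{\sim l} f$, that can actually survive the product followed by $E_k$-projection; the gain then comes from combining the finite band property \eqref{eqc.glp_fb} with the Bernstein inequalities \eqref{eqc.glp_bernstein} applied to the ``good'' factor in each case.

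For the spatial estimates \eqref{eqc.intertwining_prod_elem}, in the high-low regime only $E_{\sim k} f$ contributes, and one applies H\"older's inequality using the Bernstein bound $\|E_l g\|_{L^\infty_x} \lesssim 2^l \|E_{\sim l} g\|_{L^2_x}$ together with finite band $\|E_{\sim k} f\|_{L^2_x} \lesssim 2^{-k}\|\partial f\|_{L^2_x}$, producing the factor $2^{-(k-l)}\|\partial f\|_{L^2_x}$. In the low-high regime only $E_{\sim l} f$ contributes, and the output's Fourier localization in $\{|\xi|\lesssim 2^k\}$ permits the low-frequency Bernstein bound $L^1_x \to L^2_x$ (gain $2^k$); paired with finite band on $E_{\sim l} f$ (gain $2^{-l}$) and H\"older on the product in $L^1_x$, this yields the desired $2^{k-l}$ factor. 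In the diagonal regime the crude bound $\|E_k(f \cdot E_l g)\|_{L^2_x} \lesssim \|f\|_{L^\infty_x} \|E_{\sim l} g\|_{L^2_x}$ suffices since $2^{-|k-l|} \simeq 1$. The variants involving $E_{<0}$ are covered by the same template with the corresponding index effectively set to $-1$.

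For the spacetime estimates \eqref{eqc.intertwining_prod_imp}, the $t$-parallelism of $\psi$ gives $\|E_l\psi(t,\cdot)\|_{L^q_x} = \|E_l\psi[0]\|_{L^q_x}$, so the high-low and low-high arguments from the previous paragraph propagate verbatim once $L^2_t$ is taken outside the spatial bounds. The delicate point, and the main obstacle, is the diagonal regime: the naive application of H\"older's inequality with $L^\infty_x$ on $E_l\psi$ produces $\|\phi\|_{L^{2,2}_{t,x}}$ rather than the required $\|\phi\|_{L^{\infty,2}_{x,t}}$, and these norms are not comparable on $\R^2$. To circumvent this I would use the convolution representation $E_k h = K_k * h$ with $\|K_k\|_{L^1_x}$ bounded uniformly in $k$; the pointwise estimate $|E_k(\phi \cdot E_l\psi)(t,x)| \leq \int |K_k(x-y)|\,|\phi(t,y)|\,|E_l\psi(y)|\,dy$ allows one to take $L^2_t$ first, pull $\|\phi(\cdot,y)\|_{L^2_t} \leq \|\phi\|_{L^{\infty,2}_{x,t}}$ out uniformly in $y$, and then apply Young's inequality in $x$ against $|K_k|$ to close the bound against $\|E_{\sim l}\psi[0]\|_{L^2_x}$. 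This convolution-kernel argument is the one genuinely new ingredient beyond the spatial case, and the remaining three estimates in each block (with $E_{<0}$ in place of $E_k$ or $E_l$) follow from the same case split.
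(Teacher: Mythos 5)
Your argument is correct, but it takes a genuinely different route from the paper. The paper proves all four estimates in each block with only a two-way case split ($l \geq k$ versus $l < k$), using the identity $2^l E_l = \partial \tilde{E}_l$ together with the Leibniz rule: when $l \geq k$, one writes $E_k(\phi \cdot E_l \psi) = 2^{-l}[E_k \partial(\phi \tilde{E}_l\psi) - E_k(\partial\phi \cdot \tilde{E}_l\psi)]$ and then applies finite band to the first term and Bernstein to the second; when $l < k$, one applies finite band to $E_k$ directly and Leibniz inside. This handles the diagonal automatically, and the $L^{\infty,2}_{x,t}$ factor of $\phi$ emerges from a Fubini and H\"older step $\|\phi \tilde{E}_l\psi\|_{L^{2,2}_{t,x}} \leq \|\phi\|_{L^{\infty,2}_{x,t}}\|\tilde{E}_l\psi\|_{L^{2,\infty}_{x,t}}$, which is available precisely because $\psi$ is $t$-independent. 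Your approach instead uses Fourier-support disjointness to reduce $f$ (or $\phi$) to a single band $E_{\approx k}$ or $E_{\approx l}$, which forces a third ``diagonal'' case where support arguments give no restriction; that is a standard and valid alternative, though you should widen the window $E_{\sim k}$ slightly (e.g.\ to $|m-k|\le 3$) when verifying which blocks of $f$ actually contribute. The one place you over-engineer is the diagonal case: the convolution-kernel/Minkowski/Young argument works, but the same Fubini observation the paper uses — namely that for $t$-independent $\psi$, $\|E_k(\phi E_l\psi)\|_{L^{2,2}_{t,x}} \lesssim \|\phi E_l\psi\|_{L^{2,2}_{t,x}} = (\int_{\R^2} |E_l\psi(x)|^2 \int_0^\delta |\phi(\tau,x)|^2\, d\tau\, dx)^{1/2} \leq \|\phi\|_{L^{\infty,2}_{x,t}}\|E_{\sim l}\psi[0]\|_{L^2_x}$ — closes the diagonal case in one line without any kernel estimates. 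The net effect of the two methods is the same, but the paper's two-case decomposition is tidier, while your Fourier-support restriction is perhaps more transparent about where the $2^{-|k-l|}$ gain comes from.
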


\begin{proof}
We will only prove the first estimate in each set, as the estimates containing low-frequency projections tend to be easier and can be proved similarly.

Suppose first that $l \geq k$.
In the case of \eqref{eqc.intertwining_prod_elem}, we obtain
\begin{align*}
\| E_k ( f P_l g ) \|_{ L^2_x } &\lesssim 2^{-l} \| E_k ( f \partial \tilde{E}_l g ) \|_{ L^2_x } \lesssim 2^{-l} [ \| E_k \partial ( f \tilde{E}_l g ) \|_{ L^2_x } + \| E_k ( \partial f \cdot \tilde{E}_l g ) \|_{ L^2_x } ] \text{.}
\end{align*}
The two terms on the right-hand side are treated using \eqref{eqc.glp_fb} and \eqref{eqc.glp_bernstein}:
\begin{align*}
\| E_k ( f E_l g ) \|_{ L^2_x } &\lesssim 2^{k - l} [ \| f \tilde{E}_l g \|_{ L^2_x } + \| \partial f \cdot \tilde{E}_l g \|_{ L^1_x } ] \\
&\lesssim 2^{ - |k - l| } ( \| f \|_{ L^\infty_x } + \| \partial f \|_{ L^2_x } ) \| E_{\sim l} g \|_{ L^2_x } \text{.}
\end{align*}
Similarly, for \eqref{eqc.intertwining_prod_imp}, we have
\begin{align*}
\| E_k ( \phi E_l \psi ) \|_{ L^{2, 2}_{t, x} } &\lesssim 2^{-l} [ \| E_k \partial ( \phi \tilde{E}_l \psi ) \|_{ L^{2, 2}_{t, x} } + \| E_k ( \partial \phi \cdot \tilde{E}_l \psi ) \|_{ L^{2, 2}_{t, x} } ] \\
&\lesssim 2^{k - l} [ \| \phi \tilde{E}_l \psi \|_{ L^{2, 2}_{t, x} } + \| \partial \phi \cdot \tilde{E}_l \psi \|_{ L^{2, 1}_{t, x} } ] \\
&\lesssim 2^{ - |k - l| } ( \| \phi \|_{ L^{\infty, 2}_{x, t} } + \| \partial \phi \|_{ L^{2, 2}_{t, x} } ) \| E_{\sim l} \psi \|_{ L^{2, \infty}_{x, t} } \text{,}
\end{align*}
where we again applied \eqref{eqc.glp_fb} and \eqref{eqc.glp_bernstein}.
Since $E_{\sim l} \psi$ is independent of the $t$-variable,
\[ \| E_{\sim l} \psi \|_{ L^{2, \infty}_{x, t} } = \| E_{\sim l} \psi [0] \|_{ L^2_x } \text{.} \]

Next, consider the remaining case $l < k$.
First, for \eqref{eqc.intertwining_prod_elem}, we apply \eqref{eqc.glp_fb}:
\begin{align*}
\| E_k ( f E_l g ) \|_{ L^2_x } &\lesssim 2^{-k} \| \partial ( f E_l g ) \|_{ L^2_x } \lesssim 2^{-k} ( \| \partial f \|_{ L^2_x } \| E_l g \|_{ L^\infty_x } + \| f \|_{ L^\infty_x } \| \partial E_l g \|_{ L^2_x } ) \text{.}
\end{align*}
By \eqref{eqc.glp_fb} and \eqref{eqc.glp_bernstein}, then
\begin{align*}
\| E_k ( f E_l g ) \|_{ L^2_x } &\lesssim 2^{-k + l} ( \| \partial f \|_{ L^2_x } + \| f \|_{ L^\infty_x } ) \| E_{\sim l} g \|_{ L^2_x } \\
&= 2^{- |k - l|} ( \| \partial f \|_{ L^2_x } + \| f \|_{ L^\infty_x } ) \| E_{\sim l} g \|_{ L^2_x } \text{.}
\end{align*}
Similarly, for \eqref{eqc.intertwining_prod_imp}, we apply \eqref{eqc.glp_fb} and \eqref{eqc.glp_bernstein} once again to obtain
\begin{align*}
\| E_k ( \phi E_l \psi ) \|_{ L^{2, 2}_{t, x} } &\lesssim 2^{-k} ( \| \partial \phi \|_{ L^{2, 2}_{t, x} } \| E_l \psi \|_{ L^{\infty, \infty}_{t, x} } + \| \phi \|_{ L^{\infty, 2}_{x, t} } \| \partial E_l \psi \|_{ L^{2, \infty}_{x, t} } ) \\
&\lesssim 2^{- |k - l|} ( \| \partial \phi \|_{ L^{2, 2}_{t, x} } + \| \phi \|_{ L^{\infty, 2}_{x, t} } ) \| E_{\sim l} \psi [0] \|_{ L^2_x } \text{.}
\end{align*}
Once again, we used that $\psi$ and $\partial E_{\sim l} \psi$ are independent of $t$.
\end{proof}

We now apply Lemma \ref{thmc.intertwining_prod} to prove the desired product estimates.

\begin{proposition} \label{thmc.est_prod}
Let $a \in [1, \infty]$, and let $s \in (-1, 1)$.
\begin{itemize}
\item If $p \in [1, \infty]$, and if $\phi, \psi \in \mc{C}^\infty_t \mc{S}_x \mc{N}$, then
\begin{align}
\label{eqc.est_prod_elem} \| \phi \cdot \psi \|_{ B^{a, p, s}_{\ell, t, x} } &\lesssim_s ( \| \partial \phi \|_{ L^{\infty, 2}_{t, x} } + \| \phi \|_{ L^{\infty, \infty}_{t, x} } ) \| \psi \|_{ B^{a, p, s}_{\ell, t, x} } \text{.}
\end{align}

\item If $\phi, \psi \in \mc{C}^\infty_t \mc{S}_x \mc{N}$, and if $\psi$ is $t$-parallel, then
\begin{align}
\label{eqc.est_prod_imp} \| \phi \cdot \psi \|_{ B^{a, 2, s}_{\ell, t, x} } &\lesssim_s ( \| \partial \phi \|_{ L^{2, 2}_{t, x} } + \| \phi \|_{ L^{\infty, 2}_{x, t} } ) \| \psi [0] \|_{ B^{a, s}_{\ell, x} } \text{.}
\end{align}
\end{itemize}
\end{proposition}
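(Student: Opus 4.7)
The plan is to establish both estimates as straightforward consequences of Lemma \ref{thmc.intertwining_prod}, by Littlewood-Paley decomposing only the factor $\psi$ and summing the frequency-localized bounds via Schur's test. The hypothesis $s \in (-1, 1)$ will be used precisely at the summation step.

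First, I would fix $k \geq 0$ and write
\[
E_k ( \phi \psi ) = E_k ( \phi \cdot E_{<0} \psi ) + \sum_{l \geq 0} E_k ( \phi \cdot E_l \psi ) \text{,}
\]
with an analogous splitting for $E_{<0}(\phi \psi)$. For \eqref{eqc.est_prod_elem}, I would apply the spatial estimates \eqref{eqc.intertwining_prod_elem} pointwise in $t$ and then take $L^p_t$-norms, using H\"older's inequality with the $L^\infty_t$-factor absorbed into $\|\partial \phi\|_{ L^{\infty, 2}_{t, x} } + \|\phi\|_{ L^{\infty, \infty}_{t, x} }$, yielding
\[
\| E_k ( \phi \psi ) \|_{ L^{p, 2}_{t, x} } \lesssim M \paren{ \sum_{l \geq 0} 2^{-|k - l|} \| E_{\sim l} \psi \|_{ L^{p, 2}_{t, x} } + 2^{-k} \| E_{\lesssim 0} \psi \|_{ L^{p, 2}_{t, x} } }\text{,}
\]
where $M = \| \partial \phi \|_{ L^{\infty, 2}_{t, x} } + \| \phi \|_{ L^{\infty, \infty}_{t, x} }$. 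A parallel bound would be derived for $\| E_{<0}(\phi\psi) \|_{L^{p,2}_{t,x}}$.

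Next, to obtain \eqref{eqc.est_prod_elem} I would multiply by $2^{sk}$ and take $\ell^a$-norms in $k \geq 0$. Since $|s| < 1$, the discrete convolution kernel $2^{sk} \cdot 2^{-|k-l|} \cdot 2^{-sl}$ is summable uniformly in both variables, so Schur's test (or Young's convolution inequality on $\ell^a(\Z)$) produces
\[
\| \phi \psi \|_{ B^{a, p, s}_{\ell, t, x} } \lesssim_s M \cdot \| \psi \|_{ B^{a, p, s}_{\ell, t, x} }\text{,}
\]
which is the desired estimate. The low-frequency output terms are absorbed using the $2^{-k}$ decay, again enabled by $s < 1$.

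For \eqref{eqc.est_prod_imp}, the argument is essentially identical but uses \eqref{eqc.intertwining_prod_imp} in place of \eqref{eqc.intertwining_prod_elem}. Since $\psi$ is $t$-parallel, each $E_{\sim l} \psi$ is independent of $t$, so the right-hand sides of \eqref{eqc.intertwining_prod_imp} carry $\| E_{\sim l} \psi [0] \|_{ L^2_x }$ directly, and after multiplying by $2^{sk}$ and summing in $k \geq 0$ by the same Schur-test argument, I recover the Besov norm $\| \psi[0] \|_{ B^{a, s}_{\ell, x} }$ of the initial data. The main (and only) obstacle is the routine but slightly fiddly bookkeeping at the low-frequency endpoints, where one must verify that the $2^{-k}$-type tails combine with $\| E_{\lesssim 0} \psi \|$ or $\| E_{\lesssim 0} \psi[0] \|$ contributions without losing summability; this is automatic as long as $s > -1$.
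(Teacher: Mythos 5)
Your proposal is correct and follows essentially the same route as the paper: decompose only $\psi$ into Littlewood--Paley pieces, feed the pieces into Lemma~\ref{thmc.intertwining_prod}, and sum. The only cosmetic difference is that you close the summation by invoking Schur's test / Young's inequality on $\ell^a$ directly for all $a\in[1,\infty]$, whereas the paper proves the endpoint cases $a=1$ and $a=\infty$ explicitly and then interpolates; these are interchangeable since Young's $\ell^1$-$\ell^a$ convolution bound is precisely the $a=1$/$a=\infty$ computation plus the standard interpolation step, and both use $|s|<1$ in the same place (summability of $2^{s(k-l)-|k-l|}$) and $s>-1$ and $s<1$ in the low-frequency tails exactly as you describe.
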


\begin{proof}
It suffices to prove the above for $a = 1$ and $a = \infty$, as the remaining cases then follow from standard interpolation techniques.

For \eqref{eqc.est_prod_elem}, if $a = 1$, we decompose and estimate using \eqref{eqc.intertwining_prod_elem}:
\begin{align*}
\| \phi \psi \|_{ B^{1, p, s}_{\ell, t, x} } &\lesssim \sum_{k, l \geq 0} 2^{s k} \| E_k ( \phi E_l \psi ) \|_{ L^{p, 2}_{t, x} } + \sum_{k \geq 0} 2^{s k} \| E_k ( \phi E_{< 0} \psi ) \|_{ L^{p, 2}_{t, x} } \\
&\qquad + \sum_{l \geq 0} \| E_{< 0} ( \phi E_l \psi ) \|_{ L^{p, 2}_{t, x} } + \| E_{< 0} ( \phi E_{< 0} \psi ) \|_{ L^{p, 2}_{t, x} } \\
&\lesssim ( \| \partial \phi \|_{ L^{\infty, 2}_{t, x} } + \| \phi \|_{ L^{\infty, \infty}_{t, x} } ) \sum_{l \geq 0} \paren{ \sum_{k \geq 0} 2^{s k} 2^{- |k - l|} + 2^{-l} } \| E_{\sim l} \psi \|_{ L^{p, 2}_{t, x} } \\
&\qquad + ( \| \partial \phi \|_{ L^{\infty, 2}_{t, x} } + \| \phi \|_{ L^{\infty, \infty}_{t, x} } ) \paren{ \sum_{k \geq 0} 2^{(s - 1) k} + 1 } \| E_{\lesssim 0} \psi \|_{ L^{p, 2}_{t, x} } \\
&\lesssim ( \| \partial \phi \|_{ L^{\infty, 2}_{t, x} } + \| \phi \|_{ L^{\infty, \infty}_{t, x} } ) \| \psi \|_{ B^{1, p, s}_{\ell, t, x} } \text{.}
\end{align*}
Similarly, if $a = \infty$, then again by \eqref{eqc.intertwining_prod_elem},
\begin{align*}
\| \phi \psi \|_{ B^{\infty, p, s}_{\ell, t, x} } &\lesssim ( \| \partial \phi \|_{ L^{\infty, 2}_{t, x} } + \| \phi \|_{ L^{\infty, \infty}_{t, x} } ) \sup_{k \geq 0} \brak{ \sum_{l \geq 0} 2^{s (k - l)} 2^{- |k - l|} + 2^{(s - 1) k} }  \| \psi \|_{ B^{\infty, p, s}_{\ell, t, x} } \\
&\qquad + ( \| \partial \phi \|_{ L^{\infty, 2}_{t, x} } + \| \phi \|_{ L^{\infty, \infty}_{t, x} } ) \paren{ \sum_{l \geq 0} 2^{-l} + 1 } \| \psi \|_{ B^{\infty, p, s}_{\ell, t, x} } \\
&\lesssim ( \| \partial \phi \|_{ L^{\infty, 2}_{t, x} } + \| \phi \|_{ L^{\infty, \infty}_{t, x} } ) \| \psi \|_{ B^{\infty, p, s}_{\ell, t, x} } \text{.}
\end{align*}

The corresponding estimates for \eqref{eqc.est_prod_imp} are similarly proved, using \eqref{eqc.intertwining_prod_imp}.
\end{proof}

\subsection{Integrated Product Estimates I} \label{sec.eucl_int}

Next, we look at simple \emph{integrated} bilinear product estimates, which are similar to those in Theorem \ref{thmc.est_prod}, but also contain the $t$-integral operator $\cint^t_0$.
The key steps are similar to those in Section \ref{sec.eucl_nint}.

\begin{remark}
These integrated estimates, at least in the case $a = 1$ and $s = 0$, can also be found in \cite[Sect. 3]{kl_rod:stt}.
We include them here for completeness.
\end{remark}

\begin{lemma} \label{thmc.intertwining_trace}
Fix $k, l \geq 0$, and let $\phi, \psi \in C^\infty_t \mc{S}_x \mc{N}$.
\begin{itemize}
\item The following estimates hold:
\begin{align}
\label{eqc.intertwining_trace_sh} \| E_k \cint^t_0 ( \phi \cdot E_l \psi ) \|_{ L^{\infty, 2}_{t, x} } &\lesssim 2^{ - |k - l| } ( \| \partial \phi \|_{ L^{2, 2}_{t, x} } + \| \phi \|_{ L^{\infty, 2}_{x, t} } ) \| E_{\sim l} \psi \|_{ L^{2, 2}_{t, x} } \text{,} \\
\notag \| E_k \cint^t_0 ( \phi \cdot E_{< 0} \psi ) \|_{ L^{\infty, 2}_{t, x} } &\lesssim 2^{ - k } ( \| \partial \phi \|_{ L^{2, 2}_{t, x} } + \| \phi \|_{ L^{\infty, 2}_{x, t} } ) \| E_{\lesssim 0} \psi \|_{ L^{2, 2}_{t, x} } \text{,} \\
\notag \| E_{< 0} \cint^t_0 ( \phi \cdot E_l \psi ) \|_{ L^{\infty, 2}_{t, x} } &\lesssim 2^{ - l } ( \| \partial \phi \|_{ L^{2, 2}_{t, x} } + \| \phi \|_{ L^{\infty, 2}_{x, t} } ) \| E_{\sim l} \psi \|_{ L^{2, 2}_{t, x} } \text{,} \\
\notag \| E_{< 0} \cint^t_0 ( \phi \cdot E_{< 0} \psi ) \|_{ L^{\infty, 2}_{t, x} } &\lesssim \| \phi \|_{ L^{\infty, 2}_{x, t} } \| E_{\lesssim 0} \psi \|_{ L^{2, 2}_{t, x} } \text{.}
\end{align}

\item The following estimates hold:
\begin{align}
\label{eqc.intertwining_trace_shp} \| E_k ( \phi \cdot \cint^t_0 E_l \psi ) \|_{ L^{2, 2}_{t, x} } &\lesssim 2^{ - |k - l| } ( \| \partial \phi \|_{ L^{2, 2}_{t, x} } + \| \phi \|_{ L^{\infty, 2}_{x, t} } ) \| E_{\sim l} \psi \|_{ L^{1, 2}_{t, x} } \text{,} \\
\notag \| E_k ( \phi \cdot \cint^t_0 E_{< 0} \psi ) \|_{ L^{2, 2}_{t, x} } &\lesssim 2^{ - k } ( \| \partial \phi \|_{ L^{2, 2}_{t, x} } + \| \phi \|_{ L^{\infty, 2}_{x, t} } ) \| E_{\lesssim 0} \psi \|_{ L^{1, 2}_{t, x} } \text{,} \\
\notag \| E_{< 0} ( \phi \cdot \cint^t_0 E_l \psi ) \|_{ L^{2, 2}_{t, x} } &\lesssim 2^{ - l } ( \| \partial \phi \|_{ L^{2, 2}_{t, x} } + \| \phi \|_{ L^{\infty, 2}_{x, t} } ) \| E_{\sim l} \psi \|_{ L^{1, 2}_{t, x} } \text{,} \\
\notag \| E_{< 0} ( \phi \cdot \cint^t_0 E_{< 0} \psi ) \|_{ L^{2, 2}_{t, x} } &\lesssim \| \phi \|_{ L^{\infty, 2}_{x, t} } \| E_{\lesssim 0} \psi \|_{ L^{1, 2}_{t, x} } \text{.}
\end{align}
\end{itemize}
\end{lemma}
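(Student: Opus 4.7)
The plan is to mirror the proof of Lemma \ref{thmc.intertwining_prod} as closely as possible, making two modifications to accommodate the presence of $\cint^t_0$. As in that proof, for both estimates I would reduce to the leading cases (say, the first inequality in each block), handle the high-low split $l \geq k$ versus $l < k$ separately, and use one of the identities $2^l E_l = \partial \tilde{E}_l$ or $2^k E_k = \partial \tilde{E}_k$ to produce an integration by parts that generates the decay factor $2^{-|k-l|}$. The low-frequency variants (with $E_{<0}$ in place of $E_k$ or $E_l$) are strictly easier and follow the same template.

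For \eqref{eqc.intertwining_trace_sh}, I would first use that $E_k$ commutes with $\cint^t_0$ (the L--P operators act only in the spatial variables) and reduce to estimating $\|E_k(\phi E_l\psi)\|$ in an appropriate space-time norm. The cleanest route is duality: writing
\[
\|E_k \cint^\tau_0(\phi E_l\psi)\|_{L^{\infty,2}_{t,x}} = \sup_{\tau,\,\|g\|_{L^2_x}=1} \left|\int_0^\tau \int_{\R^2} \phi(s,x)\, E_l\psi(s,x)\, E_k g(x)\, dx\, ds\right|,
\]
I would integrate by parts in $x$, transferring a derivative between $E_l\psi$ and $E_k g$ (via $E_l\psi = 2^{-l}\partial\tilde{E}_l\psi$ when $l \geq k$, or $E_k g = 2^{-k}\partial\tilde{E}_k g$ when $l < k$), producing the prefactor $2^{-\max(k,l)}$. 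In the resulting terms, the spatial Bernstein inequality applied to $E_k g$ or $\partial E_k g$ contributes a factor $2^k$, giving $2^{-|k-l|}$ overall. The term containing $\partial\phi$ is handled by Hölder in $x$ and Cauchy--Schwarz in $s$, producing $\|\partial\phi\|_{L^{2,2}_{t,x}}\|\tilde{E}_l\psi\|_{L^{2,2}_{t,x}}$ directly.

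For \eqref{eqc.intertwining_trace_shp}, the roles of $\cint^t_0$ and the multiplication are reversed, but the same dualization works: pairing against a unit $g \in L^{2,2}_{t,x}$ and writing $\cint^\tau_0 E_l\psi = \int_0^\tau E_l\psi(s,\cdot)\,ds$, one interchanges the order of integration via $\int_0^\delta \int_0^\tau f(s,\tau)\, ds\, d\tau = \int_0^\delta \int_s^\delta f(s,\tau)\, d\tau\, ds$. After the same integration-by-parts step, the inner $\tau$-integral now groups $\phi \cdot \partial E_k g$ (or $\partial\phi \cdot E_k g$), and $s$ becomes the outer time variable paired with $\tilde{E}_l\psi$.

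The main obstacle, present in both estimates, is producing the norm $\|\phi\|_{L^{\infty,2}_{x,t}}$ on the right-hand side rather than an $L^{\infty,\infty}_{t,x}$ bound; a naive application of Hölder in time costs a factor of $\delta^{1/2}$ that must not appear. The trick is to perform the Cauchy--Schwarz step pointwise in $x$, using
\[
\int_0^\delta |\phi(s,x)|\,|\tilde{E}_l\psi(s,x)|\,ds \;\leq\; \|\phi(\cdot,x)\|_{L^2_t}\,\|\tilde{E}_l\psi(\cdot,x)\|_{L^2_t},
\]
and only afterwards taking $\sup_x$ on the first factor and $\|\cdot\|_{L^2_x}$ on the second. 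This yields $\|\phi\|_{L^{\infty,2}_{x,t}}\,\|\tilde{E}_l\psi\|_{L^{2,2}_{t,x}}$ with no $\delta$-dependent constant, and the estimate closes by noting that $\tilde{E}_l$ may be replaced by $E_{\sim l}$ via its Fourier support.
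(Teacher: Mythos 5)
Your proposal is correct and follows essentially the same line of argument as the paper: the integration-by-parts trick $E_l\psi = 2^{-l}\partial\tilde E_l\psi$ (resp.\ $E_k = 2^{-k}\partial\tilde E_k$) to generate the $2^{-|k-l|}$ decay, Bernstein to absorb the remaining frequency factors, and---crucially---the observation that the $t$-supremum/integral must be handled pointwise in $x$ before taking the $L^2_x$ norm (which you phrase as pointwise Cauchy--Schwarz in time and the paper packages as $\|\cint^t_0 \cdot\|_{L^{q,\infty}_{x,t}} \lesssim \|\cdot\|_{L^{q,1}_{x,t}}$ together with a Minkowski step). The only stylistic difference is that you present the computations in dual form by pairing against a unit $L^2$ test function, whereas the paper manipulates the mixed norms directly; the underlying estimates are identical.
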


\begin{proof}
Again, we only prove the first estimates in each set.

Assume first that $l \geq k$.
For \eqref{eqc.intertwining_trace_sh}, we have
\begin{align*}
\| E_k \cint^t_0 ( \phi E_l \psi ) \|_{ L^{\infty, 2}_{t, x} } &\lesssim 2^{-l} [ \| E_k \partial \cint^t_0 ( \phi \tilde{E}_l \psi ) \|_{ L^{\infty, 2}_{t, x} } + \| E_k \cint^t_0 ( \partial \phi \cdot \tilde{E}_l \psi ) \|_{ L^{\infty, 2}_{t, x} } ] \\
&\lesssim 2^{k - l} [ \| \cint^t_0 ( \phi \tilde{E}_l \psi ) \|_{ L^{2, \infty}_{x, t} } + \| \cint^t_0 ( \partial \phi \cdot \tilde{E}_l \psi ) \|_{ L^{1, \infty}_{x, t} } ] \text{,}
\end{align*}
where in the last step, we applied \eqref{eqc.glp_fb} and \eqref{eqc.glp_bernstein}.
By the definition of $\cint^t_0$,
\begin{align*}
\| E_k \cint^t_0 ( \phi E_l \psi ) \|_{ L^{\infty, 2}_{t, x} } &\lesssim 2^{k - l} [ \| \phi \tilde{E}_l \psi \|_{ L^{2, 1}_{x, t} } + \| \partial \phi \cdot \tilde{E}_l \psi \|_{ L^{1, 1}_{t, x} } ] \\
&\lesssim 2^{ - |k - l| } ( \| \phi \|_{ L^{\infty, 2}_{x, t} } + \| \partial \phi \|_{ L^{2, 2}_{t, x} } ) \| E_{\sim l} \psi \|_{ L^{2, 2}_{t, x} } \text{,}
\end{align*}
as desired.
Similarly, in the case of \eqref{eqc.intertwining_trace_shp}, we have
\begin{align*}
\| E_k ( \phi \cint^t_0 E_l \psi ) \|_{ L^{2, 2}_{t, x} } &\lesssim 2^{-l} [ \| E_k \partial ( \phi \cint^t_0 \tilde{E}_l \psi ) \|_{ L^{2, 2}_{t, x} } + \| E_k ( \partial \phi \cint^t_0 \tilde{E}_l \psi ) \|_{ L^{2, 2}_{t, x} } ] \\
&\lesssim 2^{k - l} ( \| \phi \cint^t_0 \tilde{E}_l \psi \|_{ L^{2, 2}_{t, x} } + \| \partial \phi \cint^t_0 \tilde{E}_l \psi \|_{ L^{2, 1}_{t, x} } ) \\
&\lesssim 2^{k - l} ( \| \phi \|_{ L^{\infty, 2}_{x, t} } + \| \partial \phi \|_{ L^{2, 2}_{t, x} } ) \| \cint^t_0 \tilde{E}_l \psi \|_{ L^{2, \infty}_{x, t} } \text{.}
\end{align*}
By the definition of $\cint^t_0$ and the Minkowski integral inequality, we obtain
\[ \| E_k ( \phi \cint^t_0 E_l \psi ) \|_{ L^{2, 2}_{t, x} } \lesssim 2^{ - |k - l| } ( \| \phi \|_{ L^{\infty, 2}_{x, t} } + \| \partial \phi \|_{ L^{2, 2}_{t, x} } ) \| E_{\sim l} \psi \|_{ L^{1, 2}_{t, x} } \text{.} \]

Next, assume $l < k$.
For \eqref{eqc.intertwining_trace_sh}, first
\begin{align*}
\| E_k \cint^t_0 ( \phi E_l \psi ) \|_{ L^{\infty, 2}_{t, x} } &\lesssim 2^{-k} [ \| \cint^t_0 ( \partial \phi \cdot E_l \psi ) \|_{ L^{\infty, 2}_{t, x} } + \| \cint^t_0 ( \phi \partial E_l \psi ) \|_{ L^{\infty, 2}_{t, x} } ] \\
&\lesssim 2^{-k} ( \| \partial \phi \cdot E_l \psi \|_{ L^{2, 1}_{x, t} } + \| \phi \partial E_l \psi \|_{ L^{2, 1}_{x, t} } ) \text{.}
\end{align*}
Applying \eqref{eqc.glp_fb} and \eqref{eqc.glp_bernstein} yet again yields
\begin{align*}
\| E_k \cint^t_0 ( \phi E_l \psi ) \|_{ L^{\infty, 2}_{t, x} } &\lesssim 2^{-k} ( \| \partial \phi \|_{ L^{2, 2}_{t, x} } \| E_l \psi \|_{ L^{2, \infty}_{t, x} } + \| \phi \|_{ L^{\infty, 2}_{x, t} } \| \partial E_l \psi \|_{ L^{2, 2}_{t, x} } ) \\
&\lesssim 2^{- |k - l|} ( \| \partial \phi \|_{ L^{2, 2}_{t, x} } + \| \phi \|_{ L^{\infty, 2}_{x, t} } ) \| E_{\sim l} \psi \|_{ L^{2, 2}_{t, x} } \text{.}
\end{align*}
Similarly, for \eqref{eqc.intertwining_trace_shp}, we have
\begin{align*}
\| E_k ( \phi \cint^t_0 E_l \psi ) \|_{ L^{2, 2}_{t, x} } &\lesssim 2^{-k} [ \| \partial \phi \cdot \cint^t_0 E_l \psi \|_{ L^{2, 2}_{t, x} } + \| \phi \cint^t_0 \partial E_l \psi \|_{ L^{2, 2}_{t, x} } ] \\
&\lesssim 2^{-k} ( \| \partial \phi \|_{ L^{2, 2}_{t, x} } \| \cint^t_0 E_l \psi \|_{ L^{\infty, \infty}_{x, t} } + \| \phi \|_{ L^{\infty, 2}_{x, t} } \| \cint^t_0 \partial E_l \psi \|_{ L^{2, \infty}_{x, t} } ) \\
&\lesssim 2^{-k} ( \| \partial \phi \|_{ L^{2, 2}_{t, x} } \| E_l \psi \|_{ L^{\infty, 1}_{x, t} } + \| \phi \|_{ L^{\infty, 2}_{x, t} } \| \partial E_l \psi \|_{ L^{2, 1}_{x, t} } ) \\
&\lesssim 2^{- |k - l|} ( \| \partial \phi \|_{ L^{2, 2}_{t, x} } + \| \phi \|_{ L^{\infty, 2}_{x, t} } ) \| E_{\sim l} \psi \|_{ L^{1, 2}_{t, x} } \text{.} \qedhere
\end{align*}
\end{proof}

\begin{proposition} \label{thmc.est_trace}
For any $a \in [1, \infty]$, $s \in (-1, 1)$, and $\phi, \psi \in C^\infty_t \mc{S}_x \mc{N}$,
\begin{align}
\label{eqc.est_trace_sh} \| \cint^t_0 ( \phi \cdot \psi ) \|_{ B^{a, \infty, s}_{\ell, t, x} } &\lesssim_s ( \| \partial \phi \|_{ L^{2, 2}_{t, x} } + \| \phi \|_{ L^{\infty, 2}_{x, t} } ) \| \psi \|_{ B^{a, 2, s}_{\ell, t, x} } \text{,} \\
\label{eqc.est_trace_shp} \| \phi \cdot \cint^t_0 \psi \|_{ B^{a, 2, s}_{\ell, t, x} } &\lesssim_s ( \| \partial \phi \|_{ L^{2, 2}_{t, x} } + \| \phi \|_{ L^{\infty, 2}_{x, t} } ) \| \psi \|_{ B^{a, 1, s}_{\ell, t, x} } \text{.}
\end{align}
\end{proposition}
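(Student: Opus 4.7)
The proof will follow the same template as that of Proposition \ref{thmc.est_prod}, now using Lemma \ref{thmc.intertwining_trace} in place of Lemma \ref{thmc.intertwining_prod} as the source of frequency-by-frequency bounds. First I would observe that by standard real interpolation it suffices to establish \eqref{eqc.est_trace_sh} and \eqref{eqc.est_trace_shp} only in the two endpoint cases $a = 1$ and $a = \infty$, since the $B^{a,p,s}_{\ell,t,x}$-spaces form an interpolation scale in the $a$ parameter. This reduces the task to summing, respectively via an $\ell^1$- and $\ell^\infty$-norm in the output frequency index, the four estimates supplied by Lemma \ref{thmc.intertwining_trace}.

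For \eqref{eqc.est_trace_sh}, I would L-P decompose $\psi = \sum_{l \geq 0} E_l \psi + E_{<0} \psi$, apply the outer projection $E_k$ (or $E_{<0}$) to $\cint^t_0(\phi \cdot \psi)$, and bound the four resulting pieces by the four inequalities of \eqref{eqc.intertwining_trace_sh}. In the $a = 1$ case, taking the $\ell^1$-sum against $2^{sk}$ gives
\begin{align*}
\| \cint^t_0(\phi \cdot \psi) \|_{ B^{1, \infty, s}_{\ell, t, x} } &\lesssim ( \| \partial \phi \|_{ L^{2, 2}_{t, x} } + \| \phi \|_{ L^{\infty, 2}_{x, t} } ) \\
&\quad \times \Bigl( \sum_{l \geq 0} \Bigl[ \sum_{k \geq 0} 2^{sk} 2^{-|k-l|} + 2^{-l} \Bigr] \| E_{\sim l} \psi \|_{ L^{2, 2}_{t, x} } \\
&\qquad\quad + \Bigl[ \sum_{k \geq 0} 2^{(s-1)k} + 1 \Bigr] \| E_{\lesssim 0} \psi \|_{ L^{2, 2}_{t, x} } \Bigr) \text{.}
\end{align*}
Since $s \in (-1,1)$, the convolution sum $\sum_{k \geq 0} 2^{s k} 2^{-|k-l|}$ is dominated by $C_s \cdot 2^{sl}$, and the remaining sums converge, yielding the desired bound by $\| \psi \|_{ B^{1, 2, s}_{\ell, t, x} }$. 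The $a = \infty$ case is treated by replacing the $\ell^1$-sum in $k$ by a supremum and invoking the same two-sided exponential decay in $|k - l|$.

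The argument for \eqref{eqc.est_trace_shp} is structurally identical, with Lemma \ref{thmc.intertwining_trace}'s bounds \eqref{eqc.intertwining_trace_shp} substituted in; here the output is measured in $L^{p,2}_{t,x}$ with $p = 2$ rather than $p = \infty$, matching the $L^{1,2}_{t,x}$-type control on $\psi$ appearing on the right-hand side. The only nontrivial step in either case is bookkeeping of the convolution-in-frequency sum, which converges precisely because $s$ is strictly between $-1$ and $1$; the endpoints $s = \pm 1$ would give a logarithmic divergence, which is why they are excluded. I do not anticipate any genuine obstacle here beyond this geometric-series bookkeeping, since all the hard analytic work has already been isolated in Lemma \ref{thmc.intertwining_trace}. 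Thus the proposition reduces to a direct summation argument entirely parallel to the proof of Proposition \ref{thmc.est_prod}.
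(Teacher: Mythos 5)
Your proposal is correct and follows exactly the paper's route: reduce to the endpoint cases $a=1,\infty$ by interpolation, decompose dyadically, invoke Lemma \ref{thmc.intertwining_trace} frequency-by-frequency, and sum the resulting geometric series (which converge precisely because $s \in (-1,1)$). This is the same argument as in the paper, which in turn mirrors the proof of Proposition \ref{thmc.est_prod}.
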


\begin{proof}
For \eqref{eqc.est_trace_sh}, in the case $a = 1$, we decompose and estimate using \eqref{eqc.intertwining_trace_sh}:
\begin{align*}
\| \cint^t_0 ( \phi \psi ) \|_{ B^{1, \infty, s}_{\ell, t, x} } &\lesssim \sum_{k, l \geq 0} 2^{s k} \| E_k \cint^t_0 ( \phi E_l \psi ) \|_{ L^{\infty, 2}_{t, x} } + \sum_{k \geq 0} 2^{s k} \| E_k \cint^t_0 ( \phi E_{< 0} \psi ) \|_{ L^{\infty, 2}_{t, x} } \\
&\qquad + \sum_{l \geq 0} \| E_{< 0} \cint^t_0 ( \phi P_l \psi ) \|_{ L^2_x } + \| E_{< 0} \cint^t_0 ( \phi E_{< 0} \psi ) \|_{ L^{\infty, 2}_{t, x} } \\
&\lesssim ( \| \partial \phi \|_{ L^{2, 2}_{t, x} } + \| \phi \|_{ L^{\infty, 2}_{x, t} } ) \sum_{l \geq 0} \paren{ \sum_{k \geq 0} 2^{s k} 2^{- |k - l|} + 2^{-l} } \| E_{\sim l} \psi \|_{ L^{2, 2}_{t, x} } \\
&\qquad + ( \| \partial \phi \|_{ L^{2, 2}_{t, x} } + \| \phi \|_{ L^{\infty, 2}_{x, t} } ) \paren{ \sum_{k \geq 0} 2^{(s - 1) k} + 1 } \| E_{\lesssim 0} \psi \|_{ L^{2, 2}_{t, x} } \\
&\lesssim ( \| \partial \phi \|_{ L^{2, 2}_{t, x} } + \| \phi \|_{ L^{\infty, 2}_{x, t} } ) \| \psi \|_{ B^{1, 2, s}_{\ell, t, x} } \text{.}
\end{align*}
Note the basic strategy for applying \eqref{eqc.intertwining_trace_sh} is the same as that used in the proof of Proposition \ref{thmc.est_prod}.
The case $a = \infty$ also follows from \eqref{eqc.intertwining_trace_sh}.

Finally, \eqref{eqc.est_trace_shp} is established analogously, using the estimate \eqref{eqc.intertwining_trace_shp}.
\end{proof}

\subsection{Non-Integrated Product Estimates II} \label{sec.eucl_nintex}

Next, we consider product estimates in which one must apply L-P decompositions to both factors.
We begin in this subsection with non-integrated estimates.

The main building block behind the first such estimate is the following:

\begin{lemma} \label{thmc.est_envelope_prod}
Fix integers $k, l, m \geq 0$, and fix $s \in [0, 1)$.
Moreover, let
\[ \alpha = \frac{1 - s}{2} \text{,} \qquad \beta = \frac{1 + s}{2} \text{.} \]
If $f, g \in \mc{S}_x \R^2$, then the following estimates hold:
\begin{align}
\label{eqc.est_envelope_prod} 2^{s k} \| E_k ( E_l f \cdot E_m g ) \|_{ L^2_x } &\lesssim 2^{- \alpha |k - l| } 2^{\beta l} \| E_{\sim l} f \|_{ L^2_x } \cdot 2^{ - \alpha |k - m| } 2^{\beta m} \| E_{\sim m} g \|_{ L^2_x } \text{,} \\
\notag 2^{s k} \| E_k ( E_l f \cdot E_{< 0} g ) \|_{ L^2_x } &\lesssim 2^{- \alpha |k - l|} 2^{\beta l} \| E_{\sim l} f \|_{ L^2_x } \cdot 2^{- \alpha k } \| g \|_{ L^2_x } \text{,} \\
\notag 2^{s k} \| E_k ( E_{< 0} f \cdot E_m g ) \|_{ L^2_x } &\lesssim 2^{- \alpha k} \| f \|_{ L^2_x } \cdot 2^{ - \alpha |k - m| } 2^{\beta m} \| E_{\sim m} g \|_{ L^2_x } \text{,} \\
\notag \| E_{< 0} ( f \cdot g ) \|_{ L^2_x } &\lesssim \| f \|_{ L^2_x } \| g \|_{ L^2_x } \text{.}
\end{align}
\end{lemma}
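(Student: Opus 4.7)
The plan is to perform a standard trichotomy based on Fourier support, and then apply the Bernstein inequalities \eqref{eqc.glp_bernstein} in each regime. The structural observation is that $E_l f \cdot E_m g$ has spatial Fourier support contained in $\{|\xi| \lesssim 2^l + 2^m\}$, and when $|l - m| > 2$ this support is concentrated in an annulus at the larger of the two scales. Consequently $E_k(E_l f \cdot E_m g)$ vanishes unless either (i) $l \sim m \geq k - O(1)$ (low-high-high interaction), (ii) $l \sim k$ and $m \leq k + O(1)$ (high-high-low, first factor), or (iii) $m \sim k$ and $l \leq k + O(1)$ (high-high-low, second factor). After this observation, only finitely many ``diagonal'' neighborhoods contribute, and the self-replication $E_l = E_l E_{\sim l}$ allows one to rewrite $E_l f$ as $E_l E_{\sim l} f$, absorbing the $E_{\sim l}$ notations that appear on the right-hand side of \eqref{eqc.est_envelope_prod}.

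I will treat the first estimate of \eqref{eqc.est_envelope_prod} in the three regimes above. In case (i), I apply the $L^1 \to L^2$ Bernstein inequality to $E_k$ followed by Cauchy--Schwarz:
\[
\| E_k(E_l f \cdot E_m g)\|_{L^2_x} \lesssim 2^k \| E_l f\|_{L^2_x} \| E_m g\|_{L^2_x},
\]
and I verify, using $\alpha + \beta = 1$ and $\beta - \alpha = s$, that $2^{sk} \cdot 2^k \leq 2^{-\alpha(l-k)}2^{\beta l} \cdot 2^{-\alpha(m-k)}2^{\beta m}$ precisely when $l, m \geq k$. In case (ii), I use $\|E_k(\cdot)\|_{L^2} \leq \|E_l f\|_{L^2_x} \|E_m g\|_{L^\infty_x}$ together with the $L^2 \to L^\infty$ Bernstein inequality $\|E_m g\|_{L^\infty_x} \lesssim 2^m \|E_m g\|_{L^2_x}$, and check the exponents match when $l \sim k$ and $m \leq k$. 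Case (iii) is symmetric.

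The three mixed estimates involving $E_{<0}$ are handled by the same template, interpreting $E_{<0}$ as frequency-zero localization: the Fourier support of $E_{<0} g$ lives in $\{|\xi| \lesssim 1\}$, so for the second estimate one uses that $E_k(E_l f \cdot E_{<0} g)$ is forced to have $k \sim l$ or $k \leq l$ with $l \gtrsim 0$. The Bernstein bounds then collapse to exactly the claimed exponents (with $m = 0$, effectively). The final inequality is elementary: by the $L^1 \to L^2$ Bernstein inequality and H\"older,
\[
\|E_{<0}(fg)\|_{L^2_x} \lesssim \|fg\|_{L^1_x} \leq \|f\|_{L^2_x} \|g\|_{L^2_x}.
\]

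The only mild obstacle is purely bookkeeping: verifying in each case that the explicit Bernstein output, after multiplying by $2^{sk}$, is dominated by the product $2^{-\alpha|k-l|+\beta l} \cdot 2^{-\alpha|k-m|+\beta m}$. The identities $\alpha + \beta = 1$ and $\beta - \alpha = s$ make this routine, but one must be careful that in the low-high-high regime the target bound is \emph{larger} than our estimate (which is tight in the high-low cases), so the trichotomy really does yield the claimed pointwise-in-frequency envelope. No new ideas beyond \eqref{eqc.glp_bdd}--\eqref{eqc.glp_bernstein} and support considerations are required.
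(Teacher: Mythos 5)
Your proposal follows essentially the same route as the paper: a case split by relative sizes of $k$, $l$, $m$, the $L^1\!\to\!L^2$ Bernstein bound in the low-high-high case, Hölder plus the $L^2\!\to\!L^\infty$ Bernstein bound in the high-high-low cases, and the arithmetic identities $\alpha+\beta=1$, $\beta-\alpha=s$ to match exponents. The one cosmetic difference is the trichotomy: the paper organizes by the raw comparisons $l\geq k,\ m\geq k$, etc., with the case $l,m<k$ declared negligible by Fourier support, whereas you build the Fourier-support constraint ($l\sim m\gtrsim k$, or one of $l,m$ is $\sim k$ while the other is $\lesssim k$) into the trichotomy from the outset. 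Both are valid; yours is a bit tighter and avoids explicitly checking a vacuous regime. One small imprecision in the write-up: the condition needed in case (i) after multiplying through is $l+m\geq 2k$ (which follows from $l,m\geq k$, and in fact from $l\sim m\gtrsim k$), not that the inequality holds "precisely when $l,m\geq k$"; this doesn't affect the correctness of the argument.
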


\begin{proof}
We prove only the first estimate of \eqref{eqc.est_envelope_prod}, as the remaining bounds are similar and easier.
Let $A$ denote the left-hand side of the first estimate in \eqref{eqc.est_envelope_prod}.

Consider first the case $l \geq k$ and $m \geq k$.
By \eqref{eqc.glp_bernstein},
\[ A \lesssim 2^{ (s + 1) k } \| E_l f E_m g \|_{ L^1_x } \lesssim 2^{ (s + 1) k } \| E_l f \|_{ L^2_x } \| E_m g \|_{ L^2_x } \text{.} \]
Since $\alpha + \beta = 1$ and $k \leq l$, then
\begin{align*}
A &\lesssim 2^{s l} 2^{ \alpha (k - l) } 2^{ \alpha l } \| E_{\sim l} f \|_{ L^2_x } 2^{ \beta (k - m) } 2^{ \beta m } \| E_{\sim m} g \|_{ L^2_x } \\
&\lesssim 2^{ \alpha (k - l) } 2^{ \beta l } \| E_{\sim l} f \|_{ L^2_x } 2^{ \alpha (k - m) } 2^{ \beta m } \| E_{\sim m} g \|_{ L^2_x } \text{,}
\end{align*}
and the desired estimate follows in this setting.

Next, consider when $l \leq k \leq m$.
Applying H\"older's inequality and \eqref{eqc.glp_bernstein} yields
\[ A \lesssim 2^{s k} \| E_l f \|_{ L^\infty_x } \| E_m g \|_{ L^2_x } \lesssim 2^{s m} 2^l \| E_{\sim l} f \|_{ L^2_x } \| E_m g \|_{ L^2_x } \text{.} \]
The intended estimate follows from the above, since
\begin{align*}
A &\lesssim 2^{ \alpha l } 2^{ \beta l } \| E_{\sim l} f \|_{ L^2_x } 2^{ -\alpha m } 2^{ (\alpha + s) m } \| E_{\sim m} g \|_{ L^2_x } \\
&\lesssim 2^{ \alpha (l - k) } 2^{ \beta l } \| E_{\sim l} f \|_{ L^2_x } 2^{ \alpha (k - m) } 2^{ \beta m } \| E_{\sim m} g \|_{ L^2_x } \text{.}
\end{align*}
The case $m \leq k \leq l$ can be proved similarly, by switching the roles of $f$ and $g$.

The remaining case, $m \leq k$ and $l \leq k$, is negligible, since due to Fourier supports, these vanish whenever both $m$ and $l$ are less than, say, $k - 5$.
\end{proof}

\begin{proposition} \label{thmc.est_prod_sob}
Let $s \in [0, 1)$, and suppose that $p, p_1, p_2 \in [1, \infty]$ satisfy the relation $p^{-1} = p_1^{-1} + p_2^{-1}$.
If $\phi, \psi \in \mc{C}^\infty_t \mc{S}_x \R^2$, then
\begin{align}
\label{eqc.est_prod_sob} \| \phi \cdot \psi \|_{ B^{1, p, s}_{\ell, t, x} } &\lesssim_s \| \phi \|_{ B^{2, p_1, (1 + s) / 2}_{\ell, t, x} } \| \psi \|_{ B^{2, p_2, (1 + s) / 2}_{\ell, t, x} } \text{.}
\end{align}
\end{proposition}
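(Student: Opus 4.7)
The plan is to follow exactly the decomposition strategy employed in the proof of Proposition \ref{thmc.est_prod}, but now invoking the envelope estimate of Lemma \ref{thmc.est_envelope_prod} in place of Lemma \ref{thmc.intertwining_prod}, and then closing by Cauchy--Schwarz together with Young's convolution inequality rather than a single geometric sum. Concretely, I would expand both factors into their Littlewood--Paley pieces and organize
\[ \| \phi \cdot \psi \|_{ B^{1, p, s}_{\ell, t, x} } \lesssim \sum_{ k \geq 0 } \sum_{ l, m \in \Z_{\geq 0} \cup \{ - \} } 2^{sk} \| E_k ( E_l \phi \cdot E_m \psi ) \|_{ L^{p, 2}_{t, x} } + (\text{low-frequency terms}) \text{,} \]
where the symbol $-$ stands in for the low-frequency projector $E_{< 0}$.

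The key step is then to apply Lemma \ref{thmc.est_envelope_prod} pointwise in $t$, followed by H\"older's inequality in time with conjugate exponents $p_1, p_2$. Setting $\alpha = (1 - s)/2$ and $\beta = (1 + s)/2$ as in the lemma, and letting
\[ a_l = 2^{ \beta l } \| E_{\sim l} \phi \|_{ L^{p_1, 2}_{t, x} } \text{,} \qquad b_m = 2^{ \beta m } \| E_{\sim m} \psi \|_{ L^{p_2, 2}_{t, x} } \text{,} \]
we obtain the bound
\[ 2^{s k} \| E_k ( E_l \phi \cdot E_m \psi ) \|_{ L^{p, 2}_{t, x} } \lesssim 2^{ - \alpha | k - l | } a_l \cdot 2^{ - \alpha | k - m | } b_m \text{,} \]
with corresponding variants from the other three inequalities in \eqref{eqc.est_envelope_prod} for the low-frequency cases. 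Crucially, since $s \in [0, 1)$, we have $\alpha > 0$, so the envelope $\{ 2^{- \alpha | j | } \}_{ j \in \Z }$ lies in $\ell^1$.

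Summing over $(k, l, m)$ is then mechanical: I would split the double envelope via Cauchy--Schwarz in $k$, write each resulting factor as a convolution of $\{ 2^{- \alpha | j | } \}$ with $\{ a_l \}$ and $\{ b_m \}$ respectively, and invoke Young's inequality to obtain $\| a \|_{ \ell^2 } \| b \|_{ \ell^2 }$. The almost orthogonality of the $E_{\sim l}$'s converts this into
\[ \| a \|_{ \ell^2 }^2 \lesssim \sum_{ l \geq 0 } 2^{ (1 + s) l } \| E_l \phi \|_{ L^{p_1, 2}_{t, x} }^2 + \| E_{< 0} \phi \|_{ L^{p_1, 2}_{t, x} }^2 \lesssim \| \phi \|_{ B^{2, p_1, (1 + s)/2}_{\ell, t, x} }^2 \text{,} \]
and analogously for $\| b \|_{ \ell^2 }$, yielding the claimed estimate. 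The low-frequency residual pieces (where $l$, $m$, or $k$ corresponds to $E_{< 0}$) are handled by the same argument, using that the remaining estimates in \eqref{eqc.est_envelope_prod} supply a geometrically summable factor $2^{- \alpha k}$ in place of one of the envelopes.

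Since Lemma \ref{thmc.est_envelope_prod} packages the entire substantive content of the estimate, no individual step of this argument is delicate; the main obstacle is purely bookkeeping. In particular, one must carefully track the H\"older relation $p^{-1} = p_1^{-1} + p_2^{-1}$ through the time integration so that the Cauchy--Schwarz step in the frequency variable $k$ is applied only after the $L^p_t$-norms have been split, and one must verify that the dyadic interaction ``$l \leq k$ and $m \leq k$'' case (which the lemma does not directly cover but which is suppressed by the Fourier-support constraint forcing one of $l, m$ to be within $O(1)$ of $k$) contributes only a finite number of terms. With these caveats in hand, the argument proceeds precisely as outlined.
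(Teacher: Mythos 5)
Your proposal is correct and follows essentially the same route as the paper: both decompose bilinearly, apply Lemma~\ref{thmc.est_envelope_prod}, transfer to mixed time norms via H\"older in $t$, and close the dyadic sum by Cauchy--Schwarz in $k$ together with Young's convolution inequality (the paper packages the latter two steps as a ``direct computation'' involving the envelopes $\mc{E}^1_k, \mc{E}^2_k$, but the substance is identical). Your only slight imprecision is the remark that the ``$l \leq k$ and $m \leq k$'' case is not directly covered by the lemma: in fact the first estimate of \eqref{eqc.est_envelope_prod} is stated unconditionally for all $k, l, m \geq 0$, and its proof already disposes of that case by the Fourier-support observation you mention, so no extra verification is needed at the level of Proposition~\ref{thmc.est_prod_sob}.
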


\begin{proof}
We first decompose the left-hand side and apply \eqref{eqc.est_envelope_prod}:
\begin{align*}
\| \phi \psi \|_{ B^{1, p, s}_{\ell, t, x} } &\lesssim \sum_{k, l, m \geq 0} 2^{s k} \| E_k ( E_l \phi E_m \psi ) \|_{ L^{p, 2}_{t, x} } + \sum_{k, l \geq 0} 2^{s k} \| E_k ( E_l \phi E_{< 0} \psi ) \|_{ L^{p, 2}_{t, x} } \\
&\qquad + \sum_{k, l \geq 0} 2^{s k} \| E_k ( E_{< 0} \phi E_m \psi ) \|_{ L^{p, 2}_{t, x} } + \| E_{< 0} ( \phi \psi ) \|_{ L^{p, 2}_{t, x} } \\
&\lesssim \sum_{k \geq 0} \mc{E}^1_k \phi \mc{E}^2_k \psi + \| \phi \|_{ L^{p_1, 2}_{t, x} } \| \psi \|_{ L^{p_2, 2}_{t, x} } \text{,}
\end{align*}
where
\begin{align*}
\mc{E}^1_k \phi &= \sum_{l \geq 0} 2^{- \frac{1 - s}{2} |k - l|} 2^{ \frac{ 1 + s }{2} l } \| E_l \phi \|_{ L^{p_1, 2}_{t, x} } + 2^{ -\frac{ 1 - s }{2} k } \| \phi \|_{ L^{p_1, 2}_{t, x} } \text{,} \\
\mc{E}^2_k \psi &= \sum_{l \geq 0} 2^{- \frac{1 - s}{2} |k - l|} 2^{ \frac{ 1 + s }{2} l } \| E_l \psi \|_{ L^{p_2, 2}_{t, x} } + 2^{ -\frac{ 1 - s }{2} k } \| \psi \|_{ L^{p_2, 2}_{t, x} } \text{,}
\end{align*}
The proof now follows from the inequality below, which can be computed directly:
\[ \sum_{k \geq 0} ( \mc{E}^1_k \phi \mc{E}^2_k \psi ) \lesssim \| \phi \|_{ B^{2, p_1, (1 + s) / 2}_{\ell, t, x} } \| \psi \|_{ B^{2, p_2, (1 + s) / 2}_{\ell, t, x} } \text{.} \qedhere \]
\end{proof}

Next, we introduce the following trace estimate.

\begin{proposition} \label{thmc.nsob_trace}
If $\phi \in \mc{C}^\infty_t \mc{S}_x \mc{N}$, then
\begin{equation} \label{eqc.nsob_trace} \| \phi \|_{ B^{2, \infty, 1/2}_{\ell, t, x} } \lesssim \| \phi [0] \|_{ H^{1/2}_x } + \| \partial_t \phi \|_{ L^{2, 2}_{t, x} }^\frac{1}{2} ( \| \partial \phi \|_{ L^{2, 2}_{t, x} } + \| \phi \|_{ L^{2, 2}_{t, x} } )^\frac{1}{2} \text{.} \end{equation}
\end{proposition}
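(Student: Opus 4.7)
The plan is to estimate each dyadic piece $\| E_k \phi \|_{L^{\infty,2}_{t,x}}$ by a standard fundamental theorem of calculus argument. For any $\tau \in [0,\delta]$ and $x \in \R^2$, one has
\[ |E_k \phi(\tau,x)|^2 = |E_k \phi(0,x)|^2 + 2\int_0^\tau (E_k \phi)(s,x) \cdot (\partial_t E_k \phi)(s,x)\, ds, \]
so taking a supremum in $\tau$, integrating in $x$, and applying Cauchy--Schwarz (first in $t$, then in $x$) yields
\[ \| E_k \phi \|_{L^{\infty,2}_{t,x}}^2 \lesssim \| E_k \phi[0] \|_{L^2_x}^2 + \| E_k \phi \|_{L^{2,2}_{t,x}} \| \partial_t E_k \phi \|_{L^{2,2}_{t,x}}. \]
The corresponding estimate for $E_{<0}\phi$ is identical.

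Next, I would multiply the $k$-th inequality by $2^k$ and sum over $k \geq 0$, adding also the low-frequency bound. Thanks to the Littlewood--Paley characterization $\sum_{k \geq 0} 2^k \| E_k \phi[0] \|_{L^2_x}^2 + \| E_{<0}\phi[0] \|_{L^2_x}^2 \simeq \| \phi[0] \|_{H^{1/2}_x}^2$, the first group of terms collapses to $\| \phi[0] \|_{H^{1/2}_x}^2$. For the cross term, I apply Cauchy--Schwarz in $k$:
\[ \sum_{k \geq 0} 2^k \| E_k \phi \|_{L^{2,2}_{t,x}} \| \partial_t E_k \phi \|_{L^{2,2}_{t,x}} \leq \Bigl( \sum_{k \geq 0} 2^{2k} \| E_k \phi \|_{L^{2,2}_{t,x}}^2 \Bigr)^{1/2} \Bigl( \sum_{k \geq 0} \| \partial_t E_k \phi \|_{L^{2,2}_{t,x}}^2 \Bigr)^{1/2}. \]
The first factor is controlled using the finite band property \eqref{eqc.glp_fb} and almost orthogonality by $\| \partial \phi \|_{L^{2,2}_{t,x}} + \| \phi \|_{L^{2,2}_{t,x}}$; the second is controlled by $\| \partial_t \phi \|_{L^{2,2}_{t,x}}$, since $\partial_t$ commutes with the spatial Fourier multipliers $E_k$, so that $\sum_{k\geq 0} \| \partial_t E_k \phi \|_{L^{2,2}_{t,x}}^2 \lesssim \| \partial_t \phi \|_{L^{2,2}_{t,x}}^2$ by Plancherel. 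Treating the $E_{<0}$ term analogously (it contributes only to the $\phi$ part, not the $\partial\phi$ part) and taking a square root using $\sqrt{a+b} \leq \sqrt{a} + \sqrt{b}$ produces exactly \eqref{eqc.nsob_trace}.

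There is no serious obstacle here; the argument is entirely routine once one writes $|E_k \phi|^2$ as an integral in $t$. The only minor points to verify are that $\partial_t$ genuinely commutes with the spatial L-P operators $E_k$ (immediate from the definition of $E_k$ as a spatial Fourier multiplier) and that all Schwartz-class manipulations are justified for $\phi \in \mc{C}^\infty_t \mc{S}_x \mc{N}$, which is automatic. The estimate is sharp in the sense that it is precisely the classical $H^{1/2}$-trace inequality in a Besov-valued form, with the spatial $H^{1/2}$-regularity recovered by pairing $\partial_t$ (which is ``worth" one time derivative) with $\partial_x$ at the same frequency scale through Cauchy--Schwarz.
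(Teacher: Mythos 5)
Your argument is correct and follows the same route as the paper: a pointwise fundamental-theorem-of-calculus estimate for each $E_k\phi$, Cauchy--Schwarz to extract $\| E_k\phi\|_{L^{2,2}_{t,x}}\|\partial_t E_k\phi\|_{L^{2,2}_{t,x}}$, and then a weighted sum over $k$ recovering $\|\phi[0]\|_{H^{1/2}_x}$ from the dyadic initial terms and the cross term via Cauchy--Schwarz in $k$ together with the finite-band and almost-orthogonality properties. The paper compresses the last summation step into a single sentence; you have merely spelled it out, so there is nothing to flag.
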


\begin{proof}
For any integer $k \geq 0$, we have the calculus inequalities
\begin{align*}
2^k \| E_k \phi \|_{ L^{\infty, 2}_{t, x} }^2 &\lesssim 2^k \| E_k \phi [0] \|_{ L^2_x }^2 + 2^k \int_0^\delta \int_{ \R^2 } \partial_t | E_k \phi |^2 |_{ (\tau, x) } dx d\tau \\
&\lesssim 2^k \| E_k \phi [0] \|_{ L^2_x }^2 + \| E_k \partial_t \phi \|_{ L^{2, 2}_{t, x} } \cdot 2^k \| E_k \phi \|_{ L^{2, 2}_{t, x} } \text{,} \\
\| E_{< 0} \phi \|_{ L^{\infty, 2}_{t, x} }^2 &\lesssim \| E_{< 0} \phi [0] \|_{ L^2_x }^2 + \| E_{< 0} \partial_t \phi \|_{ L^{2, 2}_{t, x} } \cdot \| E_{< 0} \phi \|_{ L^{2, 2}_{t, x} } \text{.}
\end{align*}
Summing the above inequalities results in \eqref{eqc.nsob_trace}.
\end{proof}

Combining Propositions \ref{thmc.est_prod_sob} and \ref{thmc.nsob_trace} yields the following estimate.

\begin{proposition} \label{thmc.est_prod_ex}
For any $\phi, \psi \in \mc{C}^\infty_t \mc{S}_x \mc{N}$,
\begin{align}
\label{eqc.est_prod_ex} \| \phi \cdot \psi \|_{ B^{1, \infty, 0}_{\ell, t, x} } &\lesssim ( \| \phi \|_{ N^1_{t, x} } + \| \phi [0] \|_{ H^{1/2}_x } ) ( \| \psi \|_{ N^1_{t, x} } + \| \psi [0] \|_{ H^{1/2}_x } ) \text{.}
\end{align}
\end{proposition}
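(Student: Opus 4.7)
The plan is to combine the two preceding propositions directly, with no further decomposition needed. First, I would apply Proposition \ref{thmc.est_prod_sob} with the parameter choices $s = 0$ and $p = p_1 = p_2 = \infty$ (which satisfy the required relation $\infty^{-1} = \infty^{-1} + \infty^{-1}$, interpreted as $0 = 0 + 0$). This yields
\[
\| \phi \cdot \psi \|_{B^{1, \infty, 0}_{\ell, t, x}} \lesssim \| \phi \|_{B^{2, \infty, 1/2}_{\ell, t, x}} \, \| \psi \|_{B^{2, \infty, 1/2}_{\ell, t, x}} \text{,}
\]
reducing the problem to estimating each factor in the $B^{2, \infty, 1/2}_{\ell, t, x}$-norm.

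Next, I would apply the trace-type estimate \eqref{eqc.nsob_trace} from Proposition \ref{thmc.nsob_trace} to each of $\phi$ and $\psi$. This gives
\[
\| \phi \|_{B^{2, \infty, 1/2}_{\ell, t, x}} \lesssim \| \phi[0] \|_{H^{1/2}_x} + \| \partial_t \phi \|_{L^{2,2}_{t,x}}^{1/2} \paren{ \| \partial \phi \|_{L^{2,2}_{t,x}} + \| \phi \|_{L^{2,2}_{t,x}} }^{1/2} \text{,}
\]
and similarly for $\psi$. An application of the weighted arithmetic-geometric inequality $a^{1/2} b^{1/2} \leq \tfrac{1}{2}(a + b)$ to the second term bounds it by $\| \partial_t \phi \|_{L^{2,2}_{t,x}} + \| \partial \phi \|_{L^{2,2}_{t,x}} + \| \phi \|_{L^{2,2}_{t,x}} = \| \phi \|_{N^1_{t,x}}$, so altogether
\[
\| \phi \|_{B^{2, \infty, 1/2}_{\ell, t, x}} \lesssim \| \phi[0] \|_{H^{1/2}_x} + \| \phi \|_{N^1_{t,x}} \text{.}
\]

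Multiplying the analogous bounds for $\phi$ and $\psi$ and inserting the product into the first display yields \eqref{eqc.est_prod_ex}. There is no real obstacle here; the estimate is essentially the composition of two earlier results. The only minor subtlety is verifying that the $p = \infty$ case of Proposition \ref{thmc.est_prod_sob} indeed applies, but the proof of that proposition handles general $p \in [1, \infty]$ uniformly via the split $p^{-1} = p_1^{-1} + p_2^{-1}$, so this presents no issue.
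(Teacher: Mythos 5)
Your proposal is correct and is exactly the argument the paper intends: the text preceding the proposition says "Combining Propositions \ref{thmc.est_prod_sob} and \ref{thmc.nsob_trace} yields the following estimate" and gives no further proof, so the intended argument is precisely the composition you carried out — apply \eqref{eqc.est_prod_sob} with $s=0$, $p=p_1=p_2=\infty$, then bound each $B^{2,\infty,1/2}_{\ell,t,x}$-factor via \eqref{eqc.nsob_trace} and observe that the geometric-mean term is dominated by the $N^1_{t,x}$-norm. Nothing to add.
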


\subsection{Integrated Product Estimates II} \label{sec.eucl_intex}

Finally, we prove an integrated analogue of Proposition \ref{thmc.est_prod_ex}.
In the Euclidean case, this estimate was originally proved within \cite[Sect. 3]{kl_rod:stt}; we give another version of the proof here for completeness.

Given an integer $k \geq 0$, we define
\begin{align*}
\mc{N}_k \phi &= \| E_k \partial_t \phi \|_{ L^{2, 2}_{t, x} } + 2^k \| E_k \phi \|_{ L^{2, 2}_{t, x} } + 2^\frac{k}{2} \| E_k \phi \|_{ L^{\infty, 2}_{t, x} } \text{.}
\end{align*}
We will exploit these quantities in order to dyadically recover Sobolev and Besov norms, in the same manner as in the proof of Proposition \ref{thmc.est_prod_sob}.

\begin{lemma} \label{thmc.est_envelope}
Fix integers $k, l, m \geq 0$.
If $\phi, \psi \in C^\infty_t \mc{S}_x \mc{N}$, then
\begin{align}
\label{eqc.est_envelope_trace} \| E_k \cint^t_0 ( E_l \partial_t \phi \cdot E_m \psi ) \|_{ L^{\infty, 2}_{t, x} } &\lesssim 2^{ -\frac{1}{2} |k - l| } \mc{N}_l \phi \cdot 2^{ - \frac{1}{2} |k - m| } \mc{N}_m \psi \text{,} \\
\notag \| E_k \cint^t_0 ( E_l \partial_t \phi \cdot E_{< 0} \psi ) \|_{ L^{\infty, 2}_{t, x} } &\lesssim 2^{ -\frac{1}{2} |k - l| } \mc{N}_l \phi \cdot 2^{ -\frac{1}{2} k } ( \| \psi \|_{ N^1_{t, x} } + \| \psi \|_{ L^{\infty, 2}_{t, x} } ) \text{,} \\
\notag \| E_k \cint^t_0 ( E_{< 0} \partial_t \phi \cdot E_m \psi ) \|_{ L^{\infty, 2}_{t, x} } &\lesssim 2^{ -\frac{1}{2} k } \| \phi \|_{ N^1_{t, x} } \cdot 2^{ -\frac{1}{2} |k - m| } \mc{N}_m \psi \text{,} \\
\notag \| E_{< 0} \cint^t_0 ( \partial_t \phi \cdot \psi ) \|_{ L^{\infty, 2}_{t, x} } &\lesssim \| \partial_t \phi \|_{ L^{2, 2}_{t, x} } \| \psi \|_{ L^{2, 2}_{t, x} } \text{.}
\end{align}
\end{lemma}

\begin{proof}
Again, we prove only the first estimate of \eqref{eqc.est_envelope_trace}, as it is the most difficult.
Let $B$ denote the left-hand side of the first part of \eqref{eqc.est_envelope_trace}.

First, if $k \leq l \leq m$, then by H\"older's inequality and \eqref{eqc.glp_bernstein},
\begin{align*}
B &\lesssim 2^k \| E_l \partial_t \phi \cdot E_m \psi \|_{ L^{1, 1}_{t, x} } \lesssim 2^k \| E_l \partial_t \phi \|_{ L^{2, 2}_{t, x} } \| E_m \psi \|_{ L^{2, 2}_{t, x} } \text{.}
\end{align*}
By our assumption on $k, l, m$, then
\begin{align*}
B &\lesssim 2^{ \frac{1}{2} ( k - l ) } \mc{N}_l \phi \cdot 2^{ \frac{1}{2} ( k + l ) - m } \mc{N}_m \psi \lesssim 2^{- \frac{1}{2} |k - l| - \frac{1}{2} |k - m| } \mc{N}_l \phi \mc{N}_m \psi \text{.}
\end{align*}
On the other hand, if $k \leq m \leq l$, then we can reduce to the previous case by moving the ``$\partial_t$" to the other factor via an integration by parts:
\[ B \lesssim \| E_k \cint^t_0 ( E_l \phi E_m \partial_t \psi ) \|_{ L^{\infty, 2}_{t, x} } + \| E_k ( E_l \phi E_m \psi ) \|_{ L^{\infty, 2}_{t, x} } \text{.} \]
By symmetry, the first term on the right-hand side can be handled in the same way as the preceding case $k \leq l \leq m$.
The second term can bounded using \eqref{eqc.est_envelope_prod}.

Next, consider when $l \leq k \leq m$.
Absorbing the $\cint^t_0$ into the integral norm, then
\[ B \lesssim \| E_l \partial_t \phi \|_{ L^{\infty, 2}_{x, t} } \| E_m \psi \|_{ L^{2, 2}_{t, x} } \lesssim 2^{-k} 2^l \| E_l \partial_t \phi \|_{ L^{2, 2}_{t, x} } 2^k \| E_m \psi \|_{ L^{2, 2}_{t, x} } \text{.} \]
As a result, we obtain, as desired,
\[ B \lesssim 2^{l - k} \mc{N}_l \phi \cdot 2^{k - m} \mc{N}_m \phi \lesssim 2^{- \frac{1}{2} |k - l| - \frac{1}{2} |k - m| } \mc{N}_l \phi \mc{N}_m \psi \text{.} \]
For the opposite case $m \leq k \leq l$, we must once again integrate by parts:
\[ B \lesssim \| E_k \cint^t_0 ( E_l \phi E_m \partial_t \psi ) \|_{ L^{\infty, 2}_{t, x} } + \| E_k ( E_l \phi E_m \psi ) \|_{ L^{\infty, 2}_{t, x} } \text{.} \]
The first term on the right-hand side is now equivalent to the case $l \leq k \leq m$ by symmetry.
The second term on the right-hand side is controlled using \eqref{eqc.est_envelope_prod}.

The remaining case $m \leq k$ and $l \leq k$ is negligible due to Fourier supports.
\end{proof}

\begin{proposition} \label{thmc.est_trace_ex}
For any $\phi, \psi \in C^\infty_t \mc{S}_x \mc{N}$,
\begin{align}
\label{eqc.est_trace_ex} \| \cint^t_0 ( \partial_t \phi \cdot \psi ) \|_{ B^{1, \infty, 0}_{\ell, t, x} } &\lesssim ( \| \phi \|_{ N^1_{t, x} } + \| \phi [0] \|_{ H^{1/2}_x } ) ( \| \psi \|_{ N^1_{t, x} } + \| \psi [0] \|_{ H^{1/2}_x } ) \text{.}
\end{align}
\end{proposition}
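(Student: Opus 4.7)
The plan is to decompose both factors using the Euclidean Littlewood--Paley operators and apply the dyadic envelope estimates from Lemma \ref{thmc.est_envelope}, which already package the bulk of the difficulty. Writing out the norm on the left-hand side, I would estimate
\begin{align*}
\| \cint^t_0 ( \partial_t \phi \cdot \psi ) \|_{ B^{1, \infty, 0}_{\ell, t, x} } &\lesssim \sum_{k, l, m \geq 0} \| E_k \cint^t_0 ( E_l \partial_t \phi \cdot E_m \psi ) \|_{ L^{\infty, 2}_{t, x} } \\
&\quad + \sum_{k, l \geq 0} \| E_k \cint^t_0 ( E_l \partial_t \phi \cdot E_{< 0} \psi ) \|_{ L^{\infty, 2}_{t, x} } \\
&\quad + \sum_{k, m \geq 0} \| E_k \cint^t_0 ( E_{< 0} \partial_t \phi \cdot E_m \psi ) \|_{ L^{\infty, 2}_{t, x} } \\
&\quad + \| E_{< 0} \cint^t_0 ( \partial_t \phi \cdot \psi ) \|_{ L^{\infty, 2}_{t, x} },
\end{align*}
and bound each piece by \eqref{eqc.est_envelope_trace}. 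The resulting sums are of Schur type, with weights $2^{-\frac{1}{2} |k - l|}$ and $2^{-\frac{1}{2} |k - m|}$ against the envelope quantities $\mc{N}_l \phi$ and $\mc{N}_m \psi$.

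Next, I would close the argument by reducing everything to a dyadic $\ell^2$ bound on $\mc{N}_l \phi$ and $\mc{N}_m \psi$. Using Cauchy--Schwarz in $l$ (and $m$) followed by Cauchy--Schwarz in $k$, the triple sum is controlled by
\[
\Bigl( \sum_{l \geq 0} ( \mc{N}_l \phi )^2 \Bigr)^{1/2} \Bigl( \sum_{m \geq 0} ( \mc{N}_m \psi )^2 \Bigr)^{1/2},
\]
while the mixed sums involving $E_{<0}$ contribute similarly controlled tails after summing the geometric factors $2^{-k/2}$.

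The key step is then the trace-type inequality that absorbs the middle piece of $\mc{N}_l \phi = \| E_l \partial_t \phi \|_{L^{2,2}_{t,x}} + 2^l \| E_l \phi \|_{L^{2,2}_{t,x}} + 2^{l/2} \| E_l \phi \|_{L^{\infty,2}_{t,x}}$. For the first two summands, $\sum_l (\mc{N}_l \phi)^2$ telescopes via almost orthogonality into $\|\partial_t \phi\|_{L^{2,2}_{t,x}}^2 + \|\partial \phi\|_{L^{2,2}_{t,x}}^2 \lesssim \|\phi\|_{N^1_{t,x}}^2$. For the third, I would use exactly the dyadic calculus inequality proved inside the proof of Proposition \ref{thmc.nsob_trace}:
\[
2^l \| E_l \phi \|_{L^{\infty,2}_{t,x}}^2 \lesssim 2^l \| E_l \phi[0] \|_{L^2_x}^2 + \| E_l \partial_t \phi \|_{L^{2,2}_{t,x}} \cdot 2^l \| E_l \phi \|_{L^{2,2}_{t,x}},
\]
sum in $l$, apply Cauchy--Schwarz on the cross term, and recognize the first summand as $\| \phi[0] \|_{H^{1/2}_x}^2$. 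This yields
\[
\Bigl( \sum_{l \geq 0} ( \mc{N}_l \phi )^2 \Bigr)^{1/2} \lesssim \| \phi \|_{N^1_{t,x}} + \| \phi [0] \|_{H^{1/2}_x},
\]
and the symmetric bound for $\psi$, from which \eqref{eqc.est_trace_ex} follows.

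The main obstacle I anticipate is the trace term $2^{l/2} \| E_l \phi \|_{L^{\infty,2}_{t,x}}$ in $\mc{N}_l \phi$, since its summed $\ell^2$-strength is precisely what forces the $H^{1/2}$ initial-data norm into the estimate; the geometric summation of the envelope weights is essentially algebra once that piece is handled. The low-frequency remainders are lossless because the $E_{<0}$ contributions to $\mc{N}_l$-type quantities are dominated by $\|\phi\|_{N^1_{t,x}} + \|\phi[0]\|_{H^{1/2}_x}$ after a single application of the same trace inequality at frequency zero.
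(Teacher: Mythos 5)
Your proposal follows the same route as the paper: decompose via the Euclidean L-P operators, invoke the dyadic envelope estimates \eqref{eqc.est_envelope_trace}, sum via Young/Cauchy--Schwarz to reduce to $\ell^2$ control on $\mc{N}_l \phi$ and $\mc{N}_m \psi$, and then use the pointwise-in-frequency calculus inequality underlying \eqref{eqc.nsob_trace} to convert the $2^{l/2}\|E_l\phi\|_{L^{\infty,2}_{t,x}}$ piece into the $H^{1/2}_x$ initial-data norm. The only cosmetic difference is that the paper packages the Schur-type summation through the envelope quantities $\mc{E}_k$ and then cites \eqref{eqc.nsob_trace} as a black box, whereas you unwind that lemma's proof ingredient directly; the computations are otherwise identical.
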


\begin{proof}
Let $L$ denote the left-hand side of \eqref{eqc.est_trace_ex}.
We decompose and apply \eqref{eqc.est_envelope_trace}:
\begin{align*}
L &\lesssim \sum_{k, l, m \geq 0} \| E_k \cint^t_0 ( E_l \partial_t \phi \cdot E_m \psi ) \|_{ L^{\infty, 2}_{t, x} } + \sum_{k, l \geq 0} \| E_k \cint^t_0 ( E_l \partial_t \phi \cdot E_{< 0} \psi ) \|_{ L^{\infty, 2}_{t, x} } \\
&\qquad + \sum_{k, l \geq 0} \| E_k \cint^t_0 ( E_{< 0} \partial_t \phi \cdot E_m \psi ) \|_{ L^{\infty, 2}_{t, x} } + \| E_{< 0} \cint^t_0 ( \partial_t \phi \cdot \psi ) \|_{ L^{\infty, 2}_{t, x} } \\
&\lesssim \sum_{k \geq 0} \mc{E}_k \phi \mc{E}_k \psi + \| \partial_t \phi \|_{ L^{2, 2}_{t, x} } \| \psi \|_{ L^{2, 2}_{t, x} } \text{,}
\end{align*}
where
\begin{align*}
\mc{E}_k \phi &= \sum_{l \geq 0} 2^{-\frac{1}{2} |k - l|} \mc{N}_l \phi + 2^{ -\frac{k}{2} } ( \| \phi \|_{ N^1_{t, x} } + \| \phi \|_{ B^{2, \infty, 1/2}_{\ell, t, x} } ) \text{,}
\end{align*}
and similarly for $\mc{E}_k \psi$.
By a direct computation, we see that
\[ \sum_{k \geq 0} \mc{E}_k \phi \mc{E}_k \psi \leq ( \| \phi \|_{ N^1_{t, x} } + \| \phi \|_{ B^{2, \infty, 1/2}_{\ell, t, x} } ) ( \| \psi \|_{ N^1_{t, x} } + \| \psi \|_{ B^{2, \infty, 1/2}_{\ell, t, x} } ) \text{.} \]
The proof is now completed by applying \eqref{eqc.nsob_trace}.
\end{proof}

\section{Besov Norm Comparisons} \label{sec.bcomp}

In the proofs of our main results, we used that the geometric Besov norms were equivalent to certain coordinate-based Besov norms constructed using the regularity conditions of Section \ref{sec.fol_reg}.
The precise comparisons were stated in Proposition \ref{thm.comp_main}.
In this appendix, we will prove this proposition.

Throughout, we will consider both the geometric and the Euclidean L-P operations concurrently.
Moreover, we will refer to integral and Besov norms in both the geometric and the Euclidean settings.
As usual, the norm that is being referenced will depend on the specific context.
We also remark that the methods used here are heavily inspired by a similar estimate found in \cite{wang:cg}.

\subsection{Scalar Reduction Estimates} \label{sec.bcomp_scal}

One technical step in the proof of Proposition \ref{thm.comp_main} is to show that the scalar reduction process described in Section \ref{sec.fol_scal} is sufficiently compatible with our geometric norms.
The main step in this process is the following intermediate estimates, resembling estimates found in Appendix \ref{sec.eucl}.

\begin{lemma} \label{thm.intertwining_prodx_elem}
Assume $(\mc{S}, h)$ satisfies \ass{r0}{C, N}.
Moreover, suppose $F \in \mc{C}^\infty T^{r_1}_{l_1} \mc{S}$ and $G \in \mc{C}^\infty T^{r_2}_{l_2} \mc{S}$.
If $k, l \geq \Z$, with $k \geq 0$, then
\begin{align}
\label{eq.intertwining_prodx_elem} \| P_k ( F \otimes P_l G ) \|_{ L^2_x } &\lesssim_{C, N} 2^{- |k - l|} ( \| \nabla F \|_{ L^4_x } + \| F \|_{ L^\infty_x } ) \| P_{\sim l} G \|_{ L^2_x } \text{,} \\
\notag \| P_k ( F \otimes P_{< 0} G ) \|_{ L^2_x } &\lesssim_{C, N} 2^{-k} ( \| \nabla F \|_{ L^4_x } + \| F \|_{ L^\infty_x } ) \| P_{\lesssim 0} G \|_{ L^2_x } \text{,} \\
\notag \| P_{< 0} ( F \otimes P_l G ) \|_{ L^2_x } &\lesssim_{C, N} 2^{-l} ( \| \nabla F \|_{ L^4_x } + \| F \|_{ L^\infty_x } ) \| P_{\sim l} G \|_{ L^2_x } \text{,} \\
\notag \| P_{< 0} ( F \otimes P_{< 0} G ) \|_{ L^2_x } &\lesssim_{C, N} \| F \|_{ L^\infty_x } \| P_{\lesssim 0} G \|_{ L^2_x } \text{.}
\end{align}
\end{lemma}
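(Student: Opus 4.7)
The plan is to mimic the structure of the Euclidean Lemma \ref{thmc.intertwining_prod}, but adapted to the geometric Littlewood--Paley calculus, where the natural replacement of the first-order Euclidean identity $2^k E_k = \partial \tilde{E}_k$ is the second-order identity $2^{2l}P_l = -\lapl P'_l$ mentioned at the end of Section \ref{sec.geom_glp}. Accordingly, I will split each of the four estimates in \eqref{eq.intertwining_prodx_elem} into two subcases, according to whether the output frequency dominates or is dominated by the input frequency, and use slightly different techniques in each regime.

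For the first estimate when $l \leq k$, I would apply the reverse finite band \eqref{eq.glp_fbr} to $P_k$ acting on $H = F \otimes P_l G$, expand $\nabla H$ via the Leibniz rule, and estimate the two resulting terms by H\"older's inequality: one via the $L^4 \times L^4$ split $\|\nabla F\|_{L^4_x}\|P_l G\|_{L^4_x}$, the other via the $L^\infty \times L^2$ split $\|F\|_{L^\infty_x}\|\nabla P_l G\|_{L^2_x}$. The weak Bernstein bound $\|P_l G\|_{L^4_x} \lesssim 2^{l/2}\|P_{\sim l}G\|_{L^2_x}$ from \eqref{eq.glp_bernstein} together with the finite band $\|\nabla P_l G\|_{L^2_x} \lesssim 2^l\|P_{\sim l}G\|_{L^2_x}$ from \eqref{eq.glp_fb} then yield the total factor $2^{l-k} = 2^{-|k-l|}$.

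For the same estimate when $l \geq k$, I would instead dualize, writing $\|P_k(F \otimes P_l G)\|_{L^2_x} = \sup_{\|W\|_{L^2_x}=1}\langle P_k W, F \otimes P_l G\rangle$, substitute $P_l G = -2^{-2l}\lapl P'_l G$ using the Bochner Laplacian, and apply the pointwise Leibniz identity $F \otimes \nabla^a\nabla_a P'_l G = \nabla^a(F \otimes \nabla_a P'_l G) - \nabla^a F \otimes \nabla_a P'_l G$ followed by integration by parts. The first resulting term is bounded via \eqref{eq.glp_fb} by $2^{-2l}\|\nabla P_k W\|_{L^2_x}\|F\|_{L^\infty_x}\|\nabla P'_l G\|_{L^2_x} \lesssim 2^{k-l}\|F\|_{L^\infty_x}\|P_{\sim l}G\|_{L^2_x}$, and the second term is bounded via Bernstein \eqref{eq.glp_bernstein} and finite band by $2^{-2l}\|P_k W\|_{L^4_x}\|\nabla F\|_{L^4_x}\|\nabla P'_l G\|_{L^2_x} \lesssim 2^{k/2-l}\|\nabla F\|_{L^4_x}\|P_{\sim l}G\|_{L^2_x}$. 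Since $k \geq 0$, we have $2^{k/2-l} \leq 2^{-|k-l|}$, so both contributions combine to the claimed bound.

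The second and third estimates in \eqref{eq.intertwining_prodx_elem} are the analogues of the first with either $P_k$ or $P_l$ replaced by a low-frequency projector, and reduce to the same two mechanisms: reverse finite band for the case where $P_k$ acts on $F \otimes P_{<0}G$, and the duality/$\lapl$-trick for the case where $P_{<0}$ acts on $F \otimes P_l G$; in both settings, the $L^2$-boundedness of $\nabla P_{<0}$ from \eqref{eq.glp_fb} and the endpoint Bernstein bound for $P_{<0}$ from \eqref{eq.glp_bernstein} substitute for the corresponding dyadic estimates on $P_l$, and the self-replication identity $P_{<0} = P_{<0}P_{\lesssim 0}$ is used to insert $P_{\lesssim 0}$ on the right-hand side. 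The fourth estimate is immediate from $L^2$-boundedness of $P_{<0}$ combined with H\"older's inequality. I expect no real obstacle beyond careful bookkeeping: because the Bochner Laplacian acts tensorially as $\lapl = \nabla^a\nabla_a$ and every step above uses only the product rule (never a commutator of two covariant derivatives), no curvature terms appear, and the \ass{r0}{C,N} regularity enters only through the scalar Gagliardo--Nirenberg inequality \eqref{eq.gns_1} underlying the weak Bernstein bound.
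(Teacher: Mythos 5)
Your proposal is correct and matches the paper's argument. The paper also splits on $l \lessgtr k$, applies the reverse finite band estimate \eqref{eq.glp_fbr} plus Leibniz plus H\"older in the $l < k$ regime, and in the $l \geq k$ regime creates a Laplacian via $P_l = 2^{-2l}\lapl \tilde{P}_l$, expands by the Leibniz rule, and bounds the two resulting terms by the finite band estimate $\| P_k \nabla H \|_{L^2_x} \lesssim 2^k \| H \|_{L^2_x}$ and the weak Bernstein estimate respectively; your dualization against a test tensor $W$ in the $l \geq k$ case is merely an unpacked form of that same duality (which the paper invokes internally in proving the $P_k\nabla$ finite band bound), so the two arguments are equivalent.
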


\begin{proof}
The proof is analogous to that of \eqref{eqc.intertwining_prod_elem}.
We prove only the first estimate in \eqref{eq.intertwining_prodx_elem} here, as the remaining estimates are similar but easier.
First, if $l \geq k$, we create an instance of $\lapl$ and apply \eqref{eq.glp_fb} and \eqref{eq.glp_bernstein}:
\begin{align*}
\| P_k ( F \otimes P_l G ) \|_{ L^2_x } &\lesssim 2^{-2l} \| P_k ( F \otimes \lapl \tilde{P}_l G ) \|_{ L^2_x } \\
&\lesssim 2^{-2l} [ \| P_k \nabla ( F \otimes \nabla \tilde{P}_l G ) \|_{ L^2_x } + \| P_k ( \nabla F \otimes \nabla \tilde{P}_l G ) \|_{ L^2_x } ] \\
&\lesssim 2^{-2l + k} ( \| F \otimes \nabla \tilde{P}_l G \|_{ L^2_x } + \| \nabla F \otimes \nabla \tilde{P}_l G \|_{ L^{4/3}_x } ) \\
&\lesssim 2^{-2l + k} ( \| F \|_{ L^\infty_x } \| \nabla \tilde{P}_l G \|_{ L^2_x } + \| \nabla F \|_{ L^4_x } \| \nabla \tilde{P}_l G \|_{ L^2_x } ) \text{.}
\end{align*}
Applying \eqref{eq.glp_fb} results in the desired estimate in this case.
On the other hand, if $l < k$, we apply \eqref{eq.glp_fbr} and H\"older's inequality to obtain
\begin{align*}
\| P_k ( F \otimes P_l G ) \|_{ L^2_x } &\lesssim 2^{-k} ( \| \nabla F \otimes P_l G \|_{ L^2_x } + \| F \otimes \nabla P_l G \|_{ L^2_x } ) \\
&\lesssim 2^{-k} ( \| \nabla F \|_{ L^4_x } \| P_l G \|_{ L^4_x } + \| F \|_{ L^\infty_x } \| \nabla P_l G \|_{ L^2_x } ) \text{.}
\end{align*}
Applying \eqref{eq.glp_fb} and \eqref{eq.glp_bernstein} results in the desired estimate.
\end{proof}

We can apply Lemma \ref{thm.intertwining_prodx_elem} to prove a non-sharp variant of \eqref{eq.est_prod_elem}.

\begin{lemma} \label{thm.est_prodx_elem}
Assume $(\mc{N}, \gamma)$ satisfies \ass{R0}{C, N}, and suppose $a \in [1, \infty]$, $p \in [1, \infty]$, and $s \in (-1, 1)$.
If $\Psi \in \mc{C}^\infty \ul{T}^{r_1}_{l_1} \mc{N}$ and $\Phi \in \mc{C}^\infty \ul{T}^{r_2}_{l_2} \mc{N}$, then
\begin{align}
\label{eq.est_prodx_elem} \| \Phi \otimes \Psi \|_{ B^{a, p, s}_{\ell, t, x} } &\lesssim_{C, N, s} ( \| \nabla \Phi \|_{ L^{\infty, 4}_{t, x} } + \| \Phi \|_{ L^{\infty, \infty}_{t, x} } ) \| \Psi \|_{ B^{a, p, s}_{\ell, t, x} } \text{.}
\end{align}
\end{lemma}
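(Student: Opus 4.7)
The plan is to mirror exactly the proof strategy of Proposition \ref{thmc.est_prod} (the Euclidean analogue), replacing the Euclidean Littlewood--Paley operators $E_k$ by the geometric ones $P_k$, and using Lemma \ref{thm.intertwining_prodx_elem} in place of Lemma \ref{thmc.intertwining_prod}. First, by a standard real interpolation argument, it suffices to establish \eqref{eq.est_prodx_elem} in the two endpoint cases $a = 1$ and $a = \infty$.

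The essential input will be a time-integrated version of the bilinear frequency estimates. Fix $\tau \in [0,\delta]$; since $(\mc{N},\gamma)$ satisfies \ass{R0}{C,N}, each $(\mc{S},\gamma[\tau])$ satisfies \ass{r0}{C,N} with the same data, so Lemma \ref{thm.intertwining_prodx_elem} applies to $\Phi[\tau]$ and $\Psi[\tau]$. For any integers $k,l \geq 0$, this yields
\[
\| P_k ( \Phi[\tau] \otimes P_l \Psi[\tau] ) \|_{ L^2_x } \lesssim 2^{-|k-l|} \paren{ \| \nabla \Phi[\tau] \|_{ L^4_x } + \| \Phi[\tau] \|_{ L^\infty_x } } \| P_{\sim l} \Psi[\tau] \|_{ L^2_x } \text{,}
\]
together with the analogous low-frequency variants. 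Taking the $L^p_t$-norm of this pointwise-in-time bound and pulling out the uniform coefficient in $\tau$ gives
\[
\| P_k ( \Phi \otimes P_l \Psi ) \|_{ L^{p,2}_{t,x} } \lesssim 2^{-|k-l|} \paren{ \| \nabla \Phi \|_{ L^{\infty,4}_{t,x} } + \| \Phi \|_{ L^{\infty,\infty}_{t,x} } } \| P_{\sim l} \Psi \|_{ L^{p,2}_{t,x} } \text{,}
\]
and similarly for the three remaining cases in \eqref{eq.intertwining_prodx_elem}.

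Once this time-integrated form is in hand, I would decompose
\[
\Phi \otimes \Psi = \sum_{l \geq 0} \Phi \otimes P_l \Psi + \Phi \otimes P_{<0} \Psi
\]
and further decompose the outer factor via $P_k$ for $k \geq 0$ and $P_{<0}$, then plug in the frequency bound above. For $a = 1$, the summation $\sum_{k,l \geq 0} 2^{sk} 2^{-|k-l|} \| P_{\sim l} \Psi \|_{ L^{p,2}_{t,x} }$ is controlled by $\| \Psi \|_{ B^{1,p,s}_{\ell,t,x} }$ because convolution with the kernel $2^{-|k|}$ is bounded on the weighted space $\ell^1(2^{sk})$ precisely when $|s|<1$; this is where the restriction $s \in (-1,1)$ enters, exactly as in the proof of Proposition \ref{thmc.est_prod}. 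The case $a = \infty$ is handled identically after replacing the $\ell^1$-sum over $k$ with the $\ell^\infty$-supremum. The mixed low-frequency terms (involving $P_{<0}$ on one or both factors) are bounded by the corresponding three tail estimates in \eqref{eq.intertwining_prodx_elem} and summed trivially.

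There is no substantial obstacle once Lemma \ref{thm.intertwining_prodx_elem} is available; the proof is essentially combinatorial bookkeeping of the dyadic sum, with the only subtle point being the uniform-in-$\tau$ control of the coefficient $\| \nabla \Phi[\tau] \|_{L^4_x} + \| \Phi[\tau] \|_{L^\infty_x}$ by $\| \nabla \Phi \|_{L^{\infty,4}_{t,x}} + \| \Phi \|_{L^{\infty,\infty}_{t,x}}$, which is immediate from the definitions of the iterated norms in Section \ref{sec.fol_hor}. The step I would double-check most carefully is the dependence on $s$ in the geometric series, to confirm that the implicit constant degenerates only as $|s| \to 1$, matching the hypothesis $s \in (-1,1)$.
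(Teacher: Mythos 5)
Your proof is correct and follows essentially the same route as the paper: decompose via the geometric L-P operators, apply Lemma \ref{thm.intertwining_prodx_elem} slice-by-slice in $\tau$ (pulling out the coefficient in $L^{\infty}_t$), sum the dyadic series for $a=1$ and $a=\infty$ using $|s|<1$, and interpolate for intermediate $a$.
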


\begin{proof}
The proof is analogous to that of \eqref{eqc.est_prod_elem}.
Assume first $a = 1$, and decompose
\begin{align*}
\| \Phi \otimes \Psi \|_{ B^{1, p, s}_{\ell, t, x} } &\lesssim \sum_{k, l \geq 0} 2^{s k} \| P_k ( \Phi \otimes P_l \Psi ) \|_{ L^{p, 2}_{t, x} } + \sum_{k \geq 0} 2^{s k} \| P_k ( \Phi \otimes P_{< 0} \Psi ) \|_{ L^{p, 2}_{t, x} } \\
&\qquad + \sum_{l \geq 0} \| P_{< 0} ( \Phi \otimes P_l \Psi ) \|_{ L^{p, 2}_{t, x} } + \| P_{< 0} ( \Phi \otimes P_{< 0} \Psi ) \|_{ L^{p, 2}_{t, x} } \text{.}
\end{align*}
Since every $(\mc{S}, \gamma [\tau])$ satisfies \ass{r0}{C, N}, each of the above terms can be controlled using \eqref{eq.intertwining_prodx_elem}, essentially in the same manner as in the proof of \eqref{eqc.est_prod_elem}.
The $a = \infty$ case is also handled similarly, again like in the proof of \eqref{eqc.est_prod_elem}.
Finally, for general $a$, we can interpolate between the above two cases.
\end{proof}

We can now use Lemma \ref{thm.est_prodx_elem} to prove our geometric scalar reduction comparison.

\begin{lemma} \label{thm.besov_scalar_red}
Assume $(\mc{N}, \gamma)$ satisfies \ass{R1}{C, N}, with data $\{ U_i, \varphi_i, \eta_i, \tilde{\eta}_i, e_i \}_{i = 1}^N$.
Then, for any $a \in [1, \infty]$, $p \in [1, \infty]$, $s \in (-1, 1)$, and $\Psi \in \mc{C}^\infty \ul{T}^r_l \mc{N}$,
\footnote{Note that $\eta_i X$ extends smoothly to a global tensor field on $\mc{S}$ for each $1 \leq i \leq N$ and $X \in \mc{X}^r_l (i)$.  Therefore, it makes sense to take a geometric norm of $\eta_i \Psi (X)$.}
\begin{align}
\label{eq.besov_scalar_red} \| \Psi \|_{ B^{a, p, s}_{\ell, t, x} } \simeq_{C, N, s, r, l} \sum_{i = 1}^N \sum_{ X \in i \mc{X}^r_l } \| \eta_i \Psi ( X ) \|_{ B^{a, p, s}_{\ell, t, x} } \text{.}
\end{align}
\end{lemma}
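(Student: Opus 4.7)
The plan is to prove the two inequalities of \eqref{eq.besov_scalar_red} separately, each by a single application of the non-sharp product estimate \eqref{eq.est_prodx_elem}, combined with the frame bounds \eqref{eq.frame_basis} and the \ass{R1}{C,N} control on the cutoffs $\eta_i$ and $\tilde\eta_i$.

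For the $\lesssim$ direction, I will reconstruct $\Psi$ from its scalar components. Since the $e^i_a$'s and $e_{\ast i}^b$'s form dual orthonormal frames on $\mc{N}_{U_i}$, the family $i \mc{X}^r_l$ has a natural dual family in $\mc{C}^\infty \ul{T}^r_l \mc{N}_{U_i}$ (obtained by raising and lowering indices via $\gamma$), which I denote $\{ X^\sharp \}_{X \in i \mc{X}^r_l}$; each $X^\sharp$ satisfies the same \eqref{eq.frame_basis}-type bounds as $X$. One then has $\Psi|_{\mc{N}_{U_i}} = \sum_{X \in i\mc{X}^r_l} \Psi(X) \cdot X^\sharp$, and using $\sum_i \eta_i \equiv 1$ together with $\tilde\eta_i \equiv 1$ on $\mathrm{supp}\,\eta_i$, this globalizes to
\[ \Psi = \sum_{i=1}^N \sum_{X \in i\mc{X}^r_l} [\eta_i \Psi(X)] \cdot [\tilde\eta_i X^\sharp], \]
with each $\tilde\eta_i X^\sharp$ extended by zero to a smooth horizontal tensor field on $\mc{N}$. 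I then apply \eqref{eq.est_prodx_elem} with $\Phi := \tilde\eta_i X^\sharp$; the product rule together with \eqref{eq.frame_basis} and the \ass{R1}{C,N} bounds on $\tilde\eta_i$ dominates $\|\tilde\eta_i X^\sharp\|_{L^{\infty,\infty}_{t,x}} + \|\nabla(\tilde\eta_i X^\sharp)\|_{L^{\infty,4}_{t,x}}$ by a constant depending only on $C, N, r, l$, and summing over the finitely many indices concludes this direction.

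For the $\gtrsim$ direction, I will interpret each scalar $\eta_i \Psi(X)$ as a full contraction of $(\eta_i X) \otimes \Psi$, where $\eta_i X$ extends by zero to a smooth global horizontal tensor field. Since the geometric L-P operators commute with all contractions (recall the last paragraph of Section \ref{sec.geom_glp}) and since $|c(\Phi_1 \otimes \Phi_2)| \leq |\Phi_1||\Phi_2|$ pointwise for any full contraction $c$ in an orthonormal setting, the Besov norm of any such contraction is controlled by that of the full tensor product. A direct application of \eqref{eq.est_prodx_elem} with $\Phi := \eta_i X$ then yields
\[ \|\eta_i \Psi(X)\|_{B^{a,p,s}_{\ell,t,x}} \lesssim_{C,N,s,r,l} ( \|\nabla(\eta_i X)\|_{L^{\infty,4}_{t,x}} + \|\eta_i X\|_{L^{\infty,\infty}_{t,x}} ) \| \Psi \|_{B^{a,p,s}_{\ell,t,x}} \lesssim \| \Psi \|_{B^{a,p,s}_{\ell,t,x}}, \]
where the prefactor is absorbed using \eqref{eq.frame_basis} together with the \ass{R1}{C,N} bounds on $\eta_i$ and $\partial^i_a \eta_i$; summing over $i$ and $X$ completes the proof.

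The result is essentially a matter of careful bookkeeping: the only analytic input is Lemma \ref{thm.est_prodx_elem}, and there is no deep obstacle beyond ensuring that every frame-dependent local object is multiplied by a compactly supported cutoff before being treated as a global horizontal tensor field on $\mc{N}$. The one subtle point to verify is the uniform pointwise bound on the covariant derivatives of cutoffs times frame elements, which follows routinely from the \ass{R1}{C,N} data.
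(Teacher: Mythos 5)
Your proposal is correct and takes essentially the same approach as the paper: both directions are single applications of Lemma \ref{thm.est_prodx_elem}, using the reconstruction identity $\Psi = \sum_{i} \sum_{X} [\eta_i \Psi(X)] \, \tilde\eta_i X^\ast$ (the paper's $X^\ast$ is your $X^\sharp$) for the $\lesssim$ direction and the observation that $\eta_i \Psi(X)$ is a full contraction of $(\eta_i X) \otimes \Psi$ for the $\gtrsim$ direction, with the frame bounds \eqref{eq.frame_basis} and the \ass{R1}{C,N} control of the cutoffs absorbing the prefactors.
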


\begin{proof}
First of all, we apply \eqref{eq.est_prodx_elem} for each $\eta_i$ and $X$ to obtain
\begin{align*}
\| \eta_i \Psi (X) \|_{ B^{a, p, s}_{\ell, t, x} } &\lesssim ( \| \nabla ( \eta_i X ) \|_{ L^{\infty, 4}_{t, x} } + \| \eta_i X \|_{ L^{\infty, \infty}_{t, x} } ) \| \Psi \|_{ B^{a, p, s}_{\ell, t, x} } \lesssim \| \Psi \|_{ B^{a, p, s}_{\ell, t, x} } \text{.}
\end{align*}
In the last step, we applied \ass{R1}{C, N} and \eqref{eq.frame_basis} to control $\eta_i X$.
Summing the above inequality over both $i$ and $X$ proves one half of \eqref{eq.besov_scalar_red}.

Next, given $1 \leq i \leq N$ and $X \in i \mc{X}^r_l$, we let $X^\ast \in i \mc{X}^l_r$ denote the dual basis element on $U_i$ to $X$.
Since $\eta_i \Psi$ is supported entirely on $U_i$,
\[ \Psi = \sum_{i = 1}^N ( \eta_i \Psi ) = \sum_{i = 1}^N \sum_{ X \in i \mc{X}^r_l } \eta_i \Psi (X) \otimes \tilde{\eta}_i X^\ast \text{.} \]
As a result, we can once again apply \eqref{eq.est_prodx_elem}, \ass{R1}{C, N}, and \eqref{eq.frame_basis}:
\begin{align*}
\| \Psi \|_{ B^{a, p, s}_{\ell, t, x} } &\lesssim \sum_{i = 1}^N \sum_{ X \in i \mc{X}^r_l } ( \| \nabla ( \tilde{\eta}_i X^\ast ) \|_{ L^{\infty, 4}_{t, x} } + \| \tilde{\eta}_i X^\ast \|_{ L^{\infty, \infty}_{t, x} } ) \| \eta_i \Psi (X) \|_{ B^{a, p, s}_{\ell, t, x} } \\
&\lesssim \sum_{i = 1}^N \sum_{ X \in i \mc{X}^r_l } \| \eta_i \Psi (X) \|_{ B^{a, p, s}_{\ell, t, x} } \text{.} \qedhere
\end{align*}
\end{proof}

\subsection{Mixed Estimates} \label{sec.bcomp_mixed}

The other fundamental step in the proof of Proposition \ref{thm.comp_main} is a number of preliminary comparisons in the special case of localized scalar quantities.
The main technical component in this endeavor is a collection of estimates involving both geometric and Euclidean L-P operators.

\begin{lemma} \label{thm.intertwining_mixed}
Assume $(\mc{S}, h)$ satisfies \ass{r1}{C, N}, with data $\{ U_i, \varphi_i, \eta_i, \tilde{\eta}_i, e_i \}_{i = 1}^N$.
In addition, fix integers $1 \leq i \leq N$ and $k, l \geq 0$.
\begin{itemize}
\item If $f \in \mc{C}^\infty \mc{S}$, then
\footnote{In \eqref{eq.intertwining_mixed_good}, multiplying $P_l f$ and $P_{< 0} f$ by $\tilde{\eta}_i$ ensures that the resulting functions are compactly supported in $U_i$, so that the outer (classical) L-P operator $E_k$ is well-defined.}
\begin{align}
\label{eq.intertwining_mixed_good} \| E_k [ ( \tilde{\eta}_i \cdot P_l f ) \circ \varphi_i^{-1} ] \|_{ L^2_x } &\lesssim_C 2^{ - |k - l| } \| P_{\sim l} f \|_{ L^2_x } \text{,} \\
\notag \| E_k [ ( \tilde{\eta}_i \cdot P_{< 0} f ) \circ \varphi_i^{-1} ] \|_{ L^2_x } &\lesssim_C 2^{- k} \| P_{\lesssim 0} f \|_{ L^2_x } \text{,} \\
\notag \| E_{< 0} [ ( \tilde{\eta}_i \cdot P_l f ) \circ \varphi_i^{-1} ] \|_{ L^2_x } &\lesssim_C 2^{- l} \| P_{\sim l} f \|_{ L^2_x } \text{,} \\
\notag \| E_{< 0} [ ( \tilde{\eta}_i \cdot P_{< 0} f ) \circ \varphi_i^{-1} ] \|_{ L^2_x } &\lesssim_C \| P_{\lesssim 0} f \|_{ L^2_x } \text{.}
\end{align}

\item If $g \in \mc{S}_x \R^2$, and if $\bar{\eta}_i = \tilde{\eta}_i \circ \varphi_i^{-1}$, then
\footnote{In \eqref{eq.intertwining_mixed_bad}, multiplying $E_l g$ and $E_{< 0} g$ by $\bar{\eta}_i$ ensures that the resulting functions are compactly supported in $\varphi_i (U_i)$, hence their compositions by $\varphi_i$ can be smoothly extended to $\mc{S}$.}
\begin{align}
\label{eq.intertwining_mixed_bad} \| P_k [ ( \bar{\eta}_i \cdot E_l g ) \circ \varphi_i ] \|_{ L^2_x } &\lesssim_C |k - l | 2^{- |k - l|} \| E_{\sim l} g \|_{ L^2_x } \text{,} \\
\notag \| P_k [ ( \bar{\eta}_i \cdot E_{< 0} g ) \circ \varphi_i ] \|_{ L^2_x } &\lesssim_C 2^{- k} \| E_{\lesssim 0} g \|_{ L^2_x } \text{,} \\
\notag \| P_{< 0} [ ( \bar{\eta}_i \cdot E_l g ) \circ \varphi_i ] \|_{ L^2_x } &\lesssim_C l 2^{- l} \| E_{\sim l} g \|_{ L^2_x } \text{,} \\
\notag \| P_{< 0} [ ( \bar{\eta}_i \cdot E_{< 0} g ) \circ \varphi_i ] \|_{ L^2_x } &\lesssim_C \| E_{\lesssim 0} g \|_{ L^2_x } \text{.}
\end{align}
\end{itemize}
\end{lemma}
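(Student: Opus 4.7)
The strategy is to exploit the identities $2^{2k} P_k = -\lapl P_k^\prime$ and $2^{2k} E_k = -\partial^2 \tilde{E}_k$, where $P_k^\prime$ and $\tilde{E}_k$ are smoothed spectral variants with the same Fourier supports as $P_k$ and $E_k$ and obeying the same boundedness, finite band, and Bernstein inequalities. Whenever the two frequency scales are mismatched, I would convert the higher-frequency operator into a Laplacian, integrate by parts, and absorb the resulting derivatives by invoking the corresponding finite band estimate on the other side. The coordinate change $\varphi_i$ itself contributes harmless factors via the integral comparison \eqref{eqr.change_of_coord} and the uniform positivity of $\vartheta_i$ from \eqref{eqr.vd_unif}.

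For \eqref{eq.intertwining_mixed_good}, I would treat the first bound in two cases. If $k \geq l$, I write $E_k = -2^{-2k} \partial^2 \tilde{E}_k$, use $L^2$-boundedness of $\tilde{E}_k$ and \eqref{eqr.change_of_coord} to reduce to $\|\partial^2[(\tilde{\eta}_i \cdot P_l f) \circ \varphi_i^{-1}]\|_{L^2_x}$, and then compare Euclidean second derivatives to Levi-Civita second derivatives; the difference terms pick up at most one factor of $\nabla \vartheta_i$ or an inverse area density, each controlled by \ass{r1}{C,N} and \eqref{eqr.vdd_inv}. Applying \eqref{eq.glp_fbl} to $\nabla^2 P_l f$ then produces the required $2^{2(l-k)}$. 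If $k < l$, I write $P_l = -2^{-2l} \lapl P_l^\prime$ and integrate by parts against the composed function on the geometric side; the cutoff $\tilde{\eta}_i$ now generates commutator terms involving $\nabla \tilde{\eta}_i$ and $\lapl \tilde{\eta}_i$, both bounded in $L^\infty$ by \ass{r1}{C,N}, which together with \eqref{eqr.change_of_coord} yield the factor $2^{-2(l-k)}$. The three remaining estimates in \eqref{eq.intertwining_mixed_good} follow by the same procedure, using that low frequencies are handled trivially by boundedness of the L-P operators.

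For \eqref{eq.intertwining_mixed_bad}, the roles of $P$ and $E$ are reversed but the ideas are parallel. If $k \geq l$, I write $P_k = -2^{-2k} \lapl P_k^\prime$ and expand $\lapl [(\bar{\eta}_i \cdot E_l g) \circ \varphi_i]$ in the $\varphi_i$-coordinates as $\gamma^{ab} \partial^i_a \partial^i_b - \gamma^{ab} \Gamma^c_{ab} \partial^i_c$ acting on the composition; the Christoffel symbols $\Gamma^c_{ab}$ are pointwise of the form $\gamma^{-1} \cdot \nabla e^i_\cdot \cdot e^i_\cdot$ (together with a divergence contribution from $\nabla \vartheta_i$), and hence sit in $L^4_x$ and $L^2_x$ respectively by \ass{r1}{C,N}. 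The main term $2^{-2k} \partial^2 E_l g$ gives the clean $2^{-2(k-l)}$ factor via \eqref{eqc.glp_fb}, while the Christoffel error term contributes $2^{-2k} \cdot \|\Gamma\|_{L^4_x} \cdot 2^l \|E_{\sim l} g\|_{L^4_x}$; bounding $\|E_{\sim l}g\|_{L^4_x}$ by $2^{l/2}\|E_{\sim l}g\|_{L^2_x}$ via \eqref{eqc.glp_bernstein} and iterating the reduction across the dyadic interval between $l$ and $k$ yields the logarithmic loss $|k - l|\cdot 2^{-|k-l|}$. The case $k < l$ is symmetric, converting $E_l$ to $-2^{-2l} \partial^2 \tilde{E}_l$, integrating by parts in Euclidean sense, and estimating the resulting curvature/cutoff contributions via \eqref{eqr.change_of_coord}, \ass{r1}{C,N}, and the Bernstein inequality \eqref{eqc.glp_bernstein}.

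The main obstacle is the bookkeeping for the Christoffel-type error terms in \eqref{eq.intertwining_mixed_bad}; these are the only place where the $L^4$-integrability in \ass{r1}{C,N} is genuinely needed, and they are what force the logarithmic factor $|k-l|$ instead of a sharp geometric decay. The cases involving low-frequency truncations $P_{<0}$ or $E_{<0}$ will be strictly easier, as the low-frequency output admits trivial $L^2$-boundedness and the gain in $|k-l|$ is replaced by a single gain of $2^{-k}$ or $2^{-l}$, obtained from one application of the Laplacian identity.
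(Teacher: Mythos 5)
Your proposal is built around the Laplacian identity $2^{2k}P_k=-\lapl P_k'$ and a two-derivative comparison, but that is precisely the route that the hypothesis \ass{r1}{C,N} does not support, and the paper avoids it for exactly that reason.

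Concretely, there are three gaps. First, when you expand $\lapl[(\bar\eta_i E_l g)\circ\varphi_i]$ or compare $\partial^2$ with $\nabla^2$, you inevitably face the Christoffel symbols of the $\varphi_i$-coordinate frame. You assert that these lie in $L^4_x$ ``by \ass{r1}{C,N}''. They do not. Under \ass{r1}{C,N} the only controlled connection-type quantities are $\nabla e^i_a$ in $L^4_x$ and $\nabla\vartheta_i$ in $L^2_x$; the coordinate Christoffel symbols $\Gamma^c_{ab}$ are governed by $\nabla\partial^i_a$, which is controlled (in $L^2_x$ only) first at the level of \ass{r2}{}, not \ass{r1}{}. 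Indeed Section \ref{sec.intro_sprod} of the paper singles out exactly this fact—coordinate Christoffel symbols have at best $L^2$-integrability, which is the reason the coordinate reduction fails and a parallel orthonormal frame has to be used instead. Second, even setting aside the Christoffel error, the main term in your $k\geq l$ computation needs a geometric Bochner-type inequality $\|\nabla^2 P_l f\|_{L^2_x}\lesssim 2^{2l}\|f\|_{L^2_x}$; this is \eqref{eq.glp_bochner_sc} in the paper, and it is proved only under the additional curvature hypothesis \ass{k}{C,D}. Under \ass{r1}{C,N} alone, \eqref{eq.glp_fbl} gives $\|\lapl P_l f\|\lesssim 2^{2l}$ but not the full Hessian, so your reduction is not available. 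Third, your mechanism for the logarithmic loss $|k-l|$ (``iterating the reduction across the dyadic interval'') is not an argument: iterating a geometric gain produces a geometric series, not a linear-in-$|k-l|$ factor.

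The paper's proof is structured quite differently and avoids all of the above. In the case where the \emph{outer} operator has the higher frequency (i.e., $l\leq k$), only a single derivative is extracted, via the reverse finite-band estimates $\|P_k F\|\lesssim 2^{-k}\|\nabla F\|$ and its Euclidean counterpart. Since the quantities involved are scalars, the covariant gradient is just the differential, and \eqref{eqr.change_of_coord} together with the uniform positivity from \ass{r0}{} makes the coordinate and covariant one-derivative $L^2_x$-norms comparable—no Christoffel symbols enter at all, and a Leibniz rule plus Bernstein on the low-frequency factor closes this case (this is \eqref{eq.intertwining_mixed_pre}). The case where the \emph{inner} operator has the higher frequency ($l\geq k$) is then handled by a duality argument: test against a unit $L^2$ element, use self-adjointness of $P_l$ and $E_k$ to move the projections, and absorb the change-of-measure factor $\vartheta_i^{\pm 1}$ as a multiplier. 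For \eqref{eq.intertwining_mixed_good} this simply reduces to the already-proven case via \eqref{eq.intertwining_mixed_pre}, giving the sharp decay $2^{-|k-l|}$. For \eqref{eq.intertwining_mixed_bad} the multiplier $\vartheta_i$ has only $H^1$-type control, so one has to run an additional Euclidean L-P decomposition and apply the paraproduct estimate \eqref{eqc.intertwining_prod_elem} together with \eqref{eq.intertwining_mixed_good}; summing $\sum_m 2^{-|l-m|-|m-k|}$ is what produces the factor $|k-l|\,2^{-|k-l|}$, which is the true source of the logarithmic loss.

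So the two essential ideas you are missing are: restrict to one derivative to stay under the regularity floor set by \ass{r1}{}, and use duality to swap the high and low frequencies.
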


\begin{proof}
Once again, we only prove the first estimate in each set.

Consider first the case $l \leq k$.
For this setting, we will prove a more general version of \eqref{eq.intertwining_mixed_bad}.
Let $\nu \in \mc{S}_x \R^2$.
Applying \eqref{eq.glp_fbr}, \eqref{eqr.change_of_coord}, and \ass{r0}{C, N}, then
\begin{align*}
\| P_k ( \bar{\eta}_i \nu E_l g \circ \varphi_i ) \|_{ L^2_x } &\lesssim 2^{-k} \| \nabla ( \bar{\eta}_i \nu E_l g \circ \varphi_i ) \|_{ L^2_x } \\
&\lesssim 2^{-k} ( \| \bar{\eta}_i \nu \|_{ L^\infty_x } \| \partial E_l g \|_{ L^2_x } + \| \partial ( \bar{\eta}_i \nu ) \|_{ L^2_x } \| E_l g \|_{ L^\infty_x } ) \text{.}
\end{align*}
Applying \eqref{eqc.glp_fb}, \eqref{eqc.glp_bernstein}, and \ass{r0}{C, N}, we obtain, as desired,
\begin{equation} \label{eq.intertwining_mixed_pre} \| P_k ( \bar{\eta}_i \nu E_l g \circ \varphi_i ) \|_{ L^2_x } \lesssim 2^{- |k - l|} ( \| \partial \nu \|_{ L^2_x } + \| \nu \|_{ L^\infty_x } ) \| E_{\sim l} g \|_{ L^2_x } \text{,} \qquad l \leq k \text{.} \end{equation}
In particular, by setting $\nu \equiv 1$, we establish \eqref{eq.intertwining_mixed_bad} in the $l \leq k$ case.
Similarly, for \eqref{eq.intertwining_mixed_good}, we apply \eqref{eqr.change_of_coord}, \eqref{eqc.glp_fb}, and \ass{r0}{C, N}:
\begin{align*}
\| E_k ( \tilde{\eta}_i P_l f \circ \varphi_i^{-1} ) \|_{ L^2_x } &\lesssim 2^{-k} ( \| \nabla \tilde{\eta}_i \|_{ L^\infty_x } \| P_l f \|_{ L^2_x } + \| \nabla P_l f \|_{ L^2_x } ) \lesssim 2^{ - | k - l | } \| P_{\sim l} f \|_{ L^2_x } \text{.}
\end{align*}

Next, consider when $l \geq k$.
Fix $u \in \mc{S}_x \R^2$ and $w \in \mc{C}^\infty \mc{S}$, with
\[ \| u \|_{ L^2_x } = 1 \text{,} \qquad \| w \|_{ L^2_x } = 1 \text{.} \]
By standard duality arguments, it suffices to show that
\begin{align}
\label{eql.intertwining_mixed_good} \abs{ \int_{ \R^2 } E_k [ ( \tilde{\eta}_i P_l P_{\sim l} f ) \circ \varphi_i^{-1} ] \cdot u } &\lesssim 2^{- |k - l|} \| P_{\sim l} f \|_{ L^2_x } \text{,} \\
\label{eql.intertwining_mixed_bad} \abs{ \int_\mc{S} P_k [ ( \bar{\eta}_i E_l E_{\sim l} g ) \circ \varphi_i ] \cdot w \cdot d \omega } &\lesssim | k - l | 2^{- |k - l|} \| E_{\sim l} g \|_{ L^2_x } \text{.}
\end{align}
To show this, we first need the following preliminary estimates for $\tilde{\eta}_i \vartheta_i$, which follows immediately from \ass{r1}{C, N}, \eqref{eqr.vd_unif}, and \eqref{eqr.vdd_inv}:
\begin{alignat}{3}
\label{eql.intertwining_mixed_est} \| \tilde{\eta}_i \vartheta_i \|_{ L^\infty_x } &\lesssim 1 \text{,} &\qquad \| \tilde{\eta}_i \vartheta_i^{-1} \|_{ L^\infty_x } &\lesssim 1 \text{,} \\
\notag \| \nabla ( \tilde{\eta}_i \vartheta_i ) \|_{ L^2_x } &\lesssim 1 \text{,} &\qquad \| \nabla ( \tilde{\eta}_i \vartheta_i^{-1} ) \|_{ L^2_x } &\lesssim 1 \text{.}
\end{alignat}

Let $I_1$ and $I_2$ denote the left-hand sides of \eqref{eql.intertwining_mixed_good} and \eqref{eql.intertwining_mixed_bad}, respectively.
From the self-adjointness properties of $P_k$ and $E_l$, we have
\begin{align}
\label{eql.intertwining_mixed_bad_int} I_2 &= \abs{ \int_{ \mc{S} } [ ( \bar{\eta}_i E_l E_{\sim l} g ) \circ \varphi_i ] \cdot P_k w \cdot d \omega } \\
\notag &= \abs{ \int_{ \R^2 } E_l E_{\sim l} g \cdot [ ( \tilde{\eta}_i P_k w \cdot \vartheta_i ) \circ \varphi_i^{-1} ] } \\
\notag &\leq \| E_{\sim l} g \|_{ L^2_x } \| E_l [ ( \tilde{\eta}_i \vartheta_i \cdot P_k w ) \circ \varphi_i^{-1} ] \|_{ L^2_x } \\
\notag &= \| E_{\sim l} g \|_{ L^2_x } I_{11} \text{.}
\end{align}
By a similar computation, we have
\begin{align}
\label{eql.intertwining_mixed_good_int} I_1 &= \abs{ \int_\mc{S} P_l P_{\sim l} f \cdot \tilde{\eta} \vartheta_i^{-1} \cdot ( E_k u \circ \varphi_i ) \cdot d \omega } \\
\notag &\leq \| P_{\sim l} f \|_{ L^2_x } \| P_l [ \tilde{\eta}_i \vartheta_i^{-1} \cdot ( E_k u \circ \varphi_i ) ] \|_{ L^2_x } \text{.}
\end{align}

Applying \eqref{eq.intertwining_mixed_pre} in conjunction with \eqref{eqr.change_of_coord} and \eqref{eql.intertwining_mixed_est}, we obtain
\[ \| P_l [ \tilde{\eta}_i \vartheta_i^{-1} \cdot ( E_k u \circ \varphi_i ) ] \|_{ L^2_x } \lesssim 2^{ - | k - l | } \| E_{\sim k} u \|_{ L^2_x } \lesssim 2^{ - | k - l | } \text{.} \]
Combining the above with \eqref{eql.intertwining_mixed_good_int} yields \eqref{eql.intertwining_mixed_good}, which completes the proof of \eqref{eq.intertwining_mixed_good}.

Finally, to deal with \eqref{eql.intertwining_mixed_bad_int}, we decompose again using classical L-P operators:
\begin{align*}
I_{11} &\lesssim \sum_{m \geq 0} \| E_l \{ ( \vartheta \circ \varphi_i^{-1} ) E_m [ ( \tilde{\eta}_i P_k w ) \circ \varphi_i^{-1} ] \} \|_{ L^2_x } \\
&\qquad + \| E_l \{ ( \vartheta_i \circ \varphi_i^{-1} ) E_{< 0} [ ( \tilde{\eta}_i P_k w ) \circ \varphi_i^{-1} ] \} \|_{ L^2_x } \text{.}
\end{align*}
Applying \eqref{eqc.intertwining_prod_elem} and \eqref{eq.intertwining_mixed_good}, along with \eqref{eqr.vd_unif}, \eqref{eqr.change_of_coord}, and \eqref{eqr.vdd_inv}, then
\begin{align*}
I_{11} &\lesssim \sum_{m \geq 0} 2^{- |l - m| } \| E_{\sim m} [ ( \tilde{\eta}_i P_k w ) \circ \varphi_i^{-1} ] \|_{ L^2_x } + 2^{-l} \| ( \tilde{\eta}_i P_k w ) \circ \varphi_i^{-1} \|_{ L^2_x } \\
&\lesssim \sum_{m \geq 0} 2^{ - |l - m| - |m - k| } \| P_{\sim k} w \|_{ L^2_x } + 2^{-l} \| P_{\sim k} w \|_{ L^2_x } \\
&\lesssim | k - l | 2^{ - | l - k | } \text{.}
\end{align*}
The above, combined with \eqref{eql.intertwining_mixed_bad_int}, yields \eqref{eql.intertwining_mixed_bad}, which proves \eqref{eq.intertwining_mixed_bad}.
\end{proof}

\subsection{Completion of the Proof} \label{sec.bcomp_comp}

With the technical necessities out of the way, we can now finish the proof of Proposition \ref{thm.comp_main}.
The first step is to do this in the localized scalar case.
This argument is a minor variation of a similar result in \cite{wang:cg}.
Its main component is the mixed L-P estimates of Lemma \ref{thm.intertwining_mixed}.

\begin{lemma} \label{thm.besov_comp_sc}
Assume $(\mc{N}, h)$ satisfies \ass{R1}{C, N}, and let $1 \leq i \leq N$, $a \in [1, \infty]$, $p \in [1, \infty]$, and $s \in (-1, 1)$.
If $\phi \in C^\infty \mc{N}$ is supported within the support of $\eta_i$, then
\begin{align}
\label{eq.besov_comp_sc} \| \phi \|_{ B^{a, p, s}_{\ell, t, x} } \simeq_{C, s} \| \phi \circ \varphi_i^{-1} \|_{ B^{a, p, s}_{\ell, t, x} } \text{.}
\end{align}
\end{lemma}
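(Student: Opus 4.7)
The plan is to reduce \eqref{eq.besov_comp_sc} to the mixed L--P estimates of Lemma \ref{thm.intertwining_mixed} by decomposing $\phi$ and $\phi \circ \varphi_i^{-1}$ dyadically using geometric and Euclidean L--P operators respectively, and then controlling the resulting ``transition matrix'' entries via Schur-type summation. The key structural observation is that $\phi$ is supported in $\mathrm{supp}(\eta_i)$ where $\tilde\eta_i \equiv 1$, so $\phi = \tilde{\eta}_i \phi$ and $\phi \circ \varphi_i^{-1} = \bar{\eta}_i (\phi \circ \varphi_i^{-1})$ with $\bar{\eta}_i = \tilde{\eta}_i \circ \varphi_i^{-1}$. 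This lets us freely insert $\tilde\eta_i$ or $\bar\eta_i$ factors whenever we need to transfer an object between the two settings while ensuring that the resulting expressions extend smoothly (by zero) to all of $\mc{S}$ or of $\R^2$.

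For the forward bound $\| \phi \circ \varphi_i^{-1} \|_{B^{a,p,s}_{\ell,t,x}} \lesssim_{C,s} \| \phi \|_{B^{a,p,s}_{\ell,t,x}}$, I would write
\[ \phi = \sum_{l \geq 0} \tilde{\eta}_i P_l \phi + \tilde{\eta}_i P_{<0} \phi, \]
compose with $\varphi_i^{-1}$, and then apply the Euclidean projectors $E_k$, $E_{<0}$. Invoking \eqref{eq.intertwining_mixed_good} at each fixed $\tau \in [0,\delta]$ followed by an $L^p_t$-norm yields kernel estimates of the form $2^{-|k-l|}$ (with $2^{-k}$ or $2^{-l}$ in the mixed high/low-frequency cases). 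Summing the four combinations against the $\ell^a(2^{sak})$ weights defining the Besov norm is then a standard Schur/Young argument: for $|s|<1$ the sum $\sum_k 2^{s(k-l)-|k-l|}$ converges to an $l$-independent constant depending only on $s$. Endpoints $a = 1$ and $a = \infty$ are handled directly, and the intermediate exponents follow by interpolation.

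For the reverse bound $\| \phi \|_{B^{a,p,s}_{\ell,t,x}} \lesssim_{C,s} \| \phi \circ \varphi_i^{-1} \|_{B^{a,p,s}_{\ell,t,x}}$, the roles of the two L--P families are exchanged: write
\[ \phi = \sum_{l \geq 0} \bigl( \bar{\eta}_i \, E_l (\phi \circ \varphi_i^{-1}) \bigr) \circ \varphi_i + \bigl( \bar{\eta}_i \, E_{<0}(\phi \circ \varphi_i^{-1}) \bigr) \circ \varphi_i, \]
apply the geometric operators $P_k$, $P_{<0}$, and invoke \eqref{eq.intertwining_mixed_bad} slicewise. The kernel now has the slightly worse form $|k-l|\,2^{-|k-l|}$, but since $|s|<1$ the weighted sum $\sum_k |k-l|\, 2^{s(k-l)-|k-l|}$ still converges uniformly in $l$ to an $O_s(1)$ quantity, so the same Schur argument goes through with constants depending on $C$ and $s$ alone.

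The main obstacle I expect is bookkeeping rather than analytic subtlety: one must keep track of the four combinations of high/low frequency on either side so as to match the four bounds of Lemma \ref{thm.intertwining_mixed}, and ensure that taking the $L^{p,2}_{t,x}$-norm commutes harmlessly with the (time-independent) spatial L--P decomposition. The logarithmic loss $|k-l|$ in \eqref{eq.intertwining_mixed_bad} is the one place the open range $s \in (-1,1)$ is used decisively, and consequently the constants implicit in \eqref{eq.besov_comp_sc} must be allowed to depend on $s$ and to blow up as $|s| \to 1$, exactly as the statement permits.
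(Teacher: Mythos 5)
Your proposal is correct and essentially reproduces the paper's proof: both pass through the mixed L--P estimates of Lemma \ref{thm.intertwining_mixed} using the same support trick ($\phi = \tilde\eta_i\phi$, $\phi\circ\varphi_i^{-1} = \bar\eta_i(\phi\circ\varphi_i^{-1})$) to insert cutoffs, decompose the ``near-diagonal'' interaction kernel as $2^{-|k-l|}$ (resp.\ $|k-l|\,2^{-|k-l|}$) and sum via a Schur-type argument, then interpolate from $a=1,\infty$. You also correctly identify that the logarithmic loss in \eqref{eq.intertwining_mixed_bad} is what makes the open range $s\in(-1,1)$ essential.
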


\begin{proof}
For notational convenience, we define $\bar{\phi} = \phi \circ \varphi_i^{-1}$ and $\bar{\eta}_i = \tilde{\eta}_i \circ \varphi_i^{-1}$.
First, consider the case $a = 1$.
Using \eqref{eqc.glp_bdd}, \eqref{eq.glp_bdd}, and \eqref{eqr.change_of_coord}, we can decompose
\begin{align*}
\| \phi \|_{ B^{1, p, s}_{\ell, t, x} } &\lesssim \sum_{k, l \geq 0} 2^{s k} \| P_k ( \bar{\eta}_i E_l \bar{\phi} \circ \varphi_i ) \|_{ L^{p, 2}_{t, x} } + \sum_{k \geq 0} 2^{s k} \| P_k ( \bar{\eta}_i E_{< 0} \bar{\phi} \circ \varphi_i ) \|_{ L^{p, 2}_{t, x} } \\
\notag &\qquad + \sum_{l \geq 0} \| P_{< 0} ( \bar{\eta}_i E_l \bar{\phi} \circ \varphi_i ) \|_{ L^{p, 2}_{t, x} } + \| P_{< 0} ( \bar{\eta}_i E_{< 0} \bar{\phi} \circ \varphi_i ) \|_{ L^{p, 2}_{t, x} } \text{,} \\
\| \bar{\phi} \|_{ B^{1, p, s}_{\ell, t, x} } &\lesssim \sum_{k, l \geq 0} 2^{s k} \| E_k ( \tilde{\eta}_i P_l \phi \circ \varphi_i^{-1} ) \|_{ L^{p, 2}_{t, x} } + \sum_{k \geq 0} 2^{s k} \| E_k ( \tilde{\eta}_i P_{< 0} \phi \circ \varphi_i^{-1} ) \|_{ L^{p, 2}_{t, x} } \\
\notag &\qquad + \sum_{l \geq 0} \| E_{< 0} ( \tilde{\eta}_i P_l \phi \circ \varphi_i^{-1} ) \|_{ L^{p, 2}_{t, x} } + \| E_{< 0} ( \tilde{\eta}_i P_{< 0} \phi \circ \varphi_i^{-1} ) \|_{ L^{p, 2}_{t, x} } \text{.}
\end{align*}
Note in particular that due to the supports of $\phi$ and $\bar{\phi}$, then $\tilde{\eta}_i \phi = \phi$ and $\bar{\eta}_i \bar{\phi} = \bar{\phi}$.

Applying \eqref{eq.intertwining_mixed_good} the second inequality above, then
\begin{align*}
\| \bar{\phi} \|_{ B^{1, p, s}_{\ell, t, x} } &\lesssim \sum_{k, l \geq 0} 2^{s k} 2^{- |k - l|} \| P_{\sim l} \phi \|_{ L^{p, 2}_{t, x} } + \sum_{k \geq 0} 2^{s k} 2^{- k} \| P_{\lesssim 0} \phi \|_{ L^{p, 2}_{t, x} } \\
&\qquad + \sum_{l \geq 0} 2^{-l} \| P_{\sim l} \phi \|_{ L^{p, 2}_{t, x} } + \| P_{\lesssim 0} \phi \|_{ L^{p, 2}_{t, x} } \\
&\lesssim \| \phi \|_{ B^{1, p, s}_{\ell, t, x} } \text{.}
\end{align*}
Similarly, applying \eqref{eq.intertwining_mixed_bad}, we obtain
\begin{align*}
\| \phi \|_{ B^{1, p, s}_{\ell, t, x} } &\lesssim \sum_{k, l \geq 0} 2^{s k} | k - l | 2^{ - |k - l| } \| E_l \bar{\phi} \|_{ L^{p, 2}_{t, x} } + \sum_{k \geq 0} 2^{s k} 2^{-k} \| E_{\lesssim 0} \bar{\phi} \|_{ L^{p, 2}_{t, x} } \\
&\qquad + \sum_{l \geq 0} l 2^{- l} \| E_{\sim l} \bar{\phi} \|_{ L^{p, 2}_{t, x} } + \| E_{\lesssim 0} \bar{\phi} \|_{ L^{p, 2}_{t, x} } \\
&\lesssim \| \bar{\phi} \|_{ B^{1, p, s}_{\ell, t, x} } \text{.}
\end{align*}
In both cases, we also used that every $(\mc{S}, \gamma [\tau])$ satisfies \ass{r1}{C, N}.

The $a = \infty$ case can be proved similarly, again by applying \eqref{eq.intertwining_mixed_good} and \eqref{eq.intertwining_mixed_bad}.
The general case of arbitrary $a$ follows by interpolation.
\end{proof}

We can now complete the proof of Proposition \ref{thm.comp_main}.
For \eqref{eq.comp_main}, we combine the localized scalar estimates of Lemma \ref{thm.besov_comp_sc} with the scalar reduction argument of Lemma \ref{thm.besov_scalar_red}.
Indeed, given $\Psi \in \mc{C}^\infty \ul{T}^r_l \mc{N}$, we have
\begin{align*}
\| \Psi \|_{ B^{a, p, s}_{\ell, t, x} } &\simeq \sum_{i = 1}^N \sum_{ X \in i \mc{X}^r_l } \| \eta_i \Psi (X) \|_{ B^{a, p, s}_{\ell, t, x} } \\
&\simeq \sum_{i = 1}^N \sum_{ X \in i \mc{X}^r_l } \| \eta_i \Psi (X) \circ \varphi_i^{-1} \|_{ B^{a, p, s}_{\ell, t, x} } \\
&= \| \Psi \|_{ \mc{B}^{a, p, s}_{\ell, t, x} } \text{.}
\end{align*}

For the technical estimate \eqref{eq.comp_main_tech}, we first fix $1 \leq i \leq N$ and $X \in i \mc{X}^r_l$.
Partitioning the quantity $\tilde{\eta}_i \Psi (X)$ using the $\eta_j$'s and applying \eqref{eq.comp_main}, then
\begin{align*}
\| \tilde{\eta}_i \Psi (X) \circ \varphi_i^{-1} \|_{ B^{a, p, s}_{\ell, t, x} } &\lesssim \sum_{j = 1}^N \| \eta_j \Psi ( \tilde{\eta}_i X ) \circ \varphi_i^{-1} \|_{ B^{a, p, s}_{\ell, t, x} } \lesssim \| \Psi ( \tilde{\eta}_i X ) \|_{ B^{a, p, s}_{\ell, t, x} } \text{.}
\end{align*}
By \eqref{eq.est_prodx_elem} as well as the \ass{R1}{C, N} condition, we obtain
\begin{align*}
\| \tilde{\eta}_i \Psi (X) \circ \varphi_i^{-1} \|_{ B^{a, p, s}_{\ell, t, x} } &\lesssim ( \| \nabla ( \tilde{\eta}_i X ) \|_{ L^{\infty, 4}_{t, x} } + \| \tilde{\eta}_i X \|_{ L^{\infty, \infty}_{t, x} } ) \| \Psi \|_{ B^{a, p, s}_{\ell, t, x} } \lesssim \| \Psi \|_{ B^{a, p, s}_{\ell, t, x} } \text{.}
\end{align*}
Summing over $i$ and $X$ yields \eqref{eq.comp_main_tech}, as desired.

\section{Conformal Elliptic Estimates} \label{sec.cell}

The goal of this appendix is to complete the proof of Proposition \ref{thm.conf} by establishing the estimates \eqref{eq.besov_embed}, \eqref{eq.besov_nabla}, and \eqref{eq.besov_hodge}.
Throughout, we assume the setting of Proposition \ref{thm.conf}; in particular, we define $u$ and $\bar{\gamma}$ using \eqref{eq.conf_curv} and \eqref{eq.conf_transform}.
Moreover, we adopt all the conventions used throughout Section \ref{sec.curv_conf}.

Recall we have already established that $(\mc{N}, \bar{\gamma})$ satisfies \ass{R1}{C^\prime, N} and \ass{K}{C^\prime, D^\prime} for appropriate constants $C^\prime$, $D^\prime$ depending on $C$ and $N$.
More importantly, though, we also have the estimate \eqref{eq.conf_curv_est}, which provides $\bar{L}^2_x$-control for $\bar{\mc{K}}$.
The main idea is that this $\bar{L}^2_x$-control for $\bar{\mc{K}}$ is far better than the control available for $\mc{K}$ from the \ass{K}{} condition.
Consequently, with respect to $(\mc{N}, \bar{\gamma})$, we can derive stronger versions of the estimates found in Sections \ref{sec.curv_ell}-\ref{sec.curv_wbdh}.

\subsection{Strong Curvature Estimates} \label{sec.cell_ellex}

Here, we demonstrate the improved elliptic estimates that are avaiable due to $\bar{\mc{K}}$ having $\bar{L}^2_x$-control.
Each of these estimates takes place on an arbitrary fixed level set $(\mc{S}, \bar{\gamma} [\tau])$ of our foliation.
The first improvement for $(\mc{N}, \bar{\gamma})$ is the following tensorial Bochner estimate.

\begin{lemma} \label{thm.ell_est}
Assume $(\mc{N}, \gamma)$ satisfies \ass{R1}{C, N} and \ass{K}{C, D}, with $D \ll 1$ sufficiently small.
If $F \in \mc{C}^\infty T^r_l \mc{S}$, then, with respect to $(\mc{S}, \bar{\gamma} [\tau])$, we have
\begin{equation} \label{eq.ell_est} \| \bar{\nabla}^2 F \|_{ \bar{L}^2_x } \lesssim_{C, N, r, l} \| \bar{\lapl} F \|_{ \bar{L}^2_x } + \| \bar{\nabla} F \|_{ \bar{L}^2_x } + \| F \|_{ \bar{L}^2_x } \text{.} \end{equation}
\end{lemma}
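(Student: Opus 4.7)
The plan is to apply the divergence-curl estimate \eqref{eq.div_curl_est} of Proposition \ref{thm.div_curl_est} to the horizontal tensor $\bar{\nabla} F \in \mc{C}^\infty T^r_{l+1} \mc{S}$ on the surface $(\mc{S}, \bar{\gamma} [\tau])$, choosing the freshly created (outermost) covariant slot of $\bar{\nabla} F$ as the one to be contracted against the divergence or curl. This is legitimate because Proposition \ref{thm.conf} guarantees that $(\mc{N}, \bar{\gamma})$ satisfies both \ass{R1}{C^\prime, N} and \ass{K}{C^\prime, D^\prime} (for a sufficiently small $D^\prime$ depending on $C, N$), so each level $(\mc{S}, \bar{\gamma} [\tau])$ meets the \ass{r1}{} and \ass{k}{} hypotheses of Proposition \ref{thm.div_curl_est}. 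With this choice of slot, \eqref{eq.div_curl_est} becomes
\[ \| \bar{\nabla}^2 F \|_{ \bar{L}^2_x } \lesssim_{C, N, r, l} \| \bar{\lapl} F \|_{ \bar{L}^2_x } + \| \bar{\epsilon}^{ab} \bar{\nabla}_a \bar{\nabla}_b F \|_{ \bar{L}^2_x } + \| \bar{\nabla} F \|_{ \bar{L}^2_x } \text{,} \]
so the problem reduces to controlling the antisymmetric ``curl'' term.

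Next I would rewrite this curl term as $\tfrac{1}{2} \bar{\epsilon}^{ab} [\bar{\nabla}_a, \bar{\nabla}_b] F$; since in two dimensions the Riemann tensor equals $\bar{\mc{K}}$ times an antisymmetrization of $\bar{\gamma}$, the commutator expands into at most $r + l$ contractions of $\bar{\mc{K}} \otimes F$, yielding $\| \bar{\epsilon}^{ab} \bar{\nabla}_a \bar{\nabla}_b F \|_{ \bar{L}^2_x } \lesssim_{r, l} \| \bar{\mc{K}} F \|_{ \bar{L}^2_x }$. To estimate $\bar{\mc{K}} F$, I would split $\bar{\mc{K}} = e^{-2 u} f + (\bar{\mc{K}} - e^{-2 u} f)$. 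By \eqref{eq.conf_unif} and the pointwise bound $f \simeq 1$, the first piece lies in $\bar{L}^\infty_x$ with norm $\lesssim 1$, while \eqref{eq.conf_curv_est} shows the second piece has $\bar{L}^2_x$-norm $\lesssim D$. Hence
\[ \| \bar{\mc{K}} F \|_{ \bar{L}^2_x } \lesssim \| F \|_{ \bar{L}^2_x } + D \| F \|_{ \bar{L}^\infty_x } \text{.} \]

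Finally, I would invoke the Gagliardo-Nirenberg-type inequality \eqref{eq.gns_2} on $(\mc{S}, \bar{\gamma} [\tau])$ to bound $\| F \|_{ \bar{L}^\infty_x } \lesssim \| \bar{\nabla}^2 F \|_{ \bar{L}^2_x }^{1/2} \| F \|_{ \bar{L}^2_x }^{1/2} + \| F \|_{ \bar{L}^2_x }$, and then apply Young's inequality together with the smallness of $D$ to absorb a small multiple of $\| \bar{\nabla}^2 F \|_{ \bar{L}^2_x }$ into the left-hand side, producing \eqref{eq.ell_est}. The main obstacle throughout is this curvature commutator: for the original metric $\gamma$, the Gauss curvature $\mc{K}$ is only controlled in a very weak sense (strictly weaker than $L^2_x$), so a direct tensorial Bochner argument on $(\mc{S}, \gamma [\tau])$ cannot close. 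The whole purpose of the conformal renormalization of Section \ref{sec.curv_conf} is to trade the non-$L^2_x$ divergence part of $\mc{K}$ for an $\bar{L}^2_x$-bounded remainder, and it is precisely this improvement, channeled through \eqref{eq.conf_curv_est}, that drives the above absorption argument and furnishes the tensorial strengthening of the scalar estimate \eqref{eq.ell_est_sc}.
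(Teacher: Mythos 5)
Your proposal is correct and follows essentially the same route as the paper's proof: apply \eqref{eq.div_curl_est} to $\bar{\nabla} F$, recognize the curl term as a commutator that reduces to $\bar{\mc{K}}$-contractions, split $\bar{\mc{K}}$ via \eqref{eq.conf_gc_id} and \eqref{eq.conf_curv_est} into an $\bar{L}^\infty_x$-bounded part plus an $O(D)$ piece in $\bar{L}^2_x$, then control $\| F \|_{\bar{L}^\infty_x}$ via \eqref{eq.gns_2} and absorb using the smallness of $D$. The only difference is cosmetic: the paper compresses the curl-to-commutator step into a single line while you spell it out explicitly.
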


\begin{proof}
Applying \eqref{eq.div_curl_est} to $\bar{\nabla} F$ on $(\mc{S}, \bar{\gamma} [\tau])$ yields
\begin{align*}
\| \bar{\nabla}^2 F \|_{ \bar{L}^2_x } &\lesssim \| \bar{\lapl} F \|_{ \bar{L}^2_x } + (r + l) \| | \bar{\mc{K}} | \cdot | F | \|_{ \bar{L}^2_x } + \| \bar{\nabla} F \|_{ \bar{L}^2_x } \\
&\lesssim \| \bar{\lapl} F \|_{ \bar{L}^2_x } + (r + l) D \| F \|_{ \bar{L}^\infty_x } + (r + l) \| F \|_{ \bar{L}^2_x } + \| \bar{\nabla} F \|_{ \bar{L}^2_x } \text{.}
\end{align*}
The $\bar{L}^\infty_x$-norm of $F$ can be treated using \eqref{eq.gns_2}:
\begin{align*}
(r + l) \| F \|_{ \bar{L}^\infty_x } &\lesssim (r + l) \| \bar{\nabla}^2 F \|_{ \bar{L}^2_x }^\frac{1}{2} \| F \|_{ \bar{L}^2_x }^\frac{1}{2} + (r + l) \| F \|_{ \bar{L}^2_x } \\
&\lesssim \| \bar{\nabla}^2 F \|_{ \bar{L}^2_x } + (r + l)^2 \| F \|_{ \bar{L}^2_x } \text{.}
\end{align*}
Combining the above and recalling the smallness of $D$ yields \eqref{eq.ell_est}.
\end{proof}

Using Lemma \ref{thm.ell_est}, we can now extend the weak Bernstein inequalities in Proposition \ref{thm.glp_bernstein} to the sharp cases $q = \infty$ and $q^\prime = 1$.

\begin{lemma} \label{thm.glp_bernstein_sh}
Assume $(\mc{N}, \gamma)$ satisfies \ass{R1}{C, N} and \ass{K}{C, D}, with $D \ll 1$ sufficiently small.
Also, suppose $k \geq 0$ is an integer, and let $F \in \mc{C}^\infty T^r_l \mc{S}$.
\begin{itemize}
\item The following estimates hold with respect to $(\mc{S}, \bar{\gamma} [\tau])$:
\begin{align}
\label{eq.glp_bochner} \| \bar{\nabla}^2 \bar{P}_k F \|_{ \bar{L}^2_x } + \| \bar{P}_k \bar{\nabla}^2 F \|_{ \bar{L}^2_x } &\lesssim_{C, N, r, l} 2^{ 2 k } \| F \|_{ \bar{L}^2_x } \text{,} \\
\notag \| \bar{\nabla}^2 \bar{P}_{< 0} F \|_{ \bar{L}^2_x } + \| \bar{P}_{< 0} \bar{\nabla}^2 F \|_{ \bar{L}^2_x } &\lesssim_{C, N, r, l} \| F \|_{ \bar{L}^2_x } \text{.}
\end{align}

\item The following estimates hold with respect to $(\mc{S}, \bar{\gamma} [\tau])$:
\begin{alignat}{3}
\label{eq.glp_bernstein_sh} \| \bar{P}_k F \|_{ \bar{L}^\infty_x } &\lesssim_{C, N, r, l} 2^k \| F \|_{ \bar{L}^2_x } \text{,} &\qquad \| \bar{P}_{< 0} F \|_{ \bar{L}^\infty_x } &\lesssim_{C, N, r, l} \| F \|_{ \bar{L}^2_x } \text{,} \\
\notag \| \bar{P}_k F \|_{ \bar{L}^2_x } &\lesssim_{C, N, r, l} 2^k \| F \|_{ \bar{L}^1_x } \text{,} &\qquad \| \bar{P}_{< 0} F \|_{ \bar{L}^2_x } &\lesssim_{C, N, r, l} \| F \|_{ \bar{L}^1_x } \text{.}
\end{alignat}
\end{itemize}
\end{lemma}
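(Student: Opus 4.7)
The plan is to exploit the fact that the already-established parts of Proposition \ref{thm.conf} give us, for the renormalized system $(\mc{N}, \bar{\gamma})$, both the \ass{R1}{C', N} condition and the \ass{K}{C', D'} condition (with data $(e^{-2u} f, e^{-2u} W, 0)$), so each leaf $(\mc{S}, \bar{\gamma}[\tau])$ satisfies \ass{r1}{C', N} and \ass{k}{C', D'}. Consequently the full tensorial Bochner estimate of Lemma \ref{thm.ell_est} is now available on each leaf, in contrast to the situation for the original metric $\gamma$. Both \eqref{eq.glp_bochner} and \eqref{eq.glp_bernstein_sh} will follow from this Bochner estimate combined with the leaf-wise geometric Littlewood-Paley calculus of Proposition \ref{thm.glp} and the scalar Sobolev embedding \eqref{eq.gns_2}.

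For \eqref{eq.glp_bochner}, I would apply \eqref{eq.ell_est} to $\bar{P}_k F$ on $(\mc{S}, \bar{\gamma}[\tau])$:
\[ \| \bar{\nabla}^2 \bar{P}_k F \|_{ \bar{L}^2_x } \lesssim \| \bar{\lapl} \bar{P}_k F \|_{ \bar{L}^2_x } + \| \bar{\nabla} \bar{P}_k F \|_{ \bar{L}^2_x } + \| \bar{P}_k F \|_{ \bar{L}^2_x }. \]
Each term on the right is then bounded by $2^{2k} \| F \|_{\bar{L}^2_x}$ via the finite-band and boundedness properties of Proposition \ref{thm.glp}, applied to the spectral L-P operator $\bar{P}_k$ built from $\bar{\lapl}$. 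For the companion estimate on $\bar{P}_k \bar{\nabla}^2 F$, I would argue by $\bar{L}^2$-duality: pair with a test tensor $G$ of the appropriate rank and integrate by parts twice, so that $\langle \bar{\nabla}^2 F, \bar{P}_k G \rangle$ is converted to $\langle F, \bar{\nabla}^2 \bar{P}_k G \rangle$ (interpreted with the suitable index pattern), which is controlled by $2^{2k} \| F \|_{\bar{L}^2_x}$ using the estimate for $\bar{\nabla}^2 \bar{P}_k G$ just established. The $\bar{P}_{<0}$ analogues are identical, with the factor $2^{2k}$ replaced by a universal constant.

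The sharp Bernstein estimates in \eqref{eq.glp_bernstein_sh} then follow by feeding \eqref{eq.glp_bochner} into the two-dimensional $L^\infty$-Sobolev inequality \eqref{eq.gns_2}, which is available on $(\mc{S}, \bar{\gamma}[\tau])$ since \ass{r0}{} is satisfied there:
\[ \| \bar{P}_k F \|_{ \bar{L}^\infty_x } \lesssim \| \bar{\nabla}^2 \bar{P}_k F \|_{ \bar{L}^2_x }^{1/2} \| \bar{P}_k F \|_{ \bar{L}^2_x }^{1/2} + \| \bar{P}_k F \|_{ \bar{L}^2_x } \lesssim 2^{k} \| F \|_{ \bar{L}^2_x }, \]
using \eqref{eq.glp_bochner} and the $\bar{L}^2$-boundedness of $\bar{P}_k$ in the final step. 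The dual inequality $\| \bar{P}_k F \|_{\bar{L}^2_x} \lesssim 2^{k} \| F \|_{\bar{L}^1_x}$ is immediate from the $\bar{L}^2$-self-adjointness of $\bar{P}_k$: for any $G$ with $\| G \|_{\bar{L}^2_x} = 1$, one has $|\langle \bar{P}_k F, G\rangle| = |\langle F, \bar{P}_k G\rangle| \leq \| F \|_{\bar{L}^1_x} \| \bar{P}_k G \|_{\bar{L}^\infty_x} \lesssim 2^{k} \| F \|_{\bar{L}^1_x}$. The $\bar{P}_{<0}$ versions are identical.

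The substantive ingredient here is \eqref{eq.ell_est}, which in turn rests on the $\bar{L}^2_x$-control of $\bar{\mc{K}}$ provided by \eqref{eq.conf_curv_est}; everything else amounts to standard Littlewood-Paley bookkeeping. The one mildly delicate point is the duality step for $\bar{P}_k \bar{\nabla}^2 F$ in the tensorial setting, since integrating by parts on a general horizontal tensor produces lower-order curvature terms; however these are already absorbed by the fact that \eqref{eq.glp_bochner} is proved for arbitrary rank, so no separate commutator estimates enter.
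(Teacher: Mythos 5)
Your proof is correct and follows essentially the same route as the paper: apply the Bochner estimate \eqref{eq.ell_est} to $\bar{P}_k F$ together with the finite-band bounds of Proposition \ref{thm.glp}, dualize for $\bar{P}_k\bar{\nabla}^2 F$, then feed \eqref{eq.glp_bochner} into the $L^\infty$-Sobolev inequality \eqref{eq.gns_2} and dualize again for the $L^2$-$L^1$ bound. (One minor remark: no curvature commutator actually arises in the duality step, since integrating by parts for the Levi-Civita connection is clean and the pointwise tensor norm is invariant under permuting slots; your observation that the arbitrary-rank Bochner estimate covers any such terms is a valid fallback, but the issue never materializes.)
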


\begin{proof}
First, \eqref{eq.glp_bochner} follows from \eqref{eq.ell_est} in the same manner that \eqref{eq.glp_bochner_sc} follows from \eqref{eq.ell_est_sc}.
Next, the first part of \eqref{eq.glp_bernstein_sh} follows from Proposition \ref{thm.glp}, \eqref{eq.gns_2}, and \eqref{eq.glp_bochner}:
\[ \| \bar{P}_k F \|_{ \bar{L}^\infty_x } \lesssim \| \bar{\nabla}^2 \bar{P}_k F \|_{ \bar{L}^2_x }^\frac{1}{2} \| \bar{P}_k F \|_{ \bar{L}^2_x }^\frac{1}{2} + \| \bar{P}_k F \|_{ \bar{L}^2_x } \lesssim 2^k \| F \|_{ \bar{L}^2_x } \text{.} \]
The low-frequency analogue follows by a similar proof.
The remaining $L^2$-$L^1$-estimates follow from the corresponding $L^\infty$-$L^2$-estimates by duality.
\end{proof}

As a consequence of the above, we can derive much stronger versions of the intermediate estimates \eqref{eq.intertwining_weak}.
These are proved in the following two lemmas.

\begin{lemma} \label{thm.intertwining_nabla}
Assume $(\mc{N}, \gamma)$ satisfies \ass{R1}{C, N} and \ass{K}{C, D}, with $D \ll 1$ sufficiently small.
Given $k, m \geq 0$ and $F \in \mc{C}^\infty T^r_l \mc{S}$, we have, with respect to $(\mc{S}, \bar{\gamma} [\tau])$,
\begin{align}
\label{eq.intertwining_nabla} \| \bar{P}_k \bar{\nabla} \bar{P}_m F \|_{ \bar{L}^2_x } &\lesssim_{C, N, r, l} 2^{- |k - m|} 2^{ \min (k, m) } \| \bar{P}_{\sim m} F \|_{ \bar{L}^2_x } \text{,} \\
\notag \| \bar{P}_k \bar{\nabla} \bar{P}_{< 0} F \|_{ \bar{L}^2_x } &\lesssim_{C, N, r, l} 2^{-k} \| \bar{P}_{\lesssim 0} F \|_{ \bar{L}^2_x } \text{,} \\
\notag \| \bar{P}_{< 0} \bar{\nabla} \bar{P}_m F \|_{ \bar{L}^2_x } &\lesssim_{C, N, r, l} 2^{-m} \| \bar{P}_{\sim m} F \|_{ \bar{L}^2_x } \text{,} \\
\notag \| \bar{P}_{< 0} \bar{\nabla} \bar{P}_{< 0} F \|_{ \bar{L}^2_x } &\lesssim_{C, N} \| \bar{P}_{\lesssim 0} F \|_{ \bar{L}^2_x } \text{.}
\end{align}
\end{lemma}

\begin{proof}
We prove only the first estimate, as the remaining bounds are similarly proved and are easier.
First, if $k \geq m$, we can apply \eqref{eq.glp_fbr} and \eqref{eq.glp_bochner}:
\begin{align*}
\| \bar{P}_k \bar{\nabla} \bar{P}_m F \|_{ \bar{L}^2_x } &\lesssim 2^{-k} \| \bar{\nabla}^2 \bar{P}_m F \|_{ \bar{L}^2_x } \lesssim 2^{-k + 2m} \| \bar{P}_{\sim m} F \|_{ \bar{L}^2_x } \text{.}
\end{align*}
The $k \geq m$ case follows from the above by duality.
\end{proof}

\begin{lemma} \label{thm.intertwining_hodge}
Assume $(\mc{N}, \gamma)$ satisfies \ass{R1}{C, N} and \ass{K}{C, D}, with $D \ll 1$ sufficiently small.
Let $\bar{\mc{D}}$ denote any one of $\bar{\mc{D}}_1$, $\bar{\mc{D}}_2$, $\bar{\mc{D}}_1^\ast$, $\bar{\mc{D}}_2^\ast$, and let $X$ be a smooth section of the appropriate Hodge bundle on $(\mc{S}, \bar{\gamma} [\tau])$.
Then, for any integers $k, m \geq 0$, we have, with respect to $(\mc{S}, \bar{\gamma} [\tau])$, the estimates
\begin{align}
\label{eq.intertwining_hodge} \| \bar{P}_k \bar{\mc{D}}^{-1} \bar{P}_m X \|_{ \bar{L}^2_x } &\lesssim_{C, N} 2^{- \max(k, m)} 2^{- |k - m|} \| \bar{P}_{\sim m} X \|_{ \bar{L}^2_x } \text{,} \\
\notag \| \bar{P}_k \bar{\mc{D}}^{-1} \bar{P}_{< 0} X \|_{ \bar{L}^2_x } &\lesssim_{C, N} 2^{-2 k} \| \bar{P}_{\lesssim 0} X \|_{ \bar{L}^2_x } \text{,} \\
\notag \| \bar{P}_{< 0} \bar{\mc{D}}^{-1} \bar{P}_m X \|_{ \bar{L}^2_x } &\lesssim_{C, N} 2^{-2 m} \| \bar{P}_{\sim m} X \|_{ \bar{L}^2_x } \text{,} \\
\notag \| \bar{P}_{< 0} \bar{\mc{D}}^{-1} \bar{P}_{< 0} X \|_{ \bar{L}^2_x } &\lesssim_{C, N} \| \bar{P}_{\lesssim 0} X \|_{ \bar{L}^2_x } \text{.}
\end{align}
\end{lemma}

\begin{proof}
Again, like for Lemma \ref{thm.intertwining_nabla}, we prove only the first (and the most difficult) estimate.
Consider first the case $k \geq m$.
Applying \eqref{eq.hodge_sq} and \eqref{eq.glp_fbl}, we obtain
\begin{align*}
\| \bar{P}_k \bar{\mc{D}}^{-1} \bar{P}_m X \|_{ \bar{L}^2_x } &\lesssim 2^{-2 k} \| \bar{\lapl} \bar{\mc{D}}^{-1} \bar{P}_m X \|_{ \bar{L}^2_x } \lesssim 2^{-2 k} ( \| \bar{\mc{D}}^\ast \bar{P}_m X \|_{ \bar{L}^2_x } + \| \bar{\mc{K}} \bar{P}_m X \|_{ \bar{L}^2_x } ) \text{.}
\end{align*}
Applying \eqref{eq.glp_fb}, \eqref{eq.conf_curv_est}, and \eqref{eq.glp_bernstein_sh} yields
\begin{align*}
\| \bar{P}_k \bar{\mc{D}}^{-1} \bar{P}_m X \|_{ \bar{L}^2_x } &\lesssim 2^{-2k} ( 2^m \| \bar{P}_{\sim m} X \|_{ \bar{L}^2_x } + D \| \bar{P}_m X \|_{ \bar{L}^\infty_x } ) \lesssim 2^{-2k + m} \| \bar{P}_{\sim m} X \|_{ \bar{L}^2_x } \text{.}
\end{align*}
The remaining case $k \leq m$ follows from the above by duality.
\end{proof}

\subsection{Proof of the Estimates} \label{sec.cell_est}

We are now prepared to prove \eqref{eq.besov_embed}-\eqref{eq.besov_hodge}.
First of all, \eqref{eq.besov_embed} follows immediately from an L-P decomposition and \eqref{eq.glp_bernstein_sh}:
\[ \| \Psi \|_{ \bar{L}^{p, \infty}_{t, x} } \lesssim \sum_{k \geq 0} \| \bar{P}_k \Psi \|_{ \bar{L}^{p, \infty}_{t, x} } + \| \bar{P}_{< 0} \Psi \|_{ \bar{L}^{p, \infty}_{t, x} } \lesssim \sum_{k \geq 0} 2^k \| \bar{P}_k \Psi \|_{ \bar{L}^{p, 2}_{t, x} } + \| \Psi \|_{ \bar{L}^{p, 2}_{t, x} } \text{.} \qedhere \]
For \eqref{eq.besov_nabla} and \eqref{eq.besov_hodge}, we will prove only the case $a = 1$ here.
The dual case $a = \infty$ can be proved similarly, and the general case follows then from interpolation.

For the first part of \eqref{eq.besov_nabla}, we decompose the left-hand side and apply \eqref{eq.intertwining_nabla}:
\begin{align*}
\| \bar{\nabla} \Psi \|_{ \bar{B}^{1, p, s}_{\ell, t, x} } &\lesssim \sum_{k, m \geq 0} 2^{s k} \| \bar{P}_k \bar{\nabla} \bar{P}_m \Psi \|_{ \bar{L}^{p, 2}_{t, x} } + \sum_{k \geq 0} 2^{s k} \| \bar{P}_k \bar{\nabla} \bar{P}_{< 0} \Psi \|_{ \bar{L}^{p, 2}_{t, x} } \\
&\qquad + \sum_{m \geq 0} \| \bar{P}_{< 0} \bar{\nabla} \bar{P}_m \Psi \|_{ \bar{L}^{p, 2}_{t, x} } + \| \bar{P}_{< 0} \bar{\nabla} \bar{P}_{< 0} \Psi \|_{ \bar{L}^{p, 2}_{t, x} } \\
&\lesssim \sum_{k, m \geq 0} 2^{- |k - m|} 2^{ \min (k, m) } 2^{s k} \| \bar{P}_{\sim m} \Psi \|_{ \bar{L}^{p, 2}_{t, x} } + \sum_{k \geq 0} 2^{-k} 2^{s k} \| \bar{P}_{\lesssim 0} \Psi \|_{ \bar{L}^{p, 2}_{t, x} } \\
&\qquad + \sum_{m \geq 0} 2^{-m} \| \bar{P}_{\sim m} \Psi \|_{ \bar{L}^{p, 2}_{t, x} } + \| \bar{P}_{\lesssim 0} \Psi \|_{ \bar{L}^{p, 2}_{t, x} } \text{.}
\end{align*}
Evaluating the above sums, we see that
\begin{align*}
\| \bar{\nabla} \Psi \|_{ \bar{B}^{1, p, s}_{\ell, t, x} } &\lesssim \sum_{m \geq 0} 2^{(1 + s) m} \| \bar{P}_{\sim m} \Psi \|_{ \bar{L}^{p, 2}_{t, x} } + \| \bar{P}_{\lesssim 0} \Psi \|_{ \bar{L}^{p, 2}_{t, x} } \lesssim \| \Psi \|_{ \bar{B}^{1, p, s + 1}_{\ell, t, x} } \text{.}
\end{align*}

For the second inequality of \eqref{eq.besov_nabla}, we generate a Laplacian and decompose:
\begin{align*}
\| \Psi \|_{ \bar{B}^{1, p, s + 1}_{\ell, t, x} } &\lesssim \sum_{k \geq 0} 2^{- k} 2^{s k} \| \tilde{\bar{P}}_k \bar{\lapl} \Psi \|_{ \bar{L}^{p, 2}_{t, x} } + \| \bar{P}_{< 0} \Psi \|_{ \bar{L}^{p, 2}_{t, x} } \\
&\lesssim \sum_{k, m \geq 0} 2^{- k} 2^{s k} \| \tilde{\bar{P}}_k \bar{\nabla} \bar{P}_m \bar{\nabla} \Psi \|_{ \bar{L}^{p, 2}_{t, x} } + \sum_{k \geq 0} 2^{-k} 2^{s k} \| \tilde{\bar{P}}_k \bar{\nabla} \bar{P}_{< 0} \bar{\nabla} \Psi \|_{ \bar{L}^{p, 2}_{t, x} } \\
&\qquad + \| \bar{P}_{< 0} \Psi \|_{ \bar{L}^{p, 2}_{t, x} } \text{.}
\end{align*}
Applying again \eqref{eq.intertwining_nabla} yields
\begin{align*}
\| \Psi \|_{ \bar{B}^{1, p, s + 1}_{\ell, t, x} } &\lesssim \sum_{k, m \geq 0} 2^{- k} 2^{s k} 2^{ \min(k, m) } 2^{- |k - m|} \| \bar{P}_{\sim m} \bar{\nabla} \Psi \|_{ \bar{L}^{p, 2}_{t, x} } \\
&\qquad + \| \bar{P}_{\lesssim 0} \bar{\nabla} \Psi \|_{ \bar{L}^{p, 2}_{t, x} } + \| \Psi \|_{ \bar{L}^{p, 2}_{t, x} } \\
&\lesssim \sum_{m \geq 0} 2^{s m} \| \bar{P}_{\sim m} \bar{\nabla} \Psi \|_{ \bar{L}^{p, 2}_{t, x} } + \| \bar{P}_{\lesssim 0} \bar{\nabla} \Psi \|_{ \bar{L}^{p, 2}_{t, x} } + \| \Psi \|_{ \bar{L}^{p, 2}_{t, x} } \\
&\lesssim \| \bar{\nabla} \Psi \|_{ B^{1, p, s}_{\ell, t, x} } + \| \Psi \|_{ \bar{L}^{p, 2}_{t, x} } \text{.}
\end{align*}

Finally, for \eqref{eq.besov_hodge}, we apply yet another L-P decomposition:
\begin{align*}
\| \bar{\mc{D}}^{-1} \xi \|_{ \bar{B}^{1, p, s + 1}_{\ell, t, x} } &\lesssim \sum_{k, m \geq 0} 2^k 2^{s k} \| \bar{P}_k \bar{\mc{D}}^{-1} \bar{P}_m \xi \|_{ \bar{L}^{p, 2}_{t, x} } + \sum_{k \geq 0} 2^k 2^{s k} \| \bar{P}_k \bar{\mc{D}}^{-1} \bar{P}_{< 0} \xi \|_{ \bar{L}^{p, 2}_{t, x} } \\
&\qquad + \sum_{m \geq 0} \| \bar{P}_{< 0} \bar{\mc{D}}^{-1} \bar{P}_m \xi \|_{ \bar{L}^{p, 2}_{t, x} } + \| \bar{P}_{< 0} \bar{\mc{D}}^{-1} \bar{P}_{< 0} \xi \|_{ \bar{L}^{p, 2}_{t, x} } \text{.}
\end{align*}
Applying \eqref{eq.glp_bdd}, \eqref{eq.hodge_inv_est}, and \eqref{eq.intertwining_hodge}, and then summing, we have
\begin{align*}
\| \bar{\mc{D}}^{-1} \xi \|_{ \bar{B}^{1, p, s + 1}_{\ell, t, x} } &\lesssim \sum_{k, m \geq 0} 2^k 2^{s k} 2^{- |k - m|} 2^{- \max(k, m)} \| \bar{P}_{\sim m} \xi \|_{ \bar{L}^{p, 2}_{t, x} } \\
&\qquad + \sum_{k \geq 0} 2^{-k} 2^{s k} \| \bar{P}_{\lesssim 0} \xi \|_{ \bar{L}^{p, 2}_{t, x} } + \sum_{m \geq 0} 2^{-2 m} \| \bar{P}_{\sim m} \xi \|_{ \bar{L}^{p, 2}_{t, x} } + \| \bar{P}_{\lesssim 0} \xi \|_{ \bar{L}^{p, 2}_{t, x} } \\
&\lesssim \sum_{m \geq 0} 2^{s m} \| \bar{P}_{\sim m} \xi \|_{ \bar{L}^{p, 2}_{t, x} } + \| \bar{P}_{\lesssim 0} \xi \| _{ \bar{L}^{p, 2}_{t, x} } \\
&\lesssim \| \xi \|_{ \bar{B}^{1, p, s}_{\ell, t, x} } \text{.}
\end{align*}
This completes the proof of Proposition \ref{thm.conf}.

\raggedright
\bibliographystyle{amsplain}
\bibliography{articles,books,misc}

\end{document}